\numberwithin{equation}{section}
\theoremstyle{plain}
\newtheorem{theorem}{Theorem}[section]
\newtheorem{lemma}{Lemma}[section]
\theoremstyle{remark}
\newtheorem{remark}{Remark}
\newtheorem{assumption}{Assumption}
\def\g{g^*_\pi}
\def\gn{\hat g_{n}^\pi}
\def\Var{\text{Var}}
\def\eff{\text{eff}}
\def\S{\mathcal{S}}
\def\EE{\mathbb{E}}
\def\tv{\operatorname{tv}}
\def\PP{\mathbb{P}}
\def\G{\boldsymbol{G}}
\def\F{\mathcal{F}}
\def\qn{\hat H^{\pi}_n}
\def\qpi{\tilde{H}^\pi}
\def\goes{\rightarrow}
\def\f{\boldsymbol{f}}
\def\A{\mathcal{A}}
\def\D{\mathcal{D}}
\def\R{\mathbb{R}}
\def\Pn{\mathbb{P}_n}
\def\Upin{\hat {U}^\pi_n}
\def\Upi{{U}^\pi}
\def\P{\mathcal{P}}
\def\RR{\mathcal{R}}
\def\leqconst{\lesssim}
\newcommand{\pushright}[1]{\ifmeasuring@#1\else\omit\hfill$\displaystyle#1$\fi\ignorespaces}
\newcommand{\innerprod}[2]{\langle#1, #2 \rangle}
\newcommand{\norm}[1]{\|#1\|}
\newcommand{\bignorm}[1]{\big \|#1 \big \|}
\newcommand{\abs}[1]{|#1|}
\newcommand{\dkh}[1]{\{#1\}}
\newcommand{\bigdkh}[1]{\big\{#1 \big\}}
\newcommand{\Bigdkh}[1]{\Big\{#1\Big\}}
\newcommand{\bigabs}[1]{\big|#1\big|}
\newcommand{\Bigabs}[1]{\Big|#1\Big|}
\newcommand{\dv}[1]{\frac{d}{d#1}}
\newcommand{\indicator}[1]{ \mathds{1}_{\{#1\}}}
\newcommand{\mynu}{\nu}
\def\Var{\text{Var}}
\def\EE{\mathbb{E}}
\def\f{\boldsymbol{f}}
\def\R{\mathbb{R}}
\def\Regret{\operatorname{Regret}}
\def\Rem{\operatorname{Rem}}
\def\I{\,\mathcal{I}}
\def\ones{\mathbf{1}}
\def\B{\mathcal{B}}
\def\argmin{\operatorname{argmin}}
\def\argminb{\operatorname*{argmin}}
\def\argmax{\operatorname{argmax}}
\def\wcvg{\Rightarrow}
\def\X{\mathcal{X}}
\def\jiao{\cap}
\def\C{\mathcal{C}}
\def\given{\, | \,}
\def\Given{\, \Big| \,}
\def\GG{\mathbb{G}}
\def\Gn{\mathbb{G}_n}
\def\transpose{\top}
\renewcommand{\G}{\mathcal{G}}
\def\V{\eta}
\def\Vn{\hat{\eta}_n}
\begin{document}
	
	\begin{frontmatter}
		\title{Batch Policy Learning in Average Reward Markov Decision Processes}
		\runtitle{Batch Policy Learning in Average Reward MDP}

		\begin{aug}
\author[A]{\fnms{Peng}~\snm{Liao}\ead[label=e1]{pliao9122@gmail.com}}\footnote[1]{The first two authors contributed equally.},
\author[B]{\fnms{Zhengling}~\snm{Qi}\ead[label=e2]{qizhengling@gwu.edu}}
\and
\author[C]{\fnms{Runzhe}~\snm{Wan}\ead[label=e3]{runzhe.wan@gmail.com}}\footnote[2]{This work started prior to joining Amazon.}
\author[D]{\fnms{Predrag}~\snm{Klasnja}\ead[label=e4]{klasnja@umich.edu}}
\author[A]{\fnms{Susan}~\snm{A. Murphy}\ead[label=e5]{samurphy11@gmail.com}}

\address[A]{Harvard University\printead[presep={,\ }]{e1,e5}}

\address[B]{George Washington University\printead[presep={,\ }]{e2}}
\address[C]{Amazon\printead[presep={,\ }]{e3}}
\address[D]{University of Michigan, Ann Arbor\printead[presep={,\ }]{e4}}
\end{aug}
		
		\begin{abstract}
			We consider the batch (off-line) policy learning problem in the infinite horizon Markov Decision Process. Motivated by mobile health applications, we focus on learning a policy that  maximizes the long-term average reward. We propose a doubly robust estimator for the average reward and show that it achieves semiparametric efficiency.
			Further we develop an optimization algorithm to compute the optimal policy in a parameterized stochastic policy class. The performance of the estimated policy is measured by the difference between the optimal average reward in the policy class and the average reward of the estimated policy and we establish a finite-sample regret guarantee. 
			The performance of the method is illustrated by simulation studies and an analysis of a mobile health study promoting physical activity.
		\end{abstract}

		\begin{keyword}
			\kwd{Markov Decision Process}
			\kwd{Average Reward}
			\kwd{Policy Optimization}
			\kwd{Doubly Robust Estimator}
		\end{keyword}
		
	\end{frontmatter}

	\section{Introduction}

    Mobile health (mHealth) is a rapidly growing field due to the recent advances in mobile and sensing technologies.  The mHealth intervention provides a unique opportunity to promote the healthy behaviors (e.g., regular physical activity and adherence to medications) and has been successfully applied in many health fields (e.g., smoking cessation, physical activity, drug abuse and diabetes).  Just-in-time adaptive interventions (JITAI, \cite{nahum2016just}) use  a decision rule (i.e., a treatment policy or policy) that maps  real-time information about the individual's context to a particular treatment. In this work we study the problem of how to use data consisting of multiple trajectories to estimate a policy   that leads to good long-term performance.

     We model the sequential decision making process by a time-homogeneous Markov Decision Process (MDP) \citep{puterman1994markov} over infinite time horizon.
     This framework is natural for mobile health applications in which the number of decision times is often large. For example, in \textit{HeartSteps}, a physical activity mHealth study, there are five decision times per day, resulting in thousands of decision times over a year. Tremendous progress has been made in finite horizon setting; see the recent review  by \cite{kosorok2019precision} for references therein.  However when the number of time points is very large, methods that are based on the idea of backward iteration (e.g., Q-learning) or importance sampling \citep{precup2000eligibility} may suffer a large variance in problems or even be unpractical 
     \citep{voloshin2019empirical,laber2014dynamic}.    
     
     We propose to estimate  the policy that optimizes the long-term average outcomes (rewards) using data    consisting of multiple trajectories of finite length. 
     The majority of existing methods focuses on the alternative, the discounted sum of rewards \citep{sutton2018reinforcement}; see the recent works in statistics \citep{luckett2019estimating,ertefaie2018constructing,shi2020statistical,shi2021deeply}. The discounted formulation weighs immediate rewards more heavily than rewards further in the future, which is practical in some applications (e.g., finance). However, for mHealth applications, choosing an appropriate discount rate could be non-trivial. The rewards (i.e., the health outcomes) in the distant future are as important as the near-term ones, especially when considering maintenance of health behaviors as well as longer term treatment burden. This suggests using a large discount rate. However, it is well known that algorithms developed in the discounted setting can become increasingly unstable as the discount rate goes to one; see for example \cite{naik2019discounted}.  The long-term average reward framework provides a good approximation to the long-term performance of a desired treatment policy in mHealth. Indeed, it can be shown that under regularity conditions the finite average of the expected rewards converges sublinearly to the long-term average reward as time goes to infinity \citep{hernandez2012further}. Therefore, a policy that optimizes the average reward would approximately maximize the sum of the rewards over a sufficiently long time horizon.

    In this work, we present a novel algorithm that estimates the optimal policy in a prespecified, parametric policy class.  Various methods have been proposed to estimate the global optimal policy by  estimating the optimal Q-function;see for example \cite{ormoneit_kernel-based_2003,lagoudakis2003least,ernst_tree-based_2005,munos2008finite,antos_learning_2008,antos2008fitted,ertefaie2018constructing,fujimoto2019off,kumar2019stabilizing,agarwal2020optimistic}. In practice, the optimal Q-function could be highly non-smooth and  complex, thus requiring the use of a flexible function class. This usually results in a learned policy that is also complex.  If interpretability is important, this is problematic.   Furthermore, when the training data is limited, the flexible function class might overfit the data and thus the variance of the estimated value function and the corresponding policy could be high. Restricting to a pre-specified policy class was studied by \cite{zhang2012robust,zhang2013robust,zhou2017residual,zhao2015new,zhao2019efficient,athey2017efficient} in finite time horizon problems and by \cite{luckett2019estimating,murphy2016batch,liu2019off} in infinite time horizon problems.  The restriction to a simple policy class enhance the interpretability of the learned policy and reduces the variance of the learned policy, although this induces a bias when the optimal policy is not in the class (i.e., trading off the bias and variance).

		To efficiently learn an optimal policy in a prespecified policy class, the main statistical challenge is to construct an estimator for the average reward of a  policy  that is both data-efficient and performs uniformly well when optimizing over the policy class. Our first contribution of this work is a novel doubly robust estimator (see Section \ref{sec:method}); we  show that this estimator achieves the semiparametric efficiency bound under certain conditions on the estimation error of nuisance functions (see Section \ref{sec:theorem}).  Doubly robust estimators have been developed in the finite time horizon problems \citep{robins1994estimation,murphy2001marginal,dudik2014doubly,jiang2016doubly,thomas2016data} and recently in the discounted reward infinite horizon setting \citep{kallus2019double,Tang2020Doubly}.   To the best of our knowledge, our doubly robust estimator for the long-term average reward is new.  \textcolor{black}{In the literature of the average MDP in the batch setting, only the non-doubly robust estimator proposed by \cite{liao2019off} for the long-term average reward can be shown to achieve the semi-parametric efficiency, although they did not explicitly derive it. Most of the previous works on the policy optimization/evaluation under this framework are focused the online setting or under parametric models \citep[e.g.,][]{mahadevan1996average,abounadi2001learning,wan2021learning}. Theoretical studies on the average reward MDP in the batch setting are very limited, especially under non-parametric models.}

	To establish the semiparametric efficiency of the doubly robust estimator and
	the regret bound, we derive  finite-sample error bounds for two nuisance function estimators, a relative value estimator and a ratio estimator.  The obtained error bounds are shown to hold uniformly over the prespecified class of policies.
	Both the relative value and ratio estimators are both derived from the same principle (i.e., coupled estimation; see Section \ref{sec:nuisance est}).  
	In the case of the ratio estimator, we use an iterative procedure to obtain a near-optimal error bound for the ratio estimator.  To the best of our knowledge, this is the first theoretical result characterizing the ratio estimation error, which might be of independent interest.

	We learn the optimal policy by maximizing the estimated average reward over a policy class and derive a finite-sample upper bound of the regret.  We show that the our proposed method achieves $O(p^{1/2}n^{-1/2} + pn^{-\tilde \beta})$ regret, where $p$ is the number of parameters in the policy, $n$ is the number of trajectories in the training data and $\tilde \beta$ is a constant that can be chosen arbitrarily close to $1/(1+\alpha)$. Here $\alpha \in (0, 1)$ measures the complexity of nuisance function classes.
		The use of doubly robust estimation ensures the estimation error for the nuisance functions contributes only lower order terms to the regret.
     Unlike the setting in which the goal is to maximize the average reward defined over a finite horizon \citep{athey2017efficient}, when the goal is to maximize the average reward defined over an infinite horizon, 
     the regret analysis 
     requires  uniform control of the estimation error of the policy-dependent nuisance functions over the policy class.
		We believe this is the first regret bound result  for an estimator of an in-class optimal policy in the average reward MDP and  using batch data. \textcolor{black}{Recently, \cite{sharma2020approximate} proposed an approximate relative value learning algorithm for globally optimal policies under the average reward MDP with non-parametric function approximation. However, they require the sample size at least exponentially larger than the dimension of the state for the convergence of their algorithm, which seems sub-optimal compared with our result, e.g., Theorem \ref{thm: policy learning}.}

	The rest of the article is organized as follows. Section \ref{sec:avg rwrd mdp} formalizes the decision making problem and introduces the average reward MDP. Section \ref{sec:method} presents the proposed method of learning the in-class optimal policy, including the doubly robust estimator for average reward (Section \ref{subsec:doubly est}). In Section \ref{sec:nuisance est}, the coupled estimators of the policy-dependent nuisance functions are introduced. Section \ref{sec:theorem} provides a thorough theoretical analysis on the regret bound of our proposed method. In Section \ref{sec:implemenation}, we describe a practical optimization algorithm when Reproducing Kernel Hilbert Spaces (RKHSs) are used to model the nuisance functions. We further conduct several simulation studies to demonstrate the promising performance of our method in Section \ref{sec:simulation}. All the technical proofs are postponed to the supplementary material.

	\section{Problem Setup}
	\label{sec:avg rwrd mdp}

	Suppose we observe a training dataset, $\D_n = \{D_i\}_{i=1}^n$ that consists of $n$ independent, identically distributed (i.i.d.) observations of $D$: 
	$$D = \{S_1, A_1, S_2, \dots, S_T, A_T, S_{T+1}\}.$$ 
	We use $t$ to index the decision time. The length of the trajectory, $T$, is a fixed constant. 	$S_{t} \in \S $ is the state at time $t$  and $A_t \in \A$ is the action (treatment) selected at time $t$. We assume the action space, $\A$, is finite. To eliminate unnecessary technical distractions, we assume that the state space, $\S$, is finite; this assumption imposes no  practical limitations and can be extended to the general state space.

	The states evolve according to a time-homogeneous Markov process, that is, for $t \geq 1$, 
	$
	S_{t+1} \perp \{S_1, A_1, \dots, S_{t-1}, A_{t-1}\} \given \{S_t, A_t\}
	$,
	and the conditional distribution does not depend on $t$. Denote the conditional distribution by $P$, i.e., $\Pr(S_{t+1}=s'|S_t = s, A_t = a) = P(s'|s, a)$.  The reward (i.e., outcome) is denoted by $R_{t+1}$, which is assumed to be a known function of  $(S_{t}, A_t, S_{t+1})$, i.e., $R_{t+1} = \mathcal{R}(S_t, A_t, S_{t+1})$. We assume the reward is bounded, i.e., $\abs{\RR(s, a, s')} \leq R_{\max}$. 
	We use $r(s, a)$ to denote the conditional expectation of reward given state and action, i.e., $r(s, a) = \EE\left[R_{t+1}|S_t = s, A_t = a\right]$.  
	
	Let $H_t = \{S_1, A_1, \dots, S_t\}$ be the history up to time $(t-1)$ and the current state, $S_t$. Denote the conditional distribution of $A_t$ given $H_t$ by $\pi_{b,t}(a|H_t) = \Pr(A_t = a|H_t)$. Let $\pi_b=\{\pi_{b,1},\ldots,\pi_{b,T}\}$. This is often called behavior policy in the literature. 
	Throughout this paper, the expectation, $\EE$, without any subscript is assumed taken with respect to the distribution of the trajectory, $D$, with the actions selected by the behavior policy $\pi_b$.

	Consider a time-stationary, Markovian policy, $\pi$, that takes the  state as input and outputs a probability distribution on the action space, $\A$, that is, $\pi(a|s)$ is the probability of selecting action, $a$, at state, $s$.  The average reward of the policy, $\pi$, is defined as
	\begin{align}
	\V^\pi(s) :=  \operatorname*{lim}_{t^*\goes \infty} \EE_{\pi} \left(\frac{1}{t^*} \sum_{t=1}^{t^*} R_{t+1} \Given S_1
	= s\right),
	\label{def: avg rwrd}
	\end{align}
	where the expectation, $\EE_{\pi}$, is with respect to the distribution of the trajectory in which the states evolve according to $P$ and the actions are chosen by $\pi$. In the time-homogeneous MDP with finite state and bounded reward,  the limit in (\ref{def: avg rwrd}) always exists \citep{puterman1994markov}.
	The policy, $\pi$, induces a Markov chain of states with the transition as $P^\pi(s'|s) = \sum_{a} \pi(a|s) P(s'|s, a)$.  When the induced Markov chain, $P^\pi$, is irreducible,
	it can be shown (e.g., in \cite{puterman1994markov}) that the stationary distribution of $P^\pi$ exists and is unique (denoted by $d^\pi$) and the average reward, $\V^\pi(s)$ (\ref{def: avg rwrd}) is independent of initial state (denoted by $\V^{\pi}$) and equal to 
	\begin{align}
	\V^\pi(s) = \V^{\pi} = \sum_{s, a} r(s, a)  \pi(a|s)  d^\pi(s). \label{constant avg reward}
	\end{align}
	
	Throughout this paper we consider only the time-stationary, Markovian policies. In fact,  it can be shown that the maximal average reward among all possible history dependent policies can be in fact achieved by some time-stationary, Markovian policy (Theorem 8.1.2 in \cite{puterman1994markov}). Consider a pre-specified class of such policies, $\Pi$, that is parameterized by $\theta \in \Theta \subset \R^p$.   Throughout we assume that the induced Markov chain is always irreducible for any policy in the class, which is summarized below. 
	
	\begin{assumption}
		\label{assumption: irreducible}
		The induced Markov chain, $P^\pi$, is irreducible for $\pi \in \Pi$. 
	\end{assumption}

	The goal of this paper is to develop a method that can efficiently use the training data, $\D_n$, to learn a policy that maximizes the average reward over $\Pi$. 
	We propose to construct $\Vn^{\pi}$, an efficient estimator for the average reward, $\V^{\pi}$, for each 
	policy $\pi \in \Pi$ and  learn an optimal policy by solving
	\begin{align}
	\hat \pi_n \in \argmax_{\pi \in \Pi} \Vn^{\pi}. \label{est opt policy}
	\end{align}
	The performance of $\hat \pi_n$ is measured by its regret, defined as
	\begin{align}
	\Regret(\hat \pi_n) = \sup_{\pi \in \Pi} \V^{\pi} - \V^{\hat \pi_n}.  \label{regret defnition}
	\end{align}
	Note that although the average reward of the learned policy,  $\hat \pi_n$, is defined over an infinite horizon, the goal here is to characterize the regret based on  using a finite number of trajectories, $n$, hence the finite sample regret bound is in terms of $n$. Indeed while the average reward, $\eta^\pi$ is defined as $t^* \rightarrow \infty$ (\ref{def: avg rwrd}), the Markovian and stationary assumptions allow us to estimate $\eta^\pi$ using fixed length  trajectories.
	
	\section{Doubly Robust Estimator for Average Reward}
	\label{sec:method}

	In this section we present a doubly robust estimator for the average reward for a given policy. The estimator is derived from the efficient influence function (EIF). Below we  first introduce two functions that occur in the EIF of the average reward. Throughout this section we fix a time-stationary Markovian policy, $\pi$, and focus on the setting where the induced Markov chain, $P^\pi$, is irreducible (Assumption \ref{assumption: irreducible}).

	\subsection{Relative value and ratio functions}

	First, we define the relative value function by
	\begin{align}
	Q^\pi(s, a) := 
	\operatorname*{lim}_{t^* \goes \infty} \frac{1}{t^*}\sum_{t=1}^{t^*} \EE_\pi\left[\sum_{k=1}^{t} (R_{k+1} - \V^{\pi}) \Given S_1 = s, A_1 = a\right]. \label{value function. simplified}
	\end{align}
	The above limit is well-defined (\cite{puterman1994markov},  p. 338).  If we further assume the induced Markov chain is aperiodic, then the Ces\`aro limit in (\ref{value function. simplified}) can be replaced by $Q^\pi(s, a)  
	= \EE_\pi\{\sum_{t=1}^{\infty} (R_{t+1} - \V^{\pi}) \given S_1 = s, A_1 = a\}$.   $Q^\pi$ is often called relative value function in that $Q^\pi(s, a)$ represents the expected total difference between the reward and the average reward under the policy, $\pi$, when starting at state, $s$, and action, $a$. 
	
	The relative value function, $Q^\pi$, and the average reward, $\V^{\pi}$, are closely related via the Bellman equation:
	\begin{align}
	\EE_\pi[R_{t+1} + Q(S_{t+1}, A_{t+1})\given S_t = s, A_t = a] - Q(s, a) - \eta = 0.
	\label{Bellman equationL eta,V}
	\end{align} 
	Note that in the above expectation $A_{t+1} \sim \pi(\cdot \given S_{t+1})$. 
	It is known that under the irreducibility assumption, the set of solutions of (\ref{Bellman equationL eta,V}) is given by $\{(\V^{\pi}, Q): Q = Q^\pi + c \ones, c \in \R\}$ where $\ones(s, a) = 1$ for all $(s, a)$;  see \cite{puterman1994markov}, p. 343 for details. As we will see in Section \ref{subsec: value estimator}, the Bellman equation provides the foundation of estimating the relative value function.  
	
	We now introduce the ratio function. For $t=1,\dots, T$, let $d_t(s, a)$ be the probability mass of state-action pair at time $t$ in the trajectory $D$ generated by the behavior policy. Denote by $d_D(s, a) := (1/T) \sum_{t=1}^T d_t(s, a)$ the average probability mass across the $T$ decision times in $D$.   Similarly, define $d_t(s)$ as the marginal distribution of $S_t$ and $d_D(s) = (1/T) \sum_{t=1}^T d_t(s)$ as the average distribution of states in the trajectory $D$. Recall that $T$ is the fixed  length of the trajectory, $D$; $d_D$  describes the  distribution of this finite length trajectory.  
	Further recall that under Assumption \ref{assumption: irreducible}, the stationary distribution of $P^\pi$ exists and is denoted by $d^\pi(s)$.  {We assume the following conditions on the data-generating process.} 
	\begin{assumption}
		\label{assumption: data-gen}
		The data-generating process satisfies:
		\begin{enumerate}[label={(2-\arabic*)}]
			
			\item \label{a1}
			There exists some $p_{\min} > 0$, such that $\pi_{b, t} (a|H_{t}) \geq p_{\min}$ for all $a \in \A$ and $1 \leq t \leq T$ almost surely. 
			
			\item \label{a0}
			The average distribution $d_D(s) > 0$ for all $s \in \S$. 
		\end{enumerate}

	\end{assumption}
	
	Under Assumption \ref{assumption: data-gen}, it is easy to see that $d_D(s, a) \geq p_{\min}  \cdot \left(\min_{s} d_D(s)\right) > 0$ for all state-action pair, $(s, a)$. It essentially states that the data generating process ensures that every state-action pair $(s, a) \in \S \times \A$ has a positive probability of being visited, which is a standard assumption in the literature. See e.g., Theorem 7 of \cite{kallus2019efficiently} and (A2) of \cite{shi2020statistical}. In particular, Assumption \ref{a1} is often satisfied in randomized trials. See our mobile health application in Section \ref{sec:data analysis}. \textcolor{black}{In addition, the batch data of our mobile health application consist of $37$ trajectories with $210$ decision points on each trajectory. In this application, as long as every state has a positive probability of being visited in at least one of $210$ decision points, Assumption \ref{a0} is also satisfied. Assumption \ref{a0} is imposed on the data generating process. We essentially require that across an infinite number of draws from this data generating process/trajectory, every state $s \in \mathcal{S}$ will be observed.}
	Note that Assumption \ref{assumption: data-gen} does not require that the form of the behavior policy is known.  Now we can define the ratio function:
	\begin{align}
	\omega^\pi(s, a) = \frac{d^\pi(s) \pi(a|s)}{d_D(s, a)} \label{ratio}
	\end{align}
	The ratio function plays a similar role as the importance weight in finite horizon problems. While the classic importance weight only corrects the distribution of actions between behavior policy and target policy,  the ratio here also involves the correction of the states' distribution.  The ratio function is connected with the average reward by  
	\[
	\V^{\pi} = \EE\left\{  \frac{1}{T}\sum_{t=1}^T \omega^\pi(S_t, A_t) R_{t+1}\right\}
	\]
	for any fixed trajectory length, $T$. 
	An important property of $\omega^\pi$ is that for any state-action function $f(s, a)$ (not only $Q^\pi$), 
	\begin{align}
	\EE\left[
	\frac{1}{T}\sum_{t=1}^{T} \omega^\pi(S_t, A_t) \Big\{f(S_t, A_t) - \sum_{a'} \pi(a'|S_{t+1})  f(S_{t+1}, a') \Big\}
	\right] = 0. \label{orthogonal}
	\end{align}
	This orthogonality is key to develop the estimator for $\omega^\pi$ (see Section \ref{subsec: ratio estimator}).

	\subsection{Efficient influence function}
	In this subsection, we derive the EIF of $\V^{\pi}$ for a fixed policy $\pi$ under time-homogeneous Markov Decision Process described in Section \ref{sec:avg rwrd mdp}. 
	Recall that the semiparametric efficiency bound is the supremum of the Cram\`er-Rao bounds for all parametric submodels \citep{newey1990semiparametric}. EIF is defined as the influence function of a regular estimator that achieves the semiparametric efficiency bound.  
	For more details, refer to \cite{bickel1993efficient} and \cite{van2000asymptotic}. The EIF of $\V^{\pi}$ is given by the following theorem. The proof is provided in Appendix A. 
	
	\begin{theorem} \label{thm:semipara} 
		Suppose the states in the trajectory, $D$, evolve according to the time-homogeneous Markov process and Assumption \ref{assumption: data-gen} holds.  Consider a policy, $\pi$, such that Assumption \ref{assumption: irreducible} holds. Then the EIF of the average reward, $\V^{\pi}$, is
		$$
		\phi^\pi (D) = \frac{1}{T} \sum_{t=1}^T \omega^\pi(S_t, A_t) \left\{R_{t+1} +  U^\pi(S_t, A_t, S_{t+1})-  \V^{\pi} \right\}
		$$
	where 
	\begin{align}
	U^\pi(s, a, s') := \sum_{a'} \pi(a'|s') Q^\pi(s', a') - Q^\pi(s, a). \label{Upi}
	\end{align}
	\end{theorem}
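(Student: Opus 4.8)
\emph{Proof proposal.} The plan is the standard two-step derivation of an efficient influence function: (i) compute the pathwise derivative of $\V^{\pi}$ along an arbitrary regular parametric submodel, and (ii) exhibit a mean-zero statistic lying in the model's tangent space whose inner product with every score reproduces that derivative; such a statistic is necessarily the EIF.

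\emph{Set-up and tangent space.} First I would write the density of a single trajectory $D$ in its sequential factorization,
\[
p(D)=d_1(S_1)\prod_{t=1}^{T}\pi_{b,t}(A_t\mid H_t)\prod_{t=1}^{T}P(S_{t+1}\mid S_t,A_t),
\]
so that the score of any one-dimensional submodel through the truth splits as $S_\epsilon(D)=\dot\ell_1(S_1)+\sum_{t=1}^{T}\dot\ell_{b,t}(A_t\mid H_t)+\sum_{t=1}^{T}\dot\ell_P(S_{t+1}\mid S_t,A_t)$, with $\sum_{s'}P(s'\mid s,a)\dot\ell_P(s'\mid s,a)=0$ and $\sum_a\pi_{b,t}(a\mid H_t)\dot\ell_{b,t}(a\mid H_t)=0$; these three pieces are mutually orthogonal and their closed spans give the three blocks of the tangent space. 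Because $\RR$ is known, $\V^{\pi}$ depends on $p$ only through $P$: by \eqref{constant avg reward}, $\V^{\pi}_\epsilon=\sum_{s,a}d^\pi_\epsilon(s)\pi(a\mid s)r_\epsilon(s,a)$ with $r_\epsilon(s,a)=\sum_{s'}P_\epsilon(s'\mid s,a)\RR(s,a,s')$ and $d^\pi_\epsilon$ the (smoothly varying, since $P^\pi_\epsilon$ stays irreducible for small $\epsilon$) stationary distribution of $P^\pi_\epsilon$. It therefore suffices to find a mean-zero $\phi^\pi\in L^2_0$ lying in the $P$-block of the tangent space with $\tfrac{d}{d\epsilon}\V^{\pi}_\epsilon\big|_{0}=\EE[\phi^\pi(D)S_\epsilon(D)]$ for every submodel.

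\emph{The pathwise derivative.} Differentiating $\V^{\pi}_\epsilon$ gives $\dot\V^{\pi}=\sum_s\dot d^\pi(s)r^\pi(s)+\sum_s d^\pi(s)\dot r^\pi(s)$, where $r^\pi(s)=\sum_a\pi(a\mid s)r(s,a)$. The second term is immediate; the first requires differentiating the stationary distribution, which I would handle by pairing it with the Poisson equation rather than inverting $I-P^\pi$. Differentiating $d^\pi_\epsilon(I-P^\pi_\epsilon)=0$ and $d^\pi_\epsilon\ones=1$ yields $\dot d^\pi(I-P^\pi)=d^\pi\dot P^\pi$ and $\dot d^\pi\ones=0$; writing $V^\pi(s)=\sum_a\pi(a\mid s)Q^\pi(s,a)$, the Bellman equation \eqref{Bellman equationL eta,V} reads $(I-P^\pi)V^\pi=r^\pi-\V^{\pi}\ones$, so $\sum_s\dot d^\pi(s)r^\pi(s)=\dot d^\pi(r^\pi-\V^{\pi}\ones)=\dot d^\pi(I-P^\pi)V^\pi=d^\pi\dot P^\pi V^\pi$, which removes the unknown $\dot d^\pi$. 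Collecting terms, substituting $\dot P(s'\mid s,a)=P(s'\mid s,a)\dot\ell_P(s'\mid s,a)$, and using $\sum_{s'}P(s'\mid s,a)\dot\ell_P(s'\mid s,a)=0$ to recenter the bracket by the constant $Q^\pi(s,a)+\V^{\pi}$, I obtain
\[
\dot\V^{\pi}=\sum_{s,a,s'}d^\pi(s)\pi(a\mid s)P(s'\mid s,a)\,\dot\ell_P(s'\mid s,a)\bigl\{\RR(s,a,s')+U^\pi(s,a,s')-\V^{\pi}\bigr\},
\]
using $V^\pi(s')-Q^\pi(s,a)=U^\pi(s,a,s')$ from \eqref{Upi}.

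\emph{Matching and conclusion.} It remains to check that the claimed $\phi^\pi$ is a mean-zero gradient in the $P$-block. Mean-zero follows from the identity $\V^{\pi}=\EE[\tfrac1T\sum_t\omega^\pi(S_t,A_t)R_{t+1}]$, from $\EE[\tfrac1T\sum_t\omega^\pi(S_t,A_t)]=1$, and from \eqref{orthogonal} with $f=Q^\pi$ (which gives $\EE[\tfrac1T\sum_t\omega^\pi(S_t,A_t)U^\pi(S_t,A_t,S_{t+1})]=0$). Each summand of $\phi^\pi$ has conditional mean zero given $(S_t,A_t)$ — this is precisely the Bellman equation \eqref{Bellman equationL eta,V} multiplied by $\omega^\pi(S_t,A_t)$ — so by the tower property (earlier-time summands are $H_t$-measurable; later-time summands have their own conditional mean zero) $\phi^\pi$ is orthogonal to $\dot\ell_1$ and to every $\dot\ell_{b,t}$; hence $\EE[\phi^\pi S_\epsilon]=\sum_{t}\EE[\phi^\pi\dot\ell_P(S_{t+1}\mid S_t,A_t)]$. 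In each such term only the matching $t'=t$ summand of $\phi^\pi$ survives (cross terms vanish via the Markov property and $\EE[\dot\ell_P(S_{t+1}\mid S_t,A_t)\mid S_t,A_t]=0$); summing over $t$, replacing $\tfrac1T\sum_t d_t(s,a)$ by $d_D(s,a)$ and using $d_D(s,a)\omega^\pi(s,a)=d^\pi(s)\pi(a\mid s)$ from \eqref{ratio}, the result coincides exactly with the display for $\dot\V^{\pi}$. Finally $\phi^\pi=\sum_{t=1}^{T}g(S_{t+1},S_t,A_t)$ with $g(s',s,a)=\tfrac1T\omega^\pi(s,a)\{\RR(s,a,s')+U^\pi(s,a,s')-\V^{\pi}\}$ satisfying $\sum_{s'}P(s'\mid s,a)g(s',s,a)=0$, so $\phi^\pi$ lies in the $P$-block of the tangent space, and being a gradient there it is the EIF. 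The main obstacle is the perturbation of the stationary distribution $d^\pi_\epsilon$ and the trick of eliminating $\dot d^\pi$ through the Poisson/Bellman identity, together with the bookkeeping that turns the time-inhomogeneous marginals $d_t$ into $d_D$ so that $d_D\omega^\pi$ collapses to $d^\pi\pi$.
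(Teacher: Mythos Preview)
Your proof is correct and follows essentially the same skeleton as the paper --- verify the gradient identity $\dot\V^{\pi}=\EE[\phi^\pi S_\epsilon]$ and then show $\phi^\pi$ lies in the tangent space --- but the execution differs in two places worth noting.

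For the pathwise derivative, the paper (Lemma~A.1) differentiates the Bellman equation $\int(\RR+U^\pi_\zeta-\V^{\pi}_\zeta)P_\zeta(ds'|s,a)=0$ in $\zeta$, then integrates against $d^\pi_\zeta$ and uses stationarity to kill the $\tfrac{d}{d\zeta}U^\pi_\zeta$ term. You instead differentiate the stationary-distribution identity $d^\pi_\epsilon(I-P^\pi_\epsilon)=0$ and pair the result with the Poisson equation $(I-P^\pi)V^\pi=r^\pi-\V^{\pi}\ones$ to eliminate $\dot d^\pi$. Both land on the same expression; your route is a touch more self-contained because it never introduces the derivative of $U^\pi$.

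For membership in the tangent space, the paper proceeds indirectly: Lemma~A.2 identifies the orthogonal complement $\mathscr{T}^\perp=\bigoplus_{k}\F_k'$, where $\F_k'$ consists of history-dependent martingale increments projected off $(S_k,S_{k-1},A_{k-1})$, and the proof of the theorem checks $\EE[\phi^\pi f]=0$ for each $f\in\F_k'$ term-by-term using the Markov property. You instead observe directly that $\phi^\pi=\sum_{t}g(S_{t+1},S_t,A_t)$ for a single $g$ with $\sum_{s'}P(s'|s,a)g(s',s,a)=0$, which is exactly the form of a score arising from a nonparametric one-dimensional perturbation of the time-homogeneous kernel $P$; hence $\phi^\pi$ sits in the $P$-block of the tangent space with no further work. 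This is shorter and, because it exhibits $\phi^\pi$ as a score of the \emph{time-homogeneous} transition model rather than merely as an element orthogonal to $\mathscr{T}^\perp$, it addresses the homogeneity constraint more transparently. The paper's route, on the other hand, yields the explicit description of $\mathscr{T}^\perp$, which is of independent interest if one wants to understand the full class of influence functions.
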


	\begin{remark}
	Recall that we impose the Markovian and time-stationary assumptions on the data-generating process. Even though the parameter of interest here (i.e. the average reward $\eta^\pi$) is one-dimensional, there may exist multiple, non-efficient influence functions as a result of these assumptions on the multivariate distribution. 
	\end{remark}
	\subsection{Doubly robust estimator}
	\label{subsec:doubly est}

	It is known that EIF can be used to derive a semiparametric estimator (see, for example, Chap. 25 in \cite{van2000asymptotic}). We follow this approach.  Specifically, suppose $\hat U_n^\pi$ and $\hat \omega_n^\pi$ are estimators of $U^\pi$ and $\omega^\pi$ respectively. Then we estimate $\V^{\pi}$ by solving for $\eta$ in the plug-in estimating equation: $\Pn \dkh{(1/T)\sum_{t=1}^T \hat \omega_n^\pi(S_{t}, A_{t}) [R_{t+1} +\hat U^{\pi}_n(S_t, A_t, S_{t+1})  - \eta]} = 0$, where for any function of the trajectory, $f(D)$, the sample average is denoted as $\Pn f(D) = (1/n)\sum_{i=1}^{n} f(D_i)$. 
	The solution,  $\Vn^{\pi}$, is
	\begin{align}
	\Vn^{\pi} = \frac{\Pn \left[(1/T)\sum_{t=1}^T \hat \omega_n^\pi(S_{t}, A_{t}) \left\{R_{t+1} + \hat U^{\pi}_n(S_t, A_t, S_{t+1})\right\} \right] }{\Pn \left\{(1/T)\sum_{t=1}^T \hat \omega_n^\pi(S_{t}, A_{t}) \right\} }. \label{pol.eval}
	\end{align}
	
	We have the following doubly robustness of this estimator (the proof is given in Appendix A).
	
	\begin{theorem}\label{lemma: doubly-rob} 
		Suppose $\hat{U}_n^\pi(s, a)$ and $\hat{\omega}_n^\pi(s, a)$ converge in probability to deterministic limits $\bar {U}^\pi(s, a)$ and $\bar \omega^\pi(s, a)$ uniformly over $\S \times \A$. 
		If either $\bar U^\pi= U^\pi$ or $\bar{\omega}^\pi= \omega^\pi$, then $\Vn^{\pi}$ converges to $\V^{\pi}$ in probability. 
		
	\end{theorem}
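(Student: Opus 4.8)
The plan is to reduce the claim to two deterministic identities, after first disposing of the randomness coming from the plugged-in nuisance estimators. Introduce
\[
\bar N := \EE\left[(1/T)\sum_{t=1}^{T}\bar\omega^\pi(S_t,A_t)\left\{R_{t+1}+\bar U^\pi(S_t,A_t,S_{t+1})\right\}\right],
\]
and $\bar M := \EE\left[(1/T)\sum_{t=1}^{T}\bar\omega^\pi(S_t,A_t)\right]$. First I would show that the numerator and denominator of $\Vn^{\pi}$ in (\ref{pol.eval}) converge in probability to $\bar N$ and $\bar M$ respectively. Since $\S$ and $\A$ are finite and $\abs{\RR}\le R_{\max}$, and since $\bar\omega^\pi,\bar U^\pi$ are fixed bounded functions with $\norm{\hat\omega_n^\pi-\bar\omega^\pi}_\infty\goesip 0$ and $\norm{\hat U_n^\pi-\bar U^\pi}_\infty\goesip 0$, on an event whose probability tends to one $\hat U_n^\pi$ is uniformly bounded, and on that event the difference between the numerator of (\ref{pol.eval}) and $\Pn[(1/T)\sum_{t=1}^{T}\bar\omega^\pi(S_t,A_t)\{R_{t+1}+\bar U^\pi(S_t,A_t,S_{t+1})\}]$ is at most a fixed constant times $\norm{\hat\omega_n^\pi-\bar\omega^\pi}_\infty+\norm{\hat U_n^\pi-\bar U^\pi}_\infty=o_p(1)$, uniformly over the finitely many realizable trajectories (the same, more simply, for the denominator). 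The remaining averages are sample means of i.i.d.\ bounded functions of $D_i$, so the law of large numbers gives convergence to $\bar N$ and $\bar M$. Provided $\bar M\neq 0$, Slutsky's theorem then yields $\Vn^{\pi}\goesip \bar N/\bar M$, and it remains only to verify that $\bar N/\bar M=\V^{\pi}$ in each of the two cases.

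If $\bar U^\pi=U^\pi$, I would use the Bellman equation. By (\ref{Upi}), $U^\pi(s,a,s')$ averages $Q^\pi$ over the next action with weights $\pi(\cdot|s')$ and subtracts $Q^\pi(s,a)$; since the conditional law of $(R_{t+1},S_{t+1})$ given $(S_t,A_t)$ is the policy-free transition and reward kernel, (\ref{Bellman equationL eta,V}) gives, for every $(s,a)$,
\begin{align*}
\EE\left[R_{t+1}+U^\pi(S_t,A_t,S_{t+1})\Given S_t=s,A_t=a\right]
&=\EE_\pi\left[R_{t+1}+Q^\pi(S_{t+1},A_{t+1})\Given S_t=s,A_t=a\right]-Q^\pi(s,a)\\
&=\V^{\pi}.
\end{align*}
Because $\bar\omega^\pi(S_t,A_t)$ is a function of $(S_t,A_t)$, taking iterated expectations then gives $\bar N=\V^{\pi}\bar M$, hence $\bar N/\bar M=\V^{\pi}$ (here $\bar M\neq 0$ follows from the nonnegativity and normalization built into the ratio estimator; see Section~\ref{subsec: ratio estimator}).

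If instead $\bar\omega^\pi=\omega^\pi$, I would use the orthogonality property (\ref{orthogonal}). Here I would first recall from Section~\ref{sec:nuisance est} that $\hat U_n^\pi$ is constructed from a relative value estimator $\hat Q_n^\pi$ as $\hat U_n^\pi(s,a,s')=\sum_{a'}\pi(a'|s')\hat Q_n^\pi(s',a')-\hat Q_n^\pi(s,a)$, so that its limit inherits the same telescoping form with a bounded $\bar Q$ in place of $\hat Q_n^\pi$. Then (\ref{orthogonal}) applied with $f=\bar Q$ shows $\EE[(1/T)\sum_{t=1}^{T}\omega^\pi(S_t,A_t)\bar U^\pi(S_t,A_t,S_{t+1})]=0$, while $\EE[(1/T)\sum_{t=1}^{T}\omega^\pi(S_t,A_t)R_{t+1}]=\V^{\pi}$ follows from the definition (\ref{ratio}) of $\omega^\pi$ together with (\ref{constant avg reward}); hence $\bar N=\V^{\pi}$. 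Moreover $\bar M=\sum_{s,a}d_D(s,a)\omega^\pi(s,a)=\sum_{s,a}d^\pi(s)\pi(a|s)=1$ by (\ref{ratio}), so again $\bar N/\bar M=\V^{\pi}$.

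The probabilistic part above is routine precisely because the state-action space is finite and every quantity in sight is bounded, so no empirical-process or uniform-in-$\pi$ control is needed for this consistency statement. The real content is the two cancellations: the Bellman cancellation in the first case and the orthogonality cancellation in the second, the latter relying on the telescoping structure that the coupled estimator of Section~\ref{sec:nuisance est} imposes on $\hat U_n^\pi$. The only point I would be careful about is the non-vanishing of the limiting denominator $\bar M$ when $\bar\omega^\pi\neq\omega^\pi$, which is exactly why the estimator (\ref{pol.eval}) is written in self-normalized form.
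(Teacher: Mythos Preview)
Your proposal is correct and follows essentially the same route as the paper: replace $(\hat\omega_n^\pi,\hat U_n^\pi)$ by their limits $(\bar\omega^\pi,\bar U^\pi)$ via the assumed uniform convergence, apply the law of large numbers, and then verify the deterministic identity $\bar N/\bar M=\V^\pi$ using the Bellman equation in one case and stationarity/orthogonality in the other. The only presentational difference is that the paper exploits the exact self-normalization $\Pn[(1/T)\sum_t\hat\omega_n^\pi(S_t,A_t)]=1$ built into (\ref{ratio estimator}) to make the denominator identically one, whereas you carry the denominator through and invoke Slutsky; your remark that $\bar M\neq 0$ ``follows from the nonnegativity and normalization built into the ratio estimator'' is precisely this observation (indeed the normalization forces $\bar M=1$), and you are also slightly more explicit than the paper in noting that the telescoping form of $\hat U_n^\pi$ passes to the limit $\bar U^\pi$.
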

	\begin{remark}
		The uniform convergence in probability can be relaxed to $L_2$ convergence by using uniform laws of large numbers.
		The doubly robustness can protect against potential model mis-specifications since we only require one of two models is correct. Moreover, the doubly robust structure can be used to relax the required rate for each of the nuisance function estimation to achieve the semiparametric efficiency bound, especially if we use sample-splitting techniques (see Section \ref{sec: discussion}), as discussed in \cite{chernozhukov2018double}.
	\end{remark}

	\section{Estimators for the Nuisance Functions}
	\label{sec:nuisance est}

	Recall the doubly robust estimator (\ref{pol.eval}) requires the estimation of two nuisance functions, $U^\pi$ and $\omega^\pi$.  It turns out that although these two nuisance functions are defined from different perspectives, both nuisance functions can in fact be characterized in a similar way. Both estimators can be  obtained by minimizing an objective function that involves a minimizer of another objective function (``coupled estimation''). This can be viewed as a generalization of the classical M-estimator with  a ``plug-in estimator'' in the sense that the the second objective function also involves the unknown parameters to be estimated. 
	The idea of coupled estimation was previously used by \cite{antos_learning_2008,farahmand2016regularized} to estimate the value function in the discounted reward setting and recently by \cite{liao2019off} in the average reward setting.   In what follows we provide a general coupled estimation framework and discuss the motivation for using it. We then review the coupled estimator for relative value function and ratio function in \cite{liao2019off}.

	\subsection{Review of coupled estimation}
	\label{subsec: coupled estimation}

	Consider a setting where the true parameter (or function), $\theta^*$, can be characterized as the minimizer of the following objective function:
	
	\begin{align}
	\theta^* = \argmin_{\theta} J(\theta)= \EE \left\{(l_1\circ f_{\theta})(Z)\right\} 
	\label{obj in coupled estimator}
	\end{align}
	where $l_1: \R \goes \R^{+}$ is a loss function composite with $f$ (e.g., the squared loss, $l_1(x) = x^2$ and the linear model, $f_{\theta}(Z) = Y - \theta^\transpose X$, where $Z = (X, Y)$).
	If we can directly evaluate $f_{\theta}(Z)$, 
	then we can estimate $\theta^*$ by the classical M-estimator, $\argmin_{\theta} \Pn \left\{(l_1\circ f_{\theta})(Z)\right\}$. 
	
	In our setting 
	$f_{\theta}$ is of the form $f_{\theta}(Z) = \EE[F_{\theta}(Z')|Z]$
and $f_{\theta}(Z)$ cannot be directly evaluated because we don't have an explicit formula for the conditional expectation $\EE[F_{\theta}(Z')|Z]$. 
	A natural idea to remedy this is to replace the unknown $f_{\theta}(Z)$ by $F_{\theta}(Z')$ and estimate $\theta^*$ by $\argmin_{\theta} \Pn \left\{(l_1\circ F_{\theta}(Z')\right\}$.  Unfortunately this estimator is biased in general. To see this, suppose $l_1(x) = x^2$. We note that the limit of the new objective function, $\Pn \left\{(l_1\circ F_{\theta}(Z')\right\}$, is then $\tilde J(\theta) = \EE \left\{(l_1\circ F_{\theta}(Z')\right\} = J(\theta) + \Delta (\theta)$ where $\Delta(
	\theta)  = \EE[\Var\{F_{\theta}(Z')|Z\}]$. The minimizer of $\tilde J(\theta)$ is not necessarily $\theta^*$ unless further conditions are imposed (e.g., $\Var\{F_{\theta}(Z')|Z\}$ is independent of $\theta$, which is often not the case in our setting). 
	
	The high level idea of coupled estimation is to first estimate $f_{
		\theta}$ for each $\theta$, denoted by $\hat f_{\theta}$, and then estimate $\theta^*$ by the plug-in estimator,  $\argmin_{\theta} \Pn \{(l_1 \circ \hat f_{\theta})(Z)\}$. A standard empirical risk minimization can be applied to obtain a consistent estimator for $f_{\theta}$, e.g., $\hat f_{\theta} = \argmin_{g \in \G} \Pn [l_2\{(F_{\theta}(Z'), g(Z)\}]$ for some loss function $l_2: \R \times \R \goes \R^{+}$ and a function space, $\G$ to approximate $f_{\theta}$. We call the estimator coupled because the objective function (i.e., $\Pn \{(l_1 \circ \hat f_{\theta})(Z)\}$) involves $\hat f_{\theta}$ which itself is an minimizer of another objective function (i.e., $\Pn l_2((F_{\theta}(Z'), g(Z))$) for each $\theta$.

	\subsection{Relative value function estimator}
	\label{subsec: value estimator}
	
	Recall the doubly robust estimator requires an estimator of $U^\pi$. It is enough to learn one specific version of $Q^\pi$.  More specifically,  define a shifted value function by $\tilde Q^\pi(s, a) = Q^\pi(s, a) - Q^\pi(s^*, a^*)$ for some specific state-action pair $(s^*, a^*)$.  By restricting to $Q(s^*, a^*) =0$, the solution of Bellman equations (\ref{Bellman equationL eta,V}) is unique and given by $\{\V^{\pi}, \tilde Q^\pi\}$.  Below we derive a coupled estimator for the shifted value function, $\tilde Q^\pi$, using the coupled estimation framework in Section \ref{subsec: coupled estimation}.

	Let $Z_t = (S_t, A_t, S_{t+1})$ be the transition sample at time $t$. For a given $(\eta, Q)$ pair,  let \begin{align}\delta^\pi(Z_t; \eta, Q) = R_{t+1} + \sum_{a'} \pi(a'|S_{t+1}) Q(S_{t+1}, a) - Q(S_t, A_t) - \eta \label{TDErr}
	\end{align}
	be the so-called temporal difference (TD) error. The Bellman equation then becomes $\EE[\delta^\pi(Z_t; \eta, Q)|S_t=s, A_t = a] =0$ for all state-action pair, $(s, a)$. As a result, we have
	\begin{align*}
	\{\V^{\pi}, \tilde Q^\pi\} \in \argminb_{\eta, Q} \EE\left[\frac{1}{T}\sum_{t=1}^T\left(\EE[\delta^\pi(Z_t; \eta, Q)|S_t, A_t]\right)^2\right].\
	\end{align*}
	Note that above we choose the squared loss for simplicity; a general loss function can also be applied. We see that it fits in the coupled estimation framework presented in the previous section. In particular, $\theta^* = \{\V^{\pi}, \tilde Q^\pi\}$ and $f_{\theta}$ becomes the Bellman error, i.e., $\EE[\delta^\pi(Z_t; \eta, Q)|S_t=\cdot, A_t=\cdot]$.   The above characterization involves the average reward, $\V^{\pi}$. Thus in the process of obtaining an estimator of  the relative value function, we will also obtain an estimator of the average reward. See the end of this subsection for discussion.
	
	We use $\F$ and $\G$ to denote two classes of functions of state-action. We use $\F$ to model the shifted value function, $\tilde Q^\pi$, and thus require $f (s^*, a^*) = 0$ for all $f \in \F$.  We use $\G$ to approximate   the conditional mean of the Bellman error.   In addition, $J_1: \F \goes \R^+$ and $J_2: \G \goes \R^+$ are two regularizers that measure the complexities of these two functional classes respectively. Given  tuning parameters $(\lambda_n, \mu_n)$, the coupled estimator, denoted by $(\hat{\eta}^\pi_n, \hat Q^\pi_n)$, is obtained by solving
	\begin{align}
	(\tilde \eta_n^\pi, \hat Q_n^\pi) = \argminb_{(\eta, Q) \in \R \times \F} \Pn \left[\frac{1}{T} \sum_{t=1}^{T} \gn(S_t, A_t; \eta, Q)^2\right] + \lambda_n J_1^2(Q), \label{estimator}
	\end{align}
	where $\gn(\cdot, \cdot; \eta, Q)$ is the projected Bellman error at $(\eta, Q)$:
	\begin{align}
	\gn(\cdot, \cdot; \eta, Q) = \argminb_{g \in \G} \Pn\Big[ \frac{1}{T}\sum_{t=1}^{T} \big( \delta^\pi(Z_t; \eta, Q) - g(S_t, A_t)\big)^2  \Big] + \mu_n J_2^2(g). \hspace{-3ex} \label{gn value}
	\end{align}
	Given the estimator of the  (shifted) relative value function, $\hat Q_n^\pi$, we form the estimator of $U^\pi$ by $\hat U_n^\pi (s, a, s') = \sum_{a'} \pi(a'|s') \hat Q^\pi_n(s', a') -  \hat Q_n^\pi(s, a).$
	
	Throughout this paper, we use  tuning parameters, $(\lambda_n, \mu_n)$, that do not depend on the policy. {In the setting where the policy class is highly complex and the corresponding relative value functions are very different, it could be beneficial to select the tuning parameters locally at a cost of higher computation burden. }
	
	 Recall that the goal here is to estimate relative value function, $U^\pi$, and then plug $\hat U^\pi_n$ in the doubly robust estimator (\ref{pol.eval}). The above $\tilde \eta^\pi_n$ is only used to help estimate the relative function. In fact, \cite{liao2019off} proposed using $\tilde \eta^\pi_n$ to estimate the average reward. The advantage of our doubly robust estimator (\ref{pol.eval}), $\Vn^{\pi}$, over  $\tilde \eta^\pi_n$ is that the consistency of $\Vn^{\pi}$ is guaranteed as long as at least one of the nuisance function is estimated consistently (Theorem \ref{lemma: doubly-rob}).

	\subsection{Ratio function estimator}

	\label{subsec: ratio estimator}
	Below we derive the  estimator for the ratio function, $\omega^\pi$ using the coupled estimation framework.  In particular we estimate a scaled version of the ratio function (denoted by $e^\pi$ below) and then convert this back to an estimator of $\omega^\pi$.   To estimate $e^\pi$, we first  construct a new MDP and estimate the relative value function for this new MDP (denoted by $H^\pi$) using the   
	coupled estimation framework.   The estimator of $e^\pi$ is then derived from the estimator of $H^\pi$.
	
	We start with introducing $e^\pi$: 
	\begin{align}
	e^\pi(s, a) =  \frac{\omega^\pi(s, a)}{\sum_{\tilde s, \tilde a} \omega^\pi(\tilde s, \tilde a) d^\pi(\tilde s) \pi(\tilde a|\tilde s)}. \label{epi defn}
	\end{align}
	By definition, $\sum_{s, a} e^\pi(s, a) d^\pi(s) \pi(a|s) = 1$.  If we were to replace the reward function in our MDP by  $1-e^\pi(s, a)$, then   the ``average reward'' of $\pi$ in this new MDP is constant and equal to zero under Assumption \ref{assumption: irreducible} (i.e., $\sum_{s, a} \left\{1-e^\pi(s, a)\right\} d^\pi(s) \pi(a|s) = 0$). The  ``relative value function'' of policy $\pi$ under the new MDP is,
	\begin{align}
	H^{\pi} (s, a) = 	\operatorname*{lim}_{t^* \goes \infty} \frac{1}{t^*}\sum_{t=1}^{t^*} \EE_\pi\left[\sum_{k=1}^{t} \left\{1 -e^\pi(S_k, A_k)\right\} \Given S_1 = s, A_1 = a\right]. \label{qpi}
	\end{align}
	Note that $H^\pi$ is well-defined under Assumption \ref{assumption: irreducible}. Furthermore, consider the following Bellman equation for the new MDP: 
	\begin{align}
	\EE_\pi\left\{1 - e^\pi(S_t, A_t) +  H(S_{t+1}, A_{t+1}) \given S_t=s, A_t=a\right\} =  H(s, a). \label{epi qpi}
	\end{align}
	Note that since the ``average reward'' in the new MDP is zero, the above equation only involves $H$. The set of solutions of (\ref{epi qpi}) can be shown to be $\{H: H = H^\pi+ c\ones, c \in \R\}$. 
	
	Below we construct a coupled estimator for a shifted version of $H^\pi$, i.e., $\tilde H^\pi = H^{\pi} - H^\pi(s^*, a^*)$.   Recall $Z_t = (S_t, A_t, S_{t+1}, R_{t+1})$ is the transition sample at time $t$. 	    For a given state-action function, $H$, let $\Delta^\pi(Z_t; H) = 1 - H(S_t, A_t) + \sum_{a'} \pi(a'|S_{t+1})  H(S_{t+1}, a')$.   As a result of the above Bellman-like equation and the orthogonality property (\ref{orthogonal}), we know that
	\begin{align*}
	\tilde H^{\pi} \in \argminb_{H }\EE\Bigg[\frac{1}{T}\sum_{t=1}^T\big( \EE[\Delta^\pi(Z_t; H)|S_t, A_t]\big)^2\Bigg].
	\end{align*}
	Now it can be seen that the estimation of $\tilde H^\pi$ fits into the coupled estimation framework (\ref{obj in coupled estimator}), i.e., $\theta^* = \tilde H^\pi$ and $f_{\theta}$ is $\EE[\Delta^\pi(Z_t; H)\given S_t=\cdot, A_t = \cdot]$. With a slight abuse of notation, we use $\F$ to approximate $\tilde H^\pi$ and $\G$ to form the approximation of  $\EE[\Delta^\pi(Z_t; H)|S_t=\cdot, A_t=\cdot]$. The coupled estimator,  $\qn$, is then found by solving
	\begin{align}
	\qn = \argminb_{H \in \F} \Pn\Big[\frac{1}{T}\sum_{t=1}^{T}  \gn(S_t, A_t; H)^2\Big] + \lambda'_n J_1^2(H), \label{qhat}
	\end{align}
	where for any $H \in \F$, $\gn(\cdot, \cdot; H)$ solves
	\begin{align}
	\gn(\cdot, \cdot; H) = \argminb_{g \in \G} \Pn\Big[\frac{1}{T} \sum_{t=1}^{T} \big(\Delta^\pi(Z_t; H)- g(S_t, A_t)\big)^2\Big]+  \mu'_n J_2^2(g).
	\label{minimizer of ratio}
	\end{align}
	Recall that $e^\pi$ can be written in terms of $H^{\pi}$ by (\ref{epi qpi}); that is, re-arranging terms, 
	\begin{align}
	\EE_\pi\left\{1 - H(S_t, A_t) +  H(S_{t+1}, A_{t+1}) \given S_t=s, A_t=a\right\} = e^\pi(s, a). \nonumber
	\end{align}
	Thus given the estimator, $\qn$, we estimate $e^\pi$ by $\hat e^\pi_n(s, a) = \hat g_n(s, a; \qn)$. 
	By the definition of $\omega^\pi$, we have $\EE[(1/T) \sum_{t=1}^T \omega^\pi(S_t, A_t)] = 1$. Since $e^\pi$ is a scaled version of $\omega^\pi$ up to a constant, we finally construct the estimator for ratio, $\omega^\pi$,  by scaling $\hat e^\pi$,  that is, 
	\begin{align}
	\hat \omega^\pi_n(s, a) = \hat e_n^\pi(s, a) /\Pn[(1/T)\sum_{t=1}^{T} \hat e_n^\pi(S_t,A_t)], ~ \forall (s, a) \in \S \times \A.  \label{ratio estimator}
	\end{align}

	\begin{remark}
	
		The above ratio function estimator was developed by \cite{liao2019off}.
		In this paper we, for the first time, derive a finite-sample error bound for this ratio function estimator, uniformly over the policy class (Theorem B.2 in the appendix).  This is the key element in establishing the finite-sample bound regret bound for the estimated optimal policy.
		\end{remark}
		
		\begin{remark}

		Our ratio function estimator is different from  most in the existing literature, such as \cite{liu2018breaking,uehara2019minimax,nachum2019dualdice,zhang2020gendice}, which are obtained by min-max based estimating methods. For example, \cite{liu2018breaking} aimed to estimate 
		the ratio between stationary distribution induced by a known, Markovian time-stationary behavior policy and target policy, which is then used to estimate the average reward of a given policy. This is not suitable for the setting where the behavior policy is history dependent.  
		\cite{uehara2019minimax}  estimated the ratio, $\omega^\pi(s, a)$,  based on the observation that for every state-action function $f$, 
		\begin{align*}
		\EE\left[\frac{1}{T}{\sum_{t=1}^{T}}\left(\omega^\pi(S_t, A_t)\sum_{a'} \pi(a'|S_{t+1}) f(S_{t+1}, a)-\omega^\pi(S_t, A_t)f(S_t, A_t)\right)\right] = 0,
		\end{align*}
		with the restriction that $\EE[\frac{1}{T}{\sum_{t=1}^{T}}\omega^\pi(S_t, A_t)] = 1$.
		Then they constructed their estimator by solving the empirical version of the following min-max optimization problem:
		\begin{align*}
		\min_{w \in \Delta} \max_{f \in \F'} \EE^2\left[\frac{1}{T}\sum_{t=1}^{T} \left(\omega(S_t, A_t)\sum_{a'} \pi(a'|S_{t+1}) f(S_{t+1}, a)-\omega(S_t, A_t)f(S_t, A_t)\right)\right],
		\end{align*}
		where $\Delta$ is a simplex space and $\cal F'$ is a set of discriminator functions.
		This method minimizes the upper bound of the bias of their average reward estimator if the state-action value function is contained in $\F'$. They proved consistency of their ratio and average reward estimators in the parametric  setting, that is,  where $\omega^\pi(S_t, A_t)$ can be modelled parametrically and $\F'$ is a finite dimensional space.
		Subsequently \cite{zhang2020gendice} developed a general min-max based estimator by considering variational $f$-divergence, which subsumes the case in \cite{uehara2019minimax}. Unfortunately, there are no error bounds guarantee for ratio function estimators developed in \cite{uehara2019minimax} and \cite{zhang2020gendice}. Our ratio estimator appears closely related to the estimation developed by \cite{nachum2019dualdice} as they also formulated the ratio estimator as a minimizer of a loss function. However, relying on the Fenchel's duality theorem, they still use the min-max based method to estimate the ratio. Furthermore, their method cannot be applied in average reward settings. Instead of using min-max based estimators, we use  coupled estimation.  This will facilitate the derivation of estimation error bounds as will be seen below.    We will derive the estimation error of the ratio function,  which will enable us to provide a  strong theoretical guarantee, and finally demonstrate the efficiency of our average reward estimator without imposing restrictive parametric assumptions on the nuisance function estimations, see Section \ref{sec:theorem} below.

	\end{remark}

	\section{Theoretical Results}

	\label{sec:theorem}

	\subsection{Regret bound}
	
	In this section, we provide a finite sample bound on the regret of $\hat \pi_n$ defined in (\ref{regret defnition}), i.e., the difference between the optimal average reward in the policy class, $\Pi$, and the average reward of the estimated policy, $\hat \pi_n$.

	Consider a state-action function, $f(s, a)$. Let $\I$ be the identity operator, i.e., $\I(f) = f$.  Denote the conditional expectation operator by $\P^\pi f: (s, a) \mapsto \EE_\pi[f(S_{t+1}, A_{t+1})|S_t=s, A_t =a].$ Let the expectation under stationary distribution induced by $\pi$ be $\mu^\pi(f) = \int f(s, a) \mu^\pi(ds, da)$.  Denote by $\|\cdot \|_{\tv}$  the total variation distance between two probability measures. For a function $g(s, a, s')$,  define $\norm{g}^2 = \EE\left\{(1/T) \sum_{t=1}^{T}g^2(S_t, A_t, S_{t+1})\right\}$. For a set $\X$ and $M > 0$, let $\B(X, M)$ be the class of bounded functions on $\X$ such that $\norm{f}_{\infty} \leq M$.  Denote  by $N(\epsilon, \F, \norm{\cdot})$ the $\epsilon$-covering number of a set of functions, $\F$, with respect to the norm, $\norm{\cdot}$. 
	
	We make use of the following assumption on $\Pi$. 
	\begin{assumption}
		\label{assumption: policy class}
		The policy class, $\Pi = \{\pi_{\theta}: \theta \in \Theta \subset \R^{p}\}$, satisfies: 
		
		\begin{enumerate}
			[label={(3-\arabic*)}]
			
			\item \label{c0} 
			$\Theta \subset \R^p$ is compact and let $\text{diam}(\Theta) = \sup_{\theta_1, \theta_2 \in \Theta} \norm{\theta_1 - \theta_2}_2$. 
			\item \label{c1}
			There exists $L_\Theta >0$, such that for $\theta_1, \theta_2 \in \Theta$ and for all $(s, a) \in \S \times \A$, the following holds $$\abs{\pi_{\theta_1}(a|s) - \pi_{\theta_2}(a|s)} \leq L_\Theta \norm{\theta_1 - \theta_2}_2.$$

			\item \label{a2}
			There exists  constants $C_0 > 0$ and $0 \leq \beta < 1$, such that for every $\pi \in \Pi$, the following hold for all $t \geq 1$:
			\begin{align}
			& \norm{P^\pi\left(S_t = \cdot \given S_1 =s \right)  - d^\pi(\cdot)}_{\tv}  \leq {C}_0 {\beta}^t, \label{ass2}\\
			& \norm{(\P^\pi)^t f - \mu^\pi(f)}  \leq C_0 \norm{f} \beta^t. \label{ass}
			\end{align}
			
			
		\end{enumerate}
		
	\end{assumption}

	\begin{remark}
		
		The Lipschitz property of the policy class \ref{c1} is used to control the complexity of nuisance function induced by $\Pi$, that is, $\{U^\pi(\cdot, \cdot, \cdot): \pi \in \Pi\}$ and $\{\omega^\pi(\cdot, \cdot): \pi \in \Pi\}$. This
		is commonly assumed in the finite-time horizon problems (e.g., \cite{zhou2017residual}). \textcolor{black}{Assumptions \ref{c0} and \ref{c1} can be easily satisfied by many policy classes such the one we used in Section \ref{sec:simulation}.} Our analysis can be extended to more general policy classes if a similar complexity property holds for these two nuisance function classes.  Intuitively the constant $\beta$ in the assumption \ref{a2} relates to the ``mixing time'' of the Markov chain induced by $\pi \in \Pi$. A similar assumption was used by \cite{van1998learning,liao2019off} in average reward setting.  \textcolor{black}{Specifically,  Equation \eqref{ass2} in Assumption \ref{a2} is used to show that two nuisance functions $U^\pi$ and $\omega^\pi$ are Lipschitz continuous with respect to the policy parameter $\theta$ so that we can quantify their estimation error uniformly over the policy class. See Lemma C.1 of Supplementary Material for more details. Equation \eqref{ass} in Assumption \ref{a2} basically requires an exponential convergence rate of the policy induced Markov chain to the stationary distribution in terms of the expectation under the $L_2$-norm with respect to the data generating process. This assumption, together with Assumption \ref{b4_2} stated below is used to guarantee the Bellman operator for $U^\pi$ based on Equation \eqref{Bellman equationL eta,V} (or a similar quantity related to the ratio function estimation defined in Lemma B.4 of Supplementary Material) is well-posed in the sense of $L_2$-norm with respect to the data generating process so as to derive their estimation errors. See Lemma B.5 of \cite{liao2019off} and Lemma B.4 of Supplementary Material for more details. }

	\end{remark}
	
	Recall that we use the same pair of function classes $(\F, \G)$ in the coupled estimation for both $U^\pi$ and $\omega^\pi$. We make  the following assumptions on $(\F, \G)$. 
	\begin{assumption} 
		\label{assumption: function classes}
		The function classes, $(\F, \G)$, satisfy the following:
		
		\begin{enumerate}[label={(4-\arabic*)}]
			\item  \label{b1} $\F \subset \B(\S \times \A, F_{\max})$ and $\G \subset \B(\S \times \A, G_{\max})$
			
			\item \label{b2} $f(s^*, a^*) = 0, f \in \F$. 
			
			\item  The regularization functionals, $J_1$ and $J_2$, are pseudo norms and induced by the inner products $J_1(\cdot, \cdot)$ and $J_2(\cdot, \cdot)$, respectively. 
			
			\item \label{b6}
			Let $\F_M = \{f \in  \F: J_1(f) \leq M\}$ and $\G_M = \{g \in \G: J_2(g) \leq M\}$. There exists $C_1$   and $\alpha \in (0, 1)$ such that for any $\epsilon, M > 0$,
			\begin{align*}
			& \max \bigdkh{\log {N} (\epsilon, \G_M, \norm{\cdot}_\infty), \log {N}(\epsilon, \F_M, \norm{\cdot}_\infty)} \leq C_1\left(\frac{M}{\epsilon}\right)^{2\alpha}
			\end{align*}
			
		\end{enumerate}

	\end{assumption}
	
	\begin{remark}
		The boundedness assumption on $\F$ and $\G$ are used to simplify the analysis and can be relaxed by truncating the estimators.  We restrict $f(s^*, a^*) = 0$ for all $f \in \F$ because $\F$ is used to model $\tilde Q^\pi$ and $\tilde H^\pi$, which by definition satisfies $\tilde Q^\pi(s^*, a^*) = 0$ and $\tilde H^\pi(s^*, a^*) = 0$. In Section \ref{sec:implemenation}, we show how to shape an arbitrary kernel function to ensure this is satisfied automatically when $\F$ is RKHS.  The complexity assumption \ref{b6} on $\F$ and $\G$ are satisfied  for common function classes, for example RKHS and Sobolev spaces \citep{steinwart2008support,gyorfi2006distribution}.  \textcolor{black}{Taking the Sobolev spaces as an example, the entropy exponent $\alpha$ will be $p/\tilde q$, where $p$ is the dimension of state-variables and $\tilde q$ is the number of continuous derivatives possessed by the functions in the corresponding space. Assumption \ref{b6}  is imposed to control the estimation error for two nuisance functions.}
	\end{remark}

	We now introduce the assumption that is used to bound the estimation error of value function uniformly over the policy class.  Define the projected Bellman error 	operator: $$ \g(\cdot, \cdot; \eta, Q) := \argminb_{g \in \G} \EE\left[\frac{1}{T} \sum_{t=1}^{T} \left\{\delta^\pi(Z_t; \eta, Q)- g(S_t, A_t)\right\}^2 \right]$$ 
	where $\delta^\pi$ is given in (\ref{TDErr}).

	\begin{assumption}
		\label{assumption: value function}
		The triplet, $(\Pi, \F, \G)$,  satisfies the following:
		
		\begin{enumerate}[label={(5-\arabic*)}]
			
			\item \label{b3_2}
			$\tilde Q^\pi(\cdot, \cdot) \in \F$ for $\pi \in \Pi$ and  $\sup_{\pi \in \Pi} J_1(\tilde Q^\pi)  < \infty$. 
			
			\item \label{b3_3} 
			$0 \in \G$. 
			
			\item \label{b4_2}
			There exits $\kappa > 0$, such that $\inf\{ \norm{\g(\cdot, \cdot; \eta, Q)}_{} :   \norm{\EE[\delta^\pi(Z_t; \eta, Q)|S_t=\cdot, A_t=\cdot]}_{ } = 1, \abs{\eta} \leq R_{\max}, Q \in \F, \pi \in \Pi \} \geq \kappa. $

			\item \label{b5_2} There exists two constants $C_2, C_3$ such that $J_2\left\{\g
			(\cdot, \cdot; \eta, Q)\right\} \leq C_2 + C_3 J_1(Q)$ holds for all $\eta \in \R$, $Q \in \F$ and $\pi \in \Pi$. 
			
		\end{enumerate}	
		
	\end{assumption}
	
	\begin{remark}
		\textcolor{black}{Assumption \ref{b3_2} basically assumes that the non-parametric function class $\mathcal{F}$ can model $\tilde{Q}^\pi$ correctly, which is mild. }Note that in the coupled estimator of $\tilde Q^\pi$, we do not require the much stronger condition that the Bellman error for every tuple of $(\eta, Q, \pi)$ is correctly modeled by $\G$. In other words, $\EE[\delta^\pi(Z_t; \eta, Q)|S_t=\cdot, A_t=\cdot]$ is not required to belong to $\G$. Instead, the combination of conditions \ref{b3_3} and \ref{b4_2} is enough to guarantee the consistency of the coupled estimator (recall that the Bellman error is zero at $\{\V^{\pi}, \tilde Q^\pi\}$).
		The last condition \ref{b5_2} essentially requires the transition matrix is sufficiently smooth so that the complexity of the projected Bellman error, $J_2\left\{\g
		(\cdot, \cdot; \eta, Q)\right\}$, can be controlled by $J_1(Q)$, the complexity of $Q$ (see \cite{farahmand2016regularized} for an example).

	\end{remark}

	A similar set of conditions are employed to bound the estimation of ratio function.   For $\pi \in \Pi$ and $H \in \F$, define the projected error: $$ \g(\cdot, \cdot; H) = \argminb_{g \in \G} \EE\left[\frac{1}{T} \sum_{t=1}^{T} \left\{\Delta^\pi(Z_t; H) - g(S_t, A_t)\right\}^2 \right]$$
    where, as before,
    $\Delta^\pi(Z_t; H) = 1 - H(S_t, A_t) + \sum_{a'} \pi(a'|S_{t+1})  H(S_{t+1}, a')$.
	
	\begin{assumption}
		\label{assumption: ratio function}
		The triplet, $(\Pi, \F, \G)$,  satisfies the following:
		
		\begin{enumerate}
			[label={(6-\arabic*)}]
			
			\item  \label{b3}
			For $\pi \in \Pi$, $\tilde H^{\pi}(\cdot, \cdot) \in \F$, and $\sup_{\pi \in \Pi} J_1(\tilde H^{\pi}) < \infty$. 
			
			\item \label{b3-2} $e^\pi(\cdot, \cdot) \in \G$, for $\pi \in \Pi$. 
			
			\item \label{b4}
			
			There exits $\kappa' > 0$, such that $\inf \{\norm{\g(\cdot, \cdot; H) - \g(\cdot, \cdot; \tilde H^{\pi}) }_{} :  \norm{(\I - \P^\pi)(H - \tilde H^{\pi})}_{ } = 1, H \in \F, \pi \in \Pi\}\geq \kappa'$.

			\item \label{b5}
			
			There exists two constants $C_2', C_3'$ such that $J_2\left\{\g
			(\cdot, \cdot; H)\right\} \leq C_2' + C_3' J_1(H)$ holds for $H \in \F$ and $\pi \in \Pi$.

		\end{enumerate}

	\end{assumption}

	\begin{remark}
		\textcolor{black}{The interpretation of Assumption \ref{assumption: ratio function} is similar to that of Assumption \ref{assumption: value function}. Specifically, Assumption \ref{b3} basically assumes the non-parametric function class $\mathcal{F}$ can model $\tilde{H}^\pi$ correctly. }
		As in the case of estimation of relative value function, we do not require the correct modelling of $\EE[\Delta^\pi(Z_t; H)|S_t=\cdot, A_t=\cdot]$ for every $(\pi, H) \in \Pi \times \F$ but instead only assume Assumptions \ref{b3-2} and \ref{b4} hold. A major difference between Assumption \ref{assumption: value function} and \ref{assumption: ratio function} is that \ref{b3_3} is now replaced by \ref{b3-2}. This is because according to the Bellman-like equation (\ref{epi qpi}), we have $\EE[\Delta^\pi(Z_t; \tilde H^\pi)|S_t=s, A_t=a] = e^\pi(s, a)$.  
	\end{remark}

	\begin{theorem}
		
		\label{thm: policy learning} 	
		Suppose Assumptions \ref{assumption: irreducible} to \ref{assumption: ratio function} hold.  Let $\hat \pi_n$ be the estimated policy (\ref{est opt policy}) in which the nuisance functions are estimated with tuning parameters $\mu_n = \lambda_n = \mu_n' = \lambda_n' = L n^{-1/(1+\alpha)}$, for some constant $L > 0$.   
		Define $\beta_{k} = \frac{1}{1+\alpha} \left\{1-(1-\alpha) 2^{-k+1}\right\}$. Fix any integer $k \geq 2$, $\delta \in (0, 1)$ and sufficiently large $n$.  
		With probability at least $1-\delta$, we have 
		\begin{align*}
		\Regret(\hat \pi_n) \leq C(\delta) \left(p^{1/2} n^{-1/2} + p n^{-\beta_k} \right),
		\end{align*}
		where $C(\delta)$ is a function of $k$, $L$, $F_{\max}$, $G_{\max}$, $L_\Theta$, $\text{diam}(\Theta)$, $\sup_{\pi \in \Pi} J_1(\tilde H^{\pi})$, $\sup_{\pi \in \Pi} J_1(\tilde Q^{\pi})$, $\sup_{\pi \in \Pi} \norm{\omega^\pi}$, $\alpha$,  constants $\{C_0, C_1, C_2, C_3, C_2', C_3'\}$, $\kappa, \kappa'$, $\beta$, $p_{\min}$ and $\norm{\frac{d_{T+1}}{d_D}}_\infty$. 
		
	\end{theorem}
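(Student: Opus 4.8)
The plan is to reduce the regret to a uniform-over-$\Pi$ bound on $\bigabs{\Vn^{\pi}-\V^{\pi}}$, to expand that error via the doubly robust structure of $\Vn^{\pi}$ into an empirical-process term, a second-order remainder and a product bias term, and then to feed in the uniform-over-$\Pi$ finite-sample error bounds of the two nuisance estimators (the relative-value bound and the ratio bound of Theorem \ref{thm: ratio}). For the reduction, take $\pi^*$ attaining $\sup_{\pi\in\Pi}\V^{\pi}$ (possible by compactness of $\Theta$, Assumption \ref{c0}, and continuity of $\pi\mapsto\V^{\pi}$); since $\hat\pi_n$ maximizes $\pi\mapsto\Vn^{\pi}$ the cross term below is nonpositive, so
\[
\Regret(\hat\pi_n) = \big(\V^{\pi^*}-\Vn^{\pi^*}\big) + \big(\Vn^{\pi^*}-\Vn^{\hat\pi_n}\big) + \big(\Vn^{\hat\pi_n}-\V^{\hat\pi_n}\big) \;\le\; 2\sup_{\pi\in\Pi}\bigabs{\Vn^{\pi}-\V^{\pi}},
\]
and it remains to bound the right-hand side on an event of probability at least $1-\delta$.

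For fixed $\pi$, multiplying through by the denominator $\Pn[(1/T)\sum_t\hat\omega_n^\pi(S_t,A_t)]$ — which I would separately show, using $\EE[(1/T)\sum_t\omega^\pi]=1$ and uniform consistency of $\hat\omega_n^\pi$, to be uniformly bounded away from $0$ and within $o_P(1)$ of $1$ — turns $\Vn^{\pi}-\V^{\pi}$ into (a multiple of) $\Pn g_n^\pi$, where $g_n^\pi(D)=(1/T)\sum_t\hat\omega_n^\pi(S_t,A_t)\{R_{t+1}+\hat U_n^\pi(S_t,A_t,S_{t+1})-\V^{\pi}\}$. Writing $\phi^\pi$ for the EIF of Theorem \ref{thm:semipara} and using $\EE\phi^\pi=0$, I decompose
\[
\Pn g_n^\pi \;=\; (\Pn-\EE)\phi^\pi \;+\; (\Pn-\EE)(g_n^\pi-\phi^\pi) \;+\; \EE g_n^\pi .
\]
The bias $\EE g_n^\pi$ is where double robustness is used: the population map $(\omega,Q)\mapsto\EE[(1/T)\sum_t\omega(S_t,A_t)\{R_{t+1}+U(S_t,A_t,S_{t+1};Q)-\V^{\pi}\}]$, with $U(s,a,s';Q)=\sum_{a'}\pi(a'|s')Q(s',a')-Q(s,a)$, is bilinear in $(\omega,Q)$ and vanishes when $\omega=\omega^\pi$ (orthogonality identity \eqref{orthogonal}) or $Q=Q^\pi$ (Bellman equation \eqref{Bellman equationL eta,V}); expanding it around $(\omega^\pi,\tilde Q^\pi)$, the zeroth- and first-order terms vanish by double robustness in each argument separately, leaving only the mixed term $-\EE[(1/T)\sum_t(\hat\omega_n^\pi-\omega^\pi)(S_t,A_t)\{(\I-\P^\pi)(\hat Q_n^\pi-\tilde Q^\pi)\}(S_t,A_t)]$, so $\bigabs{\EE g_n^\pi}\le\norm{\hat\omega_n^\pi-\omega^\pi}\,\norm{(\I-\P^\pi)(\hat Q_n^\pi-\tilde Q^\pi)}$ by Cauchy--Schwarz.

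It then remains to bound the three pieces uniformly in $\pi$. For $\sup_\pi\bigabs{(\Pn-\EE)\phi^{\pi_\theta}}$, I would first prove that $\theta\mapsto(\omega^{\pi_\theta},Q^{\pi_\theta},\V^{\pi_\theta})$ is $\norm{\cdot}_\infty$-Lipschitz on $\Theta$ — this follows from the policy-Lipschitz Assumption \ref{c1} together with quantitative perturbation bounds for the Markov chains induced by $\pi\in\Pi$ supplied by the geometric-ergodicity Assumption \ref{a2} — so that $\{\phi^{\pi_\theta}:\theta\in\Theta\}$ is uniformly bounded with $\epsilon$-covering number at most $(C\,\text{diam}(\Theta)/\epsilon)^{p}$; a maximal inequality then gives $\sup_\pi\bigabs{(\Pn-\EE)\phi^\pi}\le C(\delta)\,p^{1/2}n^{-1/2}$ with high probability. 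For the bias, the stated tuning $\mu_n=\lambda_n=\mu_n'=\lambda_n'=Ln^{-1/(1+\alpha)}$ makes the uniform nuisance bounds read $\sup_\pi\norm{(\I-\P^\pi)(\hat Q_n^\pi-\tilde Q^\pi)}\lesssim n^{-1/(2(1+\alpha))}$ and, after $k$ rounds of the iterative ratio procedure, $\sup_\pi\norm{\hat\omega_n^\pi-\omega^\pi}\lesssim n^{-\frac{1}{1+\alpha}\{1/2-(1-\alpha)2^{-k+1}\}}$ (Theorem \ref{thm: ratio}); their product is $n^{-\beta_k}$, and propagating the $(C/\epsilon)^{p}$ covering cost of uniformizing over $\Theta$ through these bounds turns it into $C(\delta)\,p\,n^{-\beta_k}$. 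The second-order term $(\Pn-\EE)(g_n^\pi-\phi^\pi)$ is handled by a localized empirical-process argument using the polynomial-entropy Assumption \ref{b6} and $\sup_\pi\norm{g_n^\pi-\phi^\pi}\to 0$, and is of strictly smaller order. Collecting the three high-probability events, absorbing $\log(1/\delta)$ and the remaining problem constants into $C(\delta)$, and recalling the factor $2$ gives $\Regret(\hat\pi_n)\le C(\delta)(p^{1/2}n^{-1/2}+pn^{-\beta_k})$.

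The step I expect to be hardest — and the one the introduction highlights as new — is making the nuisance error bounds hold \emph{uniformly over} $\Pi$: this requires (i) the Lipschitz-in-$\theta$ dependence of $\omega^\pi$, $Q^\pi$ and $\tilde H^\pi$, hence quantitative perturbation control of the induced chains under Assumption \ref{a2}, and (ii) the iterative bootstrapping that upgrades a crude one-step ratio error bound to the $\beta_k$-rate, all while propagating the polynomial-in-$p$ covering cost of $\Theta$ at each stage. By contrast, the doubly robust cancellation that reduces the bias to a product of two nuisance errors is essentially an algebraic consequence of \eqref{orthogonal} and the Bellman equation, and is the routine part.
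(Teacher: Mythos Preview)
Your proposal is correct and follows essentially the same route as the paper's proof: the paper also reduces the regret to the sum of an empirical-process term $\sup_\pi\Pn(\phi^\pi-\phi^{\pi^*})$ and a remainder $\sup_\pi|\Rem_n(\pi)|$, handles the former via the Lipschitz-in-$\theta$ property of $\phi^{\pi_\theta}$ (proved in their Lemma \ref{lemma: lip} from Assumptions \ref{c1} and \ref{a2}) together with a $p$-dimensional covering and Talagrand's inequality, and handles the latter by the same doubly robust cancellation plus Cauchy--Schwarz, yielding the product $\norm{\hat\omega_n^\pi-\omega^\pi}\cdot\norm{\hat U_n^\pi-U^\pi}$ which is then fed the uniform nuisance bounds of Theorems \ref{thm: value} and \ref{thm: ratio}. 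The only cosmetic differences are that the paper's initial inequality is the slightly sharper $\Regret(\hat\pi_n)\le(\Vn^{\hat\pi_n}-\V^{\hat\pi_n})-(\Vn^{\pi^*}-\V^{\pi^*})$ rather than your $2\sup_\pi|\Vn^\pi-\V^\pi|$, and that the paper bounds the second-order empirical-process piece $(\Pn-\EE)(\hat\phi_n^\pi-\phi^\pi)$ explicitly by $C(\delta)\,p\,n^{-(\beta_k+1/(1+\alpha))/2}\le C(\delta)\,p\,n^{-\beta_k}$ via the same localized argument you sketch, rather than simply declaring it lower order.
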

	
	\begin{remark} 
		Recall that $p$ is the number of parameters in the policy, $\alpha$ is given in \ref{b6}, and $n$ is the number of trajectories in the data. 
		Theorem \ref{thm: policy learning} shows that when the tuning parameters are of the order $O(n^{-1/(1+\alpha)})$, the regret of the estimated policy is $O(p^{1/2}n^{-1/2} + pn^{- \beta_k})$. The leading term (in terms of $n$), $O(\sqrt{p/n})$, corresponds to the regret of an estimated policy as if the nuisance functions are known beforehand.  The second term is due to the estimation error of nuisance functions. In particular, we show in Theorem B.1 in Section B of the appendix  that the uniform estimation error of the relative value function is of $O(pn^{-1/(1+\alpha)})$ and  in Theorem B.2  in the same section that the uniform estimation error of ratio is of $O(pn^{-\beta_k})$ (see the remark after Theorem B.2 for why the rate depends on $k$).  Note that the error of ratio is the dominant term as $\beta_k < 1/(1+\alpha)$ and $\beta_k$ can be chosen arbitrarily close to 
	   $\frac{1}{1+\alpha}$
		by choosing a sufficiently large $k$. Therefore the proposed ratio estimator can achieve the near-optimal nonparametric convergence rate. See the proof of Theorem B.2 in Section B.1 of the appendix for more details.
		To the best of our knowledge, this is the first result that characterizes the regret of the estimated optimal in-class policy in the infinite horizon setting.

	\end{remark}

    	\subsection{Asymptotic results}
	
	~{In this section, we prove that the average reward for our estimator of the optimal policy  converges to the optimal average reward at a parametric rate (i.e., $\sqrt{n}$). 
	Recall $\phi^\pi(D)$ is the efficient  influence function of $\V^{\pi}$ given in Theorem \ref{thm:semipara}}.
	\begin{theorem}\label{thm: asymptotic}
		Suppose Assumptions \ref{assumption: irreducible} to \ref{assumption: ratio function} hold.  For each $n \geq 1$, let $\Vn^{\pi}$ be the doubly robust estimator defined in (\ref{pol.eval}) and $\hat \pi_n$ be the estimated policy defined in (\ref{est opt policy}) with tuning parameters $\mu_n = \lambda_n = \mu_n' = \lambda_n' = L n^{-1/(1+\alpha)}$, for some constant $L > 0$.   Then as $n \goes \infty$, 
		
		(i) $\left\{\sqrt{n}(\Vn^{\pi} - \V^{\pi}): \pi \in \Pi \right\} \wcvg \GG(\pi)$ in $l^\infty(\Pi)$ where $\GG(\pi)$ is a zero mean Gaussian Process with covariance function $\C: \Pi \times \Pi \goes \R$, $\C(\pi_1, \pi_2) = \EE\left\{\phi^{\pi_1}(D) \phi^{\pi_2}(D)\right\} 
		$.
		
		(ii) $\sqrt{n}(\Vn^{\hat \pi_n} - \sup_{\pi \in \Pi} \V^{\pi}) \wcvg \sup_{\pi \in \Pi_{\max}} \GG(\pi)$, where $\GG(\pi)$ is the Gaussian Process defined above and $\Pi_{\max} = \argmax_{\pi \in \Pi} \V^{\pi}$ is the set of policies that maximize the average reward in $\Pi$.   
		
	\end{theorem}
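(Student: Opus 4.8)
The plan is to handle the two parts separately: part~(i) is a uniform-in-$\pi$ asymptotic linearization of the doubly robust estimator $\Vn^{\pi}$, and part~(ii) follows from part~(i) together with a directional delta-method argument for the supremum functional.

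For part~(i), I would first exploit the self-normalized (ratio) form of $\Vn^{\pi}$ in \eqref{pol.eval}. Writing the numerator and denominator as $N_n(\pi)$ and $M_n(\pi)$, we have $\Vn^{\pi}-\V^{\pi} = \{N_n(\pi)-\V^{\pi}M_n(\pi)\}/M_n(\pi)$, and $N_n(\pi)-\V^{\pi}M_n(\pi)$ is exactly the plug-in estimating function evaluated at $\eta=\V^{\pi}$. Because both nuisance estimators are consistent with the rates supplied by Theorems~\ref{thm: value} and~\ref{thm: ratio}, one gets $M_n(\pi)\goesip \EE[(1/T)\sum_t \omega^\pi(S_t,A_t)]=1$ uniformly over $\Pi$, so it suffices to linearize the numerator. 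I would then carry out the standard doubly robust (Neyman-orthogonal) expansion: using the form of $\phi^\pi$ from Theorem~\ref{thm:semipara} and the orthogonality identity \eqref{orthogonal} together with the Bellman equation, one writes
\[
N_n(\pi)-\V^{\pi}M_n(\pi) \;=\; \Pn \phi^\pi \;+\; (\Pn-\EE)\{\hat\phi_n^\pi-\phi^\pi\} \;+\; \mathrm{Bias}_n(\pi),
\]
where $\hat\phi_n^\pi$ is the plug-in integrand and $\mathrm{Bias}_n(\pi)$ is a remainder that is \emph{bilinear} in the two nuisance errors (this is precisely the mixed-bias structure underlying Theorem~\ref{lemma: doubly-rob}), so that $\sup_{\pi\in\Pi}|\mathrm{Bias}_n(\pi)| \lesssim \big(\sup_{\pi}\norm{\hat\omega_n^\pi-\omega^\pi}\big)\big(\sup_{\pi}\norm{\hat{U}_n^\pi-U^\pi}\big) = O_P(n^{-\beta_k-1/(1+\alpha)})=o_P(n^{-1/2})$ since $\beta_k\ge 1/2$ and $1/(1+\alpha)>1/2$. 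The empirical-process remainder $(\Pn-\EE)\{\hat\phi_n^\pi-\phi^\pi\}$ is shown to be $o_P(n^{-1/2})$ uniformly in $\pi$ by a stochastic-equicontinuity argument: under Assumption~\ref{b6} with $\alpha<1$ the classes $\F_M,\G_M$ have a finite uniform-entropy integral and are Donsker, products of bounded such classes are Donsker, and by the Lipschitz property~\ref{c1} and compactness of $\Theta$ the union over $\pi\in\Pi$ of plug-in integrands remains Donsker; uniform consistency of the nuisance estimators (again Theorems~\ref{thm: value},~\ref{thm: ratio}) then kills this term. Finally, $\{\phi^\pi:\pi\in\Pi\}$ is a uniformly bounded class that is Lipschitz in $\theta$ over the compact set $\Theta\subset\R^p$ — this uses that $\omega^\pi$, $Q^\pi$ and $\V^\pi$ are Lipschitz in $\theta$, which follows from~\ref{c1} together with the uniform geometric ergodicity~\ref{a2} (so the stationary distribution and the relative value function depend smoothly on $\pi$) — hence Donsker with polynomial bracketing numbers. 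Therefore $\sqrt n\,\Pn\phi^\pi\wcvg\GG(\pi)$ in $\linfty(\Pi)$ for a tight zero-mean Gaussian process $\GG$, and since $\EE[\phi^\pi]=0$ its covariance is $\C(\pi_1,\pi_2)=\EE[\phi^{\pi_1}(D)\phi^{\pi_2}(D)]$, which is part~(i).

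For part~(ii), observe that $\hat\pi_n\in\argmax_{\pi\in\Pi}\Vn^{\pi}$ by \eqref{est opt policy}, so $\Vn^{\hat\pi_n}=\sup_{\pi\in\Pi}\Vn^{\pi}$ and hence $\sqrt n(\Vn^{\hat\pi_n}-\sup_{\pi}\V^\pi)=\sqrt n(\sup_{\pi}\Vn^\pi-\sup_{\pi}\V^\pi)$. I would then apply the functional delta method for Hadamard directionally differentiable maps: the supremum functional $\Psi:h\mapsto \sup_{\pi\in\Pi}h(\pi)$ on $\linfty(\Pi)$ is Hadamard directionally differentiable at $g=\V^{\cdot}$ with derivative $\Psi'_g(h)=\sup_{\pi\in\Pi_{\max}}h(\pi)$, where $\Pi_{\max}=\argmax_{\pi}\V^\pi$ is nonempty and closed because $\Theta$ is compact (Assumption~\ref{c0}) and $\pi\mapsto\V^\pi$ is continuous (again via~\ref{c1} and~\ref{a2}). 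Combining this with the weak convergence $\sqrt n(\Vn^{\cdot}-\V^{\cdot})\wcvg\GG$ from part~(i) and the extended continuous-mapping/directional delta theorem yields $\sqrt n(\sup_{\pi}\Vn^\pi-\sup_{\pi}\V^\pi)\wcvg\Psi'_g(\GG)=\sup_{\pi\in\Pi_{\max}}\GG(\pi)$, giving part~(ii).

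The main obstacle I anticipate is the uniform-over-$\Pi$ bookkeeping in part~(i): one must simultaneously (a) translate the $\norm{\cdot}$-norm nuisance error bounds of Theorems~\ref{thm: value} and~\ref{thm: ratio} into control of the bilinear bias term uniformly in $\pi$ (which requires the bounded-operator estimates relating $\hat U_n^\pi-U^\pi$ to $\hat Q_n^\pi-Q^\pi$ and $\hat\omega_n^\pi-\omega^\pi$), and (b) establish uniform stochastic equicontinuity for the empirical process indexed \emph{jointly} by $\pi$ and by the random nuisance estimators — this is where the entropy bound~\ref{b6} with $\alpha<1$ and the Lipschitz/compactness structure of $\Pi$ are indispensable, since without sample splitting one cannot treat the plug-in integrand as a fixed function. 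A secondary technical point worth dwelling on is the Lipschitz dependence of $\omega^\pi$, $Q^\pi$, $\V^\pi$ on $\theta$, needed both for the Donsker property of $\{\phi^\pi\}$ and for continuity of $\pi\mapsto\V^\pi$ in part~(ii); this reduces to perturbation bounds for the stationary distribution and for the solution of the Poisson equation of a finite-state Markov chain with a uniform spectral gap, which is exactly what Assumption~\ref{a2} provides.
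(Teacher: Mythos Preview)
Your approach is essentially the paper's: decompose $\Vn^\pi-\V^\pi$ into $\Pn\phi^\pi$ plus a remainder, split the remainder into a bilinear bias term and an empirical-process term, bound both uniformly in $\pi$ using Theorems~\ref{thm: value} and~\ref{thm: ratio} together with the entropy condition~\ref{b6}, establish that $\{\phi^\pi:\pi\in\Pi\}$ is Donsker via Lipschitzness in $\theta$ (the paper does this explicitly in the proof of Theorem~\ref{thm: policy learning}, leaning on Lemma~\ref{lemma: lip}), and then apply the Hadamard-directional delta method for the supremum to get part~(ii).

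Two small corrections. First, by the normalization in \eqref{ratio estimator} the denominator $M_n(\pi)$ equals $1$ \emph{exactly}, so your separate treatment of it is unnecessary and one has $\Vn^\pi-\V^\pi=\Pn\hat\phi_n^\pi$ directly; this is how the paper writes $\Rem_n(\pi)=\Pn(\hat\phi_n^\pi-\phi^\pi)$. Second, your bias rate is off by a factor of two in the exponent: Theorems~\ref{thm: value} and~\ref{thm: ratio} bound the \emph{squared} $L_2$ errors, so the product is $O_P\big(n^{-\beta_k/2-1/(2(1+\alpha))}\big)$, and the statement ``$\beta_k\ge 1/2$'' is not automatic---one must choose $k$ large enough, which is possible because $\beta_k\uparrow 1/(1+\alpha)>1/2$; the paper makes this choice explicit.
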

	
	\begin{remark}
		
		The first result shows that the estimated average reward by the doubly robust estimator reaches the semiparametric efficiency bound when we plug in the estimator for the two nuisance functions. The double robustness structure  ensures that the estimation error of nuisance functions is only of lower order    and does not impact the asymptotic variance of the estimated average reward.
		The second result shows the asymptotic of the estimated optimal value, $\Vn^{\hat \pi_n}$, converges to the maximum of the Gaussian process at the optimal policies.  When there is a unique optimal policy $\pi^* = \argmax_{\pi \in \Pi} \V^{\pi}$,  we have $\sqrt{n} (\Vn^{\hat \pi_n} - \V^{\pi^*})$ weakly converges to a  Gaussian distribution. Estimating the limiting distribution could be challenging (especially when there exists non-unique policies) and is left for future work.  Alternatively one can consider resampling-based method to construct confidence interval for $\sup_{\pi} \eta^\pi$ (see the recent work by \cite{Wu2020} in single-stage problem).

	\end{remark}

	\section{Practical Implementation}
	\label{sec:implemenation}
	In this section, we describe an algorithm to estimate an in-class optimal policy based on our efficient average reward estimator $\Vn^{\pi}$. Without loss of generality, we consider a binary-action setting, i.e., $\mathcal{A} = \{0, 1\}$, and the following stochastic parametrized policy class $\Pi$ indexed by $\theta$:
	\[
	\Pi = \left\{\pi \; \middle\vert \; \pi(1 \,|\, s, \theta) = \frac{\exp(s^T\theta)}{1 + \exp(s^T\theta)}, \; \; \|\theta\|_{\infty} \leq c, \; \; \theta \in \mathbb{R}^p
	\right\},
	\]
	for some pre-specified constant $c>0$.  Note that  other link functions  such as the probit function might be used here instead. Here $\| \cdot \|_\infty$ refers to sup-norm in Euclidean space. We fix $c = 10$ throughout our paper.
	In addition, we set $\F$ and $\G$ in the estimation of both value and ratio functions to be Reproducing Kernel Hilbert Spaces (RKHSs) associated with Gaussian kernels because of the representer theorem and the property of universal consistency. 
	
	The constraint on $\theta$, $\|\theta\|_{\infty} \leq c$, is used to maintain sufficient stochasticity in our learned policy. The stochasticity facilitates the use of $\hat\pi_n$ as a  ``warm start" policy for use by an online algorithm with future individuals. 
	A nice side effect is that  the restriction  on $\theta$ provides  a computational stability and can avoid degenerative cases in  policy optimization similar to that when using  logistic regression in classification problems \citep{friedman2001elements}. \textcolor{black}{As discussed in the introduction, we consider the simple policy class $\Pi$ instead of nonparametric models such as neural networks or tree-based models mainly due to the concern of overfitting. In the batch setting, data are limited and often noisy. Using flexible function classes for modeling the policy may lead to overfitting and thus the variance of the resulting policy could be very large. The use of a simple policy class can reduce the variance while it may induce some possible bias. In addition, interpretability is critical in our batch policy learning problem. The interpretability of decision tree models are often not very stable, whereas neural networks are not very interpretable. Therefore we prefer using this simple policy class $\Pi$.}

	To obtain $\hat{\pi}_n \in \Pi$, we solve a multi-level  optimization problem \eqref{multi-level optimization}-\eqref{lower level4}.  Recall a multi-level optimization problem \citep{richardson1995theory} is a  optimization problems in which the  feasible set is implicitly determined by a sequence of nested optimization problems. It typically consists of an upper level optimization task that represents the objective function, and a series of (possibly nested) lower level optimization tasks that represents the feasible set.
	\vspace{1ex}

	\noindent \underbar{Upper level optimization task}:	\begin{align}
	& \max_{\pi \in \Pi} && \frac{\Pn \dkh{(1/T)\sum_{t=1}^T \hat \omega_n^\pi(S_{t}, A_{t}) [R_{t+1} + \hat U^{\pi}_n(S_t, A_t, S_{t+1})]} }{\Pn \dkh{(1/T)\sum_{t=1}^T \hat \omega_n^\pi(S_{t}, A_{t})} }\label{multi-level optimization} 
	\end{align}
	
	\noindent \underbar{Lower level optimization task 1}:\begin{align}
	& && (\hat \eta_n^\pi, \hat Q_n^\pi) = \argminb_{(\eta, Q) \in \R \times \F} \Pn \left[\frac{1}{T} \sum_{t=1}^{T} \left[\gn(S_t, A_t; \eta, Q)\right]^2\right] + \lambda_n J_1^2(Q)\label{lower level1}\\
	& &&\text{s.t.} \; \; \gn(\cdot, \cdot; \eta, Q) = \argmin_{g \in \G} \Pn\Big[ \frac{1}{T}\sum_{t=1}^{T} \big( \delta^\pi(Z_t; \eta, Q) - g(S_t, A_t)\big)^2  \Big] + \mu_n J_2^2(g) \label{lower level2}
	\end{align}
	
	\noindent \underbar{Lower level optimization task 2}:\begin{align}
	& &&\qn(\cdot,\cdot) = \argminb_{H \in \F} \Pn\Big[\frac{1}{T}\sum_{t=1}^{T}  \left[\gn(S_t, A_t; H)\right]^2\Big] + \lambda'_n J_1^2(H)\label{lower level3}\\[0.1in]
	& && \text{s.t.} \, \, \gn(\cdot, \cdot; H) = \argminb_{g \in \G} \Pn\Big[\frac{1}{T} \sum_{t=1}^{T} \big(\Delta^\pi(Z_t; H)- g(S_t, A_t)\big)^2\Big]+  \mu'_n J_2^2(g). \label{lower level4}
	\end{align}
	As a reminder, recall that in Section \ref{sec:nuisance est} we have defined
	$$\delta^\pi(Z_t; \eta, Q) = R_{t+1} + \sum_{a'} \pi(a'|S_{t+1}) Q(S_{t+1}, a) - Q(S_t, A_t) - \eta,$$ 
	$$\hat{U}_n^\pi(S_t, A_t, S_{t+1}) = \sum_{a \in \cal A} \pi(a|S_{t+1}) \hat{Q}_n^\pi(S_{t+1}, a) - \hat{Q}_n^\pi(S_t, A_t),$$
	and 
	$$\Delta^\pi(Z_t; H) = 1 - H(S_t, A_t) + \sum_{a'} \pi(a'|S_{t+1})  H(S_{t+1}, a').$$
	Also, the ratio estimator $\omega_n^\pi$ can be obtained from $\qn(\cdot,\cdot)$ by using (\ref{ratio estimator}). 
	
	The upper optimization task \eqref{multi-level optimization} is used to search for $\hat{\pi}_n$ and the two parallel lower optimization tasks \eqref{lower level1}-\eqref{lower level2} and \eqref{lower level3}-\eqref{lower level4} are used to compute two nuisance function estimators for a given $\pi \in \Pi$, i.e., the feasible set, respectively. Note that each nuisance function estimation is itself a nested optimization sub-problem. 
	Multi-level optimization problems in general cannot be computed by iteratively updating solutions to lower problems \eqref{lower level1}-\eqref{lower level2} and \eqref{lower level3}-\eqref{lower level4}, and solutions to the upper problem \eqref{multi-level optimization}, in a similar manner to  coordinate descent. Hence, in order to solve this problem, one common approach is to replace the inner optimization problems \eqref{lower level1}-\eqref{lower level2} and \eqref{lower level3}-\eqref{lower level4}  by their corresponding Karush-Kuhn-Tucker (KKT) conditions so that the overall problem can be equivalently formulated as a nonlinear constraint optimization problem. However, this approach can be computationally expensive and may not be suitable for large scale settings. Instead we overcome this computational obstacle by using the representer theorem and obtain the closed-form solutions for our inner optimization problems \eqref{lower level1}-\eqref{lower level2} and \eqref{lower level3}-\eqref{lower level4} respectively. After plugging these closed-form solutions into \eqref{multi-level optimization}, we can use a gradient-based method to find $\hat{\pi}_n$.

	\subsection{RKHS reformulation} In the following subsection, we briefly discuss how to simplify our multi-level optimization problem \eqref{multi-level optimization} using the representer theorem. The details of computation can be found in Appendix E. 
	For the ease of illustration, we rewrite the training data $\D_n$ into tuples $Z_h = \{S_h, A_h, R_h, S_{h}'\}$ where $h = 1, \dots, N = nT$ indexes the tuple of the transition sample in the training set $\D_n$,   $S_h$ and $S_h'$ are the current and next states and $R_h$ is the associated reward. Let $W_h = (S_h, A_h)$  be the state-action pair,  and $W_h' = (S_h, A_h, S_{h}')$.  
	Suppose the kernel function for the state is denoted by $k_0(s_1, s_2)$, where $s_1, s_2 \in \S$. In order to incorporate the action space, we can define $k((s_1, a_1), (s_2, a_2)) = \indicator{a_1 = a_2} k_0(s_1, s_2)$. Basically, we model each $Q(\cdot, a)$ separately for each arm in the RKHS with the same kernel $k_0$.  Recall that we have to restrict the function space $\F$ such that $Q(s^*, a^*) = 0$ for all $Q \in \F$ so as to avoid the identification issue.   Thus for any given kernel function $k$ defined on $\S \times \A$, we make the following transformation by defining $k(W_h, W_{j}) = k_0(W_h, W_{j}) - k_0(( s^*, a^*), W_{h}) - k_0(( s^*, a^*), W_{j}) + k_0((s^*, a^*), (s^*, a^*))$ for any $1 \leq h, j \leq N$. 
	One can check that the induced RKHS by $k(\cdot, \cdot)$ satisfies the constraint in $\F$ automatically. 
	
	We denote kernel functions for $\F$ and $\G$ by $k(\cdot, \cdot), l(\cdot, \cdot)$ respectively. The corresponding inner products are defined as $\innerprod{\cdot}{\cdot}_\F$ and $\innerprod{\cdot}{\cdot}_\G$. We first discuss the inner minimization problem \eqref{lower level1}-\eqref{lower level2}. Note that this is indeed a nested kernel ridge regression problem, different from the standard ridge regression. The closed form solution can be obtained as $\gn(\cdot, \cdot; \eta, Q) =  \sum_{h=1}^{N} l(W_h, \cdot) \hat{\gamma}(\eta, Q)$. In particular, $\hat{\gamma}(\eta, Q) = (L + \mu I_N)^{-1} \delta_N^{{\pi}}(\eta, Q)$, where $Q \in \F$ and $L$ is the kernel matrix induced by $l$, $\mu = \mu_n N$, and $\delta^\pi_N(\eta, Q) = (\delta^\pi(Z_h; \eta, Q))_{h=1}^N$ is a vector of TD error.  Each TD error can be further written as $\delta^\pi(Z_h; \eta, Q) = R - \eta- \innerprod{Q}{
f_{W'_h}}_\F$ where 
	\[
	f_{W'_h}(\cdot) = k(W_h, \cdot) - \sum_{a'} \pi(a'|S'_h) k((S'_h, a'), \cdot) \in \F(\S \times \A)
	\]
	It can be shown that $\hat Q^\pi_n $ in \eqref{lower level1} must be in the linear span $\dkh{\sum_{h=1}^N \alpha_h f_{W_h'}(\cdot):  \alpha_h \in \R, h = 1, \dots, N}$ by using the representer property.
	
	Then we can solve the optimization problem \eqref{lower level1}-\eqref{lower level2}. The solutions for $\{\hat{U}_n^\pi(W'_h)\}_{h = 1}^N$ can be found as $-\tilde{F}(\pi)  \hat \alpha(\pi)$  where $\tilde F(\pi) = ( \innerprod{f_{W_h'}}{f_{W_{j}'}}_\F)_{j, h = 1}^N$ is a $N$ by $N$ matrix and $\hat \alpha(\pi)$ is the vector of coefficients with  a closed-form expression (see Appendix E for details). 
	Similarly, we can compute the closed-form solutions $\{ \gn(W_h, \qn )\}_{h=1}^N$ to the problem \eqref{lower level3}-\eqref{lower level4} as $L\hat \mynu(\pi)$. Here $\hat \mynu(\pi)$ is the corresponding estimated coefficients associated with the kernel matrix $L$. The details can be found in appendix E. Note that all of these intermediate terms except for $L$ depends on the policy $\pi$.
	
	Summarizing together and plugging all the intermediate results into  \eqref{multi-level optimization}, the multi-level optimization problem can be simplified as:
	\begin{align}
	& \max_{\pi \in \Pi} ~~ \frac{\left(\hat \mynu (\pi)\right)^\transpose L \left(R_N - \tilde F (\pi)\hat \alpha(\pi)\right)}{\hat \mynu (\pi)^\transpose L 1_N}\label{final optimization},
	\end{align}
	where $1_N$ is a length-$N$ vector of all ones.

	\subsection{Optimization}
	Note that problem \eqref{final optimization} becomes a smooth nonlinear optimization with box constraints. We  use limited-memory Broyden-Fletcher-Goldfarb-Shanno  algorithm with box constraints (L-BFGS-B) to compute the solution $\hat{\theta}$ \citep{liu1989limited}. The gradient computing is provided in  appendix. \textcolor{black}{The computational complexity/operations of our algorithm is of order $\Upsilon N^3p$, where $\Upsilon$ is the number of iterations in our optimization algorithm. The memory requirement is of order $N^2p$. One may implement some sub-sampling methods such as stochastic gradient decent to further improve both computation and memory complexity  of our algorithm. We will leave it for future work. }  Although the overall optimization problem is non-convex and, thus an optimal solution may not be  achievable, the performance of our numerical experiments in the following section are quite stable and promising. Recently, there is a  growing interest in studying statistical properties of algorithm-type of nonconvex M-estimators, e.g., \citep{mei2018landscape,loh2017statistical}. For many practical applications, gradient decent methods with a random initialization have been demonstrated   to converge to local minima (or even global minima) that are statistically good. While this is not the focus of our paper, it will be interesting to pursue toward this direction for future research such as studying the landscape of $\V^{\pi}$ and its related properties.

	\subsection{Tuning parameters selection} 
	In this subsection, we discuss the choice of tuning parameters in our method. 
	The bandwidths in the Gaussian kernels are selected using median heuristic, e.g., median of pairwise distance \citep{fukumizu2009kernel}. The tuning parameters $(\lambda_n, \mu_n)$ and $(\lambda'_n, \mu'_n)$ are selected based on 3-fold cross-validation. Given assumptions in Theorems B.1 and B.2 of the appendix that these tuning parameters are independent of the policy $\pi$, we can select them for the ratio and value functions separately. Specifically, for the tuning parameters $(\lambda_n, \mu_n)$ in the estimation of value function, we focus on \eqref{lower level1}-\eqref{lower level2}. For the tuning parameters $(\lambda'_n, \mu'_n)$ in the estimation of ratio function, we focus on \eqref{lower level3}-\eqref{lower level4}. At the first glance, one may think the selection of tuning parameters will be the same as those in the standard supervised learning. However, this actually requires an additional step as we cannot observe responses when estimating these two coupled estimators (recalled that we need to first compute projected bellman errors), in contrast to the standard kernel regression setting. In the following, we discuss our selection procedure of $(\lambda_n, \mu_n)$ and $(\lambda'_n, \mu'_n)$ with more details.
	
		\begin{algorithm}
		\caption{Tuning parameters selection via cross-validation}
		\label{alg:cross-validation}
		\textbf{Input:} Data $ \{Z_h \}_{h=1}^{N} $, a set of $M$ policies $\left\{\pi_1, \cdots, \pi_M \right\} \subset \Pi$, a set of $J$ candidate tuning parameters $\{(\mu_j, \lambda_j)\}_{j=1}^J$ in the value function estimation, and a set of $J$ candidate tuning parameters $\{(\mu'_j, \lambda'_j)\}_{j=1}^J$ in the ratio function estimation.
		
		Randomly split Data into $K$ subsets: $\{Z_h \}_{h=1}^{N} = \left\{D_k\right\}_{k = 1}^K$
		
		Denote $e^{(1)}(m, j)$ and $e^{(2)}(m, j)$ as the total validation error for $m$-th policy and $j$-th pair of tuning parameters in value and ratio function estimation respectively, for $m = 1, \cdots M$ and $j = 1, \cdots, J$. Set their initial values as $0$.
		
		Repeat for $ m = 1, \cdots, M$, 
		
		\Indp Repeat for $ k = 1,\cdots, K$, 
		
		\Indp Repeat for $ j = 1,\cdots, J$
		
		\Indp Use $\{Z_h \}_{h=1}^{N} \backslash D_k $ to compute $(\hat \eta_n^{\pi_{m}}, \hat \alpha(\pi_m))$ and $\hat{\mynu}(\pi_{m})$ by \eqref{lower level1}-\eqref{lower level2} and \eqref{lower level3}-\eqref{lower level4} using tuning parameters $(\mu_j, \lambda_j)$ and $(\mu'_j, \lambda'_j)$ respectively;
		
		Compute $\delta^{\pi_m}(\cdot; \hat \eta(\pi_m), \hat{Q}_n^{\pi_m})$ and $\varepsilon^{\pi_m}(\cdot; \hat{H}_n^{\pi_m})$ and their corresponding squared Bellman errors $mse^{(1)}$ and $mse^{(2)}$ on the dataset	 $D_k$ by Gaussian kernel regression;
		
		Assign $e^{(1)}(m, j) = e^{(1)}(m, j) + mse^{(1)}$ and $e^{(2)}(m, j) = e^{(2)}(m, j) + mse^{(2)}$;
		
		\Indm \Indm \Indm
		
		Compute $j^{(1)\ast} \in \argmin_{j} \max_{m} e^{(1)}(m, j)$ and $j^{(2)\ast} \in \argmin_{j} \max_{m} e^{(2)}(m, j)$
		
		\textbf{Output:} $(\mu^{(1)}_{j^{(1)\ast}}, \lambda^{(1)}_{j^{(1)\ast}})$ and $(\mu'_{j^{(2)\ast}}, \lambda'_{j^{(2)\ast}})$.
	\end{algorithm}

	We first randomly choose a set of candidate policies used to gauge our tuning parameters. 
	For each candidate policy, $\pi$, in this set, we can firstly estimate $(\hat \eta_n^\pi, \hat \alpha(\pi))$ by the proposed method using two folds of data. Then for the value function estimation, we calculate temporal difference errors $\delta^\pi(\cdot;\hat \eta_n^\pi, \hat \alpha(\pi))$ for each transition sample in the validation set. Since we cannot observe/calculate the true bellman error, following the idea in \citep{farahmand2011model}, we estimate the Bellman error by projecting these temporal differences on the space of $\S \times A$ in the validation set using the standard Gaussian kernel regression. Thus for each policy $\pi$ and each pair of tuning parameters, we output the squared estimated Bellman error in the validation set as a criterion to evaluate the performance of our value function estimation. Since tuning parameters are assumed independent of policies, we then select the tuning parameters that minimize the worst case of estimated Bellman errors among the set of all candidate policies.  We use the same strategy to select the tuning parameters for our ratio estimation. The details are given in the Algorithm \ref{alg:cross-validation}. Without the independent assumptions of tuning parameters from the policies in  $\Pi$, one may alternatively choose these tuning parameters jointly by maximizing $\Vn^{\pi}$ on the validation set, which requires large computational costs and we omit here.
	But it would be very interesting to study the theoretical properties of these two cross-validation procedures, or more generally, the selection of tuning parameters in the framework of couple estimation, which we leave it as  future work.

	\section{Simulation Studies}
	\label{sec:simulation}
	
	In this section, we consider two scenarios to evaluate the proposed algorithm. For both scenarios, we consider $S_t = (S_{t, 1}, S_{t, 2}, S_{t, 3})$ as a three-dimensional state at each decision point $t$, and the action space is binary, i.e., $\mathcal{A} = \{0, 1\}$. The behavior policy used to generate actions follows Bernoulli distribution with equal probabilities. In addition, the initial state $S_1$ is sampled from standard multi-variate normal distribution, i.e., $S_1 \sim MVN(0, I_4)$
	
	The first scenario we consider is a standard MDP setting. Let $\xi_t$ follows a standard multi-variate normal distribution. Then we generate the transition of states and reward functions via following models:
	\begin{align*}
	    & S_{t+1, 1} = 0.5 S_{t,1} + 2\xi_{t, 1}, \\
	    & S_{t+1, 2} = 0.25 S_{t,2} + 0.125 A_{t} + 2\xi_{t, 2}, \\
	    & S_{t+1, 3} = 0.9 S_{t, 3} + 0.05 S_{t, 3} A_t + 0.5 A_{t} + \xi_{t, 3}, \\
	    & R_{t+1} = 10 - 0.4 S_{t, 3} + 0.25 S_{t, 1} A_{t} \times (0.04+ 0.02S_{t,1} + 0.02S_{t, 2}) + 0.16 \xi_{t,4},
	    \end{align*}
	    for $t = 1, \cdots, T$. Here $S_{t, 3}$ can be interpreted as the treatment burden or fatigue.
	    
	 The second scenario we consider is a non-stationary environment. In particular, we consider the same transition models as above, but let the reward function to be time-dependent. More specifically, we consider
	 \begin{align*}
	        R_{t+1} = 10 - \tau_t S_{t, 3} + \beta_t S_{t, 1} A_{t} ( 0.04 + 0.02S_{t,1} + 0.02S_{t, 2}),
	    \end{align*}
	    where the time-varying parameters $\beta_t = 0.25 \times \exp(-0.05(t-1))$ and $\tau_t = 0.4 \times \exp(-0.05 (t-1))$. This generative model represents the scenario in which as the study progresses, the overall impact of intervention is decreasing. Note that since the reward function is non-stationary, we do not have a guarantee for our proposed algorithm to find an optimal policy. 
	    
	    \textcolor{black}{We compare with four baseline methods, which were proposed in the setting of the discounted sum of rewards. The first two are recently proposed deep off-policy RL algorithms \citep{fujimoto2019off,kumar2019stabilizing} denoted by BCQ and BEAR respectively. The underlying idea behind these two state-of-art algorithms is to conservatively estimate the optimal $Q$-function on the less explored state-action pair and restrict the resulting policy close to the behavior one.
	    The third method is the celebrated fitted-Q iteration (FQI) method proposed by \cite{ernst_tree-based_2005}. At each iteration, relying on the optimal Bellman equation, FQI algorithm updates the estimation of the optimal $Q$-function via solving a supervised learning problem.  The last method is V-learning proposed by \citep{luckett2019estimating}, which also aims to learn an optimal in-class policy. Since our goal is to maximize the long-term average reward, we set the discount factor $\gamma$ in these four methods as $0.99$ to approximate the average reward for comparison. In addition, to draw a relatively fair comparison, we implement these four methods using the same policy class as ours. Specifically, the first three methods will output an estimation of the optimal $Q$-function (defined in the discounted setting), after which we implement a weighted logistic regression to estimate the optimal in-class policy. For V-learning, we keep the default setup 
	   and use the same policy class as ours. Finally, for BEAR and BCQ, we use two-hidden layers neural networks with 32 nodes for each and ReLU activation functions to model the optimal $Q$-function. The other hyper-parameters are either tuned for their best performance, or recommended in the official implementation as robust choices. For FQI, we implement a kernel ridge regression at each iteration with tuning parameters selected similar to our nuisance parameter estimation. }
	    
	 To demonstrate the performance of our algorithm compared with other methods, we consider different combinations of the number of trajectories $n$ and the length of each trajectory $T$. Specifically, we consider $(n, T) = (40, 50), (40, 100), \text{and} \, (80, 50)$. Once all estimated policies are obtained, we generate another $100$ test samples with the length of trajectories $1000$ using all learned policies and compute the corresponding empirical average of observed rewards. In order to compare results with the best in-class stationary policy, we combine the gradient-type optimization algorithm with Monte Carlo method to estimate the best in-class policy that can maximize
	$\EE^\pi\left[\frac{1}{1000}\sum_{t=1}^{1000}R_{t+1}\right].$
	Specifically, for each policy with parameter $\theta$, we generate a sample of $n=100$ and $T=1000$ to approximate $\EE^\pi\left[\frac{1}{1000}\sum_{t=1}^{1000}R_{t+1}\right]$ by the empirical average of rewards. Then we apply L-BFGS algorithm and require $\theta$ to be between $-10$ to $10$ to search for the best in-class stationary policy, which is treated as the oracle policy.

	\textcolor{black}{Results of the above two scenarios can be found in Table \ref{Tab: new sim setting}. As we can see, our algorithm performs well in finding optimal in-class stationary policies, compared with the other four baseline methods. Compared with the oracle one with the best average reward about $10$, the regret of our algorithm is almost the smallest among all these methods, which is expected as we aim to maximize the average reward while the other four methods are for maximizing the discounted sum of rewards. For BEAR and BCQ with neural network models, due to the relatively small sample size and a large discount factor $\gamma$, the performances seem unstable. FQI and V-learning methods overall show competitive performances. But it can be seen that V-learning may suffer some large variance.} In addition, one possible reason for the high quality performance of our method in Scenario 2 is that the time-dependent effect in reward function is exponentially decaying by our design. Therefore we expect the performance of our algorithm may not be affected severely by the non-stationarity.  In addition, it can be seen that as the sample size $n$ or the length of each trajectory $T$ increases, the average rewards of our estimated policies are also improved, demonstrating the appealing performance of the proposed method. \textcolor{black}{Finally, we remark that the maximum running time of our method for one replication in our simulation studies is less than 40 minutes. }
	
\begin{table}[h!]
\centering
\caption{Monte Carlo estimation of the average rewards of the learned policy from proposed algorithm and three baseline offline RL algorithms over $T = 1000$ with $100$ replications. Numbers in parentheses are corresponding standard deviations. The oracle in-class optimal average rewards for both scenario are about 10.002. }
\begin{tabular}{c|cc|ccccc}
\hline
 & $n$            & $T$ &  Our method & BEAR & BCQ & FQI & V-learning  \\ \toprule
\multirow{3}{*}{Scenario 1} 
 &  40 & 50 & $9.215 \, (0.133)$ & 7.513 (0.044) & 8.187 (0.067) & 9.728 (0.007) & 9.246 (0.470) \\ 

 &  40 & 100 & $9.913 \, (0.050)$ & 6.949 (0.053) & 7.362 (0.068) & 9.820 (0.028) & 9.345 (0.476) \\ 

 &  80 & 50 & $9.834 \, (0.052)$ & 7.487 (0.033) & 7.992 (0.059) & 9.764 (0.005) & 9.461 (0.457) \\ 

\toprule \multirow{3}{*}{Scenario 2}
 & 40 & 50 & $9.243 \, (0.133)$ & 9.128 (0.009) & 9.426 (0.020) & 9.579 (0.028) & 9.834 (0.097) \\ 
 &  40 & 100 &  $9.905 \, (0.006)$ & 9.508 (0.006) & 9.692 (0.012) & 9.858 (0.011) & 9.840 (0.108) \\ 
 & 80 & 50 & $9.919 \, (0.005)$ & 9.141 (0.011) & 9.384 (0.020) & 9.652 (0.025)& 9.873 (0.097) \\ 
  \toprule
\end{tabular}
\label{Tab: new sim setting}
\end{table}

    	\section{Application to mobile health}
	\label{sec:data analysis}

    We apply the proposed method to HeartSteps. HeartSteps is mobile health application focusing on physical activity. Three studies were conducted to develop the intervention. In this work, we apply the proposed method to the data collected from the first study, which we will refer to as HS1 in the throughout. HS1 is a 42-day micro-randomized trial (\cite{klasnja2015microrandomized,Liaoetal2015}). Each participant was provided with a Jawbone wrist tracker to collect step count data and  specified five decision times, roughly 2.5 hours apart during each day, that would be good times to potentially receive  contextually tailored activity suggestion message. In HS1, the activity message was sent with a fixed probability 0.6 at each of the five decision times.
    Our goal is to use HS1 data to learn a treatment policy that determines at each decision time whether to send the activity message (i.e., binary action). 
    
    We construct the state variable using the previous step count (the 30-min step count prior to the decision time and from yesterday), location, temperature and past notifications. We set the reward to be the log transformation of the  step count in 30-min window after each decision time.  In this analysis, we include 37 participants' data and exclude the decision times when participants were traveling
abroad or experiencing technical issues or when the reward (i.e.,
post 30-min step count) was considered as missing \citep{klasnja2018efficacy}.

    Next, we construct the policy class. In this analysis, we include two state variables in the policy. The first variable is the location (home/work vs. other locations). Location is important because people, in a more structured environment (i.e. at home or work), may respond better to an activity suggestion as compared to when they are at other locations. As a proxy for participant burden, the second variable included in the policy is ``dosage'',  a discounted sum of the number of past activity messages sent  with the discount rate chosen as 0.95. The rationale for using this variable is that receiving too many notifications in the recent past is likely to decrease the effectiveness of sending the activity message due to over-burdening participants. We consider the policy class of the form $\pi_{\theta}(1|s) = \operatorname{expit}(\theta^\transpose \phi(s)), \theta \in \Theta$, 
	where the feature vector $\phi(s) = (1, \text{dosage}, \text{location})$ and $\Theta$ is the box constraint within -10 and 10. Here dosage is standardized to be within 0 and 1.  

    We apply the proposed method with the tuning parameter selected by cross-validation in Algorithm \ref{alg:cross-validation}. The estimated coefficients are $[10,  -10,  -4.788]$. Figure \ref{fig:data analysis} shows the estimated policy at different combination of dosage and location. As one would expect, the learned policy tends to send fewer suggestions if the participant received many suggestions in the recent past. Also, the policy indicates that it is more effective to send the message when the user is at home/work location.  The estimated average reward of this policy is 3.301. As a comparison, the estimated average reward of the simple location-based policy (i.e., send only when the user is at home/work) is 3.15 and the send-nothing policy is 2.96. Transforming to the scale of the raw step count as that in \citep{klasnja2018efficacy}, the learned policy can result in $16\%$ (i.e., $\exp(3.301-3.15)-1=0.16$) improvement, which is equivalent to $40$ more steps (the mean step count across all decision times in the data is $248$) compared with the simple location-based policy, and $40\%$ (i.e., $\exp(3.301-2.96)-1=0.40$) improvement, or equivalently $101$ steps more, compared with the send-nothing policy.  \textcolor{black}{Lastly, we remark that the running time of our real data analysis is about $2$ hours, which is acceptable in the batch setting.  This is ultimately different from online RL domains where the policy is usually updated upon the arrival of each observation. }
    
		\begin{figure}[H]
		\centering
		\includegraphics[width=0.8\linewidth]{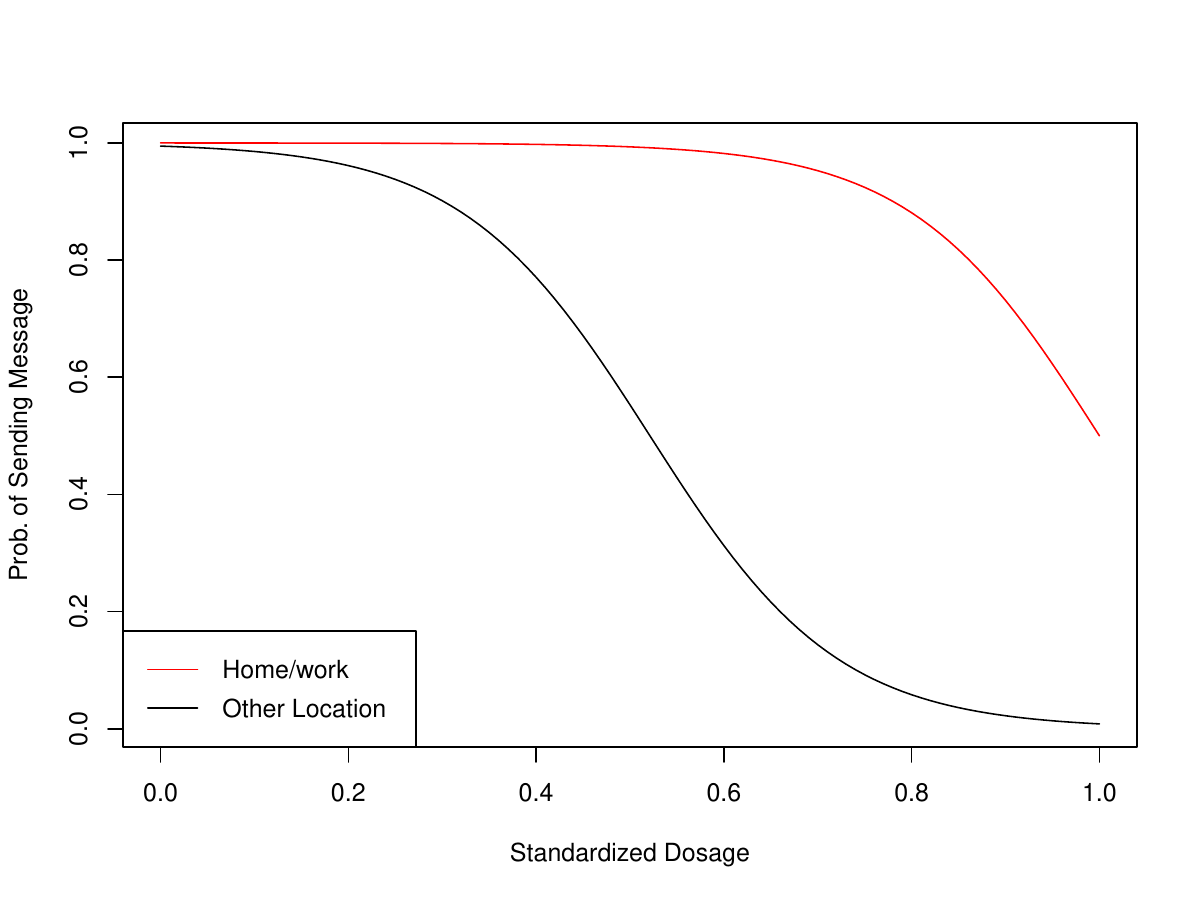}
		\caption{The estimated policy in HeartSteps data. }
		\label{fig:data analysis}
	\end{figure}

    \section{Discussion}
    \label{sec: discussion}
    
    \textit{Double/Debiased machine learning} An alternative way to construct the estimator for the average reward is based on the idea of double/debiased machine learning (a.k.a. cross-fitting, \cite{bickel1993efficient} and \cite{chernozhukov2018double}). There is growing interest in using double machine learning in causal inference and in the policy learning literature \citep{zhao2019efficient} in order to relax  assumptions on the convergence rates of nuisance parameters. The basic idea is to split the data into $K$ folds. For each  of the $K$ folds, construct the estimating equation by plugging in the estimated nuisance functions that are obtained using the remaining $(K-1)$ folds. The final estimator is obtained by solving the aggregated estimation equations. 
	While cross-fitting requires weaker conditions on the nuisance function estimations, it indeed incurs additional computational cost, especially in our setting where nuisance functions are policy-dependent and we aim to search for the in-class optimal policy. Further, this sample splitting procedure may not be stable when the sample size is relatively small, e.g., in a typical  mHealth clinical trial. A more efficient way of data splitting under the framework of MDP is needed, which we leave as future work       
    
    \textit{Computation and optimization} Our current algorithm requires relatively large computation and
memory because of the non-parametric estimation and the policy-dependent structure of nuisance functions. It is therefore desirable to develop a more efficient algorithm.
One possible remedy is to consider a zero-order optimization method such as Bayesian optimization \citep{snoek2012practical}, which is suitable when the dimension of state variables is small. \textcolor{black}{Another possible way to improve the computational efficiency is to first apply some simple algorithm to estimate a sub-optimal policy, based on which we can implement our method to estimate two nuisance parameters. Then one can develop the performance difference lemma in terms of the average reward MDP, similar to that in the discounted setting \citep{kakade2002approximately}, to construct a lower bound for $\mathcal{V}(\pi)$ using two estimated nuisance parameters. The last step is to optimize this lower bound for obtaining a better policy. This method may require less computational cost.}

\textit{Tuning parameters/Model selection} In our proposed algorithm, we assume tuning parameters are independent of policies, based on which we develop a min-max cross-validation procedure for the selection of tuning parameters. Model selection in the offline RL setting, which is necessary for improving generalization of RL techniques, is often considered as a challenging task as there is no ground truth available for performance demonstration, in contrast to the online setting with simulated environment. Therefore, it will be interesting to systematically investigate how to perform model selection in offline RL and to provide  theoretical guarantees.

\section{Acknowledgements} Peng Liao was supported by NIH grants P50DA039838, R01AA023187, and U01 CA229437. Susan Murphy was supported by NIH grants P50DA039838, R01AA023187, P50DA054039, P41EB028242, U01 CA229437, UG3DE028723, and UH3DE028723. The authors would also like to thank two reviewers, the Associate Editor and the Editor for helpful comments and suggestions that led to substantial improvement in the presentation.

	\newpage
	\appendix
\section{Semi-parametric efficiency bound and doubly robustness}
\label{appendix: semi efficiency}

In this section, we calculate the semi-parametric efficient influence function and proves the doubly robustness. 
Denote by $L(D; \zeta)$ the likelihood of a parametric sub-model of the data collected over $T$ decision times: 
$$
L(D; \zeta) = p_\zeta(S_1) \pi_{\zeta, 1} (A_1|S_1) P_\zeta (S_2|S_1, A_1) \dots \pi_{\zeta, T} (A_T|H_T) P_{\zeta}(S_{T+1}|S_{T}, A_{T}).
$$
Then the score function of the above parametric sub-model is given by
\begin{align*}
S_\zeta(D) & = \frac{d \log L(D; \zeta)}{d \zeta}= S_\zeta(S_1) + \sum_{t=1}^T  S_\zeta(S_{t+1}|S_t, A_t)  + \sum_{t=1}^T S_\zeta(A_{t} | H_t),
\end{align*}
where $S_\zeta(S_1)$ is $\frac{d \log  p_\zeta(S_1)}{d \zeta}$, $S_\zeta(S_{t+1}|S_t, A_t) = \frac{d \log S_\zeta(S_{t+1}|S_t, A_t)}{d \zeta}$, and $S_\zeta(A_{t} | H_t) = \frac{d \log S_\zeta(A_{t} | H_t)}{d \zeta}$ for $t = 1, \cdots, T$.
\begin{lemma}\label{lemma: is influence}
	If $\phi_\eff$ is an influence function, then  for any parametric submodel that contains the true parameter $\zeta_0$, 
	\begin{align*}
	\frac{d \V_\zeta(\pi)}{ d\zeta} \Big|_{\zeta = \zeta_0} = \EE[\phi^\pi (D) S_{\zeta_0} (D) ]
	\end{align*}
\end{lemma}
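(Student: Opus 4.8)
The plan is to obtain the identity from the standard characterization of influence functions of regular, asymptotically linear estimators in semiparametric models (see \cite{bickel1993efficient} and Chapter~25 of \cite{van2000asymptotic}). Recall that $\phi_{\eff} = \phi^\pi$ being an influence function means that there is a regular estimator sequence, say $\Vn^{\pi}$, satisfying $\sqrt{n}(\Vn^{\pi} - \V^{\pi}) = n^{-1/2}\sum_{i=1}^{n}\phi^\pi(D_i) + o_p(1)$ with $\EE[\phi^\pi(D)] = 0$ and $\EE[\phi^\pi(D)^2] < \infty$. First I would fix an arbitrary regular one-dimensional parametric submodel $\{L(\cdot\,;\zeta)\}$ through the truth at $\zeta = \zeta_0$, with score $S_{\zeta_0}(D)$ as in the display above, and set $\zeta_n = \zeta_0 + h n^{-1/2}$ for a fixed $h \in \R$.

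The first step is the LAN expansion under i.i.d.\ sampling: $\sum_{i=1}^{n}\log\{L(D_i;\zeta_n)/L(D_i;\zeta_0)\} = h\,n^{-1/2}\sum_{i=1}^{n}S_{\zeta_0}(D_i) - \tfrac{h^2}{2}\EE[S_{\zeta_0}(D)^2] + o_p(1)$ under $\PP_{\zeta_0}$. This is standard once the submodel is quadratic-mean differentiable; here the likelihood factorizes across the $T$ decision times, so the score splits exactly as in the display preceding the lemma. The second step combines this with asymptotic linearity: under $\PP_{\zeta_0}$ the pair consisting of $\sqrt{n}(\Vn^{\pi} - \V^{\pi})$ and the log-likelihood ratio is jointly asymptotically normal with asymptotic covariance $h\,\EE[\phi^\pi(D)S_{\zeta_0}(D)]$, so Le Cam's third lemma yields, under $\PP_{\zeta_n}$, $\sqrt{n}(\Vn^{\pi} - \V^{\pi}) \wcvg \N\big(h\,\EE[\phi^\pi(D)S_{\zeta_0}(D)],\ \Var(\phi^\pi(D))\big)$.

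The third step uses differentiability of $\zeta \mapsto \V_\zeta(\pi)$. Writing $\dot{\V}^{\pi} := \frac{d}{d\zeta}\V_\zeta(\pi)\big|_{\zeta_0}$, we have $\sqrt{n}(\V_{\zeta_n}(\pi) - \V_{\zeta_0}(\pi)) \to h\,\dot{\V}^{\pi}$, so under $\PP_{\zeta_n}$, $\sqrt{n}(\Vn^{\pi} - \V_{\zeta_n}(\pi)) \wcvg \N\big(h\{\EE[\phi^\pi(D)S_{\zeta_0}(D)] - \dot{\V}^{\pi}\},\ \Var(\phi^\pi(D))\big)$. Regularity of $\Vn^{\pi}$ forces this limiting law to be independent of $h$, hence its mean must vanish for every $h$, giving $\dot{\V}^{\pi} = \EE[\phi^\pi(D)S_{\zeta_0}(D)]$; since the submodel was arbitrary, this is the claim.

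The main obstacle, and the place where care is needed, is verifying that the class of parametric submodels is rich and regular enough for the LAN expansion and contiguity to apply while remaining inside the model: perturbations of $P$, of the behavior policy $\pi_b$, and of the law of $S_1$ must preserve Assumption~\ref{assumption: irreducible} along the path. The $t^{\ast}\to\infty$ limit in the definition (\ref{def: avg rwrd}) of $\V^{\pi}$ I would sidestep by working with the closed form $\V^{\pi} = \sum_{s,a} r(s,a)\pi(a|s)d^\pi(s)$ of (\ref{constant avg reward}), which exhibits $\zeta \mapsto \V_\zeta(\pi)$ as a smooth function of the finitely many transition probabilities and conditional reward means, so that its differentiability follows directly from that of the submodel.
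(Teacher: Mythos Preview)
Your argument is the standard abstract derivation (LAN expansion, joint asymptotic normality, Le Cam's third lemma, then regularity forces the mean shift to vanish), and it is correct for the lemma read at face value. However, the paper's proof is entirely different and, importantly, does not invoke the hypothesis that $\phi^\pi$ is already an influence function. Instead it verifies the identity by direct computation: on the right-hand side it inserts the explicit form $\phi^\pi(D)=\frac{1}{T}\sum_t \omega^\pi(S_t,A_t)\{R_{t+1}+U^\pi(S_t,A_t,S_{t+1})-\V^{\pi}\}$ and uses the factorization of the score (only the transition-score pieces $S_{\zeta_0}(S_{t+1}\mid S_t,A_t)$ survive, via the TD-error structure and the ratio $\omega^\pi$) to reduce everything to $\int \delta^\pi(s,a,s')\,\partial_\zeta P_\zeta(s'\mid s,a)\,d^\pi(s,a)\,d\mu$; on the left-hand side it differentiates the Bellman equation in $\zeta$, integrates against the stationary measure $d^\pi$, and shows the $\partial_\zeta U^\pi_\zeta$ contributions telescope to zero by stationarity, leaving the same integral.

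The distinction matters because in the paper's logic this lemma is precisely what \emph{establishes} that $\phi^\pi$ is a gradient of $\V^{\pi}$; the next lemma then places it in the tangent space to conclude efficiency in Theorem~\ref{thm:semipara}. Your route presupposes the existence of a regular asymptotically linear estimator with this particular influence function, which is exactly what remains to be shown at this point, so it would be circular here; it also needs regularity, which the hypothesis ``$\phi_{\eff}$ is an influence function'' does not by itself supply. The paper's constructive calculation buys you the identity unconditionally for this specific $\phi^\pi$, at the price of an MDP-specific argument using the Bellman equation and invariance of $d^\pi$; your abstract route is cleaner and fully general, but only applicable once an estimator with this influence function is already in hand.
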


\begin{proof}[Proof of Lemma \ref{lemma: is influence}]
	
	Plugging into RHS and using the definition of score function gives
	\begin{align*}
	\EE[\phi^\pi (D) S_{\zeta_0} (D) ] &= \EE[(1/T)\sum_{t=1}^T \omega^\pi(S_t, A_t) \delta^\pi(S_t, A_t, S_{t+1}) S_{\zeta_0}(S_{t+1}|S_t, A_t)]\\
	& = \EE_\pi[\delta^\pi(S, A, S') S_{\zeta_0}(S'|S, A)] \quad (S,A,S') \sim d^\pi(s, a) P_{\zeta_0}(s'|s, a)\\
	& = \int \delta^\pi(s, a, s') \frac{d}{d\zeta} P_\zeta(s'|s, a) d^\pi(s, a) d\mu(s, a, s') \big|_{\zeta = \zeta_0}
	\end{align*}
	where $(S,A,S') \sim d^\pi(s, a) P_{\zeta_0}(s'|s, a)$ follows the stationary distribution under the true model.

	For the LHS, we start with the Bellman equation: for any $\zeta$, we have 
	\begin{align*}
	0 &= \EE_{\zeta} [\RR(S, A, S') + U^\pi_\zeta- \V_\zeta(\pi) |S=s, A= a ]\\
	& = \int \left(\RR(s, a, s') + U^\pi_\zeta- \V_\zeta(\pi) \right) P_\zeta(s'|s, a) d \mu(s'),
	\end{align*}
	where we write $U^\pi$ and  $\V^{\pi}$ as $U^\pi_\zeta$ and  $\V_\zeta(\pi)$ to explicitly indicate its dependency on $\zeta$.
	Taking the derivative implies
	\begin{align*}
	\frac{d\V_\zeta(\pi)}{d\zeta} & = \dv{\zeta} \int \left(\RR(s, a, s') + U^\pi_\zeta(s, a, s')\right) P_\zeta(s'|s, a) d \mu(s') \\
	& =  \int \dv{\zeta} \left(\RR(s, a, s') + U^\pi_\zeta(s, a, s')\right) P_\zeta(s'|s, a) d \mu(s')   \\
	& \qquad + \int \left(\RR(s, a, s') + U^\pi_\zeta(s, a, s')\right) \dv{\zeta}  P_\zeta(s'|s, a) d \mu(s')\\
	& =  \int  \left(\dv{\zeta} U^\pi_\zeta(s, a, s')\right) P_\zeta(s'|s, a) d \mu(s')   \\
	& \qquad + \int \left(\RR(s, a, s') + U^\pi_\zeta(s, a, s')\right) \dv{\zeta}  P_\zeta(s'|s, a) d \mu(s')\\
	& =  \int \left(\dv{\zeta} U^\pi_\zeta(s, a, s')\right) P_\zeta(s'|s, a) d \mu(s') + \int \delta_\zeta^\pi(s, a, s') \dv{\zeta}  P_\zeta(s'|s, a) d \mu(s')
	\end{align*}
	where in the second last line we use $\int \dv{\zeta}  \P_\zeta(s'|s, a) d\mu(s') = 0$. Now averaging over the stationary distribution $d^\pi_\zeta(s, a) = d_\zeta^\pi(s) \pi(a|s)$ of the state-action pair gives
	\begin{align*}
	\frac{d\V_\zeta(\pi)}{d\zeta}  & = \int  \left(\dv{\zeta} U^\pi_\zeta(s, a, s')\right) P_\zeta(s'|s, a) d^\pi_\zeta(s, a) d \mu(s, a, s') \\
	& \qquad + \int \delta_\zeta^\pi(s, a, s')) \dv{\zeta}  P_\zeta(s'|s, a) d^\pi_\zeta(s, a) d \mu(s, a, s')
	\end{align*}
	For the first term, using the definition of stationary distribution we have
	\begin{align*}
	& \int  \left(\dv{\zeta} U^\pi_\zeta(s, a, s')\right) P_\zeta(s'|s, a) d^\pi_\zeta(s, a) d \mu(s, a, s')
	\\
	& = \int  \frac{d Q^\pi_\zeta(s', a')}{ d\zeta} \pi(a'|s')  P_\zeta(s'|s, a)  d^\pi_\zeta(s, a) d \mu(s, a, s', a') \\
	& \qquad -  \int   \frac{d Q^\pi_\zeta(s, a)}{ d\zeta} P_\zeta(s'|s, a) d^\pi_\zeta(s, a) d \mu(s, a, s')\\
	& = \int  \frac{d Q^\pi_\zeta(s', a')}{ d\zeta} \pi(a'|s')  P_\zeta(s'|s, a)  d^\pi_\zeta(s, a) d \mu(s, a, s', a')  \\
	& \qquad -  \int   \frac{d Q^\pi_\zeta(s, a)}{ d\zeta}  d^\pi_\zeta(s, a) d \mu(s, a) = 0
	\end{align*}
	Thus we have
	\begin{align*}
	\frac{d\V_\zeta(\pi)}{d\zeta} \Big|_{\zeta = \zeta_0} & = \dv{\zeta}  \int \delta^\pi(s, a, s'))  P_\zeta(s'|s, a) d^\pi(s, a) d \mu(s, a, s') \Big|_{\zeta = \zeta_0} 
	= \EE[\phi^\pi (D) S_{\zeta_0} (D) ]
	\end{align*}
\end{proof}

\begin{lemma}\label{lemma: efficient influence}
	(i) 		The tangent space $\mathscr{T}$ is given by
	\begin{align*}
	\mathscr{T} = \left(\bigoplus_{1\leq t \leq T} (\F_t \bigoplus \G_t)\right) \bigoplus \F_{T+1}
	\end{align*}
	where
	\begin{align*}
	&\F_1 = \dkh{ q(s_1): \EE[q(S_1)] = 0, q \in L^2}\\
	&\F_t = \dkh{q(s_t, s_{t-1}, a_{t-1}): \EE[q(S_t, S_{t-1}, A_{t-1}) | S_{t-1}, A_{t-1}] = 0, q \in L^2 }, ~ 2 \leq t \leq T+1, ~ \\
	& \G_t = \{q(a_t, h_t): \EE[q(A_t, H_t) | H_t] = 0, q \in L^2 \}, ~ 1 \leq t \leq T
	\end{align*}
	(ii) The orthogonal complement of the tangent space $\mathscr{T}$ is 
	\begin{align*}
	\mathscr{T}^\perp 
	= \bigoplus_{2\leq t \leq T+1} \F_t'
	\end{align*}
	where $\F_t' = \dkh{q(h_t) - \EE[q(H_t)|S_t, S_{t-1}, a_{t-1}]: \EE[q(H_t) | H_{t-1}, A_{t-1}] = 0, q \in L^2 }$

\end{lemma}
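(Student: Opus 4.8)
The plan is to read $\mathscr{T}$ as the tangent space of the semiparametric model in which the law of $S_1$, the (history-dependent, possibly time-varying) behavior policies $\pi_{b,t}(\cdot\mid H_t)$, and the one-step transition kernels $P_t(\cdot\mid S_t,A_t)$ are all unrestricted apart from the Markov form of the transitions and the positivity in Assumption \ref{assumption: data-gen} (time-homogeneity of the transitions is a hypothesis on the true law, not a restriction imposed on this model). Because $\S$ and $\A$ are finite, every function of $D$ is bounded, so $L^2$-membership, closedness of subspaces, and the existence of smooth one-parameter perturbations of individual conditional laws are all automatic; this removes the usual functional-analytic technicalities and lets the argument be purely algebraic.

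Part (i). I would start from the additive score decomposition already recorded above, $S_\zeta(D) = S_\zeta(S_1) + \sum_t S_\zeta(A_t\mid H_t) + \sum_t S_\zeta(S_{t+1}\mid S_t,A_t)$, which immediately gives that $\mathscr{T}$ is contained in the sum of the spaces $\F_1$, the $\G_t$ ($1\le t\le T$) and the $\F_t$ ($2\le t\le T+1$). For the reverse inclusion, given any $g$ in one of these spaces I would write down the one-dimensional tilt submodel perturbing only the corresponding factor — e.g. $\pi^\zeta_{b,t}(a\mid h_t)\propto \pi_{b,t}(a\mid h_t)\exp(\zeta\,g(a,h_t))$ for a $\G_t$-direction, and analogously a perturbation of $P_t$ as a function of $(s_{t+1},s_t,a_t)$ only (which preserves the Markov form), and of the initial law — and check that its score at $\zeta=0$ equals $g$ minus its relevant conditional mean, i.e. $g$ itself by the defining constraint of the space. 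Mutual orthogonality of $\F_1$, the $\G_t$'s and the $\F_t$'s in $L^2$ follows from iterated expectations along the natural time ordering of the coordinates of $D$, so the closed span is exactly the claimed orthogonal direct sum.

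Part (ii). The idea is to build an orthogonal decomposition of the whole space $L^2_0(D)$ of mean-zero functions of $D$ and read off $\mathscr{T}^\perp$. Order the coordinates as they are revealed, $S_1, A_1, S_2, A_2,\dots, A_T, S_{T+1}$, let $\mathcal G_k$ be the $\sigma$-field generated by the first $k$ of them, and use the telescoping (Doob / martingale-difference) identity $q = \sum_k \{\EE[q\mid \mathcal G_k] - \EE[q\mid\mathcal G_{k-1}]\}$, whose summands are mutually orthogonal. The summand attached to an $A_t$-coordinate is a function of $(A_t,H_t)$ with conditional mean zero given $H_t$, hence lies in $\G_t\subseteq\mathscr{T}$; the $S_1$-summand lies in $\F_1\subseteq\mathscr{T}$; and the summand attached to an $S_{t+1}$-coordinate is a function of $H_{t+1}$ with conditional mean zero given $(H_t,A_t)$ — call this subspace $\mathscr H_{t+1}$. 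The remaining step is to split $\mathscr H_{t+1} = \F_{t+1}\oplus\F_{t+1}'$: writing $g\in\mathscr H_{t+1}$ as $\EE[g\mid S_{t+1},S_t,A_t] + \big(g-\EE[g\mid S_{t+1},S_t,A_t]\big)$, I would check that the first piece lies in $\F_{t+1}$ (its conditional mean given $(S_t,A_t)$ vanishes because that given the coarser $(H_t,A_t)$ does), that the second piece is exactly of the form defining $\F_{t+1}'$, and that the two are orthogonal by conditioning on $(S_{t+1},S_t,A_t)$. Assembling the pieces yields $L^2_0(D) = \mathscr{T} \oplus \bigoplus_{2\le t\le T+1}\F_t'$, hence $\mathscr{T}^\perp = \bigoplus_{2\le t\le T+1}\F_t'$ (with the $t=2$ term trivially $\{0\}$, reflecting that the Markov restriction is vacuous at the first transition since $H_1=S_1$).

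The main obstacle is Part (ii): choosing the revelation filtration so that the martingale-difference pieces align cleanly with the $\G_t$'s, the $\F_t$'s, and the $\F_t'$'s, and then verifying carefully that subtracting the conditional mean given the Markov-relevant triple $(S_{t+1},S_t,A_t)$ — rather than given all of $H_{t+1}$ — produces precisely the orthogonal complement of $\F_{t+1}$ inside the ``$S_{t+1}$-innovation'' space $\mathscr H_{t+1}$, which is where the Markov structure of the model actually enters. Everything else (smoothness of the submodels in Part (i), closedness, and exhaustiveness of the decomposition) is routine thanks to finiteness of $\S$ and $\A$.
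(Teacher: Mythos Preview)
Your proposal is correct and follows essentially the same approach as the paper's own proof: both obtain Part~(i) from the additive score decomposition together with mutual orthogonality via iterated expectations, and both obtain Part~(ii) by a martingale-difference decomposition of $L^2_0(D)$ along the natural revelation filtration followed by splitting each ``$S_{t+1}$-innovation'' space into its $\F_{t+1}$-component (conditioning on the Markov triple $(S_{t+1},S_t,A_t)$) and the residual $\F_{t+1}'$. Your treatment is, if anything, slightly cleaner than the paper's in that you explicitly keep all the $\G_t$ summands ($t\ge 2$) in the decomposition of $L^2_0(D)$ and dispose of them as lying in $\mathscr{T}$, whereas the paper's displayed decomposition omits them; this does not affect the conclusion since those pieces are in $\mathscr{T}$ anyway, but your bookkeeping is more transparent.
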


\begin{proof}[Proof of Lemma \ref{lemma: efficient influence}]
	Given the expression of score function $S_\zeta$, we can obtain statement (i). In particular, $\F_1$ is induced by $S_{\zeta_0}(S_1)$, $\F_t$ is induced by $S_{\zeta_0}(S_{t+1} | S_t, A_t)$ for $t = 2, \cdots, T+1$, and $\G_t$ is induced by $S_{\zeta_0}(A_t | H_t)$ for $t = 1, \cdots, T$. See Theorem 1 of \citep{kallus2020double}. For any $2 \leq t_1, t_2 \leq T+1$ and $t_1 \neq t_2$, we can show that $\F_{t_1}$ is orthogonal to $\F_{t_2}$. Without loss of generality, suppose $t_1 < t_2$. Then for any $q_{t_1} \in \F_{t_1}$ and $q_{t_2} \in \F_{t_2}$,
	\begin{align*}
	&\EE[q(S_{t_1}, S_{t_1 - 1}, A_{t_1 - 1}) q(S_{t_2}, S_{t_2 - 1}, A_{t_2 - 1})] \\[0.1in]
	=& \EE[\EE[q(S_{t_1}, S_{t_1 - 1}, A_{t_1 - 1}) q(S_{t_2}, S_{t_2 - 1}, A_{t_2 - 1}) | S_{t_1-1}, A_{t_1-1}]] \\[0.1in]
	=& \EE[\EE[q(S_{t_1}, S_{t_1 - 1}, A_{t_1 - 1}) | S_{t_1-1}, A_{t_1-1}] \times \EE[q(S_{t_2}, S_{t_2 - 1}, A_{t_2 - 1}) | S_{t_1-1}, A_{t_1-1}]]\\[0.1in]
	= & 0,
	\end{align*}
	where the second equality is by Markov property. By the similar argument, we can also show that $\G_{t_1}$ is orthogonal to $\G_{t_2}$ for $1 \leq t_1, t_2, \leq T$ and $t_1 \neq t_2$. In addition, for $2 \leq t \leq T$, we can show $\G_t$ is orthogonal to $\F_t$ by again similar argument.

	In order to derive the orthogonal complement of tangent space $\mathscr{T}$, we first note that 
	$$
	\left(\bigoplus_{2 \leq t \leq {T+1}} \F''_t\right) \bigoplus \left(\F_1 \bigoplus \G_1\right)
	$$	
	is the space of all random functions with mean zero and finite variance, where 
	$$
	\F''_t := \left\{ q(h_t) : \EE[q(H_t)|H_{t-1}, A_{t-1}] = 0, q \in L^2\right\},
	$$
	for $t = 2, \cdots, (T+1)$, are orthogonal to each other.
	Then it is enough to project each elements in $\F''_t$ onto the orthogonal complement of $\left(\F_t \bigoplus \G_t\right)$ for $2 \leq t \leq T$ and $\F_{T+1}$ respectively.

	First of all, we can see that $\F''_t$ is orthogonal to $\G_t$ for $2 \leq t \leq T$ by the definition of $\G_t$. Secondly, it is straightforward to show that $\F''_t$ is orthogonal to
	$$
	\F'''_t = \left\{ q(s_{t-1}, a_{t-1}) : \EE[q(S_{t-1}, A_{t-1}) ] = 0, q  \in L^2
	\right\},
	$$
	which is indeed also orthogonal to $\F_t$ for $2 \leq t \leq (T+1)$.
	Then projecting each element in $\F''_t$ onto the orthogonal complement of $\F_t$ is equivalent to projecting onto the orthogonal space of
	$$
	\F'''_t \cup \F_t = \left\{ q(s_t, s_{t-1}, a_{t-1}) : \EE[q(S_t, S_{t-1}, A_{t-1}) ] = 0, q  \in L^2
	\right\},
	$$
	which gives us exactly $\F'_t$. This concludes statement (ii).

\end{proof}

\begin{proof}[Proof of Theorem 3.1]
	
	Choose an arbitrary function $q(H_k)$  such that $\EE[q(H_k) | H_{k-1}, A_{k-1}] = 0$ and define $f(H_k) = q(H_k) - \EE[q(H_k)|S_k, S_{k-1}, A_{k-1}] \in \F_k'$. We have
	\begin{align*}
	& \EE[\phi^\pi (D) f(H_k)] = \frac{1}{T} \sum_{t=1}^T \EE[ \omega^\pi(S_t, A_t) \delta^\pi(S_t, A_t, S_{t+1}) f(H_k)]\\
	& = \frac{1}{T} \sum_{t=1}^{k-1} \EE[ \omega^\pi(S_t, A_t) \delta^\pi(S_t, A_t, S_{t+1}) f(H_k)] \\
	& = \frac{1}{T} \Bigdkh{ \EE[ \omega^\pi(S_{k-1}, A_{k-1}) \delta^\pi(S_{k-1}, A_{k-1}, S_{k}) f(H_k)] \\
		& \qquad +\sum_{t=1}^{k-2} \EE[ \omega^\pi(S_t, A_t) \delta^\pi(S_t, A_t, S_{t+1}) f(H_{k-1})]}
	\end{align*}
	The first term is zero since $\omega^\pi(s_{k-1}, a_{k-1}) \delta^\pi(s_{k-1}, a_{k-1}, s_k) \in \F_k$ by definition of TD error and thus orthogonal to $f(h_k) \in \F_k^\perp$.  
	For the second term,  for any $1 \leq t \leq k-2$, we have $(S_t, A_t, S_{t+1}) \in \sigma(H_{k-1})$
	\begin{align*}
	& \EE[ \omega^\pi(S_t, A_t) \delta^\pi(S_t, A_t, S_{t+1}) f(H_k)]  \\
	& = \EE[ \EE[\omega^\pi(S_t, A_t) \delta^\pi(S_t, A_t, S_{t+1}) f(H_k)|H_{k-1}, A_{k-1}]]\\
	& = \EE[ \omega^\pi(S_t, A_t) \delta^\pi(S_t, A_t, S_{t+1}) \EE[ f(H_k)|H_{k-1}, A_{k-1}]] = 0
	\end{align*}
	where the last equality follows from $\EE[ f(H_k)|H_{k-1}, A_{k-1}] = 0$ by noting
	\begin{align*}
	\EE[ f(H_k)|H_{k-1}, A_{k-1}] & = \EE[ q(H_k) - \EE[q(H_k)|S_k, S_{k-1}, A_{k-1}]|H_{k-1}, A_{k-1}] \\
	& = 0 - \EE[ \EE[q(H_k)|S_k, S_{k-1}, A_{k-1}]|H_{k-1}, A_{k-1}]\\
	& = - \EE[ \EE[q(H_k)|S_k, S_{k-1}, A_{k-1}]|S_{k-1}, A_{k-1}]\\
	& = -\EE[q(H_k)|S_{k-1}, A_{k-1}]\\
	& = - \EE[\EE[q(H_k)|H_{k-1}, A_{k-1}] |S_{k-1}, A_{k-1}] = 0
	\end{align*}
	where the second and last equality follow from the definition of $q$ (i.e., $\EE[q(H_k) | H_{k-1}, A_{k-1}] = 0$) and the third equality follows from the Markov property. As a result, we have $\EE[\phi^\pi (D) f(H_k)] = 0$. Lemma \ref{lemma: efficient influence} then implies that $\phi_\eff$ is orthogonal to $\F^\perp$ and thus $\phi_\eff$ is the efficient influence function (see, for example, Theorem 4.3 \cite{tsiatis2007semiparametric}).
	
\end{proof}	

\begin{proof}[Proof of Theorem 3.2]
	Note that the ratio estimator satisfies $$\PP_n (1/T) \sum_{t=1}^T \hat \omega^\pi_n(S_t, A_t) = 1.$$ Define 
	$$
	\tilde{\V}_n^{\pi} = \Pn \dkh{(1/T)\sum_{t=1}^T \bar w^{\pi}(S_{t}, A_{t}) [R_{t+1} + \bar U^{\pi}(S_t, A_t, S_{t+1})]}.
	$$
	We can first show that
	\begin{align*}
	&|\hat{\V}_n(\pi) - \tilde{\V}_n^{\pi}| \\[0.1in]
	=& | \Pn \dkh{(1/T)\sum_{t=1}^T \hat \omega_n^\pi(S_{t}, A_{t}) [R_{t+1} + \hat U^{\pi}_n(S_t, A_t, S_{t+1})]} \\[0.1in]
	-& \Pn \dkh{(1/T)\sum_{t=1}^T \bar w^{\pi}(S_{t}, A_{t}) [R_{t+1} + \bar U^{\pi}(S_t, A_t, S_{t+1})]}  |\\[0.1in]
	\leq & \left| \Pn \dkh{(1/T)\sum_{t=1}^T \hat \omega_n^\pi(S_{t}, A_{t}) [\hat U^{\pi}_n(S_t, A_t, S_{t+1}) - \bar U^{\pi}(S_t, A_t, S_{t+1})]}  \right|	\\[0.1in]
	+ & \left| \Pn \dkh{(1/T)\sum_{t=1}^T \left(\hat \omega_n^\pi(S_{t}, A_{t}) - \bar w^{\pi}(S_{t}, A_{t})\right)\left[R_{t+1}+\bar U^{\pi}(S_t, A_t, S_{t+1})\right]} \right|\\[0.1in]
	\leqconst & \sup_{s \in S, a \in \A, s' \in S} \left|\hat U_n^\pi(s, a, s') - \bar U^\pi(s, a, s')\right| + \sup_{s \in S, a \in \A}\left|\hat \omega_n^\pi(s, a) - \bar \omega^\pi(s, a)\right|,
	\end{align*}
	which converges to 0 in probability by assumptions in the lemma. In addition,	by the law of large numbers, we know $\tilde{\V}_n^{\pi}$ converges to $\tilde{\V}(\pi)$ in probability, where
	$$
	\tilde{\V}(\pi) = \EE[(1/T)\sum_{t=1}^T \bar w^{\pi}(S_{t}, A_{t}) [R_{t+1} + \bar U^{\pi}(S_t, A_t, S_{t+1})]].
	$$	
	If $\bar w^{\pi}= w^{\pi}$, then by the definition of stationary distribution, we have
	\begin{align*}
	\tilde{\V}(\pi) = \int \left(r(s, a) + \bar{U}^\pi(s, a, s')\right) d^\pi(s, a) ds da = \int r(s, a)d^\pi(s, a) ds da = \V^{\pi}.
	\end{align*}
	If $Q^\pi = \bar{Q}^\pi$, which implies $U^\pi = \bar{U}^\pi$, then
	\begin{align*}
	\tilde{\V}(\pi) &= \EE[(1/T)\sum_{t=1}^T \bar w^{\pi}(S_{t}, A_{t}) \EE[R_{t+1} +  U^{\pi}(S_t, A_t, S_{t+1}) | S_t, A_t ]]\\[0.1in]
	&= \EE[(1/T)\sum_{t=1}^T \bar w^{\pi}(S_{t}, A_{t}) \V^{\pi}] =\V^{\pi}.
	\end{align*}
	Thus if either $\bar \omega^\pi$ or $\bar U^\pi$ is correct, then $\tilde{\V}(\pi) = \V^{\pi}$. This gives that $\tilde{\V}_n^{\pi}$ converges to $\V^{\pi}$ with probability $1$. Summarizing above, we can show that
	\begin{align*}
	\left|\hat{\V}_n(\pi) - \V^{\pi} \right| &\leq \left|\hat{\V}_n(\pi) - \tilde{\V}_n^{\pi} \right| + \left|\bar{\V}_n(\pi) - \V^{\pi} \right|,
	\end{align*}
	converges to $0$ in probability. This concludes our statement.
\end{proof}

\section{Theoretical Results on Nuisance Function Estimation}\label{appendix: nuisance}

In this section, we present  two finite sample upper bounds for the estimation error of the nuisance functions that holds uniformly over the policy class. These results are needed to prove Theorem 5.1 and 5.2. 

We first present the uniform bound for the estimation error of $U^\pi$ over $\pi \in \Pi$. This is a generalization of Theorem 1 in \cite{liao2019off} in which they focused only on a single policy.

\begin{theorem}
	\label{thm: value}
	Suppose the tuning parameters $\mu_n = \lambda_n = L n^{-1/(1+\alpha)}$ and Assumptions   1-5 hold.  
	Fix some $\delta > 0$. There exists some constant $C(\delta)$ that depends on $R_{\max}, F_{\max}, G_{\max}, L_\Theta, \text{diam}(\Theta)$, $\sup_{\pi \in \Pi} J_1(\tilde Q^\pi)$, $\sup_{\pi \in \Pi} \norm{\omega^\pi}$, $\{C_0, C_1, C_2, C_3\}$ and $\alpha$, such that the following holds with probability {$1-\delta$:}
	\begin{align*}
	\sup_{\pi \in \Pi} \norm{\Upin- \Upi}^2 \leq C(\delta)   \iota pn^{-1/(1+\alpha)}
	\end{align*}
	where $\iota = 4(\kappa)^{-2}(1+p_{\min}^{-1}(1+(1/T) \norm{\frac{d_{T+1}}{d_D}}_\infty))(1+C_0\beta/(1-\beta))^2$
\end{theorem}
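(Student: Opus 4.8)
The plan is to reduce the claim to (i) a single-policy error bound — a refinement of Theorem~1 of \cite{liao2019off} in which all constants are tracked — together with (ii) a covering/union-bound argument over $\Theta$ that produces the factor $p$. Fix $\pi\in\Pi$. I would start from the basic inequality implied by the optimality of $(\tilde\eta^\pi_n,\hat Q^\pi_n)$ in \eqref{estimator}: since the projected Bellman error vanishes at $(\V^{\pi},\tilde Q^\pi)$ and $0\in\G$ (Assumption \ref{b3_3}), substituting $(\V^\pi,\tilde Q^\pi)$ into the objective bounds $\Pn[(1/T)\sum_{t}\hat g_n(S_t,A_t;\tilde\eta^\pi_n,\hat Q^\pi_n)^2]+\lambda_n J_1^2(\hat Q^\pi_n)$ by $\lambda_n J_1^2(\tilde Q^\pi)$ plus empirical-process remainders. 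Assumption \ref{b5_2} gives $J_2(\hat g_n)\le C_2+C_3 J_1(\hat Q^\pi_n)$, so the functions in play lie in sublevel sets $\F_M,\G_M$ whose sup-norm metric entropy is $\lesssim (M/\epsilon)^{2\alpha}$ by \ref{b6}. A localized empirical-process bound (peeling in $M$, Dudley's entropy integral) then converts the empirical norms to population norms with fluctuation of order $M^{\alpha}n^{-1/(1+\alpha)}$; balancing this against $\lambda_n=\mu_n=Ln^{-1/(1+\alpha)}$ yields, with high probability, both $J_1(\hat Q^\pi_n)=O(1)$ and $\norm{\hat g_n(\cdot,\cdot;\tilde\eta^\pi_n,\hat Q^\pi_n)}^2=O(n^{-1/(1+\alpha)})$.

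I would then translate this into a bound on $\norm{\hat U^\pi_n-U^\pi}$. The non-degeneracy constant $\kappa$ in \ref{b4_2} converts $\norm{\hat g_n}$ into the true Bellman error $\norm{\EE[\delta^\pi(Z_t;\tilde\eta^\pi_n,\hat Q^\pi_n)\mid S_t,A_t]}$ (after separately showing that $\hat g_n$ is close to the population projected error $\g(\cdot,\cdot;\tilde\eta^\pi_n,\hat Q^\pi_n)$). Writing out the conditional expectation, $\EE[\delta^\pi(Z_t;\eta,Q)\mid s,a]=(\V^\pi-\eta)-(\I-\P^\pi)(Q-\tilde Q^\pi)(s,a)$, so this controls $\norm{(\I-\P^\pi)(\hat Q^\pi_n-\tilde Q^\pi)}$; the geometric mixing bound \eqref{ass} makes $(\I-\P^\pi)$ boundedly invertible on the $\mu^\pi$-mean-zero subspace, via the Neumann series $\sum_{t\ge0}(\P^\pi)^t$, with operator norm at most $1+C_0\beta/(1-\beta)$, hence $\norm{\hat Q^\pi_n-\tilde Q^\pi-c\ones}$ is small for the appropriate constant $c=\mu^\pi(\hat Q^\pi_n-\tilde Q^\pi)$. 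Finally $\hat U^\pi_n-U^\pi$ is the difference of $\hat Q^\pi_n-\tilde Q^\pi$ evaluated at $(S_t,A_t)$ and, averaged under $\pi(\cdot\mid S_{t+1})$, at the next state, and $c$ cancels; bounding the ``next-state'' term in $\norm{\cdot}$ needs a change of measure from the law of $S_{t+1}$ averaged over $t$ to $d_D$ (factor $1+(1/T)\norm{d_{T+1}/d_D}_\infty$) and from $\pi(\cdot\mid s')$ to the behavior action distribution (factor $p_{\min}^{-1}$, using $d_D(s,a)\ge p_{\min}\min_s d_D(s)$). Collecting $\kappa^{-2}$, $(1+C_0\beta/(1-\beta))^2$, and $1+p_{\min}^{-1}(1+(1/T)\norm{d_{T+1}/d_D}_\infty)$, together with the crude constant $4$ from Cauchy--Schwarz splittings, reproduces $\iota$.

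For uniformity, the Lipschitz property \ref{c1} makes $\theta\mapsto\pi_\theta$, and therefore $\delta^\pi$, the two objective functionals, the population projected operator $\g(\cdot,\cdot;\eta,Q)$, and the targets $\tilde Q^\pi$ and $U^\pi$, all Lipschitz in $\theta$ with constants depending only on $L_\Theta$, $F_{\max}$, $G_{\max}$, $\sup_{\pi}J_1(\tilde Q^\pi)$ and the other fixed quantities (using boundedness \ref{b1} and the uniform bound in \ref{b3_2}). I would cover $\Theta$ with an $\epsilon$-net $\N_\epsilon$ with $|\N_\epsilon|\le(\text{diam}(\Theta)/\epsilon)^p$, apply the single-policy bound at each net point at confidence $1-\delta/|\N_\epsilon|$, and union-bound: the resulting $\log|\N_\epsilon|\lesssim p\log(\text{diam}(\Theta)/\epsilon)$ is exactly the source of the factor $p$, and taking $\epsilon$ a small polynomial in $n^{-1}$ makes the Lipschitz fluctuation between a policy and its nearest net point lower order, giving $\sup_{\pi}\norm{\hat U^\pi_n-U^\pi}^2\le C(\delta)\iota pn^{-1/(1+\alpha)}$.

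The main obstacle is controlling the coupled (nested) minimization uniformly in $\pi$: the inner minimizer $\hat g_n(\cdot,\cdot;\eta,Q)$ is a data-dependent function that must stay close to $\g(\cdot,\cdot;\eta,Q)$ simultaneously over the growing sublevel set of $(\eta,Q)$ \emph{and} over all $\pi\in\Pi$, without the two nested empirical processes compounding into a slower rate; Assumption \ref{b5_2} (the $J_2$-complexity of $\g$ is controlled by $J_1(Q)$) together with the non-degeneracy \ref{b4_2} is what keeps the localization closed, and the delicate bookkeeping is to verify that the $\theta$-Lipschitz constant of $\g(\cdot,\cdot;\eta,Q)$ — which depends on $\pi$ through $\delta^\pi$ and through a conditional expectation — is finite uniformly, and that the remaining constants assemble exactly into the stated $\iota$.
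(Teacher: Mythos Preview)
Your single-policy analysis is sound and the assembly of the constant $\iota$ from $\kappa^{-2}$, the Neumann-series factor $(1+C_0\beta/(1-\beta))^2$, and the change-of-measure factor $1+p_{\min}^{-1}(1+(1/T)\norm{d_{T+1}/d_D}_\infty)$ is exactly right. The difference from the paper is in how uniformity over $\Pi$ is obtained.

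The paper does \emph{not} prove a single-policy bound and then union-bound over a net of $\Theta$. Instead (following the template visible in the analogous ratio proof, Lemmas~\ref{lemma: uniform l2 bound} and~\ref{lemma: second term}), it enlarges the function class to include $\Pi$ from the start: the relevant empirical-process class contains functions indexed by $(g,Q,\pi)\in\G_M\times\F_M\times\Pi$, and the Lipschitz assumption~\ref{c1} is used to bound the covering number of $\V(\Pi,\F_M)=\{s\mapsto\sum_a\pi(a|s)Q(s,a):Q\in\F_M,\pi\in\Pi\}$ by $\tilde C_1(M/\epsilon)^{2\alpha}$ with $\tilde C_1=C_1+p(\text{diam}(\Theta)L_\Theta F_{\max})^{2\alpha}/(2\alpha)$. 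The factor $p$ then enters linearly through this entropy constant inside a single peeling argument, yielding a bound that is automatically uniform over $\pi$.

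Your route has a gap at the extension step. To pass from net points to all of $\Theta$ you need $\theta\mapsto\hat U^{\pi_\theta}_n$ Lipschitz, not merely $\theta\mapsto U^{\pi_\theta}$ and the objectives. Lipschitzness of the objective does not by itself give Lipschitzness of the argmin; you would need a quantitative stability argument for the coupled minimizers $(\tilde\eta^\pi_n,\hat Q^\pi_n)$ and $\hat g_n(\cdot,\cdot;\eta,Q)$ as functions of $\pi$ (e.g.\ via strong convexity in the RKHS case, or an implicit-function argument), which is not in your proposal and is not covered by the listed assumptions in general. The paper's approach sidesteps this entirely: by peeling over the product class $\F\times\G\times\Pi$, it never compares $\hat U^{\pi}_n$ at two different policies, so no estimator-stability is required.
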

Since the proof is similar to that in \citep{liao2019off} with additional efforts on controlling complexity of the policy class $\Pi$, we omit here. Next we present the uniform finite sample bound for the ratio estimator. 

\begin{theorem}
	\label{thm: ratio}
	Suppose assumptions  1-4 and  6 hold.  Let $\hat \omega^\pi_n$ be the estimated ratio function with tuning parameter $\mu_n'= \lambda_n' = L n^{-1/(1+\alpha)}$ defined in (4.10). 
	Fix some $k \geq 2$ and  $\delta > 0$. There exists some constant $C^{[k]}(\delta)$ that depends on $F_{\max}, G_{\max}, L_\Theta, \text{diam}(\Theta)$, $\sup_{\pi \in \Pi} J_1(\tilde H^{\pi})$, $\{C_0, C_1, C_2', C_3'\}$, $\sup_{\pi \in \Pi} \norm{\omega^\pi}$ and $\alpha$, such that for sufficiently large $n$ the followings hold with probability {$1-(5+k)\delta$:}
	\begin{align*}
	\sup_{\pi \in \Pi} \norm{\hat \omega^\pi_n- \omega^\pi}^2 \leq C^{[k]}(\delta) (\iota')^{\omega_{k}} pn^{-\beta_{k}},
	\end{align*}
	where  $\omega_{k} = 1 - 2^{-k+1}$ and $\iota' = 4(\kappa')^{-2}(1+p_{\min}^{-1}(1+(1/T) \norm{\frac{d_{T+1}}{d_D}}_\infty))(1+C_0\beta/(1-\beta))^2$. 
	
\end{theorem}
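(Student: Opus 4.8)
The plan is to follow the coupled-estimation template used for Theorem~\ref{thm: value} (the ratio estimator is built from the coupled relative-value estimator of the ``new MDP'' with reward $1-e^\pi$), with two new ingredients: exploiting the orthogonality identity (\ref{orthogonal}) to kill the leading linear term produced by the non-vanishing target $e^\pi$, and an iterative bootstrap of the resulting rate. First I would reduce the claim to a uniform bound on $\sup_{\pi\in\Pi}\norm{\hat e^\pi_n - e^\pi}^2$. Since $\hat\omega^\pi_n=\hat e^\pi_n/\Pn[(1/T)\sum_{t=1}^T \hat e^\pi_n(S_t,A_t)]$ while, by (\ref{orthogonal}) together with $\EE[(1/T)\sum_{t=1}^T\omega^\pi(S_t,A_t)]=1$, the population normalizer $\EE[(1/T)\sum_{t=1}^T e^\pi(S_t,A_t)]$ is bounded away from $0$ uniformly in $\pi$ (and $\omega^\pi=e^\pi$ divided by that normalizer), a short argument using boundedness of $\F,\G$ and a uniform-in-$\pi$ law of large numbers for the empirical normalizer converts a bound on $\norm{\hat e^\pi_n-e^\pi}$ into one on $\norm{\hat\omega^\pi_n-\omega^\pi}$. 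Uniformity over $\Pi$ enters here and at every subsequent step through the metric entropy of $\Theta\subset\R^p$: the Lipschitz assumption~\ref{c1} and compactness~\ref{c0} let us cover $\{\omega^\pi\},\{e^\pi\},\{\tilde H^\pi\},\{\Delta^\pi(\cdot;H)\}$ at the cost of a $p\log(\cdot)$ term in the entropy, which is what produces the polynomial-in-$p$ prefactors.

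Next I would decompose $\hat e^\pi_n-e^\pi=\bigl(\gn(\cdot,\cdot;\qn)-\gn(\cdot,\cdot;\tilde H^\pi)\bigr)+\bigl(\gn(\cdot,\cdot;\tilde H^\pi)-e^\pi\bigr)$. The second summand is the error of a penalized regression of $\Delta^\pi(Z_t;\tilde H^\pi)$ onto $\G$ whose conditional mean equals $e^\pi\in\G$ (assumption~\ref{b3-2}); localized empirical-process bounds under the $\alpha$-entropy assumption~\ref{b6} and the smoothness assumption~\ref{b5} give $\norm{\gn(\cdot,\cdot;\tilde H^\pi)-e^\pi}^2\lesssim pn^{-1/(1+\alpha)}$. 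For the first summand, note that $\Delta^\pi(Z_t;H)$ is affine in $H$ with conditional mean $1-((\I-\P^\pi)H)(S_t,A_t)$, so $\gn(\cdot,\cdot;\qn)-\gn(\cdot,\cdot;\tilde H^\pi)$ is (up to a mean-zero empirical fluctuation) the empirical $\G$-projection of $-(\I-\P^\pi)(\qn-\tilde H^\pi)$; its norm is therefore controlled by $\norm{(\I-\P^\pi)(\qn-\tilde H^\pi)}$ plus empirical-process remainders whose size is governed by an upper bound $M_n$ on $J_1(\qn)$ via~\ref{b5} and~\ref{b6}. Hence everything reduces to bounding $\sup_\pi\norm{(\I-\P^\pi)(\qn-\tilde H^\pi)}$.

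For that I would invoke the optimality of $\qn$ in (\ref{qhat}): the empirical loss plus penalty at $\qn$ is no larger than at $\tilde H^\pi$. Expanding both sides around $e^\pi=\g(\cdot,\cdot;\tilde H^\pi)$ and passing from empirical to population inner products, the would-be leading term is the linear term $\langle e^\pi,\,\g(\cdot,\cdot;\qn)-e^\pi\rangle$ (the inner product associated with $\norm{\cdot}$). By (\ref{minimizer of ratio}) and the affine structure just used, $\g(\cdot,\cdot;\qn)-e^\pi$ is the $\G$-projection of $-(\I-\P^\pi)(\qn-\tilde H^\pi)$, and since $e^\pi\in\G$ this inner product equals $-\langle e^\pi,(\I-\P^\pi)(\qn-\tilde H^\pi)\rangle$, which vanishes identically: $e^\pi$ is proportional to $\omega^\pi$, and taking $f=\qn-\tilde H^\pi$ in (\ref{orthogonal}) gives $\langle\omega^\pi,(\I-\P^\pi)(\qn-\tilde H^\pi)\rangle=0$. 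What remains is $\norm{\g(\cdot,\cdot;\qn)-e^\pi}^2\lesssim \lambda_n' J_1^2(\tilde H^\pi)+pn^{-1/(1+\alpha)}+(\text{empirical-process remainders})$, and assumption~\ref{b4} bounds the left side below by $(\kappa')^2\norm{(\I-\P^\pi)(\qn-\tilde H^\pi)}^2$ (while the projection being a contraction bounds it above by the same $(\I-\P^\pi)$-norm, so the two are equivalent up to $\kappa'$). Using only the crude penalty bound $J_1(\qn)=O((\lambda_n')^{-1/2})$ and the worst-case localization in the empirical-process remainders, this first pass yields $\sup_\pi\norm{\hat\omega^\pi_n-\omega^\pi}^2\lesssim p n^{-\beta_2}$ with $\beta_2=1/2$.

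The remaining, and main, work is the iterative refinement. Given a stage-$j$ bound $\sup_\pi\norm{\hat\omega^\pi_n-\omega^\pi}^2\lesssim (\iota')^{\omega_j}p n^{-\beta_j}$ (equivalently, a radius bound on $\norm{(\I-\P^\pi)(\qn-\tilde H^\pi)}$ and a sharpened bound on $J_1(\qn)$ from the basic inequality), I would re-run the empirical-process estimates of the previous paragraph with this smaller effective localization radius in place of the worst case. Re-balancing the $\alpha$-entropy integral against the noise level produces the updated exponent $\beta_{j+1}$, with the bookkeeping arranged so that the gap $\tfrac1{1+\alpha}-\beta_j$ is halved at each step, i.e. $\beta_{j+1}=\tfrac1{1+\alpha}\{1-(1-\alpha)2^{-j}\}$, with limit $\tfrac1{1+\alpha}$; the constant acquires one further factor of $\iota'$ with exponent increasing toward $\omega_k=1-2^{-k+1}$, and one further $\delta$ of failure probability is spent per step. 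Together with a fixed number of base events (the two regression-error bounds, the concentration of the empirical normalizer, the uniform-over-$\Pi$ entropy event, and the crude $J_1$ control) this gives the stated $(5+k)\delta$; after $k$ steps one reaches $\beta_k$. I expect the main obstacle to be precisely this refinement: carrying out the localized empirical-process argument with a data-dependent radius, verifying that the recursion contracts to exactly $1/(1+\alpha)$ rather than to a weaker limit, and keeping the constants $C^{[k]}(\delta)$, the power $(\iota')^{\omega_k}$, and the accumulated probability uniform over $\pi\in\Pi$ throughout.
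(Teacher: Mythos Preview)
Your proposal captures the paper's strategy: reduce to $\norm{\hat e^\pi_n-e^\pi}$, use orthogonality (\ref{orthogonal}) to make the cross term arising from the nonzero target $e^\pi$ mean-zero, control its empirical fluctuation by a localized maximal inequality, and bootstrap the rate via the recursion $\beta_{j+1}=(\beta_j+1/(1+\alpha))/2$. Two points of comparison are worth recording.

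\emph{Decomposition.} You split $\hat e^\pi_n-e^\pi$ through $\gn(\cdot;\tilde H^\pi)$, whereas the paper splits through $\g(\cdot;\qn)$: it first proves a bound on $\norm{\g(\qn)-\g(\qpi)}$ and $J_1(\qn)$ (its Theorem~B.1) and only afterwards adds the lower-level regression error $\norm{\gn(\qn)-\g(\qn)}$. The paper's route has the advantage that assumption~\ref{b4} is stated for the \emph{population} projection $\g$, so the lower bound $(\kappa')^2\norm{(\I-\P^\pi)(\qn-\tilde H^\pi)}^2$ is immediate; your route requires an extra transfer from $\gn$ back to $\g$ before~\ref{b4} applies.

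\emph{Role of orthogonality.} Your observation that the population linear term $\langle e^\pi,\g(\qn)-e^\pi\rangle$ vanishes is correct but one level removed from where the work happens. The basic inequality is over the \emph{empirical} objective (which involves $\gn$, not $\g$); after the paper's expansion (its Lemma~B.2), what survives is the empirical remainder
\[
\Rem(\pi)=4\Bigl|\Pn\tfrac{1}{T}\sum_{t=1}^T e^\pi(S_t,A_t)\bigl[\Delta^\pi(Z_t;\qn)-\Delta^\pi(Z_t;\qpi)\bigr]\Bigr|,
\]
whose \emph{expectation} is zero by (\ref{orthogonal})---that is the precise content of your observation. The paper then bounds $\Rem(\pi)$ by $n^{-1/2}\sup_{f\in\F^*}\abs{\Gn f}$ over a class localized by the current $J_1$-radius and variance level, with the variance controlled in turn by $\iota'\norm{\g(\qn)-\g(\qpi)}^2$ (its Lemma~B.4); this is where the factor $(\iota')^{\omega_k}$ originates.

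One quantitative slip: the paper's first pass (no variance localization, only the $J_1$ bound) delivers $\beta_1=\alpha/(1+\alpha)$, not $\beta_2=1/2$; the latter is reached after one rate-improvement step. The crude bound $J_1(\qn)=O((\lambda_n')^{-1/2})$ you invoke is too coarse---the paper instead solves a quadratic inequality for $J_1(\qn)$ coming from (\ref{J iter}) with the initial $\Rem$ bound plugged in, obtaining $J_1^2(\qn)\lesssim pn^{(1-\alpha)/(1+\alpha)}$. This affects the $(5+k)\delta$ bookkeeping but not the structure of the argument.
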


\begin{remark}

	Recall that the optimal convergence rate for the classical nonparametric regression problem under the entropy condition (6-4) is $n^{-1/(1+\alpha)}$. Theorem \ref{thm: ratio} shows that the the ratio estimator achieves the near-optimal convergence rate. As we have seen in Theorem 5.2, the achieved error rate is enough to guarantee the asymptotic efficiency of the doubly robust estimator; in fact we only need to ensure the error decays faster than $n^{-1/4}$.

	Although the estimator for $\omega^\pi$ (or $\tilde H^\pi$ more specifically)  is similar to the estimator for $U^\pi$ (or $\tilde Q^\pi$) in that both of them use the coupled estimation framework. It turns out that the Bellman error at $\{\V^{\pi}, \tilde Q(\pi)\}$ equal to zero greatly simplifies the analysis for relative value function. In the case of ratio estimator, the analog of Bellman error is not zero at $\tilde H^\pi$, that is, $\EE[\Delta^\pi(Z_t; \tilde H^{\pi})|S_t=s, A_t = a] \neq 0$.  Our proof of Theorem \ref{thm: ratio} is based on iteratively controlling a remainder term to make the error rate arbitrarily close to the desired rate, $n^{-1/(1+\alpha)}$.  To the best of our knowledge, this is the first result that directly characterizes the error of the ratio estimator. 
	How to obtain the exact optimal rate for the ratio estimator is left for future work.  In the following section, we give the proof of Theorem \ref{thm: ratio}. 
	
\end{remark}

\subsection{Uniform error bound of ratio estimator}
\label{appendix: ratio bound}

\begin{theorem} \label{thm: pre-ratio}
	Suppose $\mu_n = \lambda_n = L n^{-1/(1+\alpha)}$. Fix some $k \in \mathds{N}^+$. Under Assumptions  (2-1), (2-2), (3-3),  (4-1), (4-2), (6-1), (6-2), (6-3), (6-4) and (4-4), the followings hold with probability $1-(3+k)\delta$: for all $\pi \in \Pi$:
	\begin{align*}
	& \norm{\g(\qn) - \g(\qpi)}^2 \leq C^{[k]}(\delta) \iota^{\omega_{k}} pn^{-\beta_{k}}
	\\& J_1^2(\qn) \leq C^{[k]}(\delta) \iota^{\omega_{k}} pn^{1/(1+\alpha) - \beta_{k}}
	\end{align*}
	where $\beta_{k} = \frac{1}{1+\alpha} (1-(1-\alpha) 2^{-k+1})$,  $\omega_{k} = 1 - 2^{-k+1}$, $C^{[k]}(\delta)$ is  a constant that depends on $\dkh{k, \log(1/\delta), F_{\max}, G_{\max}, L_\Theta, \text{diam}(\Theta), \{C_i\}_{i=1}^3, \sup_{\pi \in \Pi} J_1(\tilde H^{\pi}), \alpha}$ and 
	\[
	\iota = \Bigg[\frac{2 + 2 C_0 \beta/(1-\beta) }{\kappa'} \Bigg]^2 \Bigg[\frac{2 + (1/T) \norm{\frac{d_{T+1}}{d_D}}_\infty}{p_{\min}} \Bigg]
	\]
\end{theorem}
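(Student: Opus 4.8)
I would run the coupled‑estimation basic inequality for the nested problem \eqref{qhat}--\eqref{minimizer of ratio} and then remove the obstacle that, unlike for the relative‑value estimator, the population projected error does not vanish at the target. For $\pi\in\Pi$ and $H\in\F$ write $w^\pi(H):=\g(\cdot,\cdot;H)-\g(\cdot,\cdot;\qpi)$ for the outer error and $r^\pi(H):=\gn(\cdot,\cdot;H)-\g(\cdot,\cdot;H)$ for the inner error, so the target is $\sup_{\pi\in\Pi}\norm{w^\pi(\qn)}^2$, and note $\g(\cdot,\cdot;\qpi)=e^\pi$ by \eqref{epi qpi} together with assumption \ref{b3-2}. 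Since $\qn$ minimises the penalised outer objective and $\qpi\in\F$ by \ref{b3}, comparing objective values gives the basic inequality
\begin{align*}
\Pn\Big[\tfrac{1}{T}\sum_{t=1}^T\big(e^\pi+w^\pi(\qn)+r^\pi(\qn)\big)^2\Big]+\lambda_n'J_1^2(\qn)\ \leq\ \Pn\Big[\tfrac{1}{T}\sum_{t=1}^T\big(e^\pi+r^\pi(\qpi)\big)^2\Big]+\lambda_n'J_1^2(\qpi).
\end{align*}
Expanding and cancelling the $(e^\pi)^2$ terms, the argument rests on the Pythagorean identity $\langle e^\pi,w^\pi(H)\rangle=0$ --- a consequence of $e^\pi\propto\omega^\pi$, $e^\pi\in\G$, and the orthogonality property \eqref{orthogonal} --- so the $e^\pi$‑cross terms collapse to $\langle e^\pi,r^\pi(\cdot)\rangle$, a genuine (and nonzero) remainder.

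\textbf{Uniform auxiliary bounds.} Next I would establish, uniformly over $\pi\in\Pi$ and $H\in\F$: (i) \emph{inner regression}: $\gn(\cdot,\cdot;H)$ is a penalised least‑squares fit of the population projection $\g(\cdot,\cdot;H)$, and using $J_2(\g(\cdot,\cdot;H))\leq C_2'+C_3'J_1(H)$ (assumption \ref{b5}) with the entropy bound \ref{b6}, a critical‑radius computation yields $\norm{r^\pi(H)}^2\lesssim(1+J_1(H))^{2\alpha/(1+\alpha)}n^{-1/(1+\alpha)}+\mu_n'(1+J_1(H))^2$, together with the crude bound $J_2(\gn(\cdot,\cdot;H))^2\lesssim(\mu_n')^{-1}+J_1(H)^2$; (ii) \emph{localised empirical processes}: the $(\Pn-\EE)$ fluctuations of the square and cross terms are controlled by chaining over the localised classes $\{f\in\G:J_2(f)\leq M,\ \norm{f}\leq\delta\}$, and taking $\sup_{\pi\in\Pi}$ costs a factor $p$ via the Lipschitz property \ref{c1} and compactness of $\Theta\subset\R^p$ --- this is the origin of the $p$ in the bound; (iii) \emph{identifiability and mixing}: assumption \ref{b4} turns $\norm{w^\pi(H)}$ into control of $\norm{(\I-\P^\pi)(H-\qpi)}$, and the geometric‑mixing bounds \ref{a2}, the lower bound $p_{\min}$ of \ref{a1}, and $\kappa'$ of \ref{b4} assemble the constant $\iota$ (the $C_0\beta/(1-\beta)$ from a geometric series and $\norm{d_{T+1}/d_D}_\infty$ from passing between $d_D$ and the terminal‑state law).

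\textbf{Iterative bootstrap.} Dropping the positive terms in the basic inequality and using $\norm{\gn(\cdot,\cdot;H)}_\infty\leq G_{\max}$ first gives the crude bound $J_1^2(\qn)\lesssim(\lambda_n')^{-1}$. Then, feeding a current bound $J_1^2(\qn)\leq A_{k-1}$ into (i)--(iii), absorbing the $\tfrac{1}{4}\norm{w^\pi(\qn)}^2$ portions of the fluctuation and cross terms onto the left, and using $\mu_n'=\lambda_n'$ so that the inner‑bias term $\mu_n'J_1^2(\qn)$ can be played off against the outer penalty, one arrives (for $k\geq2$) at a recursion of the schematic form
\begin{align*}
J_1^2(\qn)\ \lesssim\ \big(A_{k-1}\,\iota\,p\big)^{1/2},\qquad \sup_{\pi\in\Pi}\norm{w^\pi(\qn)}^2\ \lesssim\ \lambda_n'\,\big(A_{k-1}\,\iota\,p\big)^{1/2}.
\end{align*}
Each round therefore halves the exponent gap $\tfrac{1}{1+\alpha}-\beta_{k-1}$ and picks up one more factor $\iota^{1/2}$; iterating from the first‑round bound reproduces exactly $\beta_k=\tfrac{1}{1+\alpha}\{1-(1-\alpha)2^{-k+1}\}$ and $\omega_k=1-2^{-k+1}$, and since each round contributes one extra bad event beyond the three in the empirical‑process step, the failure probability is $(3+k)\delta$.

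\textbf{Main obstacle.} The crux is precisely that $\g(\cdot,\cdot;\qpi)=e^\pi\neq0$: for the relative‑value estimator the analogue vanishes, so the basic inequality delivers the fast rate in one shot, whereas here the leftover $\langle e^\pi,r^\pi(\qn)\rangle$ decays only like the square root of the current $J_1(\qn)$‑bound, which forces the iteration. Making that self‑referential $J_1(\qn)$‑bound genuinely contract at each round rather than stall, and doing so uniformly over $\Pi$ with a single power of $p$, is the delicate point; it is what dictates the nested empirical‑process peeling and the balanced choice $\mu_n'=\lambda_n'$.
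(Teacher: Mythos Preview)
Your architecture---basic inequality from optimality, orthogonality of $e^\pi$, iterative bootstrap halving the exponent gap---is the paper's, and the recursion, the formulas for $\beta_k,\omega_k$, and the $(3+k)\delta$ accounting are all correct. The gap is in what you call the remainder. You say the $e^\pi$-cross terms ``collapse to $\langle e^\pi,r^\pi(\cdot)\rangle$'' after Pythagoras on $w^\pi$, and that this leftover ``decays only like the square root of the current $J_1$-bound''. But $\Pn[e^\pi\, r^\pi(\qn)]$ is \emph{not} a centered empirical process, and bounding it by Cauchy--Schwarz together with the inner-regression rate $\|r^\pi(H)\|\lesssim\sqrt{\mu_n'}(1+J_1(H))$ gives only $\sqrt{\mu_n' A_{k-1}}$; feeding that into $\lambda_n' A_k\lesssim p\,n^{-1/(1+\alpha)}+\sqrt{\mu_n' A_{k-1}}$ with $\mu_n'=\lambda_n'$ returns $A_k\lesssim(\lambda_n')^{-1}$---the iteration stalls, and neither $\iota$ nor $p$ appear, contradicting your own recursion.

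The paper does not split $\gn(H)-e^\pi$ into $w+r$. Instead it uses the first-order optimality of the inner minimiser in the direction $e^\pi\in\G$ (assumption \ref{b3-2}), namely $\Pn\big[(\Delta^\pi(\cdot;H)-\gn(\cdot;H))\,e^\pi\big]=\mu_n' J_2(\gn(H),e^\pi)$, to convert the full cross term $\Pn[e^\pi(\gn(\qn)-\gn(\qpi))]$ into
\[
\Rem(\pi)\;=\;\Pn\Big[\tfrac{1}{T}\sum_{t=1}^T e^\pi(S_t,A_t)\big(\Delta^\pi(Z_t;\qn)-\Delta^\pi(Z_t;\qpi)\big)\Big]
\]
plus $O(\mu_n' J_2)$ corrections. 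By the orthogonality \eqref{orthogonal} the population mean of $\Rem$ is exactly zero for every $H\in\F$, so $\Rem$ is a genuine $(\Pn-P)$ quantity. The bootstrap then localises \emph{this} process: the current outer bound and the key inequality $\EE\big[\big(\tfrac{1}{T}\sum_t\Delta^\pi(Z_t;H)-\Delta^\pi(Z_t;\qpi)\big)^2\big]\le\iota\,\|\g(H)-\g(\qpi)\|^2$ (which is where $\iota$ actually enters, combining \ref{b4}, \ref{a2}, and the $p_{\min}$, $\|d_{T+1}/d_D\|_\infty$ change of measure) supply a variance bound $\sigma^2\lesssim\iota^{1+\omega_{k-1}}p\,n^{-\beta_{k-1}}$, whence Talagrand gives $\Rem\lesssim\iota^{(1+\omega_{k-1})/2}p\,n^{-(\beta_{k-1}+1/(1+\alpha))/2}$---exactly what drives your recursion. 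So the Pythagorean step on $w^\pi$ is true but a detour; what makes the bootstrap contract is the conversion of the cross term to the centered $\Delta$-difference process via inner first-order optimality.
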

\begin{proof}[Proof of Themrem \ref{thm: pre-ratio}]
	
	We start with 
	\begin{align}
	\norm{\g(\qn) - \g(\qpi)}^2 
	\leq 2 \norm{\g(\qn) - \gn(\qn)}^2 + 2\norm{\gn(\qn) - \g(\qpi)}^2  \label{target}
	\end{align}
	Denote the leading constant in Lemma \ref{lemma: uniform l2 bound} by ${K_0}$. For the choice of tuning parameters $(\lambda_n, \mu_n)$,  Lemma \ref{lemma: uniform l2 bound} and Assumption (6-4) imply that w.p. $1-\delta$,  for all $\pi \in \Pi$,   the first term in (\ref{target}) can be bounded by
	\begin{align*}
	& \norm{\g(\qn) - \gn(\qn)}_2^2 \\
	& \leq {K_0}\left[\mu_n (1+J_1^2(\qn) + J_2^2(\g(\qn)) + \frac{p}{n \mu_n^{\alpha}} + \frac{1}{n} +  \frac{\log(1/\delta)}{n}\right]\\
	& \leq  {K_0}\left[L n^{-1/(1+\alpha)} (1+J_1^2(\qn) + 2C_1^2 + 2C_2^2 J_1^2(\qn)) + \frac{p}{n (Ln^{-1/(1+\alpha)})^{\alpha}} + \frac{1}{n} +  \frac{\log(1/\delta)}{n}\right]\\
	& \leq C_1(\delta)pn^{-1/(1+\alpha)} + {K_1} n^{-1/(1+\alpha)} J_1^2(\qn)
	\end{align*}
	where ${K_1} = {K_0} L (1+ 2C_2^2)$ and $C_1(\delta) =  {K_0} \left(L (1+2C_1^2) + L^{-\alpha} + 1 + \log(1/\delta)\right)$.  

	Now consider the second term. Denote the leading constant in Lemma \ref{lemma: upper level initial bound} by ${K_2}$. Applying the Decomposition Lemma \ref{lemma: second term} implies that w.p. $1-2\delta$, for all $\pi \in \Pi$
	\begin{align*}
	& \norm{\gn(\qn) - \g(\qpi)}^2 + \lambda_n J_1^2(\qn) 
	\\
	& \leq 	\gamma_2(\delta, n, p, \mu_n, \lambda_n)  + {K_2}n^{-1/(1+\alpha)} J_1(\qn) +  \Rem(\pi) \\
	\end{align*}
	With the choice of $(\lambda_n, \mu_n)$, $\gamma_2(\delta, n, p, \mu_n, \lambda_n)$ can be bounded by 
	\begin{align*}
	\gamma_2 & (\delta, n, p, \mu_n, \lambda_n) 
	= {K_2} \Big[\mu_n + \lambda_n + \frac{p}{n \mu_n^{\alpha}} + \frac{1}{n^{1/(1+\alpha)}} + \frac{1}{n\lambda_n^\alpha} \\ 
	&   +  \frac{1+\log^{\frac{\alpha}{1+\alpha}}(1/\delta)}{n\mu_n^{\alpha/(1+\alpha)}} + \frac{1+\log(1/\delta)}{n} + \sqrt{\frac{p}{n \mu_n^{\alpha-1}} }+ \sqrt{\frac{\mu_n}{n}} (1+\sqrt{\log(1/\delta)}) \Big] \\
	&   = {K_2} \Big[ 2Ln^{-1/(1+\alpha)}  + L^{-\alpha} pn^{-1/(1+\alpha)}  + \frac{1}{n^{1/(1+\alpha)}} + L^{-\alpha} n^{-1/(1+\alpha)}  \\ 
	&   +  \frac{2(1+\log(1/\delta))}{n^{1-\alpha/(1+\alpha)^2} L^{\alpha/(1+\alpha)}}  + L^{(1-\alpha)/2}\sqrt{p} n^{-1/(1+\alpha)}  + L^{1/2} n^{-(1+\alpha/2)/(1+\alpha)} (1+\sqrt{\log(1/\delta)}) \Big] \\
	& \leq  {K_2} \left[  2L + 2L^{-\alpha}  + 1 +  2L^{-\alpha/(1+\alpha)} (1+\log(1/\delta)) + L^{1/2} (1+\sqrt{\log(1/\delta)})\right] pn^{-1/(1+\alpha)} \\
	& := C_2(\delta) pn^{-1/(1+\alpha)}
	\end{align*}

	As a result,  we obtain that with probability at least $1-3\delta$, for all $\pi \in \Pi$, 
	\begin{align}
	\lambda_n J_1^2(\qn) \leq C_2(\delta) p n^{-1/(1+\alpha)} +  {K_2} n^{-1/(1+\alpha)} J_1(\qn) +  \Rem(\pi) \label{J iter}
	\end{align}
	\begin{align}
	\norm{\g(\qn) - \g(\qpi)}^2 & \leq 2(C_1(\delta) + C_2(\delta))pn^{-1/(1+\alpha)} + 2{K_1} n^{-1/(1+\alpha)} J_1^2(\qn)  \notag \\
	& \qquad + 2{K_2}n^{-1/(1+\alpha)} J_1(\qn) + 2 \Rem(\pi) \label{err iter}
	\end{align}

	\textbf{Initial Rate} \quad
	We derive an initial rate by bounding $\Rem(\pi)$ uniformly over $\pi \in \Pi$.  Let $$f(H_1, H_2) (D) = \frac{4}{T} \sum_{t=1}^T \g(S_t; A_t; H_2) [\Delta^\pi(S_t, A_t, S_{t+1}; H_1) - \Delta^\pi(S_t, A_t, S_{t+1}; H_2)].$$ We thus have $\Rem(\pi) = \Pn f(\qn, \qpi)$. Note that under Assumption (6-2), $\g(\qpi) = e^\pi$. The orthogonality property (3.4) then implies that  $P f(q, \qpi) = 0$ for any $H \in \F$. We can bound $\Rem(\pi)$ by 
	\begin{align*}
	\Rem (\pi) = \abs{\Pn f(\qn, \qpi)} = \frac{J_1(\qn-\qpi)}{\sqrt{n}} \frac{\abs{\Gn f(\qn, \qpi) }}{J_1(\qn - \qpi)}   \leq  \frac{J_1(\qn - \qpi)}{\sqrt{n}}  \sup_{f \in \F_0}  \abs{\Gn f}
	\end{align*}
	where the function class $\F_0$ is given by
	\begin{align*}
	\F_0 & =  \Bigdkh{D \mapsto \frac{4}{T}\sum_{t=1}^T\Big[\frac{(H^{\pi} - q)(S_t, A_t)}{J_1(H^{\pi} - q)} 
		- \sum_{a'} \pi(a'|S')   \frac{(H^{\pi} - q)(S_{t+1}, a')}{J_1(H^{\pi} - q)} \Big]  \g(S_t, A_t; \qpi): H \in \F, \pi \in \Pi }\\
	&=  \Bigdkh{D \mapsto \frac{4}{T}\sum_{t=1}^T\Big[H(S_t, A_t) 
		- \sum_{a'} \pi(a'|S')   H(S_{t+1}, a')\Big] \g(S_t, A_t; \qpi): H \in \F, J_1(q) = 1, \pi \in \Pi }
	\end{align*}
	and thus we have $\F_0 \subset \F_1$, where 
	\begin{align*}
	\F_1 = & \Bigdkh{D \mapsto \frac{4}{T}\sum_{t=1}^T\Big[H(S_t, A_t)   - \sum_{a'} \pi(a'|S') H(S_{t+1}, a') \Big] g(S_t, A_t):\\
		& \qquad\qquad\qquad  H \in \F, J_1(q) \leq 1, g \in \G, J_2(g) \leq \sup_{\pi \in \Pi} J_2(e^\pi), \pi \in \Pi}
	\end{align*}
	{Applying  Lemma \ref{lemma: donsker class prob bound} with  $M=1$ and $\sigma = f_{\max} := 8G_{\max}F_{\max}$ implies that the following holds with probability at least $1-\delta$:}
	\begin{align*}
	\sup_{f \in \F_1}  \abs{\Gn f}  & \leq {\tilde C(\delta)} \Big(2\sqrt{p}f_{\max}^{1-\alpha} + 2pf_{\max}^{-2\alpha}n^{-1/2} + f_{\max} + n^{-1/2} \\
	& \qquad + 2p^{1/4} n^{-1/4} f_{\max}^{(1-\alpha)/2} + 2\sqrt{p} f_{\max}^{-\alpha}n^{-1/2} \Big)   \\
	& \leq {\tilde C(\delta)} (2f_{\max}^{1-\alpha} + 2f_{\max}^{-2\alpha}+ f_{\max} + 1 + 2f_{\max}^{(1-\alpha)/2} +  2f_{\max}^{-\alpha})  \\
	& \qquad \times \left( \sqrt{p} + p n^{-1/2} + 1 + n^{-1/2} +  p^{-1/4}n^{-1/4} + \sqrt{p} n^{-1/2} \right)  \\
	& \leq 6 {\tilde C(\delta)} (2f_{\max}^{1-\alpha} + 2f_{\max}^{-2\alpha}+ f_{\max} + 1 + 2f_{\max}^{(1-\alpha)/2} +  2f_{\max}^{-\alpha}) \cdot \sqrt{p} := C_3(\delta) \sqrt{p} 
	\end{align*}
	where ${\tilde C(\delta)}$ is the constant specified in Lemma \ref{lemma: donsker class prob bound}.  As a result, combing with (\ref{J iter}), w.p. $1-4\delta$,  the following holds for all $\pi$:
	\begin{align*}
	& \lambda_n J_1^2(\qn)   \leq C_2(\delta) p n^{-1/(1+\alpha)} +  {K_2} n^{-1/(1+\alpha)} J_1(\qn) +  \Rem(\pi)   \\
	& \leq   C_2(\delta) p n^{-1/(1+\alpha)} +  {K_2} n^{-1/(1+\alpha)} J_1(\qn)  +  n^{-1/2}  C_3(\delta) \sqrt{p}   J_1(\qn - \qpi) \\
	& \leq  C_2(\delta) p n^{-1/(1+\alpha)} +  \sup_{\pi \in \Pi} J_1(\tilde H^\pi) C_3(\delta) \sqrt{p} n^{-1/2} + ({K_2} n^{-1/(1+\alpha)}  + n^{-1/2} \sqrt{p} C_3(\delta))J_1(\qn)
	\end{align*}
	Dividing $\lambda_n$ on both sides gives 
	\begin{align*}
	J_1^2(\qn) & \leq \frac{C_2(\delta) p n^{-1/(1+\alpha)} +  \sup_{\pi \in \Pi} J_1(\tilde H^\pi) C_3(\delta) \sqrt{p} n^{-1/2}}{\lambda_n} \\
	& \qquad + \frac{{K_2} n^{-1/(1+\alpha)}  + n^{-1/2} \sqrt{p} C_3(\delta)}{\lambda_n} J_1(\qn)
	\end{align*}
	Let $x = J_1(\qn)$ and the above inequality becomes $x^2 \leq a + b x$ for some $a, b > 0$. When $a \leq bx$, we have $x^2 \leq 2bx$, or $x^2 \leq 4b^2$. When $a > bx$, we have $x^2 \leq a+bx \leq 2a$. Thus  $x^2 \leq \max(4b^2, 2a) \leq 2a + 4b^2$.  Now we have
	\begin{align*}
	& J_1^2(\qn) \leq \frac{2C_2(\delta) p n^{-1/(1+\alpha)} +  2\sup_{\pi \in \Pi} J_1(\tilde H^\pi) C_3(\delta) \sqrt{p} n^{-1/2}}{\lambda_n} \\
	& \qquad \qquad + 4\left(\frac{{K_2} n^{-1/(1+\alpha)}  + n^{-1/2} \sqrt{p} C_3(\delta)}{\lambda_n}\right)^2 \\
	& \leq 2L^{-1} C_2(\delta) p +2L^{-1} \sup_{\pi \in \Pi} J_1(\tilde H^\pi) C_3(\delta) \sqrt{p} n^{\frac{1-\alpha}{2(1+\alpha)}} + 8 {K_2^2} L^{-2} + 8pC_3(\delta)^2 n^{\frac{1-\alpha}{1+\alpha}} \\
	& \leq (2L^{-1} C_2(\delta) + 2L^{-1} \sup_{\pi \in \Pi} J_1(\tilde H^\pi) C_3(\delta)  +  8 {K_2^2} L^{-2} + 8C_3(\delta)^2) p n^{\frac{1-\alpha}{1+\alpha}}  := C_4(\delta) p n^{\frac{1-\alpha}{1+\alpha}}  
	\end{align*}
	Now using (\ref{err iter}), w.p. $1-4\delta$ for all $\pi \in \Pi$:
	\begin{align*}
	& \norm{\g(\qn) - \g(\qpi)}^2 \\
	& \leq 2(C_1(\delta) + C_2(\delta))pn^{-1/(1+\alpha)} + 2{K_1} n^{-1/(1+\alpha)} J_1^2(\qn)  + 2{K_2}n^{-1/(1+\alpha)} J_1(\qn) + 2 \Rem(\pi) \\
	& \leq 2(C_1(\delta) + C_2(\delta))pn^{-1/(1+\alpha)} + 2{K_1} n^{-1/(1+\alpha)} J_1^2(\qn)  + 2{K_2}n^{-1/(1+\alpha)} J_1(\qn) \\
	& \qquad + 2 \sup_{\pi \in \Pi} J_1(\tilde H^\pi) C_3(\delta) \sqrt{p} n^{-1/2} + 2 n^{-1/2} C_3(\delta) \sqrt{p} J_1(\qn) \\
	& \leq 2(C_1(\delta) + C_2(\delta))pn^{-1/(1+\alpha)} + 2{K_1} n^{-1/(1+\alpha)}  C_4(\delta) p n^{\frac{1-\alpha}{1+\alpha}} + 2{K_2} n^{-1/(1+\alpha)} \sqrt{C_4(\delta)} \sqrt{p} n^{\frac{1-\alpha}{2(1+\alpha)}} \\
	& \qquad + 2 \sup_{\pi \in \Pi} J_1(\tilde H^\pi) C_3(\delta) \sqrt{p} n^{-1/2}+2 n^{-1/2} C_3(\delta) \sqrt{p} \sqrt{C_4(\delta) p n^{\frac{1-\alpha}{1+\alpha}}}\\
	& = 2(C_1(\delta) + C_2(\delta))pn^{-1/(1+\alpha)} + 2{K_1}C_4(\delta) p n^{\frac{-\alpha}{1+\alpha}} +  2{K_2} \sqrt{C_4(\delta)} \sqrt{p} n^{-1/2} \\& \qquad + 2 \sup_{\pi \in \Pi} J_1(\tilde H^\pi) C_3(\delta) \sqrt{p} n^{-1/2}+2  \sqrt{C_4(\delta)}C_3(\delta) {p} n^{-\frac{\alpha}{1+\alpha}} \\
	& \leq \left(2(C_1(\delta) + C_2(\delta)) + 2{K_1}C_4(\delta) + 2{K_2}\sqrt{C_4(\delta)} + 2 \sup_{\pi \in \Pi} J_1(\tilde H^\pi) C_3(\delta) + 2  \sqrt{C_4(\delta)}C_3(\delta)\right) pn^{-\frac{\alpha}{1+\alpha}} \\
	& := C_5(\delta)pn^{-\frac{\alpha}{1+\alpha}}
	\end{align*}
	Let $C^{[1]}(\delta) = \max(C_4(\delta) +C_5(\delta), 1)$, $\beta_1 = \frac{\alpha}{1+\alpha}$ and $\omega_1 = 0$.  We have shown that with probability at least $1-4\delta$,  the inequalities (\ref{J iter}), (\ref{err iter}) and the followings hold:
	\begin{align*}
	& \norm{\g(\qn) - \g(\qpi)}^2 \leq C^{[1]}(\delta) pn^{-\frac{\alpha}{1+\alpha}} = C^{[1]}(\delta) \iota^{\omega_1} pn^{-\beta_1}\\
	& J_1^2(\qn) \leq C^{[1]}(\delta) pn^{ \frac{1-\alpha}{1+\alpha}} = C^{[1]}(\delta) \iota^{\omega_1} pn^{ \frac{1}{1+\alpha} - \beta_1} 
	\end{align*}


	%

	\textbf{Rate Improvement} \quad
	Let $c = 4, \beta = \beta_1, \omega = \omega_1$ and $C(\delta) = C^{[1]}(\delta)$. Denote by $E_n$ the event that the inequalities (\ref{err iter}), (\ref{J iter}) holds and 
	\begin{align*}
	& \norm{\g(\qn) - \g(\qpi)}^2 \leq C(\delta) \iota^{\omega} pn^{-\beta}\\
	& J_1^2(\qn) \leq C(\delta) \iota^{\omega} pn^{1/(1+\alpha) - \beta}.
	\end{align*}
	We have shown that $\Pr(E_n) \geq 1-c\delta$.  Below we improve the rate by refining the bound of the remainder term, $\Rem(\pi)$.   First, we note that {for the constant, $\iota$, specified in the condition}, under the event $E_n$, 
	\begin{align*}
	P f^2(\qn, \qpi) & \leq  16G_{\max}^2 \EE\Big[\Big(\frac{1}{T}\sum_{t=1}^T [\Delta^\pi(S_t, A_t, S_{t+1}; \qn) - \Delta^\pi(S_t, A_t, S_{t+1}; \qpi)]\Big)^2\Big]\\
	& \leq 16G_{\max}^2 \cdot \iota \norm{\g(\qn) - \g(\qpi)}^2 \\
	& \leq 16 G_{\max}^2 \iota  \cdot C(\delta) \iota^{\omega} pn^{-\beta}  =\iota^{1+\omega}\tilde{C}_1(\delta) p n^{-\beta} \notag;
	\end{align*}
	see Lemma \ref{lemma: key ineq} for the deviation of the second inequality and 
	similarly,
	\begin{align*}
	J_1(\qn - \qpi) & \leq J_1(\qn) + J_1(\tilde H^\pi) \leq \sqrt{C(\delta)} \iota^{\omega/2}\sqrt{p} n^{\frac{1/(1+\alpha) - \beta}{2}} + \sup_{\pi \in \Pi} J_1(\tilde H^\pi) \\
	& \leq  \tilde{C}_2(\delta) \iota^{\omega/2} \sqrt{p} n^{\frac{1/(1+\alpha) - \beta}{2}}
	\end{align*}
	where we define $\tilde C_1(\delta) = G_{\max}^2 C(\delta)$ and $\tilde C_2(\delta) = \sqrt{C(\delta)} + \sup_{\pi \in \Pi} J_1(\tilde H^\pi)$.  Thus under the event $E_n$, we have 
	\begin{align*}
	\Rem (\pi) = \abs{\Pn f(\qn, \qpi)} \leq  n^{-1/2} \sup_{f \in \F_0}\abs{\Gn f}
	\end{align*}
	where $\F_0$ is given by 
	\begin{align*}
	\F_0 = & \Bigdkh{ f: D \mapsto \frac{4}{T}\sum_{t=1}^T \big[H(S_t, A_t) - \sum_{a'} \pi(a'|S_{t+1})    q (S_{t+1}, a')\big] g(S_t, A_t): \pi \in \Pi, g \in \G, H \in \F, \\
		& \hspace{10ex} J_1(q) \leq \tilde{C}_2(\delta)\iota^{\omega/2}\sqrt{p}n^{\frac{1/(1+\alpha) - \beta}{2}},  J_2(g) \leq \sup_{\pi \in \Pi} J_2(e^\pi), P f^2 \leq \tilde{C}_1(\delta)\iota^{1+\omega}pn^{-\beta}}
	\end{align*}
	Apply Lemma \ref{lemma: donsker class prob bound} with $\sigma^2 = \tilde{C}_1(\delta) \iota^{1+\omega} pn^{-\beta}$ and $M = \tilde{C}_2(\delta) \iota^{\omega/2}\sqrt{p}n^{\frac{1/(1+\alpha) - \beta}{2}}$, with probability $1-\delta$, $$\sup_{f \in \F_0}\abs{\Gn f}  \leq {\tilde C(\delta)} \gamma(n, p, M, \sigma),$$
	where ${\tilde C(\delta)}$ is the leading constant in Lemma \ref{lemma: donsker class prob bound}. Note that $\gamma(n, p, M, \sigma)$ can be bounded by
	\begin{align*}
	& \gamma(n, p, M, \sigma) 
	\leq [(1 + \tilde{C}_2(\delta)^{\alpha}) \tilde{C}_1(\delta)^{\frac{1-\alpha}{2}}]\iota^{\frac{1-\alpha + \omega}{2}}p n^{-\frac{\beta - \alpha/(1+\alpha)}{2}} + \frac{1+\tilde{C}_2(\delta)^{2\alpha}}{\tilde{C}_1(\delta)^{\alpha}}\iota^{-\alpha}p n^{\alpha/(1+\alpha) -1/2} \\
	& \qquad + \sqrt{\tilde{C}_1(\delta)}\iota^{\frac{1+\omega}{2}}\sqrt{p} n^{-\beta/2} + n^{-1/2} + (1+\tilde{C}_2(\delta)^{\alpha/2})\iota^{\frac{\alpha \omega}{4}}\sqrt{p} n^{-\frac{1 + \beta- \alpha/(1+\alpha)}{4}} \\
	& \qquad + (1+ \tilde{C}_2(\delta)^{\alpha})\iota^{\frac{\alpha \omega}{2}}\sqrt{p} n^{-\frac{1 -\alpha/(1+\alpha) }{2}} \\
	& \leq [(1 + \tilde{C}_2(\delta)^{\alpha}) \tilde{C}_1(\delta)^{\frac{1-\alpha}{2}} + \frac{1+\tilde{C}_2(\delta)^{2\alpha}}{\tilde{C}_1(\delta)^{\alpha}} + \sqrt{\tilde{C}_1(\delta)} + 2+\tilde{C}_2(\delta)^{\alpha/2} + \tilde{C}_2(\delta)^\alpha]\iota^{\frac{1+\omega}{2}} p n^{-\frac{\beta - \alpha/(1+\alpha)}{2}} \\
	& \leq \left[\left(2G_{\max}^{1-\alpha} + J_{\max}^{\alpha} G_{\max}^{1-\alpha} + G_{\max} + 2\right) \sqrt{C(\delta)} + \left(3 + 2G_{\max}^{-2\alpha} + J_{\max}^{2\alpha} + J_{\max}^{\alpha/2} + J_{\max}^\alpha\right)\right] \\
	& \qquad \times\iota^{\frac{1+\omega}{2}} p n^{-\frac{\beta - \alpha/(1+\alpha)}{2}}\\
	& \leq {K_3} (1+\sqrt{C(\delta)})\iota^{\frac{1+\omega}{2}} p n^{-\frac{\beta - \alpha/(1+\alpha)}{2}}
	\end{align*}
	{where the constant ${K_3}$ depends on $J_{\max} := C_2 + C_3 \sup_{\pi \in \Pi} J_1(\tilde H^{\pi}) , F_{\max}, G_{\max}$}. We then have w.p. $1-(c+1)\delta$ for all $\pi$
	\begin{align*}
	\Rem (\pi) \leq \left({K_3} {\tilde C(\delta)} (1+\sqrt{C(\delta)})\right)\iota^{\frac{1+\omega}{2}}p n^{-\frac{\beta - \alpha/(1+\alpha)+1}{2}} = \tilde C_3(\delta)\iota^{\frac{1+\omega}{2}} p n^{-\frac{\beta + 1/(1+\alpha)}{2}}
	\end{align*}
	Combing with (\ref{J iter}), which holds under the event $E_n$, we have 
	\begin{align*}
	\lambda_n J_1^2(\qn) & \leq \bar {C}(\delta) p n^{-1/(1+\alpha)} + \tilde C_3(\delta) \iota^{\frac{1+\omega}{2}}p n^{-\frac{\beta+ 1/(1+\alpha)}{2}} +  {K_2} n^{-1/(1+\alpha)} J_1(\qn) \\
	&  \leq (\bar {C}(\delta) + \tilde C_3(\delta)) \iota^{\frac{1+\omega}{2}} p n^{-\frac{\beta + 1/(1+\alpha)}{2}} +  {K_2} n^{-1/(1+\alpha)} J_1(\qn) 
	\end{align*}
	Thus using the same argument as in the proof of Lemma \ref{lemma: upper level initial bound} gives 
	\begin{align*}
	J_1^2(\qn) & \leq 2 (\bar {C}(\delta) + \tilde C_3(\delta)) L^{-1} \iota^{\frac{1+\omega}{2}} p n^{-\frac{\beta+ 1/(1+\alpha)}{2} + \frac{1}{1+\alpha}} + 4 {K_2}^2 L^{-1} n^{-1/(1+\alpha)}\\
	& \leq \tilde C_4(\delta) \iota^{\frac{1+\omega}{2}} p n^{\frac{ 1/(1+\alpha) - \beta}{2}}
	\end{align*}
	where $	\tilde C_4(\delta) = (2 (\bar {C}(\delta)+ \tilde C_3(\delta)) + 4{K_2}^2) L^{-1}$.  Now using (\ref{err iter}) (again this inequality holds under the event $E_n$) gives
	\begin{align*}
	& \norm{\g(\qn) - \g(\qpi)}^2 \\
	& \leq 4\bar {C}(\delta)pn^{-1/(1+\alpha)} +2{K_1} n^{-1/(1+\alpha)} J_1^2(\qn) + 2{K_2} n^{-1/(1+\alpha)} J_1(\qn)  + 2 \Rem(\pi) \\
	& \leq  4\bar {C}(\delta) pn^{-1/(1+\alpha)}  +2{K_1} n^{-1/(1+\alpha)}  \tilde C_4(\delta)\iota^{\frac{1+\omega}{2}}  p n^{\frac{ 1/(1+\alpha) - \beta}{2}}  \\
	& \qquad  + 2{K_2} n^{-1/(1+\alpha)} \sqrt{\tilde C_4(\delta)}\iota^{\frac{1+\omega}{4}} \sqrt{p} n^{\frac{ 1/(1+\alpha) - \beta}{4}} +2 \tilde C_3(\delta)\iota^{\frac{1+\omega}{2}}  p n^{-\frac{\beta + 1/(1+\alpha)}{2}} \\
	& \leq \tilde C_5(\delta)\iota^{\frac{1+\omega}{2}}  p n^{-\frac{\beta + 1/(1+\alpha)}{2}}
	\end{align*}
	where $\tilde C_5(\delta) = 2[2\bar {C}(\delta)+ 2{K_1}\tilde C_4(\delta) + \frac{{K_2}^2}{{K_1}}+ \tilde C_3(\delta)]$. 
	Note that the two constants $\tilde C_4(\delta), \tilde C_5(\delta)$ can be simply replaced by $C_{new}(\delta) = {K_4} {\tilde C(\delta)} (1 + \sqrt{C(\delta)})$ {for some constant ${K_4}$} by noting 
	\begin{align*}
	& \tilde C_4(\delta) + \tilde C_5(\delta) \\
	& = 4{\tilde C(\delta)} + \frac{2{K_2}^2}{{K_1}}+ 2\tilde C_3(\delta)  + (4{K_1} + 1)\tilde C_4(\delta) \\
	& = 4{\tilde C(\delta)}+ \frac{2{K_2}^2}{{K_1}}+ 2\tilde C_3(\delta) + (4{K_1} + 1) L^{-1}  (2 ({\tilde C(\delta)}+ \tilde C_3(\delta)) + 4{K_2}^2)  \\
	& = (4 + 2(4{K_1} + 1) L^{-1} ){\tilde C(\delta)}+ 4(4{K_1} + 1) L^{-1}{K_2}^2   + \frac{2{K_2}^2}{{K_1}}+ (2 + 2(4{K_1} + 1) L^{-1} )\tilde C_3(\delta) \\
	& = (4 + 2(4{K_1} + 1) L^{-1} ){\tilde C(\delta)}+ 4(4{K_1} + 1) L^{-1}{K_2}^2   + \frac{2{K_2}^2}{{K_1}} \\
	& \qquad + (2 + 2(4{K_1} + 1) L^{-1} ) \left({K_3}{\tilde C(\delta)} (1+\sqrt{C(\delta)})\right) \\
	& = (4 + 2(4{K_1} + 1) L^{-1} ){\tilde C(\delta)}+ 4(4{K_1} + 1) L^{-1}{K_2}^2   + \frac{2{K_2}^2}{{K_1}} \\
	& \qquad + (2 + 2(4{K_1} + 1) L^{-1} ) \left({K_3}{\tilde C(\delta)} (1+\sqrt{C(\delta)})\right) \\
	& \leq {K_4}{\tilde C(\delta)} (1 + \sqrt{C(\delta)}) 
	\end{align*}
	We now obtain that the following holds w.p. $1-(c+1)\delta$, for all $\pi \in \Pi$
	\begin{align*}
	& \norm{\g(\qn) - \g(\qpi)}_2^2 \leq C_{new} (\delta) \iota^{\frac{1+\omega}{2}} p n^{-\frac{\beta + 1/(1+\alpha)}{2}}
	\\
	& J_1^2(\qn) \leq  C_{new}  (\delta) \iota^{\frac{1+\omega}{2}} p n^{\frac{ 1/(1+\alpha) - \beta}{2}} 
	\end{align*}
	Thus the convergence rate is improved to $\beta_{2} = \frac{\beta_1 + 1/(1+\alpha)}{2}$ and $\omega_{2} = (1+\omega_1)/2$. The same procedure can be applied $k$ times. It is easy to verify that  for any $k \geq 1$,  
	$\beta_{k+1} = \frac{\beta_k + 1/(1+\alpha)}{2} = \frac{1}{1+\alpha} -  \frac{(1-\alpha) 2^{-k}}{1+\alpha} $ and  $\omega_{k+1} = (1+\omega)/2 = 1 - 2^{-k}$ and thus the desired result. 
\end{proof}

\begin{proof}[Proof of Theorem \ref{thm: ratio}]
	
	Recall the ratio estimator, $\hat \omega^\pi_n$ in (4.10).  From Theorem \ref{thm: pre-ratio} and Lemma \ref{lemma: uniform l2 bound}, w.p. $1-(3+k)\delta$ for all $\pi \in \Pi$ we have
	\begin{align*}
	& \norm{\hat e^\pi_n - e^\pi}^2  = \norm{\gn(\qn) - \g(\qn) + \g(\qn) -  \g(\qpi)}^2 \\
	& \leq 2\norm{\gn(\qn) - \g(\qn)}^2 + 2\norm{\g(\qn) -  \g(\qpi)}^2 \\
	& \leq C_1(\delta)pn^{-1/(1+\alpha)} + {K_1} n^{-1/(1+\alpha)} J_1^2(\qn) + C^{[k]}(\delta) \iota^{\omega_{k}} pn^{-\beta_{k}} \\
	& \leq C_1(\delta)pn^{-1/(1+\alpha)} + {K_1} n^{-1/(1+\alpha)} C^{[k]}(\delta) \iota^{\omega_{k}} pn^{1/(1+\alpha) - \beta_{k}} + C^{[k]}(\delta) \iota^{\omega_{k}} pn^{-\beta_{k}} \\
	& = C_1(\delta)pn^{-1/(1+\alpha)} + ({K_1} + 1)C^{[k]}(\delta) \iota^{\omega_{k}} pn^{-\beta_{k}},
	\end{align*}
	where, as in the beginning of the proof of Theorem \ref{thm: pre-ratio}, ${K_1} = {K_0} L (1+ 2C_2^2)$ and ${K_0}$ is the leading constant in Lemma \ref{lemma: uniform l2 bound}. 
	As such, we have $$\norm{\hat e^\pi_n - e^\pi} \leq C_1^{[k]}(\delta)  \iota^{\omega_{k}/2} \sqrt{p}n^{-\beta_{k}/2} $$ 
	For simplicity, let $f(g)(D) = (1/T)\sum_{t=1}^T g(S_t, A_t)$. On the other hand,	
	\begin{align*}
	\abs{\Pn f(\gn(\qn)) - P f(e^\pi)} & = \abs{\Pn f( \gn(\qn)) - P f(\gn(\qn)) + P f(\gn(\qn)) - P f(e^\pi)} \\
	& \leq  \abs{(\Pn -P) f(\gn(\qn))} + P \abs{ f(\gn(\qn)) - f(e^\pi)}  \\
	& \leq \abs{(\Pn -P) f(\gn(\qn))} + \norm{\gn(\qn) - e^\pi}\\
	& =  \abs{(\Pn -P) f(\gn(\qn))} + \norm{\hat e^\pi_n - e^\pi}.
	\end{align*}
	Consider the first term: \begin{align*}
	\abs{(\Pn -P) f(\gn(\qn))}& \leq n^{-1/2} J_2(\gn(\qn)) \frac{\Gn f(\gn(\qn))}{J_2(\gn(\qn))} \\&
	\leq n^{-1/2} J_2(\gn(\qn))  \sup_{g \in\G_1}  \abs{\Gn f(g)},
	\end{align*}
	where $\G_1 = \{ f(g): g \in \G,  J_2(g) \leq 1 \}$.  Using Lemma \ref{lemma: uniform l2 bound}, w.p $1-\delta$ for all $\pi$, 
	\begin{align*}
	& J_2(\gn(\qn))  \leq {K_0}\left(1 + J_1(\qn) +  J_2(\g (\qn)) + \sqrt{\frac{p}{n \mu_n^{\alpha+1}} }+ \sqrt{\frac{1}{n \mu_n}} +  \sqrt{\frac{\log(1/\delta)}{n\mu_n}}\right)\\
	& \leq {K_0}\left(1 + J_1(\qn) +  C_2 + C_3 J_1(\qn) + \sqrt{pL^{-\alpha-1}}+  L^{-1/2}(1+\sqrt{\log(1/\delta)})n^{-\frac{\alpha}{2(1+\alpha)}} \right)
	\end{align*}
	where ${K_0}$ is the leading constant in Lemma \ref{lemma: uniform l2 bound}. 
	Combining with the bound on $J_1(\qn)$ in Theorem \ref{thm: pre-ratio} gives that $$J_2^2(\gn(\qn)) \leq C^{[k]}_2(\delta) \iota^{\omega_{k}} pn^{1/(1+\alpha) - \beta_{k}}.$$ 
	
	{Similar to Lemma \ref{lemma: donsker class prob bound}, we can apply Talagrand's inequality to show that $\sup_{g \in\G_1}  \abs{\Gn f(g)} \leqconst C(\delta)$. As a result, we have
		\begin{align*}
		\abs{(\Pn -P) f(\gn(\qn))}  \leq C_3^{[k]}(\delta) \iota^{\omega_k/2}\sqrt{p}n^{-(\alpha/(1+\alpha)+ \beta_k)/2}.
		\end{align*}
		Together with the bound on $\norm{\hat e^\pi_n - e^\pi}$, we have
		\begin{align*}
		\abs{\Pn f(\gn(\qn)) - P f(e^\pi)} \leq C_4^{[k]}(\delta) \iota^{\omega_k/2}\sqrt{p}n^{-\beta_{k}/2}
		\end{align*}
	}
	
	Suppose $n$ is large enough such that the RHS  satisfies $C_4^{[k]}(\delta) \iota^{\omega_k/2}\sqrt{p}n^{-\beta_{k}/2} < (1/2) P f(e^\pi)$, then $\abs{\Pn f(\gn(\qn)) - P f(e^\pi)} \leq (1/2) Pf(e^\pi)$. Thus we have $\Pn f(\gn(\qn)) \geq (1/2) P f(e^\pi) > 0$ and 
	\begin{align*}
	\norm{\hat \omega^\pi_n - \omega^\pi} & \leq \frac{\norm{\hat e^\pi_n - e^\pi}}{\abs{\Pn f(\hat e^\pi_n)}} + \norm{e^\pi} \times \Bigabs{\frac{1}{\Pn f(\hat e^\pi_n)} - \frac{1}{P f(e^\pi)}} \\
	& \leq \frac{2}{P f(e^\pi)} \norm{\hat e^\pi_n - e^\pi} +\frac{2 \norm{e^\pi} }{(P f(e^\pi))^2} \times   \abs{\Pn f(\gn(\qn)) - P f(e^\pi)} 
	\end{align*}
	Recall that $\norm{\omega^\pi}^2 = \int \omega^\pi(s, a) d^\pi(s, a) > 1$. As a result, $P f(e^\pi)  = \frac{1}{\norm{\omega^\pi}^2} = \norm{e^\pi}^2$. Finally we have 
	\begin{align*}
	\norm{\hat \omega^\pi_n - \omega^\pi}  
	& \leq 2 \norm{\omega^\pi}^2 \cdot \norm{\hat e^\pi_n - e^\pi} + 2 \norm{\omega^\pi}^3 \cdot   \abs{\Pn f(\gn(\qn)) - P f(e^\pi)} \\ 
	& \leq 2 \norm{\omega^\pi}^3  \left(\norm{\hat e^\pi_n - e^\pi} + \abs{\Pn f(\gn(\qn)) - P f(e^\pi)}\right) \\
	& \leq 4 \sup_{\pi \in \Pi} \{\norm{\omega^\pi}^3\} (C_1^{[k]}(\delta) +  C_4^{[k]}(\delta)) \iota^{\omega_{k}/2} \sqrt{p} n^{-\beta_{k}/2} 		
	\end{align*}

\end{proof}

\begin{lemma}[Lower level]
	\label{lemma: uniform l2 bound}
	Let $\norm{g}_{n}^2 = \Pn [1/T\sum_{t=1}^{T} g^2(S_t, A_t)]$.  Suppose Assumptions (4-1), (4-2), (6-1), (6-2), (6-4), (4-4) and (3-2) hold. Then, $\Pr(E_n) \geq 1-\delta$, where the event $E_n$ is that  for all $(q, \pi) \in \F \times \Pi$, the followings hold
	\begin{align*}
	& \norm{\gn (q) - \g (q)}_{}^2 \leqconst \mu_n (1+J_1^2(q) + J_2^2(\g (q))) + \frac{p}{n \mu_n^{\alpha}} + \frac{1}{n} +  \frac{\log(1/\delta)}{n}\\
	&  J_2(\gn(q))\leqconst  1 + J_1(q) +  J_2(\g (q)) + \sqrt{\frac{p}{n \mu_n^{\alpha+1}} }+ \sqrt{\frac{1}{n \mu_n}} +  \sqrt{\frac{\log(1/\delta)}{n\mu_n}}\\
	& \norm{\gn(q) - \g(q)}_{n}^2 \leqconst \mu_n (1+J_1^2(q) + J_2^2(\g (q))) + \frac{p}{n \mu_n^{\alpha}} + \frac{1}{n} +  \frac{\log(1/\delta)}{n}
	\end{align*}
	where the leading constant only depends on $F_{\max}, G_{\max}, L_\Theta, \text{diam}(\Theta), C_3, \alpha$
	
\end{lemma}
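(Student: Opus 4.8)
\textbf{Proof plan for Lemma~\ref{lemma: uniform l2 bound}.} This is the uniform-over-$(\pi,q)\in\Pi\times\F$ version of the standard oracle inequality for penalized least squares over $\G$ (the single-policy analogue appears in \cite{liao2019off}); the new work is to absorb the extra complexity contributed by the $p$-dimensional policy class. Fix $(q,\pi)$ and write $h:=\gn(q)-\g(q)$, $Y_t:=\Delta^\pi(Z_t;q)$ and $\delta_t:=Y_t-\g(q)(S_t,A_t)$; by \ref{b1} all of $q$, any $g$ ranging in $\G$, $Y_t$ and $\delta_t$ are bounded by constants depending only on $F_{\max},G_{\max}$. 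Because $\G$ is convex and $\g(q)$ minimizes the population risk $g\mapsto\EE[\frac1T\sum_{t=1}^T(Y_t-g(S_t,A_t))^2]$ over $\G$, first-order optimality gives $\EE[\frac1T\sum_{t=1}^T\delta_t\,h(S_t,A_t)]\le 0$; combining this with the empirical optimality of $\gn(q)$ and expanding the squares yields the basic inequality
\begin{align*}
\norm{h}_n^2+\mu_n J_2^2(\gn(q))\ \le\ \mu_n J_2^2(\g(q))+2(\Pn-\EE)\Bigl[\tfrac1T\sum_{t=1}^T\delta_t\,h(S_t,A_t)\Bigr].
\end{align*}
All three conclusions follow once the centred empirical process on the right is controlled uniformly.

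\emph{The crux: a uniform bound on the empirical process term.} For radii $M_1,M_2>0$ I would introduce the class $\H_{M_1,M_2}$ of maps $D\mapsto\frac1T\sum_{t=1}^T\bigl(\Delta^\pi(Z_t;\tilde q)-\g(\tilde q)(S_t,A_t)\bigr)\,g(S_t,A_t)$ over $\pi\in\Pi$, $\tilde q\in\F$ with $J_1(\tilde q)\le M_1$, and $g\in\G$ with $J_2(g)\le M_2$. Its members are bounded by a constant depending only on $F_{\max},G_{\max}$ and have $L_2(\EE)$-variance $\lesssim\norm{g}^2$. For their $\norm{\cdot}_\infty$-entropy, \ref{b6} controls the $g$- and $\tilde q$-directions, the Lipschitz parametrization \ref{c1} of $\Pi$ by the compact set $\Theta\subset\R^p$ gives $N(\epsilon,\Pi,\norm{\cdot}_\infty)\le(c\,L_\Theta\,\text{diam}(\Theta)/\epsilon)^{p}$, and \ref{b5} lets one bound $J_2(\g(\tilde q))$ by a constant plus a multiple of $M_1$; together these give $\log N(\epsilon,\H_{M_1,M_2},\norm{\cdot}_\infty)\lesssim C_1((M_1+M_2)/\epsilon)^{2\alpha}+p\log(1/\epsilon)$. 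Feeding this into Dudley's entropy integral for the localized Rademacher complexity (finite exactly because $\alpha<1$) together with Talagrand's concentration inequality, and peeling over dyadic scales of $J_1(q)$, $J_2(h)$ and the $L_2(\EE)$-radius $\norm{h}$, I expect: with probability at least $1-\delta$, simultaneously over all $(q,\pi)$,
\begin{align*}
\Bigl|(\Pn-\EE)\Bigl[\tfrac1T\sum_{t=1}^T\delta_t\,h(S_t,A_t)\Bigr]\Bigr|\ \le\ \tfrac14\norm{h}^2+\tfrac14\mu_n J_2^2(h)+\Rem,
\end{align*}
where $\Rem$ collects exactly $\mu_n(1+J_1^2(q)+J_2^2(\g(q)))$, $p/(n\mu_n^\alpha)$, $1/n$ and $\log(1/\delta)/n$ (the $(M/\epsilon)^{2\alpha}$-part of the entropy, optimized against the penalty-controlled size of $J_2(h)$, produces the $1/(n\mu_n^\alpha)$ rate; the $p\log(1/\epsilon)$-part coming from $\Pi$ supplies the $p$ prefactor, bounded crudely by $p/(n\mu_n^\alpha)$).

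\emph{Closing the argument.} A companion uniform ratio inequality for the squared class $\{(g_1-g_2)^2:g_1,g_2\in\G\}$ (again Talagrand plus \ref{b6}) gives, with high probability and uniformly, $\tfrac12\norm{h}^2-\Rem\le\norm{h}_n^2\le2\norm{h}^2+\Rem$. Substituting the empirical-process bound and this ratio inequality into the basic inequality and using $J_2^2(h)\le2J_2^2(\gn(q))+2J_2^2(\g(q))$ to move $\tfrac14\mu_n J_2^2(h)$ to the left against $\mu_n J_2^2(\gn(q))$ reduces everything to $\tfrac12\norm{h}_n^2+\tfrac12\mu_n J_2^2(\gn(q))\le\tfrac32\mu_n J_2^2(\g(q))+\Rem$. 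Reading off the first summand gives the stated bound on $\norm{\gn(q)-\g(q)}_n^2$ (absorbing the leftover $\mu_n J_2^2(\g(q))$ into $\Rem$); reading off the second, dividing by $\mu_n$ and taking square roots gives the bound on $J_2(\gn(q))$; and the population bound on $\norm{\gn(q)-\g(q)}^2$ then follows from the ratio inequality. The one genuinely delicate step is the uniform empirical-process bound: one must carry the $\R^p$-covering of $\Pi$ through the chaining, cope with a ``response'' $\delta_t$ that itself depends on $(\pi,q)$ (so this is not plain regression), and perform the simultaneous peeling in $J_1(q)$, $J_2(h)$ and $\norm{h}$; after inserting $\mu_n=Ln^{-1/(1+\alpha)}$ the remainder is bookkeeping.
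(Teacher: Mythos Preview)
Your proposal is correct and reaches the same conclusion, but the packaging differs from the paper's in two ways worth noting. First, the paper does not separate the empirical and population $L_2$ norms: it writes a single identity
\[
\norm{\gn(q)-\g(q)}^2+\norm{\gn(q)-\g(q)}_n^2+\mu_n J_2^2(\gn(q))=I_1(q,\pi)+I_2(q,\pi),
\]
where $I_1$ is bounded deterministically via the minimizing property of $\gn(q)$, and $I_2$ is a single relative-deviation term handled by peeling on the combined radius $J_2^2(\gn(q))+J_2^2(\g(q))+J_1^2(q)$ together with Theorem~19.3 of \cite{gyorfi2006distribution}; this avoids the extra uniform ratio inequality $\norm{h}_n^2\approx\norm{h}^2$ that you invoke. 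Second, for the entropy the paper absorbs the contribution of $\Pi$ directly into the $\epsilon^{-2\alpha}$ coefficient (yielding a constant linear in $p$) rather than keeping a separate $p\log(1/\epsilon)$ term; this is exactly your ``bounded crudely by $p/(n\mu_n^\alpha)$'' step, so the two are equivalent in the end. Your route via Talagrand plus localized Dudley chaining and a separate ratio lemma is more modular and perhaps more familiar; the paper's route is more compact because the relative-deviation inequality of \cite{gyorfi2006distribution} already has the localization and peeling built in and delivers both norms at once.
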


\begin{proof}[Proof of Lemma \ref{lemma: uniform l2 bound}]
	Recall that for any $(s, a, s')$ and $H \in \F$, $\Delta^\pi(s, a, s'; H) = 1 - q(s, a) + \sum_{a'} \pi(a'|s') q(s', a')$. For simplicity,  let $\epsilon_t^\pi(q) = \Delta^\pi(S_t, A_t, S_{t+1}; H)$. 	
	First we note that  for all $g \in \G$, $ \sum_{t=1}^{T}\EE\left[(\Delta^\pi(S_t, A_t; H) -  \g(S_t, A_t; H)) g(S_t, A_t)\right] = 0$ because of the optimizing property of $\g$. It can then be shown that 
	\begin{align*}
	\norm{\gn(q) - \g(H) }_{}^2 =  \EE\left[ \frac{1}{T} \sum_{t=1}^{T} (\epsilon_t^\pi(q) - \gn(S_t, A_t; H))^2 -  (\epsilon_t^\pi(q) -  \g(S_t, A_t; H))^2\right]
	\end{align*}
	For $g_1, g_2 \in \G, q \in  \F, \pi \in \Pi$,  we introduce
	\begin{align*}
	& f_1(g_1, g_2, q, \pi): D \mapsto \frac{1}{T} \sum_{t=1}^{T} (\epsilon_t^\pi(q) - g_1(S_t, A_t))^2 -  (\epsilon_t^\pi(q) -  g_2(S_t, A_t))^2\\
	& f_2(g_1, g_2, q, \pi): D \mapsto \frac{1}{T} \sum_{t=1}^{T} (\epsilon_t^\pi(q) - g_2(S_t, A_t))(g_1(S_t, A_t) - g_2(S_t, A_t)) \\
	& J^2(g_1, g_2, q) = J_2^2(g_1) + (2/3)J_2^2(g_2) + (2/3)J_1^2(H)
	\end{align*}
	We then have 
	\begin{align*}
	& \norm{\gn(q) - \g(H) }_{}^2 = \EE f_1(\gn(q), \g(H), q, \pi)  \\
	& \norm{\gn(q) - \g(H) }_{n}^2 = \Pn f_1(\gn(q), \g(H), q, \pi)  + 2 \Pn f_2(\gn(q), \g(H), q, \pi)
	\end{align*}
	Thus we have $\norm{\gn(q) - \g(H) }_{}^2 + \norm{\gn(q) - \g(H) }_{n}^2 + \mu_n J_2^2(\gn(q))  = I_1(q, \pi) + I_2(q, \pi)$, where 
	\begin{align*}
	& I_1(q, \pi) = 3 \big(\Pn f_1(\gn(q), \g(H), q, \pi) + \mu_n J^2(\gn(q), \g(H), q)\big)\\
	& I_2(q, \pi) =  (\Pn + P) f_1(\gn(q), \g(H), q, \pi) + \mu_n J_2^2(\gn(q)) \\
	& \qquad \qquad \qquad + 2 \Pn f_2(\gn(q), \g(H), q, \pi)- I_1(q,  \pi)
	\end{align*}
	For the first term, the optimizing property of $\gn(q)$ implies that 
	\begin{align*}
	& (1/3)I_1(q, \pi)  = \Pn f_1(\gn(q), \g(H), q, \pi) + \mu_n J^2(\gn(q), \g(H), q)\\
	& =  \Pn \big[ (1/T) \sum_{t=1}^{T} (\epsilon_t^\pi(q) - \gn(S_t, A_t; H))^2 -  (\epsilon_t^\pi(q) -  \g(S_t, A_t; H))^2\big] \\
	& \qquad + \mu_n J_2^2(\gn(q)) + (2/3)\mu_n J_2^2 (\g(H)) +(2/3)\mu_n J_1^2(H)\\
	& = \Big[\Pn \big[ (1/T) \sum_{t=1}^{T} (\epsilon_t^\pi(q) - \gn(S_t, A_t; H))^2\big]+ \mu_n J^2(\gn(q)) \Big] \\
	&  \qquad - \Pn \big[ (1/T) \sum_{t=1}^{T}  (\epsilon_t^\pi(q) -  \g(S_t, A_t; H))^2\big] + (2/3) \mu_n J_2^2 (\g(H))+ (2/3)  \mu_n J_1^2(H)  \\
	& \leq (5/3) \mu_n J_2^2 (\g(H))+ (2/3) \mu_n J_1^2(H)
	\end{align*}
	Thus, $I_1(q, \pi) \leq 5 \mu_n J_2^2 (\g(H))+ 2\mu_n J_1^2(H)$ holds for all $(q, \pi)$.

	Next,  we derive the uniform bound of $I_2(q, \pi)$ over all $(q, \pi) \in \F \times \Pi$ using the peeling device and the exponential inequality of the relative deviation of the empirical process (\cite{gyorfi2006distribution}, thm. 19.3). Note that $ \EE f_2(\gn(q), \g(H), q, \pi) = 0$. We can then rewrite $I_2(q, \pi)$ as 
	\begin{align*}
	I_2 (q, \pi)   & =  2 (P - \Pn) f(\gn(q), \g(H), q, \pi) - P f(\gn(q), \g(H), q, \pi) \\
	& - 2\mu_n (J_2^2(\gn(q)) + J_2^2(\g(H)) + J_1^2(H))
	\end{align*}
	where we introduce $f = f_1 - f_2$. 
	Fix some $t > 0$. 
	\begin{align*}
	& \Pr\left(\exists (q, \pi) \in \F \times \Pi,  I_2(q, \pi) > t\right)\\
	& = \sum_{l=0}^{\infty} \Pr\Big(\exists (q, \pi) \in  \F\times \Pi,  ~ 2\mu_n \left[J_2^2(\gn(q)) + J_2^2(\g(H)) +  J_1^2(H)\right] \in ~ [2^l t \indicator{l\neq 0}, 2^{l+1}t),   \\
	& \hspace{14ex} 2(P-\Pn) f(\gn(q), \g(H), q, \pi) > P f(\gn(q), \g(H), q, \pi) \\
	& \hspace{20ex}+ 2\mu_n \left[J_2^2(\gn(q)) + J_2^2(\g(H)) +  J_1^2(H)\right] + t\Big)\\
	& \leq \sum_{l=0}^{\infty} \Pr\Big(\exists (q, \pi) \in \F\times \Pi,  ~ 2\mu_n \left[J_2^2(\gn(q)) + J_2^2(\g(H)) +  J_1^2(H)\right] \leq 2^{l+1}t,   \\
	& \hspace{14ex} 2(P-\Pn) f(\gn(q), \g(H), q, \pi) > P f(\gn(q), \g(H), q, \pi) + 2^l t\Big)\\
	& \leq \sum_{l=0}^\infty \Pr\left( \sup_{f \in \F_l}  \frac{(P-\Pn) f(D)}{P f(D) + 2^l t} > \frac{1}{2} \right)
	\end{align*}
	where $$\F_l = \{ f(g, \g(H), q, \pi): J_2^2(g) \leq \frac{2^{l} t}{\mu_n}, J_2^2(\g(H)) \leq \frac{2^{l} t}{\mu_n},  J_1^2(H) \leq \frac{2^{l} t}{\mu_n}, H \in \F, \pi \in \Pi\}$$ Next we verify the conditions (A1 - A4) in Theorem 19.3 in \citep{gyorfi2006distribution} with $\F = \F_l$, $\epsilon = 1/2$ and $\eta = 2^{l} t$ to get an exponential inequality for each term in the summation, similar to the proof of Lemma \ref{lemma: second term} below.   
	
	The (A1) and (A2) conditions are easy to verify using Assumptions (4-1), (6-1) and (6-2). For (A1), it is easy to see that 
	\begin{align*}
	\norm{f(g, \g(H), q, \pi)}_\infty \leq  6G_{\max}(G_{\max} + 1+2F_{\max}), \forall f \in \F
	\end{align*}
	and thus $K_1 = 6G_{\max}(G_{\max} + 1+2F_{\max})$. For (A2), note that 
	\begin{align*}
	\EE[f(g, \g(H), q, \pi)^2] \leq 2\EE [f_1(g, \g(H), q, \pi)(D)^2] + 2\EE [f_2(g, \g(H), q, \pi)(D)^2]
	\end{align*}
	For the first term:
	\begin{align*}
	& \EE [f_1(g, \g(H), q, \pi)(D)^2]  \\
	& = \EE[(\frac{1}{T} \sum_{t=1}^{T} (\epsilon_t^\pi(q) - g(S_t, A_t))^2 -  (\epsilon_t^\pi(q) - \g(S_t, A_t; H))^2)^2] \\
	& \leq \EE[\frac{1}{T} \sum_{t=1}^{T} \left((\epsilon_t^\pi(q) - g(S_t, A_t))^2 -  (\epsilon_t^\pi(q) - \g(S_t, A_t; H))^2\right)^2] \\
	& = \EE[\frac{1}{T} \sum_{t=1}^{T} \left(2\epsilon_t^\pi(q) - g(S_t, A_t)  - \g(S_t, A_t; H)\right)^2 \left(\g(S_t, A_t; H) - g(S_t, A_t)\right)^2] \\
	& \leq \left(2(1 + 2 F_{\max}) + 2G_{\max}\right)^2   \EE[\frac{1}{T} \sum_{t=1}^{T} \left(\g(S_t, A_t; H) - g(S_t, A_t)\right)^2] \\
	& = 4\left(1 + 2 F_{\max} + G_{\max}\right)^2 \EE \left[f_1(g, \g(H), q, \pi)(D)\right]
	\end{align*}
	and the second term:
	\begin{align*}
	&  \EE [f_2(g, \g(H), q, \pi)(D)^2] \\
	& = \EE[(\frac{1}{T} \sum_{t=1}^{T} (\epsilon_t^\pi(q) - \g(S_t, A_t; H))(g(S_t, A_t) - \g(S_t, A_t; H)))^2] \\
	& \leq  \EE[\frac{1}{T} \sum_{t=1}^{T} (\epsilon_t^\pi(q) - \g(S_t, A_t; H))^2(g(S_t, A_t) - \g(S_t, A_t; H))^2] \\
	& \leq \left(1 + 2 F_{\max} + G_{\max}\right)^2 \EE[\frac{1}{T} \sum_{t=1}^{T} \left(\g(S_t, A_t; H) - g(S_t, A_t)\right)^2] \\
	&  = \left(1 + 2 F_{\max} + G_{\max}\right)^2 \EE [f_1(g, \g(H), q, \pi)(D)]
	\end{align*}
	Recall that $\EE [f_2(g, \g(H), q, \pi)(D)] = 0$. Putting together implies that the condition (A2) is satisfied  with $K_2 = 10\left(1 + 2 F_{\max} + G_{\max}\right)^2$, that is, $$\EE[f(g, \g(H), q, \pi)^2] \leq  K_2\EE [f(g, \g(H), q, \pi)(D)].$$ 
	
	To ensure the condition (A3) holds for every $l$, i.e., $\sqrt{n} \epsilon\sqrt{1-\epsilon} \sqrt{\eta} \geq 288 \max(K_1, \sqrt{2K_2})$ (recall $\epsilon = 1/2$ and $\eta = 2^{l} t$), 
	we just need to ensure  the inequality holds for $l = 0$, i.e.
	\[
	\sqrt{n} (1/2)^{3/2} \sqrt{t} \geq 288 \max(K_1, \sqrt{2K_2})
	\]
	That is, $t \geq c_1 n^{-1}$ where $c_1 = 8(288 \max(K_1, \sqrt{2K_2}))^2$. 
	
	Next we verify the condition (A4). 
	It is straightforward to see that with $M = \sqrt{\frac{2^l t}{\mu_n}}$, we have 
	\begin{align*}
	& N(\epsilon(6 + 12 F_{\max} + 24 G_{\max}), \F, \norm{\cdot}_n) \\
	& \leq N(\epsilon, \F_{M}, \norm{\cdot}_\infty) N^2(\epsilon, \G_{M}, \norm{\cdot}_\infty) N(\epsilon, \V(\Pi, \F_M), \norm{\cdot}_\infty)
	\end{align*}
	where $\V(\Pi, \F_M) = \{ s\mapsto \sum_{a} \pi(a|s) q(s, a): H \in \F_M, \pi \in \Pi \}$ is a class of state-only function depending on the policy class and the function class $\F_{M}$.  
	Let $c_2 = 6 + 12 F_{\max} + 24 G_{\max}$. As a result of the entropy condition in Assumption (4-4), we have 
	\begin{align*}
	\log N(\epsilon, \F, \norm{\cdot}_n) 
	& \leq \log N(\epsilon/c_2, \F_{M}, \norm{\cdot}_\infty) + 2 \log N(\epsilon/c_2, \G_{M}, \norm{\cdot}_\infty) \\
	& \qquad + \log N(\epsilon/c_2, \V(\Pi, \F_M), \norm{\cdot}_\infty)\\
	& \leq 3C_1  (c_2M/\epsilon)^{2\alpha} +  \log N(\epsilon/c_2, \V(\Pi, \F_M), \norm{\cdot}_\infty)
	\end{align*}
	where $C_1$   is the constant in Assumption (4-4). Note that when $t \geq \mu_n$, it can be seen that for every $l \geq 0$, $M = \sqrt{\frac{2^l t}{\mu_n}} \geq 1$.  Under the metric entropy assumption (4-4) and the  assumptions (3.1) and (3-2), it can be shown that for $\tilde C_1  = C_1  + p (\text{diam}(\Theta)L_\Theta F_{\max})^{2\alpha}/(2\alpha)$ (the constant $C_1$  is specified in Assumption (4-4)), $\log N(\epsilon, \V(\Pi, \F_M), \norm{\cdot}_\infty) \leq \tilde C_1  (M/\epsilon)^{2\alpha}$. As a result,  for a constant $\tilde C_3$ depending on the policy class $\Pi$, $F_{\max}$ and $\alpha$, we have
	\begin{align*}
	& \log N(\epsilon, \F, \norm{\cdot}_n)  \leq 3C_1  (c_2M/\epsilon)^{2\alpha} +  \tilde C_1  (c_2M/\epsilon)^{2\alpha} \\ & = c_2^{2\alpha} (3C_1  + \tilde C_3)  \left(\frac{2^l t}{\mu_n}\right)^{\alpha} \epsilon^{-2\alpha} = c_3  \left(\frac{2^l t}{\mu_n}\right)^{\alpha} \epsilon^{-2\alpha}
	\end{align*}
	where we define $c_3 = c_2^{2\alpha } (3C_1  + \tilde C_3)$. 
	Now we see the condition (A4) is true  if the following inequality holds for all $x \geq 2^lt/8$, 
	\begin{align*}
	\frac{\sqrt{n} (1/2)^2 x}{96\sqrt{2} \max(K_1, 2K_2)} 
	\geq \int_{0}^{\sqrt{x}} \sqrt{c_3} \left(\frac{2^l t}{\mu_n}\right)^{\alpha/2} u^{-\alpha} du = x^{\frac{1-\alpha}{2}} \sqrt{c_3} \left(\frac{2^l t}{\mu_n}\right)^{\alpha/2}.
	\end{align*}
	Or equivalently $	x^{\frac{1+\alpha}{2}} \geq 4 \cdot 96\sqrt{2} \max(K_1, 2K_2) \sqrt{c_3}  \left(\frac{2^l t}{\mu_n}\right)^{\alpha/2} n^{-1/2}$. 
	Clearly we only need to ensure  the inequality holds when $x$ is at the minimum. That is,  below is sufficient for the condition (A4) to hold:
	\begin{align*}
	(2^l t/8)^{\frac{1+\alpha}{2}} \geq 4 \cdot 96\sqrt{2} \max(K_1, 2K_2) \sqrt{c_3}  \left(\frac{2^l t}{\mu_n}\right)^{\alpha/2} n^{-1/2} \\
	\iff (2^l t)^{1/2} \geq 8^{\frac{1+\alpha}{2}} \cdot 4 \cdot 96\sqrt{2} \max(K_1, 2K_2) \sqrt{c_3}  (\mu_n^\alpha n)^{-1/2}
	\end{align*}
	To ensure the above holds for all $l \geq 0$, we require $t$ to satisfy
	\begin{align*}
	t \geq \left(8^{\frac{1+\alpha}{2}} \cdot 4 \cdot 96\sqrt{2} \max(K_1, 2K_2) \sqrt{c_3}\right)^2  (\mu_n^\alpha n)^{-1} 
	\end{align*}
	Or, simply requiring $t \geq c_4  (\mu_n^\alpha n)^{-1}$ where $c_4 = c_3 18 (32)^4  \max(K_1^2, 4K_2^2)$.
	
	To summarize, the conditions (A1-A4) would be satisfied for every $l$ as long as $t \geq c_1n^{-1}, t \geq \mu_n$ and $t \geq c_4  (\mu_n^\alpha n)^{-1}$. Applying Theorem 19.3 in \citep{gyorfi2006distribution} for each term implies that 
	\begin{align*}
	& \Pr\left(\exists (q, \pi) \in \F \times \Pi,  I_2(q, \pi) > t\right) \\
	& \leq \sum_{l=0}^\infty \Pr\left( \sup_{f \in \F_l}  \frac{(P-\Pn) f(D)}{P f(D) + 2^l t} > \frac{1}{2} \right) \\
	& \leq \sum_{l=0}^\infty 60 \exp(-\frac{n2^l t (1/2)^{3}}{128\cdot 2304\max(K_1^2, K_2)}) \\& = \sum_{l=0}^\infty 60 \exp(-\frac{n2^l t}{c_5}) \leq \sum_{k=1}^\infty 60 \exp(-\frac{nk t}{c_5}) \leq  \frac{60\exp(-\frac{nt}{c_5}) }{1-\exp(-\frac{n t}{c_5}) }
	\end{align*}
	where $c_5 = 8 \cdot 128\cdot 2304\max(K_1^2, K_2)$. For any $\delta > 0$, when $t \geq \log(120/\delta) c_5n^{-1}$, we have both $\exp(-\frac{n t}{c_5})  \leq 1/2$ and $120 \exp(-nt/c_5) \leq \delta$ and as a result
	\begin{align*}
	\Pr\left(\exists (q, \pi) \in \F \times \Pi,  I_2(q, \pi) > t\right)  \leq \frac{60\exp(-\frac{nt}{c_5}) }{1-\exp(-\frac{n t}{c_5}) }\leq 120 \exp(-nt/c_5) \leq \delta
	\end{align*}
	Collecting all the conditions on $t$ and combing with the bound of $I_1(q, \pi)$, we have shown that w.p. at least $1-\delta$, the following holds for all $q, \pi$:
	\begin{align*}
	& \norm{\gn(q) - \g(H) }_{}^2 + \norm{\gn(q) - \g(H) }_{n}^2 + \mu_n J_2^2(\gn(q))  \\
	& \leq 5 \mu_n J_2^2 (\g(H))+ 2\mu_n J_1^2(H) + c_1n^{-1} + \mu_n + c_4  (\mu_n^\alpha n)^{-1} + \log(120/\delta) c_5 n^{-1} \\
	& = (1 + 5 J_2^2 (\g(H)) + 2J_1^2(H) ) \mu_n + (c_1 + \log(120)c_5 + \log(1/\delta) c_5)n^{-1} + c_4 (\mu_n^\alpha n)^{-1} \\
	& \leq K \left((1 + J_2^2 (\g(H)) + J_1^2(H) ) \mu_n + \frac{1+  \log(1/\delta)}{n} +\frac{p}{n\mu_n^\alpha}\right)
	\end{align*}
	where the leading constant can be chosen by $K=5 + 8(288 \max(K_1, \sqrt{2K_2}))^2 + 6 \cdot 8 \cdot 128\cdot 2304\max(K_1^2, K_2) + 2(18 (32)^4  \max(K_1^2, 4K_2^2) ) (6+ 12 F_{\max} + 24 G_{\max})^{2\alpha} \cdot (4C_1   +  (\text{diam}(\Theta)L_\Theta F_{\max})^{2\alpha}/(2\alpha))$.
	
\end{proof}

\begin{lemma}[Decomposition] \label{lemma: second term} \label{lemma: upper level initial bound}
	Suppose Assumptions (4-1), (4-2), (6-1), (6-2), (6-4), (4-4) and (3-2) hold. Then, the following hold with probability at least $1-{2\delta}$: for all policy $\pi \in \Pi$:
	\begin{align*}
	\norm{\gn(\qn) - \g(\qpi)}^2 + \lambda_n J_1^2(\qn) 
	\leq  \gamma_2(\delta, n, p, \mu_n, \lambda_n) + {K}\mu_n J_1(\qn) +  \Rem(\pi)
	\end{align*}
	where ${K}$ depends only on {$F_{\max}, G_{\max}, L_\Theta, \text{diam}(\Theta), \{C_i\}_{i=1}^3, \sup_{\pi \in \Pi} J_1(\tilde H^{\pi})$ and $\alpha$},  $\gamma_2(\delta, n, p, \mu_n, \lambda_n)$ is 		
	\begin{align*}
	\gamma_2 & (\delta, n, p, \mu_n, \lambda_n) 
	= {K} \Big[\mu_n + \lambda_n + \frac{p}{n \mu_n^{\alpha}} + \frac{1}{n^{1/(1+\alpha)}} + \frac{1}{n\lambda_n^\alpha} \\ 
	&   +  \frac{1+\log^{\frac{\alpha}{1+\alpha}}(1/\delta)}{n\mu_n^{\alpha/(1+\alpha)}} + \frac{1+\log(1/\delta)}{n} + \sqrt{\frac{p}{n \mu_n^{\alpha-1}} }+ \sqrt{\frac{\mu_n}{n}} (1+\sqrt{\log(1/\delta)}) \Big],
	\end{align*}
	and the remainder term, $\Rem(\pi)$ is given by $$\Rem(\pi) = 4\abs{\Pn (1/T) \sum_{t=1}^T \g(S_t; A_t; \qpi) [\Delta^\pi(S_t, A_t, S_{t+1}; \qn) - \Delta^\pi(S_t, A_t, S_{t+1}; \qpi)]}$$

\end{lemma}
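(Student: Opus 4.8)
The plan is to run a regularized $M$-estimation argument driven by the \emph{basic inequality}: the optimality of $\qn$ in (\ref{qhat}), together with $\qpi\in\F$ (Assumption \ref{b3}), gives
\[
\Pn\!\Big[\tfrac1T\!\sum_{t=1}^T \gn(S_t,A_t;\qn)^2\Big] + \lambda_n J_1^2(\qn)\;\le\;\Pn\!\Big[\tfrac1T\!\sum_{t=1}^T \gn(S_t,A_t;\qpi)^2\Big] + \lambda_n J_1^2(\qpi),
\]
and $J_1^2(\qpi)\le\sup_{\pi\in\Pi}J_1^2(\tilde H^\pi)<\infty$ is a fixed constant. Writing $r_n(H):=\gn(\cdot,\cdot;H)-\g(\cdot,\cdot;H)$ for the inner–regression error, I would invoke Lemma \ref{lemma: uniform l2 bound}, which bounds $\norm{r_n(H)}$, $\norm{r_n(H)}_n$ and $J_2(\gn(H))$ \emph{uniformly} over $(H,\pi)\in\F\times\Pi$, combined with $J_2(\g(H))\le C_2'+C_3'J_1(H)$ (Assumption \ref{b5}), to replace every occurrence of $\gn$ by $\g$ plus a controlled remainder. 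Instantiating at $H=\qpi$ (bounded regularizer) produces the $\mu_n$, $p/(n\mu_n^{\alpha})$, $(1+\log(1/\delta))/n$ pieces of $\gamma_2$; instantiating at $H=\qn$ leaves, besides controlled noise, terms depending on the a-priori-unbounded complexity $J_1(\qn)$.

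Next I would expand the squares $\gn(H)^2=\g(H)^2+2\,\g(H)\,r_n(H)+r_n(H)^2$ on both sides, and use the normal equations that characterize the inner ridge regression $\gn$ and the population projection $\g$ to collect the cross terms. The key structural fact is $\g(\qpi)=e^\pi$: by Assumption \ref{b3-2}, $e^\pi\in\G$, and by the Bellman-like identity (\ref{epi qpi}) the conditional mean $\EE[\Delta^\pi(Z_t;\qpi)\mid S_t,A_t]=e^\pi$ already lies in $\G$, hence equals its own projection. Since $H\mapsto\Delta^\pi(\cdot;H)$ is affine, the cross terms against $\g(\qpi)=e^\pi$ collapse to $\Pn[\tfrac1T\sum_t e^\pi(S_t,A_t)(\Delta^\pi(Z_t;\qn)-\Delta^\pi(Z_t;\qpi))]$, whose population analogue \emph{vanishes} by the orthogonality property (\ref{orthogonal}) with $f=\qn-\qpi$; what survives is precisely $\Rem(\pi)$, which I would carry along untouched (it is handled iteratively in Theorem \ref{thm: pre-ratio}). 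The remaining pieces $\langle\g(\cdot),r_n(\cdot)\rangle_n$, $\norm{r_n(\cdot)}_n^2$ and $\norm{\gn(\qpi)}_n^2-\norm{\g(\qpi)}_n^2$ are dispatched by Cauchy--Schwarz plus Lemma \ref{lemma: uniform l2 bound}, yielding the $\sqrt{\mu_n/n}(1+\sqrt{\log(1/\delta)})$, $\sqrt{p/(n\mu_n^{\alpha-1})}$ and $(1+\log^{\alpha/(1+\alpha)}(1/\delta))/(n\mu_n^{\alpha/(1+\alpha)})$ contributions; after carefully tracking all $J_1(\qn)$-dependence, the $J_1(\qn)^2$-proportional parts (from the inner penalty $\mu_n J_2^2(\gn(\qn))$ and from $\norm{r_n(\qn)}_n^2$, bounded via Assumption \ref{b5}) are absorbed into the penalty $\lambda_n J_1^2(\qn)$ on the left (using that $\mu_n$ and $\lambda_n$ are of the same order with a tunable constant), leaving a single linear term $K\mu_n J_1(\qn)$ on the right, to be eliminated later by the self-bounding $x^2\le a+bx$ step.

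It then remains to pass from the empirical quantities produced by the basic inequality to the population norm $\norm{\gn(\qn)-\g(\qpi)}^2$ in the statement, i.e.\ to control the relative deviation of the empirical process over the policy-indexed class $\{(s,a)\mapsto\g(s,a;H)-\g(s,a;\qpi):H\in\F,\pi\in\Pi\}$ and its $\Delta^\pi$-analogue. Using Assumptions \ref{b6}, \ref{b5} and \ref{c1} (the latter inflating the $(M/\epsilon)^{2\alpha}$ entropy of the $\F,\G$-balls by a factor $p\,(\mathrm{diam}(\Theta)L_\Theta)^{2\alpha}$, exactly as in the proof of Lemma \ref{lemma: uniform l2 bound}), I would peel over dyadic ranges of $J_1(\qn)$ and $J_2(\gn(\qn))$ and apply the relative-deviation exponential inequality (Theorem 19.3 of \cite{gyorfi2006distribution}); this produces the $n^{-1/(1+\alpha)}$, $1/(n\lambda_n^{\alpha})$ and $\lambda_n$ terms of $\gamma_2$, and a separate Talagrand-type uniform bound (Lemma \ref{lemma: donsker class prob bound}) controls the fluctuation of the affine-in-$H$ cross term, again linear in $J_1(\qn)$. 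Collecting the positive $\lambda_n J_1^2(\qn)$ on the left and everything else on the right, over a union of at most two high-probability events, gives the asserted inequality uniformly over $\pi\in\Pi$.

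The main obstacle I anticipate is the bookkeeping across the two coupled layers: one must propagate both the inner regularizer $\mu_n$ and the undetermined complexity $J_1(\qn)$ through Lemma \ref{lemma: uniform l2 bound} and then through the outer peeling, and classify each resulting term as (i) genuine $O(\gamma_2)$ noise, (ii) absorbable into $\lambda_n J_1^2(\qn)$, (iii) deferrable because only linear in $J_1(\qn)$, or (iv) part of $\Rem(\pi)$ — all while keeping every estimate uniform over $\pi\in\Pi$, which is exactly what forces the policy-complexity factor into the entropy bounds. Verifying conditions (A1)--(A4) of the relative-deviation inequality for these policy-indexed, affine-in-$H$ function classes, and checking that the various localization radii translate into precisely the exponents appearing in $\gamma_2$, is the part demanding the most care.
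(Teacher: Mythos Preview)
Your plan follows essentially the same route as the paper's proof: the basic inequality from optimality of $\qn$, the uniform inner-layer control from Lemma~\ref{lemma: uniform l2 bound}, the identification of $\Rem(\pi)$ via the inner normal equations together with $\g(\qpi)=e^\pi$, and a peeling/relative-deviation argument (Theorem~19.3 of \cite{gyorfi2006distribution}) for the empirical-to-population step, all uniformly in $\pi$ through the entropy inflation by Assumption~\ref{c1}.

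One organizational point is worth noting. You propose to ``absorb the $J_1(\qn)^2$-proportional parts into the penalty $\lambda_n J_1^2(\qn)$ on the left, using that $\mu_n$ and $\lambda_n$ are of the same order.'' The lemma is stated for general $(\mu_n,\lambda_n)$, so such an absorption would require an extra hypothesis. The paper sidesteps this entirely by a specific algebraic rearrangement: starting from $\|\gn(\qn)-\g(\qpi)\|^2+\lambda_n J_1^2(\qn)$, it adds and subtracts $2[\Pn\f_1(\gn(\qn))+\lambda_n J_1^2(\qn)]$ to produce three pieces $I_1+I_2+I_3$, where $I_1$ involves only $\qpi$ (bounded regularizer), $I_2=P\f_1(\gn(\qn)-\g(\qpi))-\lambda_n J_1^2(\qn)-2\Pn\f_1(\gn(\qn)-\g(\qpi))$ is handled by peeling over $\lambda_n J_1^2(\qn)$, and $I_3$ is the cross term that yields $\Rem(\pi)$ plus $\mu_n J_1(\qn)$ (linear only) via the inner KKT identity $\mu_n J_2(\gn(q),\g(\qpi))=\Pn[\tfrac1T\sum_t(\Delta^\pi(Z_t;q)-\gn(S_t,A_t;q))\g(S_t,A_t;\qpi)]$. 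With this decomposition no $\mu_n J_1^2(\qn)$ term ever appears, so no absorption is needed. Your direct square-expansion would work too, but you would have to be careful that the $\|r_n(\qn)\|_n^2$ contribution enters with the correct sign (it sits on the smaller side of the basic inequality), rather than as a term to be absorbed.
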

\begin{proof}[Proof of Lemma \ref{lemma: upper level initial bound}]
	For $g_1, g_2  \in \G$,  define the functionals $\f_1, \f_2$, 
	\begin{align*}
	& \f_1(g_1) (D) = \frac{1}{T}\sum_{t=1}^T g_1(S_t, A_t)^2\\
	& \f_2(g_1, g_2) (D) = \frac{2}{T} \sum_{t=1}^T g_1(S_t, A_t)g_2(S_t, A_t) 
	\end{align*}
	With this notation, we have
	\begin{align*}
	& \norm{\gn(\qn) - \g(\qpi)}^2 + \lambda_n J_1^2(\qn) 
	= P\f_1(\gn(\qn)-\g(\qpi)) + \lambda_n J_1^2(\qn) \\
	&= 2 \times \big[\Pn  \f_1(\gn(\qn)) + \lambda_nJ_1^2(\qn)\big] + P\f_1(\gn(\qn)-\g(\qpi))  \\
	& \qquad \qquad + \lambda_n J_1^2(\qn) - 2 \times \big[\Pn  \f_1(\gn(\qn)) + \lambda_nJ_1^2(\qn)\big] 
	\end{align*}	
	Using the optimizing property of $\qn$ in (4.8), the term in the first parentheses can be bounded 
	\begin{align*}
	& \Pn  \f_1(\gn(\qn))+ \lambda_nJ_1^2(\qn) \leq \Pn  \f_1(\gn(\qpi)) + \lambda_nJ_1^2(\qpi)\\
	& = \Pn  \f_1 (\gn(\qpi) - \g(\qpi)) +  \Pn \f_1(\g(\qpi))+  \Pn \f_2(\gn(\qpi) - \g(\qpi), \g(\qpi))  + \lambda_nJ_1^2(\qpi) \\
	& = \Pn  \f_1(\gn(\qpi) - \g(\qpi)) +  \lambda_nJ_1^2(\qpi) +  (1/2) \Pn \f_2(2\gn(\qpi) - \g(\qpi), \g(\qpi))
	\end{align*}
	so that
	\begin{align*}
	&  \norm{\gn(\qn) - \g(\qpi)}^2 + \lambda_n J_1^2(\qn) \\
	&  \leq 2 \left[\Pn  \f_1(\gn(\qpi) - \g(\qpi)) +  \lambda_nJ_1^2(\qpi) + (1/2) \Pn \f_2(2\gn(\qpi) - \g(\qpi), \g(\qpi))  \right]  \\
	& \qquad + P\f_1(\gn(\qn)-\g(\qpi)) + \lambda_n J_1^2(\qn) - 2 (\Pn  \f_1(\gn(\qn))+ \lambda_nJ_1^2(\qn)) \\
	& = 2 \left[\Pn  \f_1(\gn(\qpi) - \g(\qpi)) +  \lambda_nJ_1^2(\qpi) \right]   + P\f_1(\gn(\qn)-\g(\qpi)) - \lambda_n J_1^2(\qn) \\
	& \qquad - 2 \Pn [ \f_1(\gn(\qn)) +  (1/2)  \f_2(\g(\qpi) - 2\gn(\qpi), \g(\qpi))] \\
	& = 2 \left[\Pn  \f_1(\gn(\qpi) - \g(\qpi)) +  \lambda_nJ_1^2(\qpi) \right]  + P\f_1(\gn(\qn)-\g(\qpi)) - \lambda_n J_1^2(\qn) \\
	&	\qquad - 2 \Pn [ \f_1(\gn(\qn)-\g(\qpi)) + \f_2 (\gn(\qn)- \gn(\qpi) , \g(\qpi))]\\
	& \leq 2 \left[\Pn  \f_1(\gn(\qpi) - \g(\qpi)) +  \lambda_nJ_1^2(\qpi) \right]  \\
	& \qquad + P\f_1(\gn(\qn)-\g(\qpi)) - \lambda_n J_1^2(\qn) - 2 \Pn  \f_1(\gn(\qn)-\g(\qpi))\\
	&	 \qquad + 2\abs{ \Pn \f_2 (\gn(\qn)- \gn(\qpi) , \g(\qpi))}
	\end{align*}
	where in the last equality we use the fact that 
	$g_1^2 + (g_2 - 2g_3) g_2 = g_1^2 + g_2^2 - 2g_2 g_3 = (g_1 - g_2)^2 + 2 g_1g_2 - 2 g_2 g_3 = (g_1 - g_2)^2 + 2 g_2(g_1 - g_3)$. In summary, we have 
	\begin{align*}
	\norm{\gn(\qn) - \g(\qpi)}^2 + \lambda_n J_1^2(\qn) \leq I_{1}(\pi) +  I_2(\pi) + I_3(\pi)
	\end{align*}
	where
	\begin{align*}
	& I_1 (\pi) = 2 \left[\Pn  \f_1(\gn(\qpi) - \g(\qpi)) +  \lambda_nJ_1^2(\qpi) \right] \\
	& I_2(\pi) = P\f_1(\gn(\qn)-\g(\qpi)) - \lambda_n J_1^2(\qn) - 2 \Pn  \f_1(\gn(\qn)-\g(\qpi))\\
	& I_3(\pi) = 2\abs{ \Pn \f_2 (\gn(\qn)- \gn(\qpi) , \g(\qpi))}
	\end{align*}
	Below we provide the upper bound for each of the three terms.  Recall that by Lemma \ref{lemma: uniform l2 bound}, the event, $E_n$ holds with probability at least $1-\delta$. Let the leading constant specified in Lemma \ref{lemma: uniform l2 bound} be ${K_0}$. 

	\textbf{Step I: bounding $I_1(\pi)$}
	Under the event $E_n$, we have 
	\begin{align*}
	I_1(\pi) & =  2\norm{\gn (\qpi) - \g (\qpi)}_{}^2 + \lambda_n J_1^2(\qpi) \\
	&	\leq 2{K_0}\Big(\mu_n (1+J_1^2(\qpi) + J_2^2(\g (\qpi))) + \frac{p}{n \mu_n^{\alpha}} + \frac{1+\log(1/\delta)}{n}\Big)  + \lambda_n J_1^2(\qpi) \\
	&	\leq (2{K_0} +1) \Big(\mu_n (1+J_1^2(\qpi) + J_2^2(\g (\qpi)))  + \lambda_n J_1^2(\qpi)+ \frac{p}{n \mu_n^{\alpha}} +   \frac{1+\log(1/\delta)}{n} \Big) \\
	& \leq (2{K_0} +1) \times 2(C_1^2 + C_2^2)\Big[(\mu_n + \lambda_n) (1+J_1^2(\qpi)) + \frac{p}{n \mu_n^{\alpha}} + \frac{1+\log(1/\delta)}{n}  \Big] \\
	& \leq (2{K_0} +1) \times 2(C_1^2 + C_2^2) (1+\sup_{\pi \in \Pi} J_1^2(\qpi)) \Big[\mu_n + \lambda_n  + \frac{p}{n \mu_n^{\alpha}} + \frac{1+\log(1/\delta)}{n}  \Big]\\
	& = {K_1}\Big[\mu_n + \lambda_n + \frac{p}{n \mu_n^{\alpha}} + \frac{1+\log(1/\delta)}{n}  \Big]
	\end{align*}
	where in the third inequality we use Assumption (6-4) and ${K_1} = 2(C_1^2 + C_2^2)(2{K_0} +1) (1+\sup_{\pi \in \Pi} J_1^2(\qpi))$.

	\textbf{Step II: bounding $I_3(\pi)$}
	Using the optimizing property of $\gn( q)$ and Assumption (6-2) that $\g(\qpi) = e^\pi \in \G$,  the followings holds for all $H \in \F, \pi \in \Pi$, 
	\begin{align*}
	& \mu_n J_2( \gn(q), \g(\qpi))  \\
	& = \Pn  [(1/T) \sum_{t=1}^T \big(1-H(S_t, A_t) + \sum_{a'} \pi(a'|S_{t+1}) H(S_{t+1}, a') - \gn(S_t, A_t; H)\big)\g(S_t, A_t; \qpi)] \\
	& = \Pn  [(1/T) \sum_{t=1}^T (\Delta^\pi(S_t, A_t, S_{t+1}; H) - \gn(S_t, A_t; H))\g(S_t, A_t; \qpi)]
	\end{align*}
	Thus we have 
	\begin{align*}
	& (1/2) \Pn \f_2 (\gn(\qn)- \gn(\qpi) , \g(\qpi)) \\
	& =   \Pn (1/T) \sum_{t=1}^T  \g(S_t, A_t; \qpi) [ \gn(S_t, A_t; \qn)     - \gn(S_t, A_t; \qpi)] \\
	& =  \Pn (1/T) \sum_{t=1}^T  \g(S_t, A_t; \qpi) [ \gn(S_t, A_t; \qn)    - \Delta^\pi(S_t, A_t, S_{t+1}; \qn) + \Delta^\pi(S_t, A_t, S_{t+1}; \qn) \\
	& \qquad\qquad  - \Delta^\pi(S_t, A_t, S_{t+1}; \qpi) + \Delta^\pi(S_t, A_t, S_{t+1}; \qpi) - \gn(S_t, A_t; \qpi)] \\
	& =   \Pn (1/T) \sum_{t=1}^T \g(S_t; A_t; \qpi) [\Delta^\pi(S_t, A_t, S_{t+1}; \qn) - \Delta^\pi(S_t, A_t, S_{t+1}; \qpi)] \\
	& \qquad\qquad  + \mu_n J_2( \gn(\qpi), \g(\qpi)) - \mu_n J_2(\gn(\qn), \g(\qpi))
	\end{align*}
	In addition, under the event $E_n$, we have
	\begin{align*}
	& \abs{\mu_n J_2( \gn(\qpi), \g(\qpi)) - \mu_n J_2(\gn(\qn), \g(\qpi))}  \leq \mu_n J_2(e^\pi) \left(J_2(\gn(\qpi) ) + J_2(\gn(\qn))\right) \\
	& \leq {K_0}   \mu_nJ_2(e^\pi)\Big(2 + J_1(\tilde H^\pi) + J_2(\g(\qpi)) +  J_1(\qn) + J_2(\g(\qn))  \\
	& \qquad + 2\sqrt{\frac{p}{n \mu_n^{\alpha+1}} }+ 2\sqrt{\frac{1}{n \mu_n}} +  2\sqrt{\frac{\log(1/\delta)}{n\mu_n}}\Big)\\
	& \leq {K_0}   \mu_nJ_2(e^\pi)\Big(2 + J_1(\tilde H^\pi) + C_2 + C_3 J_1(\tilde H^\pi) +  J_1(\qn) + C_2 + C_3 J_1(\qn)   \\
	& \qquad + 2\sqrt{\frac{p}{n \mu_n^{\alpha+1}} }+ 2\sqrt{\frac{1}{n \mu_n}} +  2\sqrt{\frac{\log(1/\delta)}{n\mu_n}}\Big)\\
	& \leq 2(1+C_1+C_2)  {K_0}   \mu_nJ_2(e^\pi)\Big(1 + J_1(\tilde H^\pi) + J_1(\qn)  + 
	\sqrt{\frac{p}{n \mu_n^{\alpha+1}} }+ \sqrt{\frac{1}{n \mu_n}} +  \sqrt{\frac{\log(1/\delta)}{n\mu_n}}\Big)\\
	& \leq 2(1+C_1+C_2)  {K_0}  (C_2 + C_3 \sup_{\pi \in \Pi} J_1(\tilde H^{\pi})) \mu_n  \Big(1 + \sup_{\pi \in \Pi} J_1(\tilde H^{\pi}) +  J_1(\qn) + \\
	& \qquad  \sqrt{\frac{p}{n \mu_n^{\alpha+1}} }+ \sqrt{\frac{1}{n \mu_n}} +  \sqrt{\frac{\log(1/\delta)}{n\mu_n}}\Big) \\
	& \leq {K_3} \mu_n J_1(\qn) + {K_3} \mu_n \Big(1 + \sqrt{\frac{p}{n \mu_n^{\alpha+1}} }+ \sqrt{\frac{1}{n \mu_n}} +  \sqrt{\frac{\log(1/\delta)}{n\mu_n}}\Big)
	\end{align*}
	where ${K_3} = 2(1+C_1+C_2)  {K_0}  (C_2 + C_3 \sup_{\pi \in \Pi} J_1(\tilde H^{\pi})) (1+\sup_{\pi \in \Pi} J_1(\tilde H^\pi))$.
	Thus we have
	\begin{align*}
	I_{3}(\pi) & =  2\abs{ \Pn \f_2 (\gn(\qn)- \gn(\qpi) , \g(\qpi))} \\
	& \leq 4\bigabs{\Pn (1/T) \sum_{t=1}^T \g(S_t; A_t; \qpi) [\Delta^\pi(S_t, A_t, S_{t+1}; \qn) - \Delta^\pi(S_t, A_t, S_{t+1}; \qpi)] }  \\
	& \qquad +  {K_3}  \mu_n J_1(\qn) +{K_3} \mu_n \Big(1 + \sqrt{\frac{p}{n \mu_n^{\alpha+1}} }+ \sqrt{\frac{1}{n \mu_n}} +  \sqrt{\frac{\log(1/\delta)}{n\mu_n}}\Big) \\
	& = \Rem(\pi) +  {K_3}  \mu_n J_1(\qn) +{K_3} \mu_n \Big(1 + \sqrt{\frac{p}{n \mu_n^{\alpha+1}} }+ \sqrt{\frac{1}{n \mu_n}} +  \sqrt{\frac{\log(1/\delta)}{n\mu_n}}\Big)
	\end{align*}
	
	\textbf{Step III: bounding $I_2(\pi)$}

	For the second term, 
	\begin{align*}
	I_2(\pi) & = P\f_1(\gn(\qn)-\g(\qpi)) - \lambda_n J_1^2(\qn) - 2 \Pn  \f_1(\gn(\qn)-\g(\qpi)) \\
	& = 2(P - \Pn) \f_1(\gn(\qn)-\g(\qpi)) - \lambda_n J_1^2(\qn) - P \f_1(\gn(\qn)-\g(\qpi))
	\end{align*}
	For simplicity, let $\beta(n, \mu_n, \delta, p) = p^{1/2} n^{-1/2} \mu_n^{-(1+\alpha)/2}+ (1+\sqrt{\log(1/\delta)} )(n\mu_n)^{-1/2}$. Under $E_n$, we have 
	\begin{align*}
	& J_2(\gn(\qn)-\g(\qpi))  \\
	& \leq J_2(\gn(\qn))  + J_2(\g(\qpi)) \\ 
	& \leq {K_0}(1 + J_1(\qn) +  J_2(\g (\qn)) + \beta(n, \mu_n, \delta, p)) + C_2 + C_3 J_1(\tilde H^\pi) \\
	& \leq {K_0}(1 + J_1(\qn) +  C_2 + C_3 J_1(\qn) + \beta(n, \mu_n, \delta, p)) + C_2 + C_3 \sup_{\pi \in \Pi} J_1(\tilde H^\pi) \\
	& \leq c_1 (1+J_1(\qn) +  \beta(n, \mu_n, \delta, p))
	\end{align*} 
	where  $c_1= {K_0} (1+C_2 + C_2) + C_2 + C_3 \sup_{\pi \in \Pi} J_1(\tilde H^\pi)$ and $C_1, C_2$ are constants specified in Assumption (6-4). 
	
	Now we have $\Pr(\exists \pi \in \Pi,  I_2(\pi) > t) \leq \Pr(\{\exists \pi \in \Pi,  I_2(\pi) > t\} \jiao E_n) + \delta$ and we bound the first term using peeling device on $\lambda_n J_1^2(\qn)$ in $I_2(\pi)$: 
	\begin{align*}
	& \Pr(\{\exists \pi \in \Pi,  I_2(\pi) > t\} \jiao E_n)  \\
	& = \sum_{l=0}^\infty \Pr\big(\{\exists \pi \in \Pi,  I_2(\pi) > t, ~\lambda_n J_1^2(\qn) \in [2^{l}t \indicator{t\neq 0}, 2^{l+1} t)\} \jiao E_n \big)\\
	& \leq \sum_{l=0}^\infty \Pr\big(\exists \pi \in \Pi, ~ 2(P-\Pn) \f_1(\gn(\qn)-\g(\qpi)) > P \f_1(\gn(\qn)-\g(\qpi)) + \lambda_n J_1^2(\qn) + t, \\
	& \hspace{12ex} \lambda_n J_1^2(\qn) \in [2^{l}t \indicator{t\neq 0}, 2^{l+1} t), J_2(\gn(\qn)-\g(\qpi)) \leq c_1 (1+J_1(\qn) + \beta(n, \mu_n, \delta, p))\big)\\
	& \leq  \sum_{l=0}^\infty \Pr\big(\exists \pi \in \Pi, ~2(P-\Pn) \f_1(\gn(\qn)-\g(\qpi)) > P \f_1(\gn(\qn)-\g(\qpi)) + 2^{l}t \indicator{t\neq 0} + t, \\
	& \hspace{12ex} \lambda_n J_1^2(\qn) \leq 2^{l+1} t, J_2(\gn(\qn)-\g(\qpi)) \leq c_1 (1+ \sqrt{(2^{l+1} t)/\lambda_n} + \beta(n, \mu_n, \delta, p))\big)\\
	& \leq  \sum_{l=0}^\infty \Pr\big(\exists \pi \in \Pi, ~2(P-\Pn) \f_1(\gn(\qn)-\g(\qpi)) > P \f_1(\gn(\qn)-\g(\qpi)) + 2^{l}t, \\
	& \hspace{12ex} J_2(\gn(\qn)-\g(\qpi)) \leq c_1 (1+\sqrt{ (2^{l+1} t)/\lambda_n} + \beta(n, \mu_n, \delta, p))\big)\\
	& \leq \sum_{l=0}^\infty \Pr\left( \sup_{f \in \F_l}  \frac{(P-\Pn) f(D)}{P f(D) + 2^l t} > \frac{1}{2} \right)
	\end{align*}
	where $\F_l = \{ \f_1(g): J_2(g) \leq c_1 (1+ \sqrt{(2^{l+1} t)/\lambda_n} + \beta(n, \mu_n, \delta, p)), g \in \G  \}$. In what follows we verify the conditions (A1-A4) in Theorem 19.3 in \citep{gyorfi2006distribution} with $\F = \F_l$, $\epsilon = 1/2$ and $\eta = 2^{l} t$ to get an exponential inequality for each term in the summation. 
	
	For (A1),  it is easy to see that $\abs{\f_1(g)(D)} = \abs{\frac{1}{T} \sum_{t=1}^{T} g(S_t, A_t)^2} \leq G_{\max}^2$. We set $K_1 = G_{\max}^2$.  
	
	For (A2), we have $P \f_1^2(g) \leq G_{\max}^2 P \f_1(g)$. We set $K_2 = G_{\max}^2$.  
	
	For (A3),  the condition $\sqrt{n} \epsilon \sqrt{1-\epsilon} \sqrt{\eta} \geq 288 \max\{2K_1, \sqrt{2K_2}\}$ becomes $\sqrt{n} (1/2)^{3/2} \sqrt{2^lt} \geq 288 \max\{2G_{\max}^2, \sqrt{2} G_{\max}\}$.	So this holds for all $l \geq 0$ as long as $t \geq c_2/n$ for $c_2 = (8 \cdot 288 \max\{2G_{\max}^2, \sqrt{2} G_{\max}\})^2$. 
	
	Now we verify the condition (A4).  
	First note that for any $g_1, g_2 \in \G$
	\[
	\frac{1}{n} \sum_{i=1}^{n} \left[\f_1(g_1)(D_i) - \f_1(g_2)(D_i)\right]^2 \leq 4 G_{\max}^2 \norm{g_1 - g_2}_{n}^2
	\]
	The Assumption (4-4) then implies that the metric entropy for each $l$ is bounded  by
	\begin{align*}
	& \log {N}(u, \F_l, \norm{\cdot}_\infty) \\
	& \leq \log {N}\left(\frac{u}{2G_{\max}}, \{g: J_2(g) \leq c_1 (1+ \sqrt{(2^{l+1} t)/\lambda_n }+ \beta(n, \mu_n, \delta, p)), g \in \G  \}, \norm{\cdot}_\infty \right)\\
	& \leq C_3\left(\frac{c_1 (1+ \sqrt{(2^{l+1} t)/\lambda_n} + \beta(n, \mu_n, \delta, p))}{u/(2G_{\max})}\right)^{2\alpha} \\
	& \leq c_3 \left(  1+ \left(\frac{2^{{l+1}} t}{\lambda_n}\right)^\alpha + \beta^{2\alpha}(n, \mu_n, \delta, p)  \right) u^{-2\alpha}
	\end{align*} 
	where $C_1$   in the last inequality is specified in Assumption (4-4) and the constant $c_3 = (2G_{\max} c_1)^{2\alpha} C_3$, . Now we just need to ensure for all $x \geq \eta/8 = 2^l t/8$ and $l\geq 0$:
	\begin{align*}
	\frac{\sqrt{n} (1/2)^2 x}{96 \sqrt{2} \cdot 2G_{\max}^2} \geq \int_{0}^{\sqrt{x}} \sqrt{c_3} \left(  1+ \left(\frac{2^{l+1} t}{\lambda_n}\right)^\alpha + \beta^{2\alpha}(n, \mu_n, \delta, p)   \right)^{1/2} u^{-\alpha} du
	\end{align*}
	Note that $\int_{0}^{\sqrt{x}}  u^{-\alpha} du = (1-\alpha)^{-1} x^{\frac{1-\alpha}{2}}$. 
	The above equality is equivalent with the following:
	\begin{align*}
	\frac{(1/2)^2(1-\alpha)}{96 \sqrt{2} \cdot 2G_{\max}^2} \sqrt{n} x^{\frac{1+\alpha}{2}} \geq  \sqrt{c_3}  \left(  1+ \left(\frac{2^{l+1} t}{\lambda_n}\right)^\alpha + \beta^{2\alpha}(n, \mu_n, \delta, p)  \right)^{1/2}
	\end{align*}
	Note that the LHS is a increasing function of $x$. It's then enough to ensure the followings hold for all $l \geq 0$:
	\begin{align*}
	&\frac{1-\alpha}{4\cdot 96 \sqrt{2} \cdot 2G_{\max}^2}\sqrt{n} (2^l t/8)^{\frac{1+\alpha}{2}} \geq  \sqrt{c_3}  \\
	& \frac{1-\alpha}{ 4\cdot 96 \sqrt{2} \cdot 2G_{\max}^2}\sqrt{n} (2^l t/8)^{\frac{1+\alpha}{2}} \geq  \sqrt{c_3}  \left(\frac{2^{l+1} t}{\lambda_n}\right)^{\alpha/2}\\
	& \frac{1-\alpha}{4\cdot 96 \sqrt{2} \cdot 2G_{\max}^2}\sqrt{n} (2^l t/8)^{\frac{1+\alpha}{2}} \geq \sqrt{c_3}  \beta^{\alpha}(n, \mu_n, \delta, p).
	\end{align*}
	The above is satisfied for all $l$ by choosing large enough $t$. For example,  the first one holds whenever $$t \geq  8(c_3  4\cdot 96 \sqrt{2} \cdot 2G_{\max}^2  (1-\alpha)^{-1})^{\frac{2}{1+\alpha}}  n^{-1/(1+\alpha)} := c_4 n^{-1/(1+\alpha)}$$
	Similarly, the second and third inequalities hold for all $l$ if $$t \geq  8^{1+\alpha} 2^{\alpha} {c_3}(4\cdot 96 \sqrt{2} \cdot 2G_{\max}^2 (1-\alpha)^{-1})^2  (n\lambda_n^{\alpha})^{-1} := c_5(n\lambda_n^{\alpha})^{-1}$$The third one holds when $t$ satisfies
	\begin{align*}
	t & \geq 8\big((4\cdot 96 \sqrt{2} \cdot 2G_{\max}^2)^2{c_3} (1-\alpha)^{-1}  \big)^{\frac{1}{1+\alpha}}  \cdot n^{-1/(1+\alpha)} \beta^{\frac{2\alpha}{1+\alpha}}(n, \mu_n, \delta, p) \\
	& = c_4n^{-1/(1+\alpha)} \beta^{\frac{2\alpha}{1+\alpha}}(n, \mu_n, \delta, p) 
	\end{align*}
	Note that 
	\begin{align*}
	n^{-\frac{1}{1+\alpha}}  \beta^{\frac{2\alpha}{1+\alpha}}(n, \mu_n, \delta, p)  & = n^{-\frac{1}{1+\alpha}}   \left[p^{1/2} n^{-1/2} \mu_n^{-(1+\alpha)/2}+ (1+\sqrt{\log(1/\delta)} )(n\mu_n)^{-1/2}\right]^{\frac{2\alpha}{1+\alpha}} \\
	& \leq \frac{p^{\frac{\alpha}{1+\alpha}}}{n \mu_n^{\alpha}} + \frac{1+\log^{\frac{\alpha}{1+\alpha}}(1/\delta)}{n\mu_n^{\alpha/(1+\alpha)}}
	\end{align*}
	Thus the third one can be reduced to require $t$ such that 
	\[
	t \geq c_4 \left[\frac{p^{\frac{\alpha}{1+\alpha}}}{n \mu_n^{\alpha}} + \frac{1+\log^{\frac{\alpha}{1+\alpha}}(1/\delta)}{n\mu_n^{\alpha/(1+\alpha)}}\right]
	\]
	In summary, all conditions (A1) to (A4) would be satisfied for all $l \geq 0$ when
	\begin{align*}
	t \geq c_2n^{-1} + c_4 n^{-1/(1+\alpha)} + c_5(n\lambda_n^{\alpha})^{-1}  + c_4 \left[\frac{p^{\frac{\alpha}{1+\alpha}}}{n \mu_n^{\alpha}} + \frac{1+\log^{\frac{\alpha}{1+\alpha}}(1/\delta)}{n\mu_n^{\alpha/(1+\alpha)}}\right]
	\end{align*}
	We can now apply Theorem 19.3 in \citep{gyorfi2006distribution} for each $l$-th term. Similar to the proof of Lemma \ref{lemma: uniform l2 bound}, we have
	\begin{align*}
	\Pr(\{\exists \pi \in \Pi, I_2(\pi) > t\} \jiao E_n) & \leq \sum_{l=0}^\infty \Pr\left( \sup_{f \in \F_l}  \frac{(P-\Pn) f(D)}{P f(D) + 2^l t} > \frac{1}{2} \right)  \leq  \frac{60\exp(-\frac{nt}{c_6}) }{1-\exp(-\frac{n t}{c_6}) }
	\end{align*}
	where $c_6 =  8 \cdot 128\cdot 2304\max(G_{\max}^4, G_{\max}^2)$. When $t \geq \log(120/\delta) c_6n^{-1}$, we have both $\exp(-\frac{n t}{c_6})  \leq 1/2$ and $120 \exp(-nt/c_6) \leq \delta$ and thus
	\begin{align*}
	\Pr\left(\exists \pi \in \Pi, I_2(\pi) > t\right)  \leq \delta + \frac{60\exp(-\frac{nt}{c_5}) }{1-\exp(-\frac{n t}{c_5}) }\leq \delta + 120 \exp(-nt/c_5) \leq 2\delta
	\end{align*}
	Collecting all condition on $t$,  w.p. $1-2\delta$, for all policy $\pi \in \Pi$ we have shown that for a constant ${K_2}$ that depends only on $F_{\max}, G_{\max}, L_\Theta, \text{diam}(\Theta), \{C_i\}_{i=1}^3$, $\sup_{\pi \in \Pi}J_1(\tilde H^{\pi})$ and $\alpha$,
	\begin{align*}
	I_2(\pi) 
	&  \leq {K_2}\Big[ \frac{1}{n^{1/(1+\alpha)}} + \frac{1}{n\lambda_n^\alpha} + \frac{p^{\frac{\alpha}{1+\alpha}}}{n \mu_n^{\alpha}} + \frac{1+\log^{\frac{\alpha}{1+\alpha}}(1/\delta)}{n\mu_n^{\alpha/(1+\alpha)}} + \frac{1+\log(1/\delta)}{n} \Big]
	\end{align*}

	\textbf{Summary} \quad
	Collecting the three bounds on $I_1(\pi), I_2(\pi), I_3(\pi)$, for ${K} = {K_1} + {K_2} + {K_3}$, we have
	\begin{align*}
	\norm{\gn(\qn) - \g(\qpi)}^2 + \lambda_n J_1^2(\qn) 
	\leq  \gamma_2(\delta, n, p, \mu_n, \lambda_n) + {K}\mu_n J_1(\qn) +  \Rem(\pi)
	\end{align*}
	where $\gamma_2(\delta, n, p, \mu_n, \lambda_n)$ is a constant independent of the policy
	\begin{align*}
	\gamma_2 & (\delta, n, p, \mu_n, \lambda_n) 
	= {K} \Big[\mu_n + \lambda_n + \frac{p}{n \mu_n^{\alpha}} + \frac{1}{n^{1/(1+\alpha)}} + \frac{1}{n\lambda_n^\alpha} \\ 
	&   +  \frac{1+\log^{\frac{\alpha}{1+\alpha}}(1/\delta)}{n\mu_n^{\alpha/(1+\alpha)}} + \frac{1+\log(1/\delta)}{n} + \sqrt{\frac{p}{n \mu_n^{\alpha-1}} }+ \sqrt{\frac{\mu_n}{n}} (1+\sqrt{\log(1/\delta)}) \Big]
	\end{align*}
\end{proof}

\begin{lemma} \label{lemma: donsker class prob bound}
	For $M, \sigma > 0$, let	
	\begin{align*}
	\F^*= & \Bigdkh{ f: D \mapsto (1/T) \sum_{t=1}^T \big[H(S_t, A_t) - \sum_{a'} \pi(a'|S_{t+1})    H (S_{t+1}, a')\big] g(S_t, A_t): \\
		& \qquad \qquad \pi \in \Pi, g \in \G, H \in \F, J_1(H) \leq M, P f^2 \leq \sigma^2, J_2(g) \leq \sup_{\pi \in \Pi} J_2(e^\pi)}
	\end{align*}
	Under Assumption 4, the following holds  with probability at least $1-\delta$, 
	\begin{align*}
	\sup_{f \in \F^*}\abs{\Gn f} & \leq C(\delta) \gamma(n, p, M, \sigma) 
	\end{align*}
	where
	\begin{align*}
	C(\delta) = K_1  + \frac{4+ 32G_{\max} F_{\max} K_1}{ (8/3) G_{\max}F_{\max}} + (8/3) G_{\max}F_{\max} \log(1/\delta)
	\end{align*}
	and
	\begin{align*}
	\gamma(n, p, M, \sigma)  & = \sqrt{p}\sigma^{1-\alpha} + p\sigma^{-2\alpha} n^{-1/2} + M^{\alpha}  \sqrt{p}\sigma^{1-\alpha}   + M^{2\alpha} p\sigma^{-2\alpha} n^{-1/2} \\
	&\qquad \qquad + \sigma + n^{-1/2}  + p^{1/4} n^{-1/4} \sigma^{(1-\alpha)/2} + \sqrt{p} \sigma^{-\alpha}n^{-1/2} \\
	&\qquad \qquad + M^{\alpha/2} p^{1/4} n^{-1/4} \sigma^{(1-\alpha)/2} + M^{\alpha}\sqrt{p} \sigma^{-\alpha}n^{-1/2}.  
	\end{align*}
\end{lemma}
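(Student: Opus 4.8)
The statement is a localized maximal inequality for the empirical process $\Gn = \sqrt n(\Pn - P)$ indexed by the product-type class $\F^*$, and the plan is to follow exactly the three-step template already used in Lemmas \ref{lemma: uniform l2 bound} and \ref{lemma: upper level initial bound}: (i) bound the metric entropy of $\F^*$; (ii) apply a chaining bound to control $\EE\sup_{f\in\F^*}|\Gn f|$ in terms of the localizing radius $\sigma$; (iii) upgrade the bound in expectation to a high-probability bound via Talagrand's concentration inequality. First, every $f\in\F^*$ is a product of $g\in\G$ (with $\norm{g}_\infty\leq G_{\max}$) and a difference $H(S_t,A_t)-\sum_{a'}\pi(a'|S_{t+1})H(S_{t+1},a')$ with $H\in\F_M$; by \ref{b1} this difference is bounded by $2F_{\max}$, so $\F^*$ has constant envelope $U:=2F_{\max}G_{\max}$ and, after centering, $\norm{f-Pf}_\infty\leq 4F_{\max}G_{\max}$. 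For the entropy I would decompose $\F^*$ into the three factor classes $\F_M$, the policy-averaged class $\{s\mapsto\sum_{a'}\pi(a'|s)H(s,a'):H\in\F_M,\pi\in\Pi\}$, and $\{g\in\G:J_2(g)\leq\sup_{\pi\in\Pi}J_2(e^\pi)\}$ (the last supremum is finite because $e^\pi=\g(\cdot,\cdot;\tilde H^\pi)$ and $J_2(\g(\cdot,\cdot;\tilde H^\pi))\leq C_2'+C_3'\sup_{\pi\in\Pi}J_1(\tilde H^\pi)$ by \ref{b5} and \ref{b3}), then multiply the covering numbers for uniformly bounded factor classes. Using \ref{b6} for $\F_M$ and $\G$, and the argument from Lemma \ref{lemma: uniform l2 bound} (combining \ref{b6} with the Lipschitz and compactness hypotheses \ref{c1}, \ref{c0}) for the policy-averaged class, this yields
\[
\log N(\epsilon,\F^*,\norm{\cdot}_{L^2(P)})\leq \tilde C\, p\,(M^{2\alpha}\vee 1)\,\epsilon^{-2\alpha},
\]
where $\tilde C$ depends only on $F_{\max},G_{\max},L_\Theta,\mathrm{diam}(\Theta),C_1,\sup_{\pi\in\Pi}J_2(e^\pi),\alpha$; the factor $p$ enters precisely through the covering number of the policy-averaged class, and this is the step I expect to require the most care.

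Second, with $Pf^2\leq\sigma^2$ for all $f\in\F^*$ and the above polynomial entropy (exponent $2\alpha<2$, so the entropy integral converges), I would invoke a standard local maximal inequality for empirical processes with constant envelope: writing $\mathcal J(\sigma)=\int_0^\sigma\sqrt{1+\log N(\epsilon,\F^*,\norm{\cdot}_{L^2(P)})}\,d\epsilon\lesssim \sqrt p\,(M^\alpha+1)\,\sigma^{1-\alpha}+\sigma$, one obtains
\[
m:=\EE\sup_{f\in\F^*}|\Gn f|\ \lesssim\ \mathcal J(\sigma)\Big(1+\frac{\mathcal J(\sigma)\,U}{\sigma^2\sqrt n}\Big).
\]
Expanding $\mathcal J(\sigma)=\sqrt p(M^\alpha+1)\sigma^{1-\alpha}+\sigma$ and multiplying out reproduces, up to absolute constants, exactly the terms $\sqrt p\,\sigma^{1-\alpha}$, $M^\alpha\sqrt p\,\sigma^{1-\alpha}$, $p\,\sigma^{-2\alpha}n^{-1/2}$, $M^{2\alpha}p\,\sigma^{-2\alpha}n^{-1/2}$, $\sigma$, $n^{-1/2}$, $\sqrt p\,\sigma^{-\alpha}n^{-1/2}$ and $M^\alpha\sqrt p\,\sigma^{-\alpha}n^{-1/2}$ that appear in $\gamma(n,p,M,\sigma)$ (cross-terms such as $M^\alpha p\,\sigma^{-2\alpha}n^{-1/2}$ are dominated by the listed ones via $ab\leq\tfrac12(a^2+b^2)$).

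Third, I would apply Talagrand's (Bousquet's) concentration inequality to the supremum of the $\sqrt n$-scaled empirical process with variance proxy $\sigma^2$ and envelope $U$: with probability at least $1-\delta$,
\[
\sup_{f\in\F^*}|\Gn f|\ \leq\ m+\sqrt{2\log(1/\delta)\big(\sigma^2+2U\,m/\sqrt n\big)}+\frac{(8/3)G_{\max}F_{\max}\,\log(1/\delta)}{\sqrt n}.
\]
Using $\sqrt{x+y}\leq\sqrt x+\sqrt y$, the middle term splits into $\sqrt{2\log(1/\delta)}\,\sigma$ (absorbed, since $\sqrt{\log(1/\delta)}\lesssim 1+\log(1/\delta)$) and $\sqrt{2\log(1/\delta)}\sqrt{2Um/\sqrt n}$; substituting the leading piece $m\gtrsim\sqrt p(M^\alpha+1)\sigma^{1-\alpha}$ produces exactly the two remaining terms $p^{1/4}n^{-1/4}\sigma^{(1-\alpha)/2}$ and $M^{\alpha/2}p^{1/4}n^{-1/4}\sigma^{(1-\alpha)/2}$ of $\gamma$, while the last Talagrand term contributes the $n^{-1/2}$ term multiplied by $\log(1/\delta)$. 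Collecting the absolute constants from steps (ii)--(iii) into a single $\delta$-dependent factor gives the claimed form $C(\delta)\gamma(n,p,M,\sigma)$ with $C(\delta)=K_1+\frac{4+32G_{\max}F_{\max}K_1}{(8/3)G_{\max}F_{\max}}+(8/3)G_{\max}F_{\max}\log(1/\delta)$, where $K_1$ is the constant from the chaining bound in step (ii). The only genuinely delicate point is the entropy computation in step (i) (propagating the $p$-dimensional policy complexity through the product/difference structure); steps (ii)--(iii) are then routine applications of off-the-shelf empirical-process tools, with the sole bookkeeping task being to verify that all the cross-terms generated by the expansions are dominated by the ten terms listed in $\gamma$.
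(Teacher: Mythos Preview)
Your proposal is correct and follows essentially the same route as the paper: the paper also (i) bounds $\log N(\epsilon,\F^*,\norm{\cdot}_\infty)$ by factoring into $\F_M$, $\G_{J_{\max}}$ and the policy-averaged class $\V(\Pi,\F_M)$ to obtain the $K(1+M^{2\alpha})p\,\epsilon^{-2\alpha}$ bound, (ii) invokes a chaining/maximal inequality (specifically Lemma~2.2 of \cite{chernozhukov2014gaussian}) to control $\EE[Z_n]$ via the entropy integral, and (iii) applies Talagrand's inequality and expands the square-root term exactly as you describe. The only cosmetic difference is that the paper works with sup-norm covering numbers throughout rather than $L^2(P)$ covering numbers, which is immaterial here.
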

\begin{proof}[Proof of Lemma \ref{lemma: donsker class prob bound}]

	We need to show that w.p. $1-\delta$ 
	\begin{align*}
	Z_n \triangleq \sup_{f \in \F^\ast}\abs{\Gn f} \leq C(\delta) \gamma(n, p, M, \sigma).
	\end{align*}
	Let $B = 2G_{\max}F_{\max}$ and $\bar \delta = \frac{\sigma}{\norm{F}}$. Then by Lemma 2.2 in \citep{chernozhukov2014gaussian}, 
	$$
	\EE[Z_n] \leqconst J(\bar \delta, \F^\ast, F)  \norm{F}_2 + \frac{BJ^2(\bar \delta, \F^\ast, F)}{\bar \delta^2 \sqrt{n}},
	$$
	where
	\begin{align*}
	J(\bar \delta, \F^\ast, F) &= \int_{0}^{\bar \delta} \sqrt{\log N(\epsilon \norm{F}_2, \F^\ast, \norm{\cdot}_\infty)} d\epsilon \\
	& = \frac{1}{\norm{F}}\int_{0}^\sigma \sqrt{\log(\epsilon, \F^\ast, \norm{\cdot}_\infty)} d\epsilon
	\end{align*}
	Let $J_{\max} := \sup_{\pi \in \Pi} J_2(e^\pi)$, then we can show that
	\begin{align*}
	& N(2(G_{\max} + F_{\max})\epsilon, \F^\ast, \norm{\cdot}_\infty) \\
	& \leq N(\epsilon, \F_{M}, \norm{\cdot}_\infty) N(\epsilon, \G_{J_{\max}}, \norm{\cdot}_\infty) N(\epsilon, \V(\Pi, \F_M), \norm{\cdot}_\infty).
	\end{align*}
	Therefore
	\begin{align*}
	& \log N(\epsilon, \F^\ast, \norm{\cdot}_\infty) \\
	& \leq \log N(\frac{\epsilon}{2(G_{\max} + F_{\max})}, \F_{M}, \norm{\cdot}_\infty) \\
	& \qquad + \log N(\frac{\epsilon}{2(G_{\max} + F_{\max})}, \G_{J_{\max}}, \norm{\cdot}_\infty) \\
	& \qquad + \log N(\frac{\epsilon}{2(G_{\max} + F_{\max})}, \V(\Pi, \F_M), \norm{\cdot}_\infty) \\
	& \leq C_1  (2(G_{\max} + F_{\max}))^{2\alpha}\left(\left(\frac{M}{\epsilon}\right)^{2\alpha} + \left(\frac{J_{\max}}{\epsilon}\right)^{2\alpha}\right) + \tilde C_1  (2(G_{\max} + F_{\max}))^{2\alpha} \left(\frac{M}{\epsilon}\right)^{2\alpha}\\
	& \leq (2M^{2\alpha}+J_{\max}^{2\alpha})(C_1  + \tilde C_1) (2(G_{\max} + F_{\max}))^{2\alpha}\epsilon^{-2\alpha} \\
	& \leq {K (1+M^{2\alpha}) p\epsilon^{-2\alpha}},
	\end{align*}
	where
	\[
	\tilde C_1  = C_1  + p (\text{diam}(\Theta)L_\Theta F_{\max})^{2\alpha}/(2\alpha).
	\]
	Now we have
	\begin{align*}
	J(\delta, \F^\ast, F) & \leq \frac{\sqrt{(2M^{2\alpha}+J_{\max}^{2\alpha})(C_1  + \tilde C_1) (2(G_{\max} + F_{\max}))^{2\alpha}}}{2G_{\max}  F_{\max}(1-\alpha)}  \sigma^{1-\alpha} \\
	& \leq {{\frac{\sqrt{K}}{2G_{\max}  F_{\max}(1-\alpha)}} (1+M^{\alpha}) \sqrt{p}\sigma^{1-\alpha}} 
	\end{align*},
	where $K = (2M^{2\alpha}+J_{\max}^{2\alpha})(C_1  + \tilde C_1) (2(G_{\max} + F_{\max}))^{2\alpha}$.
	Therefore
	\begin{align*}
	\EE[Z_n] 
	& \leq {\frac{C\sqrt{K}}{1-\alpha}} (1+M^{\alpha}) \sqrt{p}\sigma^{1-\alpha} + 2CG_{\max}  F_{\max}  \frac{K(1+M^{2\alpha})}{(1-\alpha)^2}  p\sigma^{-2\alpha} n^{-1/2} \\
	& \leq  K_1\left( \sqrt{p}\sigma^{1-\alpha} + p\sigma^{-2\alpha} n^{-1/2} + M^{\alpha}  \sqrt{p}\sigma^{1-\alpha}   + M^{2\alpha} p\sigma^{-2\alpha} n^{-1/2}\right),
	\end{align*}
	where $K_1 = C\frac{\sqrt{K}}{1-\alpha} + 2CG_{\max}  F_{\max} K$.
	By Talagrand's inequality, with probability $1-e^{-t}$, we have
	\begin{align*}
	Z_n & \leq  \EE[Z_n] + \sqrt{2t(\sigma^2 + 4n^{-1/2}  \EE[Z_n] b) }+ \frac{2tb}{3\sqrt{n}} \\
	& \leq \EE[Z_n] + \sqrt{2t } \sigma + \sqrt{8t n^{-1/2}  \EE[Z_n] b }+ \frac{2tb}{3\sqrt{n}} \\
	& \leq K_1\left( \sqrt{p}\sigma^{1-\alpha} + p\sigma^{-2\alpha} n^{-1/2} + M^{\alpha}  \sqrt{p}\sigma^{1-\alpha}   + M^{2\alpha} p\sigma^{-2\alpha} n^{-1/2}\right)    + \sqrt{2t} \sigma +  \frac{4t G_{\max}F_{\max}}{3\sqrt{n}} \\
	& ~ + 4 \sqrt{G_{\max} F_{\max} K_1  t} \sqrt{n^{-1/2}\left( \sqrt{p}\sigma^{1-\alpha} + p\sigma^{-2\alpha} n^{-1/2} + M^{\alpha}  \sqrt{p}\sigma^{1-\alpha}   + M^{2\alpha} p\sigma^{-2\alpha} n^{-1/2}\right) }\\
	& \leq (K_1 + \sqrt{2t} + 4\sqrt{G_{\max} F_{\max} K_1 t} + (4/3) G_{\max}F_{\max} t) \\ & \times (\sqrt{p}\sigma^{1-\alpha} + p\sigma^{-2\alpha} n^{-1/2} + M^{\alpha}  \sqrt{p}\sigma^{1-\alpha}   + M^{2\alpha} p\sigma^{-2\alpha} n^{-1/2} + \sigma + n^{-1/2} \\
	&\qquad + p^{1/4} n^{-1/4} \sigma^{(1-\alpha)/2} + \sqrt{p} \sigma^{-\alpha}n^{-1/2} + M^{\alpha/2} p^{1/4} n^{-1/4} \sigma^{(1-\alpha)/2} + M^{\alpha}\sqrt{p} \sigma^{-\alpha}n^{-1/2} ). 
	\end{align*}
	Let $e^{-t} = \delta$, then we have
	\begin{align*}
	Z_n & \leq C(\delta) \gamma(n, p, M, \sigma) 
	\end{align*}
	where we can show
	\begin{align*}
	& K_1 + \sqrt{2\log(1/\delta)} + 4\sqrt{G_{\max} F_{\max} K_1 \log(1/\delta)}  + (4/3) G_{\max}F_{\max} \log(1/\delta)\\
	& \leq K_1 + \sqrt{(4+ 32G_{\max} F_{\max} K_1) \log(1/\delta)}  + (4/3) G_{\max}F_{\max} \log(1/\delta)\\
	& \leq K_1  + \frac{4+ 32G_{\max} F_{\max} K_1}{ (8/3) G_{\max}F_{\max}} + (8/3) G_{\max}F_{\max} \log(1/\delta) := C(\delta)
	\end{align*}
\end{proof}

\begin{lemma} \label{lemma: key ineq}
	Suppose Assumptions 2, (3-3) and (6-3) hold. For any $H \in \F$,  we have
	\begin{align*}
	& \EE\Big[\Big(\frac{1}{T}\sum_{t=1}^T [\Delta^\pi(S_t, A_t, S_{t+1}; H) - \Delta^\pi(S_t, A_t, S_{t+1}; \qpi)]\Big)^2\Big]  \\
	& \leq \Bigg[\frac{2 + 2 C_0 \beta/(1-\beta) }{\kappa'} \Bigg]^2 \Bigg[1+\frac{1 + (1/T) \norm{\frac{d_{T+1}}{d_D}}_\infty}{p_{\min}} \Bigg] \norm{\g(H) - \g(\qpi)}^2
	\end{align*}
	
\end{lemma}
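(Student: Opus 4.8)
The plan is to bound the left-hand side by a multiple of $\norm{(\I-\P^\pi)(H-\qpi)}^2$ and then invoke Assumption \ref{b4}. I would begin by setting $g:=H-\qpi$; since $H\in\F$ satisfies $H(s^*,a^*)=0$ (Assumption \ref{b2}) and $\qpi(s^*,a^*)=0$ by definition, $g(s^*,a^*)=0$. Because $\Delta^\pi(Z_t;\cdot)$ is affine and its constant part cancels in a difference, $\Delta^\pi(Z_t;H)-\Delta^\pi(Z_t;\qpi)=-g(S_t,A_t)+\bar g(S_{t+1})$, where $\bar g(s'):=\sum_{a'}\pi(a'|s')g(s',a')$; the conditional mean of this given $(S_t,A_t)$ is $-(\I-\P^\pi)g(S_t,A_t)$ (using $\EE[\bar g(S_{t+1})\mid S_t,A_t]=(\P^\pi g)(S_t,A_t)$, which holds irrespective of the behavior policy). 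Writing $h:=(\I-\P^\pi)g$ and $m_t:=\bar g(S_{t+1})-\EE[\bar g(S_{t+1})\mid S_t,A_t]$ gives $\Delta^\pi(Z_t;H)-\Delta^\pi(Z_t;\qpi)=-h(S_t,A_t)+m_t$, so by $(a+b)^2\le 2a^2+2b^2$ the left-hand side is at most $2\,\EE[((1/T)\sum_{t=1}^T h(S_t,A_t))^2]+2\,\EE[((1/T)\sum_{t=1}^T m_t)^2]$.

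For the first term, Jensen's inequality gives $\EE[((1/T)\sum_t h(S_t,A_t))^2]\le\norm{h}^2$. For the second term, the useful observation is that $m_t$ does not change when $g$ is shifted by a constant, so $\EE[m_t^2]=\EE[\Var(\bar g(S_{t+1})\mid S_t,A_t)]\le\EE[(\bar g(S_{t+1})-\mu^\pi(g))^2]$. I would then bound $(\bar g(s')-\mu^\pi(g))^2\le\sum_{a'}\pi(a'|s')(g(s',a')-\mu^\pi(g))^2$ by Jensen, average the marginal laws of $S_{t+1}$ using $(1/T)\sum_{t=1}^T d_{t+1}(s)=d_D(s)-(1/T)d_1(s)+(1/T)d_{T+1}(s)\le(1+(1/T)\norm{\frac{d_{T+1}}{d_D}}_\infty)d_D(s)$, and pass from $d_D(s)$ to $d_D(s,a)$ via $d_D(s,a)\ge p_{\min}d_D(s)$ (a consequence of Assumption \ref{a1}). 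Together with Jensen in $t$ this yields
\[
\EE\Big[\Big((1/T)\textstyle\sum_{t=1}^T m_t\Big)^2\Big]\;\le\;(1/T)\textstyle\sum_{t=1}^T\EE[m_t^2]\;\le\;\frac{1+(1/T)\norm{\frac{d_{T+1}}{d_D}}_\infty}{p_{\min}}\,\norm{g-\mu^\pi(g)\ones}^2 .
\]

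The remaining and least routine step is to bound $\norm{g-\mu^\pi(g)\ones}$ by $\norm{h}$. Since $\mu^\pi$ is $\P^\pi$-stationary, $\mu^\pi(h)=\mu^\pi(g)-\mu^\pi(\P^\pi g)=0$; the geometric ergodicity bound (\ref{ass}) then gives $\norm{(\P^\pi)^k h}\le C_0\beta^k\norm{h}\to 0$, so the Neumann series $\sum_{k\ge 0}(\P^\pi)^k h$ converges, and applying $\I-\P^\pi$ (a telescoping sum whose tail tends to $\mu^\pi(h)\ones=0$), together with the fact that the only $\mu^\pi$-mean-zero solution of $(\I-\P^\pi)f=0$ is $f=0$ (again from (\ref{ass})), identifies this series with $g-\mu^\pi(g)\ones$. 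Hence $\norm{g-\mu^\pi(g)\ones}\le(1+C_0\beta/(1-\beta))\norm{h}$. Combining the three estimates and using $2(1+x)^2\le(2+2x)^2$ for $x\ge 0$ gives the asserted bound with $\norm{h}^2$ in place of $\norm{\g(H)-\g(\qpi)}^2$, and Assumption \ref{b4}---which gives $\norm{\g(H)-\g(\qpi)}\ge\kappa'\norm{(\I-\P^\pi)(H-\qpi)}=\kappa'\norm{h}$---completes the proof.

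I expect the crux to be exactly the last paragraph: recognizing that the martingale-type remainder $m_t$ depends on $g$ only through its $\mu^\pi$-centered version $g-\mu^\pi(g)\ones$---which is what makes a resolvent estimate possible even though $\I-\P^\pi$ is singular---and controlling that centered function through the Neumann series and the ergodicity assumption (\ref{ass}). All the other steps are Jensen's inequality and elementary change-of-measure bookkeeping among $d_{t+1}$, $d_D$ and the action-marginal lower bound; the factors ``$2$'' in the final constant are just slack from the crude $(a+b)^2\le 2a^2+2b^2$ and $2(1+x)^2\le(2+2x)^2$ inequalities.
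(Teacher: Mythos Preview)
Your argument is correct and follows the same route as the paper's: center $g=H-\qpi$ by $\mu^\pi(g)$, invert $\I-\P^\pi$ on the mean-zero part via geometric ergodicity (your Neumann-series step is exactly the content of the cited Lemma~B.5 of \cite{liao2019off}), change measure from $d_{t+1}$ to $d_D$ via the $p_{\min}$ bound, and finish with Assumption~\ref{b4}. The only organizational difference is that the paper quotes Lemma~B.4 of \cite{liao2019off} to bound the left side directly by $\big(1+\tfrac{1+(1/T)\|d_{T+1}/d_D\|_\infty}{p_{\min}}\big)\|g-\mu^\pi(g)\ones\|^2$, whereas you first peel off the conditional-mean piece $-h(S_t,A_t)$ with $(a+b)^2\le 2a^2+2b^2$; after your slack step $2(1+x)^2\le(2+2x)^2$ the constants coincide.
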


\begin{proof}[Proof of Lemma \ref{lemma: key ineq}]
	Let $U^\pi(s, a, s'; H) = \sum_{a'} \pi(a'|s')H(s', a') - H(s, a)$. Let $\bar H(s, a) = H(s, a) - \sum_{s, a} d^\pi(s) \pi(a|s) H(s,a)$ be the shifted version of $H$ such that the expectation under the stationary distribution is zero. And similarly we define $\bar H^\pi = \qpi(s, a) - \sum_{s, a} d^\pi(s) \pi(a|s) \qpi (s,a)$.  By definition, we have
	\begin{align*}
	& \EE\Big[\Big(\frac{1}{T}\sum_{t=1}^T [\Delta^\pi(S_t, A_t, S_{t+1}; H) - \Delta^\pi(S_t, A_t, S_{t+1}; \qpi)]\Big)^2\Big]  \\
	& =  \EE\Big[\Big(\frac{1}{T}\sum_{t=1}^T U^\pi(S_t, A_t, S_{t+1}; \bar H) - U(S_t, A_t, S_{t+1}; \bar H^\pi) \Big)^2\Big]  \\
	& \leq \Bigdkh{1 + \frac{1}{p_{\min}}\big[1 + (1/T) \bignorm{\frac{d_{T+1}}{d_D}}_\infty\big]} \norm{\bar H - \bar H^\pi}^2
	\end{align*}
	where we use Lemma B.4 in \cite{liao2019off} in the last inequality.  Next we can apply Lemma B.5  in \cite{liao2019off} to get
	\begin{align*}
	\norm{\bar H - \bar H^\pi} & \leq 2 (1+C_0 \beta/(1-\beta)) \norm{(\I - \P^\pi)(\bar H - \bar H^\pi)} \\
	& =  2 (1+C_0 \beta/(1-\beta)) \norm{(\I - \P^\pi)(H - \qpi)}
	\end{align*}
	where in the last equality we use the fact that the operator $(\I - \P^\pi)$ is invariant to the constant shift. Now we can use Assumption (6-3) to bound the last term and get
	\begin{align*}
	\norm{ H -  \qpi} \leq  \frac{2 (1+C_0 \beta/(1-\beta))}{\kappa'}\norm{\g(H) - \g(\qpi)}
	\end{align*}
	Combining the two inequalities gives the desired result.  
\end{proof}

\section{Regret Bound}
\label{appendix: regret}

\begin{proof}[Proof of Theorem 5.1]
	Since $\Theta$ is compact and $\V^{\pi_{\theta}}$ is continuous according to Lemma \ref{lemma: lip}, there exists $\theta^* \in \Theta$, such that $\sup_{\pi \in \Pi} \V^{\pi} =\sup_{\theta \in \Theta} \V^{\pi_\theta}= \V^{\pi_{\theta^*}}$. Let $\pi^* = \pi_{\theta^*}$. 
	We bound the regret by
	\begin{align*}
	\Regret (\hat \pi_n) & = \sup_{\pi \in \Pi} \V^{\pi} -  \V^{\hat \pi_n} = \V^{\pi^*} - \V^{\hat \pi_n} \\
	& = \Vn^{\hat \pi_n} - \V^{\hat \pi_n} - (\Vn^{\pi^*} - \V^{\pi^*}) + \Vn (\pi^*) - \Vn(\hat \pi_n) \\
	& \leq \Vn^{\hat \pi_n} - \V^{\hat \pi_n} - (\Vn^{\pi^*} - \V^{\pi^*}) + \sup_{\pi \in \Pi} \Vn^{\pi} - \Vn(\hat \pi_n) \\
	& \leq \Vn^{\hat \pi_n} - \V^{\hat \pi_n} - (\Vn^{\pi^*} - \V^{\pi^*}) 
	\end{align*}
	Recall the efficient influence function is given by $\phi^\pi(D) = 	\frac{1}{T} \sum_{t=1}^T \omega^\pi(S_t, A_t) [R_{t+1} +  \sum_{a'} \pi(a'|S_{t+1}) Q^\pi(S_{t+1}, a') - Q^\pi(S_t, A_t) -  \V^{\pi} ]$. 
	Define the remainder term $\Rem_n(\pi) =(\Vn^{\pi} - \V^{\pi}) - \Pn \phi^{\pi}$.  We then have
	\begin{align*}
	\Regret (\hat \pi_n)  & \leq \Pn (\phi^{\hat \pi_n} - \phi^{\pi^*}) + (\Rem_n(\hat \pi_n) - \Rem_n(\pi^*))  \\
	& \leq \sup_{\pi \in \Pi} ~ \Pn (\phi^{\pi} - \phi^{\pi^*}) + 2 \sup_{\pi \in \Pi} \abs{\Rem_n(\pi)} 
	\end{align*}
	

	\textbf{(i) Leading Term} \quad
	For any $(s, a, s', r)$, we have
	\begin{align*}
	& \abs{\omega^{\pi_{\theta_1}}(s, a) (R + U^{\pi_{\theta_1}}(s, a, s') - \V^{\pi_{\theta_1}}) - \omega^{\pi_{\theta_2}}(s, a) (r + U^{\pi_{\theta_2}}(s, a, s') - \V^{\pi_{\theta_2}})}\\
	& \leq 	\abs{\omega^{\pi_{\theta_1}}(s, a) (R + U^{\pi_{\theta_1}}(s, a, s') - \V^{\pi_{\theta_1}}) - \omega^{\pi_{\theta_2}}(s, a) (r + U^{\pi_{\theta_1}}(s, a, s') - \V^{\pi_{\theta_1}})} \\
	& \quad +  \abs{ \omega^{\pi_{\theta_2}}(s, a) (r + U^{\pi_{\theta_1}}(s, a, s') - \V^{\pi_{\theta_1}}) - \omega^{\pi_{\theta_2}}(s, a) (r + U^{\pi_{\theta_2}}(s, a, s') - \V^{\pi_{\theta_2}})}\\
	& \leq 2(R_{\max} + F_{\max}) \abs{\omega^{\pi_{\theta_1}}(s, a) - \omega^{\pi_{\theta_2}}(s, a)}  \\
	& \quad + G_{\max}  \left(\sup_{\pi \in \Pi} \norm{\omega^\pi}^2\right) \left(\abs{U^{\pi_{\theta_1}}(s, a, s') - U^{\pi_{\theta_2}}(s, a, s')} + \abs{\V^{\pi_{\theta_1}} - \V^{\pi_{\theta_2}}}\right)
	\end{align*}
	Using Lemma \ref{lemma: lip} and Assumption {$\inf_{s} d_D(s) := d_{\min} > 0$}, (4-1), (6-1), (6-2) and (5-1), it can be seen that 
	\begin{align*}
	&\abs{\omega^{\pi_{\theta_1}}(s, a) - \omega^{\pi_{\theta_2}}(s, a)} 
	\leq (p_{\min} d_{\min})^{-1}(C_d  + L_{\Theta})\norm{\theta_1 - \theta_2}_2\\
	&	\abs{\V^{\pi_{\theta_1}} - \V^{\pi_{\theta_2}}}  \leq 
	R_{\max}(L_{\Theta} \abs{\A} + C_d) \norm{\theta_1 - \theta_2}_2 
	\end{align*}
	On the other hand, for any constant $c$, we have 
	\begin{align*}
	&\abs{U^{\pi_{\theta_1}}(s, a, s') - U^{\pi_{\theta_2}}(s, a, s')} 
	\\&\leq \abs{Q^{\pi_{\theta_1}}(s, a)  - (Q^{\pi_{\theta_2}}(s, a) - c)} + \abs{\sum_{a'} \pi_{\theta_1}(a'|s') Q^{\pi_{\theta_1}}(s', a')  - \pi_{\theta_2}(a'|s') (Q^{\pi_{\theta_2}}-c)(s', a')} \\
	&	\leq \abs{\V^{\pi_{\theta_1}} - \V^{\pi_{\theta_2}}} + \norm{V^{\pi_{\theta_1}} + c - V^{\pi_{\theta_2}}}_{\infty} \left(1 + L_{\Theta} \abs{\A} + 1\right) \\ & 
	\leq (R_{\max}(L_{\Theta} \abs{\A} + C_d) \norm{\theta_1 - \theta_2}_2  + \left(2 + L_{\Theta} \abs{\A}\right) \norm{V^{\pi_{\theta_1}} - (V^{\pi_{\theta_2}}-c)}_{\infty}
	\end{align*}
	where we define the state-only relative value function by $V^\pi (s) = \sum_{a} \pi(a|s) Q^\pi(s, a)$. By choosing $c = \mu^{\pi_{\theta_1}}(V^{\pi_{\theta_2}})$, we can apply Lemma \ref{lemma: lip} to bound $\norm{V^{\pi_{\theta_1}} - (V^{\pi_{\theta_2}}-\mu^{\pi_{\theta_1}}(V^{\pi_{\theta_2}})}_{\infty}$ and get
	\begin{align*}
	\abs{U^{\pi_{\theta_1}}(s, a, s') - U^{\pi_{\theta_2}}(s, a, s')}  \leq   \Bigdkh{R_{\max}(L_{\Theta} \abs{\A} + C_d )+ \left(2 + L_{\Theta} \abs{\A}\right) C_V} \norm{\theta_1 - \theta_2}_2
	\end{align*}
	Here $C_d, C_V$ are the constants in Lemma \ref{lemma: lip}.
	Let ${K_1} = 2(R_{\max} + F_{\max}) (p_{\min} d_{\min})^{-1}(C_d  + L_{\Theta}) + G_{\max}  \left(\sup_{\pi \in \Pi} \norm{\omega^\pi}^2\right) [ R_{\max}(L_{\Theta} \abs{\A} + C_d) +  R_{\max}(L_{\Theta} \abs{\A} + C_d) + C_V\left(2 + L_{\Theta} \abs{\A}\right)]$, we have 
	\begin{align}
	\abs{\phi^{\pi_{\theta_1}}(D) - \phi^{\pi_{\theta_2}}(D)} \leq {K_1} \norm{\theta_1 - \theta_2}_2 \label{lip eif}
	\end{align}
	On the other hand, 
	\begin{align*}
	\abs{\phi^{\pi}(D)} \leq \phi_{\max} := 2 (R_{\max} + F_{\max}) G_{\max} \cdot \sup_{\pi \in \Pi} \norm{\omega^\pi}^2
	\end{align*}
	The maximal inequality with bracketing number then gives that 
	\begin{align*}
	\EE[\sup_{\pi \in \Pi} \abs{\Gn (\phi^{\pi} - \phi^{\pi^*})} ] = \EE[\sup_{f \in \F^*} \abs{\Gn f} ]  
	\leqconst J_{[]}(\phi_{\max}, \F^*, L_2)
	\end{align*}
	where $\F^* = \{ \phi^{\pi} - \phi^{\pi^*}: \pi \in \Pi \}$ and the bracketing entropy $J_{[]}(\phi_{\max}, \F^*, L_2) = \int_{0}^{\phi_{\max}} \sqrt{ \log N_{[]}(\epsilon, \F^*, L_2)} d\epsilon$. Using the Lipschitz property gives  
	\begin{align}
	J_{[]}(\phi_{\max}, \F^*, L_2) & \leq  \int_{0}^{\phi_{\max}} \sqrt{ \log N((2{K_1})^{-1} \epsilon, \Theta, \norm{\cdot}_2)} d\epsilon \notag\\
	& \leq \int_{0}^{\phi_{\max}} \sqrt{p \log \left(\frac{6K\text{diam}(\Theta)}{\epsilon}\right)} d\epsilon := {K_2} \sqrt{p} \label{entropy}
	\end{align}
	We now apply the Talagrand's inequality. 
	\begin{align*}
	&  \sup_{\pi \in \Pi} ~ \Pn (\phi^{\pi} - \phi^{\pi^*})  \\
	& \leq \frac{1}{\sqrt{n}} \left[ {K_2} \sqrt{p}  + \sqrt{\frac{2\log(1/\delta)(4 \phi_{\max}^2 + 8 \phi_{\max}  {K_2}\sqrt{p})}{n}} + \frac{4\phi_{\max}\log(1/\delta)}{3n}   \right]\\
	& \leq {K_2} \sqrt{\frac{p}{n}} + \frac{ \phi_{\max}\sqrt{8\log(1/\delta) }}{n} +  \frac{4\sqrt{ \log(1/\delta) \phi_{\max}  {K_2}} p^{1/4}}{n} + \frac{4\phi_{\max}\log(1/\delta)}{3n^{3/2}} \\
	& \leq C_1(\delta) \left( p^{1/2} n^{-1/2}+ n^{-1} + p^{1/4}n^{-1} + n^{-3/2}  \right) 
	\end{align*}
	where $C_1(\delta) = {K_2} +  \phi_{\max}\sqrt{8\log(1/\delta) } + 4\sqrt{ \log(1/\delta) \phi_{\max}  {K_2}} + (4/3)\phi_{\max}\log(1/\delta)$.

	\textbf{(ii) Remainder Term} \quad
	For the ease of notation, define
	\[
	f(\omega, U, \pi):  D \mapsto  \frac{1}{T} \sum_{t=1}^T  \omega(S_t, A_t) (R_{t+1} + U(S_{t}, A_t, S_{t+1}) - \V^{\pi})
	\]
	Note that we have $\phi^{\pi} = f(\omega^\pi, U^\pi, \pi)$. Let $\hat \phi^\pi_n = f(\hat \omega^\pi_n, \Upin, \pi)$ be a ``plug-in'' estimator of $\phi^\pi$. Since the ratio estimator satisfies $\PP_n (1/T) \sum_{t=1}^T \hat \omega^\pi_n(S_t, A_t) = 1$ by construction, we have
	\begin{align*}
	& \Rem_n(\pi)  = \Vn^{\pi} - \V^{\pi} - \Pn \phi^{\pi} \\
	& = \frac{\Pn \dkh{(1/T)\sum_{t=1}^T \hat \omega_n^\pi(S_{t}, A_{t}) [R_{t+1} + \hat U^{\pi}_n(S_t, A_t, S_{t+1})]} }{\Pn \dkh{(1/T)\sum_{t=1}^T \hat \omega_n^\pi(S_{t}, A_{t})} }  -  \V^{\pi} - \Pn \phi^{\pi}  \\
	&= \Pn \dkh{(1/T)\sum_{t=1}^T \hat \omega_n^\pi(S_{t}, A_{t}) [R_{t+1} + \hat U^{\pi}_n(S_t, A_t, S_{t+1}) - \V^{\pi}]}  - \Pn \phi^{\pi}  \\
	& = \Pn (\hat \phi^\pi_n - \phi^\pi)    = (\Pn - P)(\hat \phi^\pi_n - \phi^\pi) + P (\hat \phi^\pi_n - \phi^\pi)
	\end{align*}
	This implies that $$\sup_{\pi \in \Pi} \abs{\Rem_n(\pi)} \leq \sup_{\pi \in \Pi} \abs{ P (\hat \phi^\pi_n - \phi^\pi)} + \sup_{\pi \in \Pi} \abs{(\Pn - P)(\hat \phi^\pi_n - \phi^\pi) } $$

	Consider the first term. The doubly-robustness structure of the efficient influence function, Lemma 3.2, implies that
	\begin{align*}
	& P(\hat \phi^\pi_n  - \phi^\pi)  = P ( f(\hat \omega^\pi_n, \Upin, \pi) -  f(\omega^\pi, U^\pi, \pi) ) \\
	& = P [f(\hat \omega^\pi_n, \Upin, \pi) - f(\hat \omega^\pi_n, \Upi, \pi)  +  f(\hat \omega^\pi_n, \Upi, \pi)   - f(\omega^\pi, U^\pi, \pi) ] \\
	& = P [f(\hat \omega^\pi_n, \Upi, \pi)   - f(\omega^\pi, U^\pi, \pi)]   + \Big(P [f(\hat \omega^\pi_n, \Upin, \pi) - f(\hat \omega^\pi_n, \Upi, \pi) ] \\
	& \qquad - P [f( \omega^\pi, \Upin, \pi) - f(\omega^\pi, \Upi, \pi)] \Big)+ P [f( \omega^\pi, \Upin, \pi) - f(\omega^\pi, \Upi, \pi)] \\
	& = \EE\Big[ (1/T) \sum_{t=1}^T (\hat \omega^\pi_n - \omega^\pi)(S_t, A_t) (R_{t+1} + \Upi(S_t, A_t, S_{t+1}) - \eta^\pi) \Big]  \\
	& \qquad + \EE\Big[ (1/T) \sum_{t=1}^T (\hat \omega^\pi_n - \omega^\pi)(S_t, A_t) \cdot (\hat U^\pi_n-U^\pi)(S_t, A_t, S_{t+1}) \Big]\\
	& \qquad + \EE\Big[ (1/T) \sum_{t=1}^T \omega^\pi(S_t, A_t) (\Upin - U^\pi)(S_{t}, A_{t}, S_{t+1}) \Big]\\
	& =\EE\Big[  (1/T) \sum_{t=1}^T   (\hat \omega^\pi_n - \omega^\pi)(S_t, A_t) \cdot (\Upin -U^\pi)(S_t, A_t, S_{t+1})\Big]
	\end{align*}
	where the last equality holds by noting  $\sum_{s, a} \EE[(\Upin - \Upi)(S_{t}, A_t, S_{t+1})|S_t=s, A_t=a] d^\pi(s, a) = 0$. 
	Furthermore, applying Cauchy inequality twice gives
	\begin{align*}
	& \abs{P( \hat \phi^\pi_n  - \phi^\pi)} =  \abs{(1/T) \sum_{t=1}^T	\EE[ (\hat \omega^\pi_n- \omega^\pi)(S_t, A_t) \cdot(\Upin-U^\pi)(S_t, A_t, S_{t+1}) ]} \\
	& \leq (1/T) \sum_{t=1}^T \sqrt{\EE\big[ (\hat \omega^\pi_n - \omega^\pi)^2(S_t, A_t)\big]} \cdot \sqrt{\EE[(\Upin-U^\pi)^2(S_t, A_t, S_{t+1})]}\\
	& \leq \sqrt{(1/T) \sum_{t=1}^T \EE[ (\hat \omega^\pi_n- \omega^\pi)^2(S_t, A_t)]} \cdot \sqrt{(1/T) \sum_{t=1}^T \EE[(\Upin-U^\pi)^2(S_t, A_t, S_{t+1})]} \\
	& = \norm{\hat \omega^\pi_n - \omega^\pi} \cdot \norm{\hat U^\pi_n - U^\pi} 
	\end{align*}	
	Using Theorem \ref{thm: ratio} and Theorem \ref{thm: value}, there exists constant $C_1(\delta)$ such that 
	\begin{align*}
	\sup_{\pi \in \Pi} \abs{P( \hat \phi^\pi_n  - \phi^\pi)} & \leq  \sup_{\pi \in \Pi}  \dkh{\norm{\hat \omega^\pi_n - \omega^\pi} \cdot \norm{\hat U^\pi_n - U^\pi}} \\
	& \leq (\sup_{\pi \in \Pi}  \norm{\hat \omega^\pi_n - \omega^\pi}) ( \sup_{\pi \in \Pi} \norm{\hat U^\pi_n - U^\pi} ) \\
	& \leq C_1(\delta) \iota^{\frac{1+\omega_k}{2}} p n^{-\frac{\beta_k}{2} - \frac{1}{2(1+\alpha)}}
	\end{align*}
	Since $k > 2$, we have $\beta_k < 1/(1+\alpha)$ and $\omega_k < 1$. This implies that  
	$$	\sup_{\pi \in \Pi} \abs{P( \hat \phi^\pi_n  - \phi^\pi)}  \leq C_1(\delta) \iota p n^{-\beta_k} 
	$$

	Now we consider the second term. 	
	There exists $C_2(\delta)$, such that $\Pr(E_n) > 1- (6+k)\delta$ where $E_n = E_{n, 1} \jiao E_{n, 2}$ and 
	\begin{align*}
	& E_{n, 1} = \{ \norm{\hat \omega^\pi_n - w_\pi}^2 \leq  C_2(\delta) \iota^{\omega_k} n^{-\beta_k}, J_2(\hat e^\pi_n) \leq C_2(\delta) \iota^{\omega_k/2} n^{\frac{1}{2(1+\alpha)} - \frac{\beta_k}{2}}, \forall \pi \in \Pi\}\\
	& E_{n, 2} = \{ \norm{\Upin - \Upi}^2 \leq  C_2(\delta) \iota n^{-\frac{1}{1+\alpha}}, J_1(\hat Q_n^\pi) \leq C_2(\delta), \forall \pi \in \Pi\}
	\end{align*}
	Under this event $E_n$, we have
	\begin{align*}
	\sup_{\pi \in \Pi} \bigdkh{\abs{(\PP_{n} - P)( \hat \phi^\pi_n  - \phi^\pi) }} \leq \sup_{f \in \F^*, Pf^2 \leq \beta(n, \delta)}  \abs{ (\PP_{n} - P) f} 
	\end{align*}
	where $\beta(n, \delta) = 2(2(R_{\max}+2F_{\max}))^2 C_2(\delta) \iota^{\omega_k} n^{-\beta_k} + 2G_{\max} \left(\sup_{\pi} \norm{\omega^\pi}^2 \right)C_2(\delta) \iota n^{-\frac{1}{1+\alpha}} \leq C_3(\delta) \iota n^{-\beta_k}$ and $\F^* = \{ f: D \mapsto \frac{1}{T}\sum_{t=1}^T g(S_t, A_t, S_{t+1}) - \phi^\pi(D):   \pi \in \Pi, g \in \G^*\}$. Here $\G^*$ is given by
	\begin{align*}
	\G^* = \{& (s, a, s')   \mapsto  g(s, a) (\RR(s, a, s') + \sum_{a'} \pi(a'|s') Q(s', a') - Q(s, a) - \eta):\\
	&    g \in \G_{M_1}, Q \in \F_{M_2}, \pi \in \Pi, \eta \in [-R_{\max}, R_{\max}]\}
	\end{align*}
	where $M_1 = C_2(\delta) \iota^{\omega_k/2} n^{\frac{1}{2(1+\alpha)} - \frac{\beta_k}{2}}$ and $M_2 = C_2(\delta)$. 
	Applying a slightly modified version of Lemma \ref{lemma: donsker class prob bound} implies that for some constant $C_4(\delta)$, 
	\[
	\sup_{\pi \in \Pi} \bigdkh{\abs{(\PP_{n} - P)( \hat \phi^\pi_n  - \phi^\pi) }}  \leq  C_4(\delta)\iota^{\frac{1+\omega_k}{2}} p n^{-\frac{\beta_{k} + 1/(1+\alpha)}{2}} \leq  C_4(\delta)\iota p n^{-\beta_k} 
	\]
	Thus $\sup_{\pi \in \Pi} \abs{\Rem_n(\pi)}  \leq \left[C_1(\delta)+C_4(\delta)\right]\iota p n^{-\beta_k} $

\end{proof}

\begin{lemma} \label{lemma: lip}
	Define the state relative value function $V^\pi (s) = \sum_{a} \pi(a|s) Q^\pi(s, a)$. Under Assumption 1, (3-2) and (3-3), there exists constants $C_d, C_V$ that depend on only $\abs{\A}, L_{\Theta}, \beta, C_0, F_{\max}$ and $R_{\max}$, such that for any $\theta_1, \theta_2 \in \Theta$
	\begin{enumerate}
		\item $\norm{d^{\pi_{\theta_1}} - d^{\pi_{\theta_2}}}_{\tv} \leqconst  \sup_{s \in \S} \norm{P^{\pi_{\theta_1}}(\cdot|s) - P^{\pi_{\theta_2}}(\cdot|s)}_{\tv} \leq C_d\norm{\theta_1 - \theta_2}_2$
		\item $\sup_{s \in \S}\left|V^{\pi_{\theta_1}}(s) - \bar V^{\pi_{\theta_2}}(s)\right|\leq C_{V} \norm{\theta_1 - \theta_2}_2$  where $\bar V^{\pi_{\theta_2}} =  V^{\pi_{\theta_2}} - \mu^{\pi_{\theta_1}}(V^{\pi_{\theta_2}})$
	\end{enumerate}

\end{lemma}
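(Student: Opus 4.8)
The plan is to prove the two parts in turn, reducing each to the geometric ergodicity bound \eqref{ass2} and the Lipschitz property \ref{c1} of the policy class. For the first inequality of Part 1, I would start from $P^{\pi_\theta}(s'|s) = \sum_a \pi_\theta(a|s) P(s'|s,a)$, so that $P^{\pi_{\theta_1}}(\cdot|s) - P^{\pi_{\theta_2}}(\cdot|s)$ is a mixture of the kernels $P(\cdot|s,a)$ against the signed weights $\pi_{\theta_1}(a|s) - \pi_{\theta_2}(a|s)$; summing absolute values over $s'$ and using \ref{c1} gives $\sup_s \norm{P^{\pi_{\theta_1}}(\cdot|s) - P^{\pi_{\theta_2}}(\cdot|s)}_{\tv} \le \tfrac12 \abs{\A} L_\Theta \norm{\theta_1-\theta_2}_2$. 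For the stationary distributions, writing $d_i = d^{\pi_{\theta_i}}$, $P_i = P^{\pi_{\theta_i}}$, I would use the identity $d_1 - d_2 = d_1(P_1-P_2) + (d_1-d_2)P_2$ (from $d_i = d_i P_i$) and iterate it to $d_1 - d_2 = \sum_{k=0}^{t-1} d_1(P_1-P_2)P_2^k + (d_1-d_2)P_2^t$. Since $d_1 - d_2$ has total mass zero, \eqref{ass2} forces $\norm{(d_1-d_2)P_2^t}_{\tv} \to 0$, hence $d_1 - d_2 = \sum_{k\ge0} d_1(P_1-P_2)P_2^k$; the signed measure $\nu := d_1(P_1-P_2)$ again has total mass zero, satisfies $\norm{\nu}_{\tv} \le \sup_s \norm{P_1(\cdot|s)-P_2(\cdot|s)}_{\tv}$, and by \eqref{ass2} obeys $\norm{\nu P_2^k}_{\tv} \le 2C_0\beta^k \norm{\nu}_{\tv}$ for $k \ge 1$. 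Summing the geometric series gives $\norm{d_1-d_2}_{\tv} \le (1 + 2C_0\beta/(1-\beta))\norm{\nu}_{\tv}$, which together with the previous bound proves Part 1 with $C_d$ a fixed multiple of $(1+2C_0\beta/(1-\beta))\abs{\A}L_\Theta$.

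For Part 2, I would first average the Bellman equation \eqref{Bellman equationL eta,V} over $a \sim \pi(\cdot|s)$ to see that $V^\pi$ solves the Poisson equation $(\I - P^\pi)V^\pi = r^\pi - \eta^\pi \ones$, where $P^\pi$ now denotes the Markov operator $(P^\pi h)(s) = \sum_{s'} P^\pi(s'|s) h(s')$ on state functions and $r^\pi(s) = \sum_a \pi(a|s) r(s,a)$, and I would record that the Ces\`aro definition \eqref{value function. simplified} yields $\mu^\pi(Q^\pi) = 0$, hence $\mu^\pi(V^\pi) = 0$. With $\bar V^{\pi_{\theta_2}} = V^{\pi_{\theta_2}} - \mu^{\pi_{\theta_1}}(V^{\pi_{\theta_2}})$ and $\Delta := V^{\pi_{\theta_1}} - \bar V^{\pi_{\theta_2}}$, one then has $\mu^{\pi_{\theta_1}}(\Delta) = 0$, and subtracting the two Poisson equations (inserting $\pm P^{\pi_{\theta_1}}\bar V^{\pi_{\theta_2}}$) gives $(\I - P^{\pi_{\theta_1}})\Delta = g$ with
\[
g = (r^{\pi_{\theta_1}} - r^{\pi_{\theta_2}}) - (\eta^{\pi_{\theta_1}} - \eta^{\pi_{\theta_2}})\ones + (P^{\pi_{\theta_1}} - P^{\pi_{\theta_2}})\bar V^{\pi_{\theta_2}}.
\]
Since $P^{\pi_{\theta_1}}$ is stationary for $\mu^{\pi_{\theta_1}}$ we get $\mu^{\pi_{\theta_1}}(g) = 0$ automatically, and together with $\mu^{\pi_{\theta_1}}(\Delta) = 0$ this identifies $\Delta$ as the mean-zero Poisson solution $\Delta = \sum_{k\ge0} (P^{\pi_{\theta_1}})^k g$; the sup-norm version of \eqref{ass2} (any $\mu^\pi$-mean-zero state function $h$ satisfies $\norm{(P^\pi)^k h}_\infty \le 2C_0\beta^k\norm{h}_\infty$) then yields $\norm{\Delta}_\infty \le (1 + 2C_0\beta/(1-\beta))\norm{g}_\infty$.

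It remains to bound $\norm{g}_\infty$ by $O(\norm{\theta_1-\theta_2}_2)$. Using \ref{c1} and $\abs{r} \le R_{\max}$ gives $\norm{r^{\pi_{\theta_1}} - r^{\pi_{\theta_2}}}_\infty \le R_{\max}\abs{\A}L_\Theta\norm{\theta_1-\theta_2}_2$; writing $\eta^\pi = \mu^\pi(r^\pi)$ and splitting $\eta^{\pi_{\theta_1}} - \eta^{\pi_{\theta_2}}$ into a change-of-integrand term and a change-of-measure term (the latter controlled by Part 1) bounds $\abs{\eta^{\pi_{\theta_1}} - \eta^{\pi_{\theta_2}}}$ by a multiple of $R_{\max}(\abs{\A}L_\Theta + C_d)\norm{\theta_1-\theta_2}_2$; and $\norm{(P^{\pi_{\theta_1}} - P^{\pi_{\theta_2}})\bar V^{\pi_{\theta_2}}}_\infty \le \abs{\A}L_\Theta\norm{\bar V^{\pi_{\theta_2}}}_\infty\norm{\theta_1-\theta_2}_2$, where $\norm{\bar V^{\pi_{\theta_2}}}_\infty$ is a constant depending only on $C_0,\beta,R_{\max}$: the $\mu^{\pi_{\theta_2}}$-centered Poisson solution equals $\sum_{k\ge0}(P^{\pi_{\theta_2}})^k(r^{\pi_{\theta_2}} - \eta^{\pi_{\theta_2}}\ones)$, of sup-norm $\le 2R_{\max}(1+2C_0\beta/(1-\beta))$, and recentering at $\mu^{\pi_{\theta_1}}$ at most doubles it (alternatively, under the standing hypotheses of Section \ref{sec:theorem} one has $\norm{\bar V^\pi}_\infty \le 2F_{\max}$ since $\tilde Q^\pi \in \F$). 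Collecting the three estimates gives $\norm{g}_\infty \le K\norm{\theta_1-\theta_2}_2$ with $K$ a function of $\abs{\A},L_\Theta,\beta,C_0,R_{\max}$ only, so that $C_V := (1 + 2C_0\beta/(1-\beta))K$ finishes the proof.

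The hard part will be the bookkeeping around the additive-constant ambiguity of the relative value function in Part 2: the claimed Lipschitz bound can hold only for the particular normalizations appearing in the statement, so one must genuinely use $\mu^\pi(V^\pi) = 0$ together with the recentering of $\bar V^{\pi_{\theta_2}}$ at $\mu^{\pi_{\theta_1}}$ — precisely what makes $\Delta$ have $\mu^{\pi_{\theta_1}}$-mean zero so that the Neumann-type series representation applies — and one must check that every constant produced depends only on the quantities allowed in the statement. The stationary-distribution perturbation in Part 1 is the other nontrivial ingredient, but it becomes routine once one observes that $d_1 - d_2$ is a zero-mass signed measure and feeds it into \eqref{ass2}.
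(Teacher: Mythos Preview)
Your proposal is correct and follows essentially the same route as the paper: both proofs reduce Part~1 to a geometric-ergodicity perturbation bound on stationary distributions and handle Part~2 by subtracting the two Poisson/Bellman equations to get $(\I-P^{\pi_{\theta_1}})\Delta=g$, bounding the three pieces of $g$, and then controlling $\|\Delta\|_\infty$ via the mixing assumption \ref{a2}. The only differences are cosmetic: for Part~1 the paper cites Mitrophanov's perturbation result whereas you derive the same Neumann-series bound directly, and for Part~2 the paper iterates finitely and picks $k$ with $2C_0\beta^{k+1}<1/2$ rather than summing the full series; the paper also bounds $(\P^{\pi_{\theta_1}}-\P^{\pi_{\theta_2}})V^{\pi_{\theta_2}}$ using $F_{\max}$ (which is why $F_{\max}$ appears in the lemma statement), matching the alternative you note.
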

\begin{proof}
	It can be seen that Assumption (3-3) implies geometrically ergodic. Then the first inequality in the first statement is given by Corollary 3.1 of \cite{mitrophanov2005sensitivity}, where the constant in this inequality can be chosen as  $\lceil \log_{\beta} C^{-1} \rceil + C_0 \frac{\beta^{\lceil \log_{\beta} C^{-1} \rceil}}{1-\beta}$. Furthermore, we can see that
	\begin{align*}
	&\sup_{s \in \S} \norm{P^{\pi_{\theta_1}}(\cdot|s) - P^{\pi_{\theta_2}}(\cdot|s)}_{\tv} \\[0.1in] =& \sup_{s \in \S} \frac{1}{2} \sum_{s' \in S} \left|\sum_{a \in \cal A}\pi_{\theta_1}(a | s) P(s' | s, a) - \sum_{a \in \cal A}\pi_{\theta_2}(a | s) P(s' | s, a) \right|\\[0.1in]
	\leq & \sup_{s \in \S} \frac{1}{2}\sum_{s' \in S} \sum_{a \in \cal A}L_{\Theta}\left\|\theta_1 - \theta_2\right\|_2 P(s' | s, a)  \leq  \frac{1}{2}\left|\cal A\right|L_{\Theta}\left\|\theta_1 - \theta_2\right\|_2, 
	\end{align*}
	where the first inequality uses Assumption (3-2) and $\left|\cal A\right|$ denotes the number of actions.
	
	Next we show the second statement of this lemma. From Bellman equation, we have
	\begin{align*}
	& (\I - \P^\pi) V^{\pi_{\theta_1}} (s) = r^{\pi_{\theta_1}}(s) - \eta^{\pi_{\theta_1}} \\
	& (\I - \P^\pi) \bar V^{\pi_{\theta_2}} (s) = r^{\pi_{\theta_2}}(s) - \eta^{\pi_{\theta_2}}
	\end{align*}
	where in the second equality we use the fact that the operator $I-P^{\pi}$ is invariant to a constant shift so that the $\bar V^{\pi_{\theta_2}}$ also solves Bellman equation.  As a result, we have
	\begin{align*}
	\Delta(s) & = {V^{\pi_{\theta_1}} (s) - V^{\pi_{\theta_2}} (s)  } \\
	& = {(\I - \P^{\pi_{\theta_1}}) V^{\pi_{\theta_1}} (s) - (\I - \P^{\pi_{\theta_2}}) V^{\pi_{\theta_2}} (s)} + {\P^{\pi_{\theta_1}} V^{\pi_{\theta_1}}(s) - \P^{\pi_{\theta_2}} V^{\pi_{\theta_2}}(s)} \\
	& = {r^{\pi_{\theta_1}}(s) - r^{\pi_{\theta_2}}(s)} - {\eta^{\pi_{\theta_1}} + \eta^{\pi_{\theta_2}}}  + (\P^{\pi_{\theta_1}} - \P^{\pi_{\theta_2}})  V^{\pi_{\theta_2}}(s) + \P^{\pi_{\theta_1}} \Delta(s) 
	\end{align*}
	On the other hand, it is straightforward to show that 
	\begin{align*}
	& \sup_{s} \abs{r^{\pi_{\theta_1}}(s) - r^{\pi_{\theta_2}}(s)} 
	\leq \abs{\A} L_{\Theta} R_{\max} \norm{\theta_1 - \theta_2}_2,
	\end{align*}
	\begin{align*}
	\left|\eta^{\pi_{\theta_1}} -\eta^{\pi_{\theta_2}}\right|
	\leq & ( 2C_d + L_{\Theta}) R_{\max}	\left|\cal A\right| \left\|\theta_1 - \theta_2 \right\|_2
	\end{align*}
	and $$\sup_{s} \abs{(\P^{\pi_{\theta_1}} - \P^{\pi_{\theta_2}})  V^{\pi_{\theta_2}}(s)} \leq \abs{\A} L_{\Theta} F_{\max} \norm{\theta_1 - \theta_2}_2. $$
	Thus for $C = \abs{\A} L_{\Theta} R_{\max} + ( 2C_d + L_{\Theta}) R_{\max}	\left|\cal A\right|+ \abs{\A} L_{\Theta} F_{\max}$, we have for any $s \in \S$, 
	\begin{align*}
	-C\norm{\theta_1-\theta_2}_2 + \P^{\pi_{\theta_1}} \Delta(s)  \leq \Delta(s) \leq C\norm{\theta_1-\theta_2}_2 + \P^{\pi_{\theta_1}} \Delta(s) 
	\end{align*}
	Now we can then bound $\sup_{s} \abs{V^{\pi_{\theta_1}} (s) - \bar V^{\pi_{\theta_2}} (s)}$ by 
	\begin{align*}
	\sup_{s} \abs{\Delta (s) } & \leq C\norm{\theta_1-\theta_2}_2 + \sup_{s} \abs{\P^{\pi_{\theta_1}} \Delta(s)} \\
	& = C\norm{\theta_1-\theta_2}_2 + \sup_{s} \Bigdkh{ \bigabs{\EE_{\pi_{\theta_1}}[\Delta(S_2)|S_1 = s]}} \\
	& \leq C\norm{\theta_1-\theta_2}_2 + \sup_{s} \Bigdkh{C\norm{\theta_1-\theta_2}_2 + \abs{\EE_{\pi_{\theta_1}}[\P^{\pi_{\theta_1}} \Delta(S_2)|S_1 = s]}}\\
	& = 2C\norm{\theta_1-\theta_2}_2 + \sup_{s} \Bigdkh{ \abs{\EE_{\pi_{\theta_1}}[\Delta(S_3)|S_1 = s]}}\\
	& \cdots \\
	& \leq k C\norm{\theta_1-\theta_2}_2 + \sup_{s} \Bigdkh{ \abs{\EE_{\pi_{\theta_1}}[\Delta(S_{k+1})|S_1 = s]}}
	\end{align*}
	where $k \geq 1$. Recall that $\mu^{\pi_{\theta_1}}(V^{\pi_{\theta_1}}) = 0$. By definition of $\bar V^{\pi_{\theta_2}}$, we have $\mu^{\pi_{\theta_1}} (\Delta) = 0$. Using Assumption (3-3), we have
	\begin{align*}
	& \sup_{s} \Bigdkh{ \abs{\EE_{\pi_{\theta_1}}[\Delta(S_{k+1})|S_1 = s]}}   = \sup_{s} \Bigdkh{ \abs{\EE_{\pi_{\theta_1}}[\Delta(S_{k+1})|S_1 = s] - \mu^{\pi_{\theta_1}} (\Delta)}} \\
	& \leq  \sup_{s} \bigdkh{\norm{\Delta}_{\infty}  \cdot 2 \cdot \norm{P^\pi\left(S_{k+1} = \cdot \given S_1 =s \right)  - d^{\pi_{\theta_1}}(\cdot)}_{\tv}}  \leq 2 {C}_0 {\beta}^{k+1} \norm{\Delta}_{\infty} 
	\end{align*}
	where $C_0$ and $\beta$ are the constants specified in Assumption (3-3).  Since $\beta < 1$, we can choose $k$ large enough so that $2 {C}_0 {\beta}^{k+1} < 1/2$.  We then have
	\begin{align*}
	\sup_{s} \abs{\Delta (s) } \leq \frac{kC}{1 - 2 {C}_0 {\beta}^{k+1}} \norm{\theta_1 - \theta_2} \leq 2kC \norm{\theta_1 - \theta_2}
	\end{align*}
	
\end{proof}

\section{Asymptotic result}
\label{appendix: asymptotic}
\begin{proof}[Proof of Theorem 5.2] 
	
	Recall $\phi^\pi(D)$ is the efficient influence function. We denote the remainder term by $\Rem_n(\pi) = (\Vn^{\pi} - \V^{\pi}) - \Pn \phi^{\pi}$.  Thus we have
	\begin{align}
	\sqrt{n}(\Vn^{\pi} - \V^{\pi}) =  \Gn \phi^\pi + \sqrt{n} \Rem_n(\pi) \label{decomp}
	\end{align}
	
	In the proof of Theorem 5.1, we've shown that under the listed assumptions, there exists some constant $C(\delta)$, such that with probability at least $1-\delta$,  $\sup_{\pi \in \Pi} \abs{\Rem_n(\pi)}  \leq C(\delta) n^{-\beta_k}$. Recall that $\beta_{k} = \frac{1}{1+\alpha} \left\{1-(1-\alpha) 2^{-k+1}\right\}$ and $C(\delta)$ does not depend on $n$ (see the exact dependence in Theorem 5.1). Because $\beta_k$ is an increasing sequence and the limit $\lim_{k \goes \infty} \beta_k = 1/(1+\alpha) > 1/2$, we can choose $k$ such that $\beta_k > 1/2$ and thus 
	\begin{align*}
	\sqrt{n}\sup_{\pi \in \Pi} \abs{\Rem_n(\pi))} = O_P(n^{-\beta_k + 1/2}) = o_P(1) 
	\end{align*}
	We now show the first term in (\ref{decomp})  converges weakly to a Gaussian Process. In other words, the function class $\{\phi^\pi: \pi \in \Pi\}$ is a Donsker. As shown in (\ref{lip eif}) in the proof of Theorem 5.1, we have $\phi^\pi$ is Lipschitz, i.e.,  there exists constant $K$ such that for any trajectory $D$, $\abs{\phi^{\pi_{\theta_1}}(D) - \phi^{\pi_{\theta_2}}(D)} \leq K \norm{\theta_1 - \theta_2}$.  As a result, the bracketing entropy integral $J_{[]}(\delta, \{\phi_\pi, \pi \in \Pi\}, L_2(P))$ is finite (see (\ref{entropy}) for more details). The weakly convergence then follows by the classic Donsker Theorem (see for example Theorem 2.3 in \cite{kosorok2007introduction}).  Finally, applying the Slutsky’s theorem (Theorem 7.15 in \cite{kosorok2007introduction}) proves the first statement in the theorem. 
	
	Next we prove the second statement. Let $X_n(\theta) = \Vn(\pi_\theta)$ for $\theta \in \Theta$ and $X(\theta) = \V^{\pi_{\theta}}$.  From the first part of the theorem we have $
	\sqrt{n} (X_n - X) \wcvg \GG$ where $\GG$ is defined in the statement of the theorem.  For a continuous function on $\Pi$,  $f: \Theta \goes \R$, define $\psi(f) = \sup_{\theta \in \Theta} f(\theta)$. With these notations, $\sqrt{n} (\Vn(\hat \pi_n) - \sup_{\pi \in \Pi}  \V^{\pi}) = \sqrt{n} (\psi(X_n) - \psi(X))$. It can be shown the max-function $\psi$ satisfies directional differentiability in the Hadamard sense (see Theorem 5.7 in \cite{shapiro2014lectures} for a proof). 
	Applying the functional Delta method (see for example Theorem 2.8 in \cite{kosorok2007introduction}), we have $\sqrt{n} (\psi(X_n) - \psi(X)) \wcvg \psi(\GG) = \sup_{\pi \in \Pi_{\max}} \GG(\pi)$ as desired.

\end{proof}
\section{Further details of implementation in RKHS}
\label{appendix: implementation}

Below we provide the details of our computation in Section 6. To be complete, we start with our overall optimization problem.

\noindent \textit{Upper level optimization task}:	\begin{align}
& \max_{\pi \in \Pi} && \frac{\Pn \dkh{(1/T)\sum_{t=1}^T \hat \omega_n^\pi(S_{t}, A_{t}) [R_{t+1} + \hat U^{\pi}_n(S_t, A_t, S_{t+1})]} }{\Pn \dkh{(1/T)\sum_{t=1}^T \hat \omega_n^\pi(S_{t}, A_{t})} }\label{multi-level optimization app} 
\end{align}

\noindent \textit{Lower level optimization task 1}:\begin{align}
& && (\tilde \eta_n^\pi, \hat Q_n^\pi) = \argminb_{(\eta, Q) \in \R \times \F} \Pn \left[\frac{1}{T} \sum_{t=1}^{T} \left[\gn(S_t, A_t; \eta, Q)\right]^2\right] + \lambda_n J_1^2(Q)\label{lower level1 app}\\
& &&\text{s.t. } \; \; \gn(\cdot, \cdot; \eta, Q) = \argmin_{g \in \G} \Pn\Big[ \frac{1}{T}\sum_{t=1}^{T} \big( \delta^\pi(Z_t; \eta, Q) - g(S_t, A_t)\big)^2  \Big] + \mu_n J_2^2(g) \label{lower level2 app}
\end{align}

\noindent \textit{Lower level optimization task 2}:\begin{align}
& &&\qn(\cdot,\cdot) = \argminb_{H \in \F} \Pn\Big[\frac{1}{T}\sum_{t=1}^{T}  \left[\gn(S_t, A_t; H)\right]^2\Big] + \lambda_n J_1^2(H)\label{lower level3 app}\\[0.1in]
& && \text{s.t. } \, \, \gn(\cdot, \cdot; H) = \argminb_{g \in \G} \Pn\Big[\frac{1}{T} \sum_{t=1}^{T} \big(\Delta^\pi(Z_t; H)- g(S_t, A_t)\big)^2\Big]+  \mu_n J_2^2(g). \label{lower level4 app}
\end{align}

Following the main text, we rewrite the training data $D$ into tuples $Z_h = \{S_h, A_h, R_h, S_{h}'\}$ where $h = 1, \dots, N = nT$ indexes the tuple of transition sample in the training set $\D_n$,   $S_h$ and $S_h'$ are the current and next states and $R_h$ is the associated reward. Let $W_h = (S_h, A_h)$  be the state-action pair,  and $W_h' = (S_h, A_h, S_{h}')$. Here we do not consider baseline features. However, this can be readily  generalized. See \cite{liao2019off} for more details. Suppose the kernel function for the state is denoted by $k_0(s_1, s_2)$, where $s_1, s_2 \in \S$. In order to incorporate the action space, we can define $k((s_1, a_1), (s_2, a_2)) = \indicator{a_1 = a_2} k_0(s_1, s_2)$. Basically, we model each $Q(\cdot, a)$ separately for each arm in the RKHS with the same kernel $k_0$.  Recall that we have to restrict the function space $\F$ such that $Q(s^*, a^*) = 0$ for all $Q \in \F$ so as to avoid the identification issue.   Thus for any given kernel function $k$ defined on $\S \times \A$, we make the following transformation by defining $k(W_1, W_2) = k(W_1, W_2) - k(( s^*, a^*), W_2) - k(W_1, (s^*, a^*)) + k((s^*, a^*), (s^*, a^*))$. One can check that the induced RKHS by $k(\cdot, \cdot)$ satisfies the constraint in $\F$ automatically.    

We denote kernel functions for $\F$ and $\G$ by $k(\cdot, \cdot), l(\cdot, \cdot)$ respectively. The corresponding inner products are defined as $\innerprod{\cdot}{\cdot}_\F$ and $\innerprod{\cdot}{\cdot}_\G$. We first discuss the inner minimization problem (6.2)-(6.3). Note that this is indeed a standard kernel ridge regression problem. The closed form solution can be obtained as $\gn(\cdot, \cdot; \eta, Q) =  \sum_{h=1}^{N} l(W_h, \cdot) \hat{\gamma}(\eta, Q)$. In particular, $\hat{\gamma}(\eta, Q) = (L + \mu I_N)^{-1} \delta_N^{{\pi}}(\eta, Q)$, $L$ is the kernel matrix of $l$, $\mu = \mu_n N$, and $\delta^\pi_N(\eta, Q) = (\delta^\pi(Z_h; \eta, Q))_{h=1}^N$ is a vector of TD error.  Moreover, each TD error can be further written as $\delta^\pi(Z'; \eta, Q) = R - \eta- \innerprod{Q}{f_{W'}}_\G$ where 
\[
f_{W'_h}(\cdot) = k(W, \cdot) - \sum_{a'} \pi(a'|S') k((S', a'), \cdot) \in \F
\]
It can be checked that $\hat Q^\pi_n $ in (6.2) can be expressed by the linear span: $\dkh{\sum_{h=1}^N \alpha_h f_{W_h'}(\cdot):  \alpha_h \in \R, h = 1, \dots, N}$ according to the representer property.
Then by the representer theorem, we can solve the optimization problem (6.2)-(6.3) by equivalently computing
\begin{align}
\label{value estimation}
(\tilde \eta_n^\pi, \hat \alpha(\pi)) = \argminb_{\eta \in \R, \alpha \in \R^N} (R_N - \eta 1_N  - \tilde F(\pi) \alpha)^\transpose M (R_N - \eta 1_N  - \tilde F(\pi) \alpha) + \lambda \alpha^\transpose \tilde F(\pi) \alpha
\end{align} 
where $R_N = (R_h)_{h = 1}^N$, $\tilde F(\pi) = ( \innerprod{f_{W_h'}}{f_{W_{j}'}}_\F)_{j, h = 1}^N$, $M = (L + \mu I_N)^{-1}  L^2 (L + \mu I_N)^{-1}$, $1_N$ is a length-$N$ vector of all ones, $\lambda = \lambda_n N$ and $\alpha = (\alpha_h)_{h = 1}^N$ is a vector of length $N$. Note that $\tilde F(\pi)\left[h, k\right]$ can be further calculated as
\begin{align*}
\innerprod{f_{W_h'}}{f_{W_{j}'}}_\F  & = k(W_h, W_{j}) - \sum_{a'} \pi(a'|S_h') k((S_h', a'), W_j) - \sum_{a'} \pi(a'|S_j') k((S_j', a'), W_h)  \\
& \qquad + \sum_{a_h'} \sum_{a_{j}'} \pi(a_h'|S_h') \pi(a_j'|S_j') k((S_h', a_h'), (S_j', a_j')).
\end{align*}
We make $\tilde F(\pi)$ and $ \hat \alpha(\pi)$ as functions of $\pi$ to explicitly indicate their  dependency on the policy $\pi$. The first-order optimality implies that $(\tilde \eta_n^\pi, \hat \alpha(\pi))$ satisfies
\begin{align*}
1_N^\transpose M 1_N \tilde \eta_n^\pi  = 1_N^\transpose M(R_N - \tilde F(\pi) \hat \alpha(\pi)) \\
( M \tilde F(\pi)  + \lambda I_N) \hat \alpha(\pi) =  M (R_N - 1_N \tilde \eta_n^\pi), 
\end{align*}
which gives 
\begin{align}
&\Big[M \tilde{F}(\pi)+\lambda I_N - M1_N(1_N^TM1_N)^{-1}1_N^TM\tilde{F}(\pi)\Big]\hat \alpha (\pi)\label{for gradient 0}\\
=& \Big(I_N - M1_N(1_N^TM1_N)^{-1}1_N^T\Big)MR_N
\label{for gradient 1}
\end{align}

and thus the corresponding $\{\hat{U}_n^\pi(W'_h)\}_{h = 1}^N =-\tilde{F}(\pi)  \hat \alpha(\pi)$. In order to apply gradient methods to obtain $\hat{\pi}_n$, or equivalently $\hat{\theta}$, we need to compute the Jacobian matrix of the vector $\{\hat{U}_n^\pi(W'_h)\}_{h = 1}^N$ with respect to $\theta$. Based on the above equations, we know
\begin{align*}
\frac{\partial \{\hat{U}_n^\pi(W'_h)\}_{h = 1}^N}{\partial \theta} = -\frac{\partial \tilde{F}(\pi) }{\partial \theta} \otimes \hat{\alpha}(\pi) - \tilde{F}(\pi) \frac{\partial \hat{\alpha}(\pi) }{\partial \theta},
\end{align*}
where $\otimes$ is denoted as a tensor product. Here $\frac{\partial \tilde{F}(\pi) }{\partial \theta}$ is a $\R^N \otimes \R^N \otimes \R^p$ tensor, where the $(i, j, k)$-th  element is the partial derivative $\frac{\partial \left[ \tilde{F}(\pi)\right]_{i, j}}{\partial \theta_k}$. In addition, $\frac{\partial \hat \alpha(\pi) }{\partial \theta}$ can be calculated via implicit theorem based on the equation \eqref{for gradient 0}-\eqref{for gradient 1}, i.e.,
\begin{align}
&\left(M \otimes \frac{\partial \tilde{F}(\pi) }{\partial \theta}+\lambda I_N - M1_N(1_N^TM1_N)^{-1}1_N^TM\otimes \frac{\partial \tilde{F}(\pi) }{\partial \theta}\right)\hat \alpha (\pi)\\
& = -(M \tilde{F}(\pi)+\lambda I_N - M1_N(1_N^TM1_N)^{-1}1_N^TM\tilde{F}(\pi))\frac{\partial \hat \alpha (\pi)}{\partial \theta},
\label{for gradient 2}
\end{align}
which gives the expression of $\frac{\partial \hat \alpha (\pi)}{\partial \theta}$, a $N$ by $p$ matrix.

Similarly, we can find the closed-form solution for the problem (6.3)-(6.4) and compute its gradient.  By some linear algebra, we can obtain $\{ \gn(W_h, \qn )\}_{h=1}^N = L\hat \mynu(\pi)$, where $\gn(W_h, \qn ) = \sum_{h=1}^N \hat\mynu_h(\pi) l(W_h, \cdot) $ and $\nu = (\hat\mynu_h(\pi))_{h=1}^N $ satisfying the following two equations:
\begin{align}
&(M \tilde F(\pi)  + \lambda I_N) \hat \varphi(\pi) &&=  M 1_N \label{for gradient 3}\\
&(L + \mu I_N) \hat \mynu (\pi)&& =  1_N - \tilde F(\pi) \hat \varphi(\pi) \label{for gradient 4},
\end{align}
given again by the representer theorem,
where $\hat{\varphi}(\pi)$ is an intermediate term.
We then can compute the Jacobian matrix of $\{ \gn(W_h, \qn )\}_{h=1}^N$ by again the implicit theorem using equations \eqref{for gradient 3} and \eqref{for gradient 4} and solving $\frac{\partial \hat \mynu(\pi)}{\partial \theta}$ based on the following two equations.
\begin{align}
&\left(M \otimes \frac{\partial \tilde{F}(\pi) }{\partial \theta}\right)  \hat \varphi(\pi) + \left(M \tilde F (\pi) + \lambda I_N\right) \frac{\partial \hat \varphi(\pi)}{\partial \theta}&&= 0 \label{for gradient 5}\\
&(L + \mu I_N) \frac{\partial \hat \mynu(\pi)}{\partial \theta} +  \tilde F (\pi) \frac{\partial \hat \varphi(\pi)}{\partial \theta} + \frac{\partial \tilde{F}(\pi) }{\partial \theta} \otimes \frac{\partial \hat \varphi(\pi)}{\partial \theta} && =  0 \label{for gradient 6}
\end{align}
Then we have
\begin{align*}
\frac{\partial \{ \gn(W_h, \qn )\}_{h=1}^N}{\partial \theta} = L \frac{\partial \hat \mynu(\pi)}{\partial \theta}
\end{align*}

Summarizing together by plugging all the intermediate results into the objective function of our upper optimization problem (6.1), we can simplify it as
\begin{align}
& \max_{\pi \in \Pi_{\Theta}} && \frac{\left(\hat \mynu (\pi)\right)^T L \left(R_N - \tilde F (\pi)\hat \alpha(\pi)\right)}{\hat \mynu (\pi)^T L 1_N}\label{final optimization app}.
\end{align}
The corresponding gradient with respect to $\theta$ can be computed directly as
\begin{align*}
&\frac{\left(\frac{\partial \hat \mynu(\pi)}{\partial \theta}\right)^T L \left(R_N - \tilde F (\pi)\hat \alpha(\pi)\right) - \left(\hat \mynu (\pi)\right)^T L \left(\frac{\partial \tilde{F}(\pi) }{\partial \theta} \otimes \hat \alpha(\pi) + \tilde{F}(\pi) \frac{\partial \hat \alpha(\pi) }{\partial \theta}\right) \left(\hat \mynu (\pi)^T L 1_N\right)}{\left(\hat \mynu (\pi)^T L 1_N\right)^2} \\[0.1in]
-&\frac{\left(\frac{\partial \hat \mynu(\pi)}{\partial \theta}\right)^T L \left(R_N - \tilde F (\pi)\hat \alpha(\pi)\right)\left(\hat \mynu (\pi)^T L 1_N\right)}{\left(\hat \mynu (\pi)^T L 1_N\right)^2}.
\end{align*}

\section{Additional Numerical Results}
	\textcolor{black}{
	In this section, we compare our proposed method with the three baseline methods via another simulation study. 
	The simulation setting is designed as the same as those in \cite{luckett2019estimating}. Specifically, we initialize two dimensional state vector $S_0 = (S_{0, 1}, S_{0, 2})$ by a standard multivariate Gaussian distribution. Given the current action $A_t \in \{0, 1\}$ and state $S_t$, the next state is generated by:
	\begin{align*}
	& S_{t+1, 1} = \frac{3}{4} (2A_t - 1) S_{t, 1} + \frac{1}{4}S_{t, 1}S_{t, 2} + \varepsilon_{t, 1}, \\
	&S_{t+1, 2} = \frac{3}{4} ( 1 - 2A_t) S_{t, 2} + \frac{1}{4}S_{t, 1}S_{t, 2} + \varepsilon_{t, 2},
	\end{align*}
	where each $\varepsilon_{t, j}$ follows independently $N(0, 1/4)$ for $j = 1, 2$. The reward function $R_{t+1}$ is given as
	\[
	R_{t+1} = 2S_{t+1, 1} + S_{t+1, 2} - \frac{1}{4}(2A_t - 1),
	\] 
	for $t = 1, \cdots, T$.
	We consider the behavior policy to be uniformly random, i.e., choosing each action with equal probability.}

	We consider different combinations of the number of trajectories $n$ and the length of each trajectory $T$ to evaluate the performance of our method. Specifically, we consider $n=25, 50$ and $T = 24, 48$, and replicate each setting for 128 times. To calculate the regret of our learned policy, basically we consider different policy parameters $\theta$, with the value of each dimension of $\theta$ ranging from $-10$ to $10$. For each of these policies, we generate one trajectory with length $10000$ following the corresponding policy, discard the first 5000 time points and take the average of the remaining rewards. Assuming achieving stationary distribution after $T=5000$, we use the largest average rewards among these policies as our optimal in-class average reward. Using a similar procedure, we can also compute the average reward of each of the learned learned policy under different settings. The regrets can be obtained by subtracting them from the optimal in-class average reward, which are provided in Table \ref{Tab: regret sim setting1}. In Table \ref{Tab: sim setting1}, we report mean and standard error of the average rewards of policies given by our method, BEAR, BCQ and FQI respectively. It can be seen that all reported regrets (or average rewards) of our method are the smallest (or the largest), indicating that our method can learn desirable in-class policies, compared with other three methods. 
	\begin{table}[H]
		\centering
		\caption{The regrets of our estimated policy and three offline RL algorithms.  Numbers in parentheses are the standard deviations of the regrets over 128 replications.} 
		\begin{tabular}{cc|cccc} 
			\cline{1-6}
			$n$ & $T$ & Our method & BEAR & BCQ &	FQI \\\cline{1-6}
			25 & 24  & $0.027 \, (0.003)$ & 0.774 (0.060) & 0.625 (0.034) & 0.048 (0.032) \\
            50 & 24  & $0.017 \, (0.002)$ & 0.714 (0.056) & 0.625 (0.034) & 0.053 (0.032)\\
			25 & 48  & $0.012 \, (0.007)$ & 0.797 (0.053) & 0.658 (0.024) & 0.061 (0.029)\\
			50 & 48  & $0.009 \, (0.003)$ & 0.843 (0.060) & 0.652 (0.025) & 0.054 (0.027)\\
			\cline{1-6}
		\end{tabular}
		\label{Tab: regret sim setting1}
	\end{table}

	\begin{table}[h!]
		\centering
		\caption{Monte Carlo estimation of the average rewards of our learned policies and three offline RL algorithms. Numbers in parentheses are the standard deviations of the average rewards over 128 replications.}
		\begin{tabular}{cc|cccc}
			\hline
			$n$ & $T$ & Our method &  BEAR & BCQ &	FQI\\ 
			\hline
			25 & 24 & $0.898 \, (0.003)$  & 0.119 (0.044) & 0.269 (0.001) & 0.832 (0.007)  \\ 
			25 & 48 & $0.900 \, (0.007)$  & 0.153 (0.044) & 0.269 (0.001) & 0.849 (0.010)  \\ 
			\hline
			50 & 24 & $0.913 \, (0.002)$  & 0.109 (0.048) & 0.276 (0.001) & 0.866 (0.014)   \\ 
			50 & 48 & $0.914 \, (0.003)$  & 0.121 (0.046) & 0.276 (0.001) & 0.853 (0.023)  \\ 
			\hline
		\end{tabular}
		
		\label{Tab: sim setting1}
	\end{table}

		\bibliographystyle{imsart-nameyear}
	\bibliography{reference}

\begin{thebibliography}{74}

\bibitem[\protect\citeauthoryear{Abounadi, Bertsekas and
  Borkar}{2001}]{abounadi2001learning}
\begin{barticle}[author]
\bauthor{\bsnm{Abounadi},~\bfnm{Jinane}\binits{J.}},
  \bauthor{\bsnm{Bertsekas},~\bfnm{Dimitrib}\binits{D.}} \AND
  \bauthor{\bsnm{Borkar},~\bfnm{Vivek~S}\binits{V.~S.}}
(\byear{2001}).
\btitle{Learning algorithms for Markov decision processes with average cost}.
\bjournal{SIAM Journal on Control and Optimization}
\bvolume{40}
\bpages{681--698}.
\end{barticle}
\endbibitem

\bibitem[\protect\citeauthoryear{Agarwal, Schuurmans and
  Norouzi}{2020}]{agarwal2020optimistic}
\begin{binproceedings}[author]
\bauthor{\bsnm{Agarwal},~\bfnm{Rishabh}\binits{R.}},
  \bauthor{\bsnm{Schuurmans},~\bfnm{Dale}\binits{D.}} \AND
  \bauthor{\bsnm{Norouzi},~\bfnm{Mohammad}\binits{M.}}
(\byear{2020}).
\btitle{An optimistic perspective on offline reinforcement learning}.
In \bbooktitle{International Conference on Machine Learning}
\bpages{104--114}.
\bpublisher{PMLR}.
\end{binproceedings}
\endbibitem

\bibitem[\protect\citeauthoryear{Antos, Szepesv{\'a}ri and
  Munos}{2008a}]{antos_learning_2008}
\begin{barticle}[author]
\bauthor{\bsnm{Antos},~\bfnm{Andr{\'a}s}\binits{A.}},
  \bauthor{\bsnm{Szepesv{\'a}ri},~\bfnm{Csaba}\binits{C.}} \AND
  \bauthor{\bsnm{Munos},~\bfnm{R{\'e}mi}\binits{R.}}
(\byear{2008}a).
\btitle{Learning near-optimal policies with Bellman-residual minimization based
  fitted policy iteration and a single sample path}.
\bjournal{Machine Learning}
\bvolume{71}
\bpages{89--129}.
\bdoi{10.1007/s10994-007-5038-2}
\end{barticle}
\endbibitem

\bibitem[\protect\citeauthoryear{Antos, Szepesv{\'a}ri and
  Munos}{2008b}]{antos2008fitted}
\begin{binproceedings}[author]
\bauthor{\bsnm{Antos},~\bfnm{Andr{\'a}s}\binits{A.}},
  \bauthor{\bsnm{Szepesv{\'a}ri},~\bfnm{Csaba}\binits{C.}} \AND
  \bauthor{\bsnm{Munos},~\bfnm{R{\'e}mi}\binits{R.}}
(\byear{2008}b).
\btitle{Fitted Q-iteration in continuous action-space MDPs}.
In \bbooktitle{Advances in neural information processing systems}
\bpages{9--16}.
\end{binproceedings}
\endbibitem

\bibitem[\protect\citeauthoryear{Athey and Wager}{2017}]{athey2017efficient}
\begin{barticle}[author]
\bauthor{\bsnm{Athey},~\bfnm{Susan}\binits{S.}} \AND
  \bauthor{\bsnm{Wager},~\bfnm{Stefan}\binits{S.}}
(\byear{2017}).
\btitle{Efficient policy learning}.
\bjournal{arXiv preprint arXiv:1702.02896}.
\end{barticle}
\endbibitem

\bibitem[\protect\citeauthoryear{Bickel et~al.}{1993}]{bickel1993efficient}
\begin{bbook}[author]
\bauthor{\bsnm{Bickel},~\bfnm{Peter~J}\binits{P.~J.}},
  \bauthor{\bsnm{Klaassen},~\bfnm{Chris~AJ}\binits{C.~A.}},
  \bauthor{\bsnm{Bickel},~\bfnm{Peter~J}\binits{P.~J.}},
  \bauthor{\bsnm{Ritov},~\bfnm{Ya’acov}\binits{Y.}},
  \bauthor{\bsnm{Klaassen},~\bfnm{J}\binits{J.}},
  \bauthor{\bsnm{Wellner},~\bfnm{Jon~A}\binits{J.~A.}} \AND
  \bauthor{\bsnm{Ritov},~\bfnm{YA'Acov}\binits{Y.}}
(\byear{1993}).
\btitle{Efficient and adaptive estimation for semiparametric models}
\bvolume{4}.
\bpublisher{Johns Hopkins University Press Baltimore}.
\end{bbook}
\endbibitem

\bibitem[\protect\citeauthoryear{Chernozhukov
  et~al.}{2014}]{chernozhukov2014gaussian}
\begin{barticle}[author]
\bauthor{\bsnm{Chernozhukov},~\bfnm{Victor}\binits{V.}},
  \bauthor{\bsnm{Chetverikov},~\bfnm{Denis}\binits{D.}},
  \bauthor{\bsnm{Kato},~\bfnm{Kengo}\binits{K.}} \betal{et~al.}
(\byear{2014}).
\btitle{Gaussian approximation of suprema of empirical processes}.
\bjournal{The Annals of Statistics}
\bvolume{42}
\bpages{1564--1597}.
\end{barticle}
\endbibitem

\bibitem[\protect\citeauthoryear{Chernozhukov
  et~al.}{2018}]{chernozhukov2018double}
\begin{bmisc}[author]
\bauthor{\bsnm{Chernozhukov},~\bfnm{Victor}\binits{V.}},
  \bauthor{\bsnm{Chetverikov},~\bfnm{Denis}\binits{D.}},
  \bauthor{\bsnm{Demirer},~\bfnm{Mert}\binits{M.}},
  \bauthor{\bsnm{Duflo},~\bfnm{Esther}\binits{E.}},
  \bauthor{\bsnm{Hansen},~\bfnm{Christian}\binits{C.}},
  \bauthor{\bsnm{Newey},~\bfnm{Whitney}\binits{W.}} \AND
  \bauthor{\bsnm{Robins},~\bfnm{James}\binits{J.}}
(\byear{2018}).
\btitle{Double/debiased machine learning for treatment and structural
  parameters}.
\end{bmisc}
\endbibitem

\bibitem[\protect\citeauthoryear{Dud{\'\i}k et~al.}{2014}]{dudik2014doubly}
\begin{barticle}[author]
\bauthor{\bsnm{Dud{\'\i}k},~\bfnm{Miroslav}\binits{M.}},
  \bauthor{\bsnm{Erhan},~\bfnm{Dumitru}\binits{D.}},
  \bauthor{\bsnm{Langford},~\bfnm{John}\binits{J.}},
  \bauthor{\bsnm{Li},~\bfnm{Lihong}\binits{L.}} \betal{et~al.}
(\byear{2014}).
\btitle{Doubly robust policy evaluation and optimization}.
\bjournal{Statistical Science}
\bvolume{29}
\bpages{485--511}.
\end{barticle}
\endbibitem

\bibitem[\protect\citeauthoryear{Ernst et~al.}{2005}]{ernst_tree-based_2005}
\begin{barticle}[author]
\bauthor{\bsnm{Ernst},~\bfnm{Damien}\binits{D.}},
  \bauthor{\bsnm{Geurts},~\bfnm{Pierre}\binits{P.}},
  \bauthor{\bsnm{Wehenkel},~\bfnm{Louis}\binits{L.}} \AND
  \bauthor{\bsnm{Littman},~\bfnm{L.}\binits{L.}}
(\byear{2005}).
\btitle{Tree-based batch mode reinforcement learning}.
\bjournal{Journal of Machine Learning Research}
\bvolume{6}
\bpages{503--556}.
\end{barticle}
\endbibitem

\bibitem[\protect\citeauthoryear{Ertefaie and
  Strawderman}{2018}]{ertefaie2018constructing}
\begin{barticle}[author]
\bauthor{\bsnm{Ertefaie},~\bfnm{Ashkan}\binits{A.}} \AND
  \bauthor{\bsnm{Strawderman},~\bfnm{Robert~L}\binits{R.~L.}}
(\byear{2018}).
\btitle{Constructing dynamic treatment regimes over indefinite time horizons}.
\bjournal{Biometrika}
\bvolume{105}
\bpages{963--977}.
\end{barticle}
\endbibitem

\bibitem[\protect\citeauthoryear{Farahmand and
  Szepesv{\'a}ri}{2011}]{farahmand2011model}
\begin{barticle}[author]
\bauthor{\bsnm{Farahmand},~\bfnm{Amir-massoud}\binits{A.-m.}} \AND
  \bauthor{\bsnm{Szepesv{\'a}ri},~\bfnm{Csaba}\binits{C.}}
(\byear{2011}).
\btitle{Model selection in reinforcement learning}.
\bjournal{Machine learning}
\bvolume{85}
\bpages{299--332}.
\end{barticle}
\endbibitem

\bibitem[\protect\citeauthoryear{Farahmand
  et~al.}{2016}]{farahmand2016regularized}
\begin{barticle}[author]
\bauthor{\bsnm{Farahmand},~\bfnm{Amir-massoud}\binits{A.-m.}},
  \bauthor{\bsnm{Ghavamzadeh},~\bfnm{Mohammad}\binits{M.}},
  \bauthor{\bsnm{Szepesv{\'a}ri},~\bfnm{Csaba}\binits{C.}} \AND
  \bauthor{\bsnm{Mannor},~\bfnm{Shie}\binits{S.}}
(\byear{2016}).
\btitle{Regularized policy iteration with nonparametric function spaces}.
\bjournal{The Journal of Machine Learning Research}
\bvolume{17}
\bpages{4809--4874}.
\end{barticle}
\endbibitem

\bibitem[\protect\citeauthoryear{Friedman, Hastie and
  Tibshirani}{2001}]{friedman2001elements}
\begin{bbook}[author]
\bauthor{\bsnm{Friedman},~\bfnm{Jerome}\binits{J.}},
  \bauthor{\bsnm{Hastie},~\bfnm{Trevor}\binits{T.}} \AND
  \bauthor{\bsnm{Tibshirani},~\bfnm{Robert}\binits{R.}}
(\byear{2001}).
\btitle{The elements of statistical learning}
\bvolume{1}.
\bpublisher{Springer series in statistics New York}.
\end{bbook}
\endbibitem

\bibitem[\protect\citeauthoryear{Fujimoto, Meger and
  Precup}{2019}]{fujimoto2019off}
\begin{binproceedings}[author]
\bauthor{\bsnm{Fujimoto},~\bfnm{Scott}\binits{S.}},
  \bauthor{\bsnm{Meger},~\bfnm{David}\binits{D.}} \AND
  \bauthor{\bsnm{Precup},~\bfnm{Doina}\binits{D.}}
(\byear{2019}).
\btitle{Off-policy deep reinforcement learning without exploration}.
In \bbooktitle{International Conference on Machine Learning}
\bpages{2052--2062}.
\bpublisher{PMLR}.
\end{binproceedings}
\endbibitem

\bibitem[\protect\citeauthoryear{Fukumizu et~al.}{2009}]{fukumizu2009kernel}
\begin{binproceedings}[author]
\bauthor{\bsnm{Fukumizu},~\bfnm{Kenji}\binits{K.}},
  \bauthor{\bsnm{Gretton},~\bfnm{Arthur}\binits{A.}},
  \bauthor{\bsnm{Lanckriet},~\bfnm{Gert~R}\binits{G.~R.}},
  \bauthor{\bsnm{Sch{\"o}lkopf},~\bfnm{Bernhard}\binits{B.}} \AND
  \bauthor{\bsnm{Sriperumbudur},~\bfnm{Bharath~K}\binits{B.~K.}}
(\byear{2009}).
\btitle{Kernel choice and classifiability for RKHS embeddings of probability
  distributions}.
In \bbooktitle{Advances in neural information processing systems}
\bpages{1750--1758}.
\end{binproceedings}
\endbibitem

\bibitem[\protect\citeauthoryear{Gy{\"o}rfi
  et~al.}{2006}]{gyorfi2006distribution}
\begin{bbook}[author]
\bauthor{\bsnm{Gy{\"o}rfi},~\bfnm{L{\'a}szl{\'o}}\binits{L.}},
  \bauthor{\bsnm{Kohler},~\bfnm{Michael}\binits{M.}},
  \bauthor{\bsnm{Krzyzak},~\bfnm{Adam}\binits{A.}} \AND
  \bauthor{\bsnm{Walk},~\bfnm{Harro}\binits{H.}}
(\byear{2006}).
\btitle{A distribution-free theory of nonparametric regression}.
\bpublisher{Springer Science \& Business Media}.
\end{bbook}
\endbibitem

\bibitem[\protect\citeauthoryear{Hern{\'a}ndez-Lerma and
  Lasserre}{1999}]{hernandez2012further}
\begin{bbook}[author]
\bauthor{\bsnm{Hern{\'a}ndez-Lerma},~\bfnm{On{\'e}simo}\binits{O.}} \AND
  \bauthor{\bsnm{Lasserre},~\bfnm{Jean~B}\binits{J.~B.}}
(\byear{1999}).
\btitle{Further topics on discrete-time Markov control processes}
\bvolume{42}.
\bpublisher{Springer}.
\end{bbook}
\endbibitem

\bibitem[\protect\citeauthoryear{Jiang and Li}{2016}]{jiang2016doubly}
\begin{binproceedings}[author]
\bauthor{\bsnm{Jiang},~\bfnm{Nan}\binits{N.}} \AND
  \bauthor{\bsnm{Li},~\bfnm{Lihong}\binits{L.}}
(\byear{2016}).
\btitle{Doubly robust off-policy value evaluation for reinforcement learning}.
In \bbooktitle{International Conference on Machine Learning}
\bpages{652--661}.
\bpublisher{PMLR}.
\end{binproceedings}
\endbibitem

\bibitem[\protect\citeauthoryear{Kakade and
  Langford}{2002}]{kakade2002approximately}
\begin{binproceedings}[author]
\bauthor{\bsnm{Kakade},~\bfnm{Sham}\binits{S.}} \AND
  \bauthor{\bsnm{Langford},~\bfnm{John}\binits{J.}}
(\byear{2002}).
\btitle{Approximately optimal approximate reinforcement learning}.
In \bbooktitle{In Proc. 19th International Conference on Machine Learning}.
\bpublisher{Citeseer}.
\end{binproceedings}
\endbibitem

\bibitem[\protect\citeauthoryear{Kallus and Uehara}{2019a}]{kallus2019double}
\begin{barticle}[author]
\bauthor{\bsnm{Kallus},~\bfnm{Nathan}\binits{N.}} \AND
  \bauthor{\bsnm{Uehara},~\bfnm{Masatoshi}\binits{M.}}
(\byear{2019}a).
\btitle{Double reinforcement learning for efficient off-policy evaluation in
  markov decision processes}.
\bjournal{arXiv preprint arXiv:1908.08526}.
\end{barticle}
\endbibitem

\bibitem[\protect\citeauthoryear{Kallus and
  Uehara}{2019b}]{kallus2019efficiently}
\begin{barticle}[author]
\bauthor{\bsnm{Kallus},~\bfnm{Nathan}\binits{N.}} \AND
  \bauthor{\bsnm{Uehara},~\bfnm{Masatoshi}\binits{M.}}
(\byear{2019}b).
\btitle{Efficiently breaking the curse of horizon: Double reinforcement
  learning in infinite-horizon processes}.
\bjournal{arXiv preprint arXiv:1909.05850}.
\end{barticle}
\endbibitem

\bibitem[\protect\citeauthoryear{Kallus and Uehara}{2020}]{kallus2020double}
\begin{barticle}[author]
\bauthor{\bsnm{Kallus},~\bfnm{Nathan}\binits{N.}} \AND
  \bauthor{\bsnm{Uehara},~\bfnm{Masatoshi}\binits{M.}}
(\byear{2020}).
\btitle{Double reinforcement learning for efficient off-policy evaluation in
  markov decision processes}.
\bjournal{Journal of Machine Learning Research}
\bvolume{21}
\bpages{1--63}.
\end{barticle}
\endbibitem

\bibitem[\protect\citeauthoryear{Klasnja
  et~al.}{2015}]{klasnja2015microrandomized}
\begin{barticle}[author]
\bauthor{\bsnm{Klasnja},~\bfnm{P.}\binits{P.}},
  \bauthor{\bsnm{Hekler},~\bfnm{E.~B.}\binits{E.~B.}},
  \bauthor{\bsnm{Shiffman},~\bfnm{S.}\binits{S.}},
  \bauthor{\bsnm{Boruvka},~\bfnm{A.}\binits{A.}},
  \bauthor{\bsnm{Almirall},~\bfnm{D.}\binits{D.}},
  \bauthor{\bsnm{Tewari},~\bfnm{A.}\binits{A.}} \AND
  \bauthor{\bsnm{Murphy},~\bfnm{S.~A.}\binits{S.~A.}}
(\byear{2015}).
\btitle{Micro-randomized trials: An experimental design for developing
  just-in-time adaptive interventions.}
\bjournal{Health Psychology}
\bvolume{34}
\bpages{1220}.
\end{barticle}
\endbibitem

\bibitem[\protect\citeauthoryear{Klasnja et~al.}{2018}]{klasnja2018efficacy}
\begin{barticle}[author]
\bauthor{\bsnm{Klasnja},~\bfnm{Predrag}\binits{P.}},
  \bauthor{\bsnm{Smith},~\bfnm{Shawna}\binits{S.}},
  \bauthor{\bsnm{Seewald},~\bfnm{Nicholas~J}\binits{N.~J.}},
  \bauthor{\bsnm{Lee},~\bfnm{Andy}\binits{A.}},
  \bauthor{\bsnm{Hall},~\bfnm{Kelly}\binits{K.}},
  \bauthor{\bsnm{Luers},~\bfnm{Brook}\binits{B.}},
  \bauthor{\bsnm{Hekler},~\bfnm{Eric~B}\binits{E.~B.}} \AND
  \bauthor{\bsnm{Murphy},~\bfnm{Susan~A}\binits{S.~A.}}
(\byear{2018}).
\btitle{Efficacy of contextually tailored suggestions for physical activity: a
  micro-randomized optimization trial of HeartSteps}.
\bjournal{Annals of Behavioral Medicine}.
\end{barticle}
\endbibitem

\bibitem[\protect\citeauthoryear{Kosorok}{2007}]{kosorok2007introduction}
\begin{bbook}[author]
\bauthor{\bsnm{Kosorok},~\bfnm{Michael~R}\binits{M.~R.}}
(\byear{2007}).
\btitle{Introduction to empirical processes and semiparametric inference}.
\bpublisher{Springer Science \& Business Media}.
\end{bbook}
\endbibitem

\bibitem[\protect\citeauthoryear{Kosorok and
  Laber}{2019}]{kosorok2019precision}
\begin{barticle}[author]
\bauthor{\bsnm{Kosorok},~\bfnm{Michael~R}\binits{M.~R.}} \AND
  \bauthor{\bsnm{Laber},~\bfnm{Eric~B}\binits{E.~B.}}
(\byear{2019}).
\btitle{Precision medicine}.
\bjournal{Annual review of statistics and its application}
\bvolume{6}
\bpages{263--286}.
\end{barticle}
\endbibitem

\bibitem[\protect\citeauthoryear{Kumar et~al.}{2019}]{kumar2019stabilizing}
\begin{barticle}[author]
\bauthor{\bsnm{Kumar},~\bfnm{Aviral}\binits{A.}},
  \bauthor{\bsnm{Fu},~\bfnm{Justin}\binits{J.}},
  \bauthor{\bsnm{Soh},~\bfnm{Matthew}\binits{M.}},
  \bauthor{\bsnm{Tucker},~\bfnm{George}\binits{G.}} \AND
  \bauthor{\bsnm{Levine},~\bfnm{Sergey}\binits{S.}}
(\byear{2019}).
\btitle{Stabilizing off-policy q-learning via bootstrapping error reduction}.
\bjournal{Advances in Neural Information Processing Systems}
\bvolume{32}.
\end{barticle}
\endbibitem

\bibitem[\protect\citeauthoryear{Laber et~al.}{2014}]{laber2014dynamic}
\begin{barticle}[author]
\bauthor{\bsnm{Laber},~\bfnm{Eric~B}\binits{E.~B.}},
  \bauthor{\bsnm{Lizotte},~\bfnm{Daniel~J}\binits{D.~J.}},
  \bauthor{\bsnm{Qian},~\bfnm{Min}\binits{M.}},
  \bauthor{\bsnm{Pelham},~\bfnm{William~E}\binits{W.~E.}} \AND
  \bauthor{\bsnm{Murphy},~\bfnm{Susan~A}\binits{S.~A.}}
(\byear{2014}).
\btitle{Dynamic treatment regimes: Technical challenges and applications}.
\bjournal{Electronic journal of statistics}
\bvolume{8}
\bpages{1225}.
\end{barticle}
\endbibitem

\bibitem[\protect\citeauthoryear{Lagoudakis and
  Parr}{2003}]{lagoudakis2003least}
\begin{barticle}[author]
\bauthor{\bsnm{Lagoudakis},~\bfnm{Michail~G}\binits{M.~G.}} \AND
  \bauthor{\bsnm{Parr},~\bfnm{Ronald}\binits{R.}}
(\byear{2003}).
\btitle{Least-squares policy iteration}.
\bjournal{Journal of machine learning research}
\bvolume{4}
\bpages{1107--1149}.
\end{barticle}
\endbibitem

\bibitem[\protect\citeauthoryear{Liao, Klasnja and Murphy}{2019}]{liao2019off}
\begin{barticle}[author]
\bauthor{\bsnm{Liao},~\bfnm{Peng}\binits{P.}},
  \bauthor{\bsnm{Klasnja},~\bfnm{Predrag}\binits{P.}} \AND
  \bauthor{\bsnm{Murphy},~\bfnm{Susan}\binits{S.}}
(\byear{2019}).
\btitle{Off-Policy Estimation of Long-Term Average Outcomes with Applications
  to Mobile Health}.
\bjournal{arXiv preprint arXiv:1912.13088}.
\end{barticle}
\endbibitem

\bibitem[\protect\citeauthoryear{Liao et~al.}{2016}]{Liaoetal2015}
\begin{barticle}[author]
\bauthor{\bsnm{Liao},~\bfnm{P.}\binits{P.}},
  \bauthor{\bsnm{Klasjna},~\bfnm{P.}\binits{P.}},
  \bauthor{\bsnm{Tewari},~\bfnm{A.}\binits{A.}} \AND
  \bauthor{\bsnm{Murphy},~\bfnm{S.~A.}\binits{S.~A.}}
(\byear{2016}).
\btitle{Micro-Randomized Trials in mHealth}.
\bjournal{Statistics in Medicine}
\bvolume{35}
\bpages{1944-71}.
\end{barticle}
\endbibitem

\bibitem[\protect\citeauthoryear{Liu and Nocedal}{1989}]{liu1989limited}
\begin{barticle}[author]
\bauthor{\bsnm{Liu},~\bfnm{Dong~C}\binits{D.~C.}} \AND
  \bauthor{\bsnm{Nocedal},~\bfnm{Jorge}\binits{J.}}
(\byear{1989}).
\btitle{On the limited memory BFGS method for large scale optimization}.
\bjournal{Mathematical programming}
\bvolume{45}
\bpages{503--528}.
\end{barticle}
\endbibitem

\bibitem[\protect\citeauthoryear{Liu et~al.}{2018}]{liu2018breaking}
\begin{binproceedings}[author]
\bauthor{\bsnm{Liu},~\bfnm{Qiang}\binits{Q.}},
  \bauthor{\bsnm{Li},~\bfnm{Lihong}\binits{L.}},
  \bauthor{\bsnm{Tang},~\bfnm{Ziyang}\binits{Z.}} \AND
  \bauthor{\bsnm{Zhou},~\bfnm{Dengyong}\binits{D.}}
(\byear{2018}).
\btitle{Breaking the curse of horizon: Infinite-horizon off-policy estimation}.
In \bbooktitle{Advances in Neural Information Processing Systems}
\bpages{5356--5366}.
\end{binproceedings}
\endbibitem

\bibitem[\protect\citeauthoryear{Liu et~al.}{2019}]{liu2019off}
\begin{barticle}[author]
\bauthor{\bsnm{Liu},~\bfnm{Yao}\binits{Y.}},
  \bauthor{\bsnm{Swaminathan},~\bfnm{Adith}\binits{A.}},
  \bauthor{\bsnm{Agarwal},~\bfnm{Alekh}\binits{A.}} \AND
  \bauthor{\bsnm{Brunskill},~\bfnm{Emma}\binits{E.}}
(\byear{2019}).
\btitle{Off-Policy Policy Gradient with State Distribution Correction}.
\bjournal{arXiv preprint arXiv:1904.08473}.
\end{barticle}
\endbibitem

\bibitem[\protect\citeauthoryear{Loh et~al.}{2017}]{loh2017statistical}
\begin{barticle}[author]
\bauthor{\bsnm{Loh},~\bfnm{Po-Ling}\binits{P.-L.}} \betal{et~al.}
(\byear{2017}).
\btitle{Statistical consistency and asymptotic normality for high-dimensional
  robust $ M $-estimators}.
\bjournal{The Annals of Statistics}
\bvolume{45}
\bpages{866--896}.
\end{barticle}
\endbibitem

\bibitem[\protect\citeauthoryear{Luckett et~al.}{2019}]{luckett2019estimating}
\begin{barticle}[author]
\bauthor{\bsnm{Luckett},~\bfnm{Daniel~J}\binits{D.~J.}},
  \bauthor{\bsnm{Laber},~\bfnm{Eric~B}\binits{E.~B.}},
  \bauthor{\bsnm{Kahkoska},~\bfnm{Anna~R}\binits{A.~R.}},
  \bauthor{\bsnm{Maahs},~\bfnm{David~M}\binits{D.~M.}},
  \bauthor{\bsnm{Mayer-Davis},~\bfnm{Elizabeth}\binits{E.}} \AND
  \bauthor{\bsnm{Kosorok},~\bfnm{Michael~R}\binits{M.~R.}}
(\byear{2019}).
\btitle{Estimating dynamic treatment regimes in mobile health using
  V-learning}.
\bjournal{Journal of the American Statistical Association}
\bvolume{just-accepted}
\bpages{1--39}.
\end{barticle}
\endbibitem

\bibitem[\protect\citeauthoryear{Mahadevan}{1996}]{mahadevan1996average}
\begin{barticle}[author]
\bauthor{\bsnm{Mahadevan},~\bfnm{Sridhar}\binits{S.}}
(\byear{1996}).
\btitle{Average reward reinforcement learning: Foundations, algorithms, and
  empirical results}.
\bjournal{Machine learning}
\bvolume{22}
\bpages{159--195}.
\end{barticle}
\endbibitem

\bibitem[\protect\citeauthoryear{Mei et~al.}{2018}]{mei2018landscape}
\begin{barticle}[author]
\bauthor{\bsnm{Mei},~\bfnm{Song}\binits{S.}},
  \bauthor{\bsnm{Bai},~\bfnm{Yu}\binits{Y.}},
  \bauthor{\bsnm{Montanari},~\bfnm{Andrea}\binits{A.}} \betal{et~al.}
(\byear{2018}).
\btitle{The landscape of empirical risk for nonconvex losses}.
\bjournal{The Annals of Statistics}
\bvolume{46}
\bpages{2747--2774}.
\end{barticle}
\endbibitem

\bibitem[\protect\citeauthoryear{Mitrophanov}{2005}]{mitrophanov2005sensitivity}
\begin{barticle}[author]
\bauthor{\bsnm{Mitrophanov},~\bfnm{A~Yu}\binits{A.~Y.}}
(\byear{2005}).
\btitle{Sensitivity and convergence of uniformly ergodic Markov chains}.
\bjournal{Journal of Applied Probability}
\bvolume{42}
\bpages{1003--1014}.
\end{barticle}
\endbibitem

\bibitem[\protect\citeauthoryear{Munos and
  Szepesv{\'a}ri}{2008}]{munos2008finite}
\begin{barticle}[author]
\bauthor{\bsnm{Munos},~\bfnm{R{\'e}mi}\binits{R.}} \AND
  \bauthor{\bsnm{Szepesv{\'a}ri},~\bfnm{Csaba}\binits{C.}}
(\byear{2008}).
\btitle{Finite-time bounds for fitted value iteration}.
\bjournal{Journal of Machine Learning Research}
\bvolume{9}
\bpages{815--857}.
\end{barticle}
\endbibitem

\bibitem[\protect\citeauthoryear{Murphy et~al.}{2001}]{murphy2001marginal}
\begin{barticle}[author]
\bauthor{\bsnm{Murphy},~\bfnm{Susan~A}\binits{S.~A.}},
  \bauthor{\bparticle{van~der} \bsnm{Laan},~\bfnm{Mark~J}\binits{M.~J.}},
  \bauthor{\bsnm{Robins},~\bfnm{James~M}\binits{J.~M.}} \AND
  \bauthor{\bsnm{Group},~\bfnm{Conduct Problems
  Prevention~Research}\binits{C.~P. P.~R.}}
(\byear{2001}).
\btitle{Marginal mean models for dynamic regimes}.
\bjournal{Journal of the American Statistical Association}
\bvolume{96}
\bpages{1410--1423}.
\end{barticle}
\endbibitem

\bibitem[\protect\citeauthoryear{Murphy et~al.}{2016}]{murphy2016batch}
\begin{barticle}[author]
\bauthor{\bsnm{Murphy},~\bfnm{Susan~A}\binits{S.~A.}},
  \bauthor{\bsnm{Deng},~\bfnm{Yanzhen}\binits{Y.}},
  \bauthor{\bsnm{Laber},~\bfnm{Eric~B}\binits{E.~B.}},
  \bauthor{\bsnm{Maei},~\bfnm{Hamid~Reza}\binits{H.~R.}},
  \bauthor{\bsnm{Sutton},~\bfnm{Richard~S}\binits{R.~S.}} \AND
  \bauthor{\bsnm{Witkiewitz},~\bfnm{Katie}\binits{K.}}
(\byear{2016}).
\btitle{A batch, off-policy, actor-critic algorithm for optimizing the average
  reward}.
\bjournal{arXiv preprint arXiv:1607.05047}.
\end{barticle}
\endbibitem

\bibitem[\protect\citeauthoryear{Nachum et~al.}{2019}]{nachum2019dualdice}
\begin{binproceedings}[author]
\bauthor{\bsnm{Nachum},~\bfnm{Ofir}\binits{O.}},
  \bauthor{\bsnm{Chow},~\bfnm{Yinlam}\binits{Y.}},
  \bauthor{\bsnm{Dai},~\bfnm{Bo}\binits{B.}} \AND
  \bauthor{\bsnm{Li},~\bfnm{Lihong}\binits{L.}}
(\byear{2019}).
\btitle{Dualdice: Behavior-agnostic estimation of discounted stationary
  distribution corrections}.
In \bbooktitle{Advances in Neural Information Processing Systems}
\bpages{2315--2325}.
\end{binproceedings}
\endbibitem

\bibitem[\protect\citeauthoryear{Nahum-Shani et~al.}{2016}]{nahum2016just}
\begin{barticle}[author]
\bauthor{\bsnm{Nahum-Shani},~\bfnm{Inbal}\binits{I.}},
  \bauthor{\bsnm{Smith},~\bfnm{Shawna~N}\binits{S.~N.}},
  \bauthor{\bsnm{Spring},~\bfnm{Bonnie~J}\binits{B.~J.}},
  \bauthor{\bsnm{Collins},~\bfnm{Linda~M}\binits{L.~M.}},
  \bauthor{\bsnm{Witkiewitz},~\bfnm{Katie}\binits{K.}},
  \bauthor{\bsnm{Tewari},~\bfnm{Ambuj}\binits{A.}} \AND
  \bauthor{\bsnm{Murphy},~\bfnm{Susan~A}\binits{S.~A.}}
(\byear{2016}).
\btitle{Just-in-Time Adaptive Interventions (JITAIs) in mobile health: key
  components and design principles for ongoing health behavior support}.
\bjournal{Annals of Behavioral Medicine}
\bpages{1--17}.
\end{barticle}
\endbibitem

\bibitem[\protect\citeauthoryear{Naik et~al.}{2019}]{naik2019discounted}
\begin{barticle}[author]
\bauthor{\bsnm{Naik},~\bfnm{Abhishek}\binits{A.}},
  \bauthor{\bsnm{Shariff},~\bfnm{Roshan}\binits{R.}},
  \bauthor{\bsnm{Yasui},~\bfnm{Niko}\binits{N.}} \AND
  \bauthor{\bsnm{Sutton},~\bfnm{Richard~S}\binits{R.~S.}}
(\byear{2019}).
\btitle{Discounted reinforcement learning is not an optimization problem}.
\bjournal{arXiv preprint arXiv:1910.02140}.
\end{barticle}
\endbibitem

\bibitem[\protect\citeauthoryear{Newey}{1990}]{newey1990semiparametric}
\begin{barticle}[author]
\bauthor{\bsnm{Newey},~\bfnm{Whitney~K}\binits{W.~K.}}
(\byear{1990}).
\btitle{Semiparametric efficiency bounds}.
\bjournal{Journal of applied econometrics}
\bvolume{5}
\bpages{99--135}.
\end{barticle}
\endbibitem

\bibitem[\protect\citeauthoryear{Ormoneit and
  Sen}{2003}]{ormoneit_kernel-based_2003}
\begin{binproceedings}[author]
\bauthor{\bsnm{Ormoneit},~\bfnm{Dirk}\binits{D.}} \AND
  \bauthor{\bsnm{Sen},~\bfnm{Saunak}\binits{S.}}
(\byear{2003}).
\btitle{Kernel-Based Reinforcement Learning}.
In \bbooktitle{Machine Learning}
\bpages{161--178}.
\end{binproceedings}
\endbibitem

\bibitem[\protect\citeauthoryear{Precup}{2000}]{precup2000eligibility}
\begin{barticle}[author]
\bauthor{\bsnm{Precup},~\bfnm{Doina}\binits{D.}}
(\byear{2000}).
\btitle{Eligibility traces for off-policy policy evaluation}.
\bjournal{Computer Science Department Faculty Publication Series}
\bpages{80}.
\end{barticle}
\endbibitem

\bibitem[\protect\citeauthoryear{Puterman}{1994}]{puterman1994markov}
\begin{barticle}[author]
\bauthor{\bsnm{Puterman},~\bfnm{Martin~L}\binits{M.~L.}}
(\byear{1994}).
\btitle{Markov Decision Processes: Discrete Stochastic Dynamic Programming}.
\end{barticle}
\endbibitem

\bibitem[\protect\citeauthoryear{Richardson}{1995}]{richardson1995theory}
\begin{barticle}[author]
\bauthor{\bsnm{Richardson},~\bfnm{George~B}\binits{G.~B.}}
(\byear{1995}).
\btitle{The theory of the market economy}.
\bjournal{Revue {\'e}conomique}
\bpages{1487--1496}.
\end{barticle}
\endbibitem

\bibitem[\protect\citeauthoryear{Robins, Rotnitzky and
  Zhao}{1994}]{robins1994estimation}
\begin{barticle}[author]
\bauthor{\bsnm{Robins},~\bfnm{James~M}\binits{J.~M.}},
  \bauthor{\bsnm{Rotnitzky},~\bfnm{Andrea}\binits{A.}} \AND
  \bauthor{\bsnm{Zhao},~\bfnm{Lue~Ping}\binits{L.~P.}}
(\byear{1994}).
\btitle{Estimation of regression coefficients when some regressors are not
  always observed}.
\bjournal{Journal of the American statistical Association}
\bvolume{89}
\bpages{846--866}.
\end{barticle}
\endbibitem

\bibitem[\protect\citeauthoryear{Shapiro, Dentcheva and
  Ruszczy{\'n}ski}{2014}]{shapiro2014lectures}
\begin{bbook}[author]
\bauthor{\bsnm{Shapiro},~\bfnm{Alexander}\binits{A.}},
  \bauthor{\bsnm{Dentcheva},~\bfnm{Darinka}\binits{D.}} \AND
  \bauthor{\bsnm{Ruszczy{\'n}ski},~\bfnm{Andrzej}\binits{A.}}
(\byear{2014}).
\btitle{Lectures on stochastic programming: modeling and theory}.
\bpublisher{SIAM}.
\end{bbook}
\endbibitem

\bibitem[\protect\citeauthoryear{Sharma, Jafarnia-Jahromi and
  Jain}{2020}]{sharma2020approximate}
\begin{binproceedings}[author]
\bauthor{\bsnm{Sharma},~\bfnm{Hiteshi}\binits{H.}},
  \bauthor{\bsnm{Jafarnia-Jahromi},~\bfnm{Mehdi}\binits{M.}} \AND
  \bauthor{\bsnm{Jain},~\bfnm{Rahul}\binits{R.}}
(\byear{2020}).
\btitle{Approximate relative value learning for average-reward continuous state
  MDPs}.
In \bbooktitle{Uncertainty in Artificial Intelligence}
\bpages{956--964}.
\bpublisher{PMLR}.
\end{binproceedings}
\endbibitem

\bibitem[\protect\citeauthoryear{Shi et~al.}{2020}]{shi2020statistical}
\begin{barticle}[author]
\bauthor{\bsnm{Shi},~\bfnm{Chengchun}\binits{C.}},
  \bauthor{\bsnm{Zhang},~\bfnm{Sheng}\binits{S.}},
  \bauthor{\bsnm{Lu},~\bfnm{Wenbin}\binits{W.}} \AND
  \bauthor{\bsnm{Song},~\bfnm{Rui}\binits{R.}}
(\byear{2020}).
\btitle{Statistical Inference of the Value Function for Reinforcement Learning
  in Infinite Horizon Settings}.
\bjournal{arXiv preprint arXiv:2001.04515}.
\end{barticle}
\endbibitem

\bibitem[\protect\citeauthoryear{Shi et~al.}{2021}]{shi2021deeply}
\begin{barticle}[author]
\bauthor{\bsnm{Shi},~\bfnm{Chengchun}\binits{C.}},
  \bauthor{\bsnm{Wan},~\bfnm{Runzhe}\binits{R.}},
  \bauthor{\bsnm{Chernozhukov},~\bfnm{Victor}\binits{V.}} \AND
  \bauthor{\bsnm{Song},~\bfnm{Rui}\binits{R.}}
(\byear{2021}).
\btitle{Deeply-Debiased Off-Policy Interval Estimation}.
\bjournal{arXiv preprint arXiv:2105.04646}.
\end{barticle}
\endbibitem

\bibitem[\protect\citeauthoryear{Snoek, Larochelle and
  Adams}{2012}]{snoek2012practical}
\begin{barticle}[author]
\bauthor{\bsnm{Snoek},~\bfnm{Jasper}\binits{J.}},
  \bauthor{\bsnm{Larochelle},~\bfnm{Hugo}\binits{H.}} \AND
  \bauthor{\bsnm{Adams},~\bfnm{Ryan~P}\binits{R.~P.}}
(\byear{2012}).
\btitle{Practical bayesian optimization of machine learning algorithms}.
\bjournal{Advances in neural information processing systems}
\bvolume{25}.
\end{barticle}
\endbibitem

\bibitem[\protect\citeauthoryear{Steinwart and
  Christmann}{2008}]{steinwart2008support}
\begin{bbook}[author]
\bauthor{\bsnm{Steinwart},~\bfnm{Ingo}\binits{I.}} \AND
  \bauthor{\bsnm{Christmann},~\bfnm{Andreas}\binits{A.}}
(\byear{2008}).
\btitle{Support vector machines}.
\bpublisher{Springer Science \& Business Media}.
\end{bbook}
\endbibitem

\bibitem[\protect\citeauthoryear{Sutton and
  Barto}{2018}]{sutton2018reinforcement}
\begin{bbook}[author]
\bauthor{\bsnm{Sutton},~\bfnm{Richard~S}\binits{R.~S.}} \AND
  \bauthor{\bsnm{Barto},~\bfnm{Andrew~G}\binits{A.~G.}}
(\byear{2018}).
\btitle{Reinforcement learning: An introduction}.
\bpublisher{MIT press}.
\end{bbook}
\endbibitem

\bibitem[\protect\citeauthoryear{Tang et~al.}{2020}]{Tang2020Doubly}
\begin{binproceedings}[author]
\bauthor{\bsnm{Tang},~\bfnm{Ziyang}\binits{Z.}},
  \bauthor{\bsnm{Feng},~\bfnm{Yihao}\binits{Y.}},
  \bauthor{\bsnm{Li},~\bfnm{Lihong}\binits{L.}},
  \bauthor{\bsnm{Zhou},~\bfnm{Dengyong}\binits{D.}} \AND
  \bauthor{\bsnm{Liu},~\bfnm{Qiang}\binits{Q.}}
(\byear{2020}).
\btitle{Doubly Robust Bias Reduction in Infinite Horizon Off-Policy
  Estimation}.
In \bbooktitle{International Conference on Learning Representations}.
\end{binproceedings}
\endbibitem

\bibitem[\protect\citeauthoryear{Thomas and Brunskill}{2016}]{thomas2016data}
\begin{binproceedings}[author]
\bauthor{\bsnm{Thomas},~\bfnm{Philip}\binits{P.}} \AND
  \bauthor{\bsnm{Brunskill},~\bfnm{Emma}\binits{E.}}
(\byear{2016}).
\btitle{Data-efficient off-policy policy evaluation for reinforcement
  learning}.
In \bbooktitle{International Conference on Machine Learning}
\bpages{2139--2148}.
\end{binproceedings}
\endbibitem

\bibitem[\protect\citeauthoryear{Tsiatis}{2007}]{tsiatis2007semiparametric}
\begin{bbook}[author]
\bauthor{\bsnm{Tsiatis},~\bfnm{Anastasios}\binits{A.}}
(\byear{2007}).
\btitle{Semiparametric theory and missing data}.
\bpublisher{Springer Science \& Business Media}.
\end{bbook}
\endbibitem

\bibitem[\protect\citeauthoryear{Uehara and Jiang}{2019}]{uehara2019minimax}
\begin{barticle}[author]
\bauthor{\bsnm{Uehara},~\bfnm{Masatoshi}\binits{M.}} \AND
  \bauthor{\bsnm{Jiang},~\bfnm{Nan}\binits{N.}}
(\byear{2019}).
\btitle{Minimax Weight and Q-Function Learning for Off-Policy Evaluation}.
\bjournal{arXiv preprint arXiv:1910.12809}.
\end{barticle}
\endbibitem

\bibitem[\protect\citeauthoryear{Van~der Vaart}{2000}]{van2000asymptotic}
\begin{bbook}[author]
\bauthor{\bparticle{Van~der} \bsnm{Vaart},~\bfnm{Aad~W}\binits{A.~W.}}
(\byear{2000}).
\btitle{Asymptotic statistics}
\bvolume{3}.
\bpublisher{Cambridge university press}.
\end{bbook}
\endbibitem

\bibitem[\protect\citeauthoryear{Van~Roy}{1998}]{van1998learning}
\begin{bphdthesis}[author]
\bauthor{\bsnm{Van~Roy},~\bfnm{Benjamin}\binits{B.}}
(\byear{1998}).
\btitle{Learning and value function approximation in complex decision
  processes},
\btype{PhD thesis},
\bpublisher{Massachusetts Institute of Technology}.
\end{bphdthesis}
\endbibitem

\bibitem[\protect\citeauthoryear{Voloshin et~al.}{2019}]{voloshin2019empirical}
\begin{barticle}[author]
\bauthor{\bsnm{Voloshin},~\bfnm{Cameron}\binits{C.}},
  \bauthor{\bsnm{Le},~\bfnm{Hoang~M}\binits{H.~M.}},
  \bauthor{\bsnm{Jiang},~\bfnm{Nan}\binits{N.}} \AND
  \bauthor{\bsnm{Yue},~\bfnm{Yisong}\binits{Y.}}
(\byear{2019}).
\btitle{Empirical Study of Off-Policy Policy Evaluation for Reinforcement
  Learning}.
\bjournal{arXiv preprint arXiv:1911.06854}.
\end{barticle}
\endbibitem

\bibitem[\protect\citeauthoryear{Wan, Naik and Sutton}{2021}]{wan2021learning}
\begin{binproceedings}[author]
\bauthor{\bsnm{Wan},~\bfnm{Yi}\binits{Y.}},
  \bauthor{\bsnm{Naik},~\bfnm{Abhishek}\binits{A.}} \AND
  \bauthor{\bsnm{Sutton},~\bfnm{Richard~S}\binits{R.~S.}}
(\byear{2021}).
\btitle{Learning and planning in average-reward markov decision processes}.
In \bbooktitle{International Conference on Machine Learning}
\bpages{10653--10662}.
\bpublisher{PMLR}.
\end{binproceedings}
\endbibitem

\bibitem[\protect\citeauthoryear{Wu and Wang}{2020}]{Wu2020}
\begin{barticle}[author]
\bauthor{\bsnm{Wu},~\bfnm{Yunan}\binits{Y.}} \AND
  \bauthor{\bsnm{Wang},~\bfnm{Lan}\binits{L.}}
(\byear{2020}).
\btitle{Resampling-based confidence intervals for model-free robust inference
  on optimal treatment regimes}.
\bjournal{Biometrics}
\bvolume{n/a}.
\bdoi{https://doi.org/10.1111/biom.13337}
\end{barticle}
\endbibitem

\bibitem[\protect\citeauthoryear{Zhang et~al.}{2012}]{zhang2012robust}
\begin{barticle}[author]
\bauthor{\bsnm{Zhang},~\bfnm{Baqun}\binits{B.}},
  \bauthor{\bsnm{Tsiatis},~\bfnm{Anastasios~A}\binits{A.~A.}},
  \bauthor{\bsnm{Laber},~\bfnm{Eric~B}\binits{E.~B.}} \AND
  \bauthor{\bsnm{Davidian},~\bfnm{Marie}\binits{M.}}
(\byear{2012}).
\btitle{A robust method for estimating optimal treatment regimes}.
\bjournal{Biometrics}
\bvolume{68}
\bpages{1010--1018}.
\end{barticle}
\endbibitem

\bibitem[\protect\citeauthoryear{Zhang et~al.}{2013}]{zhang2013robust}
\begin{barticle}[author]
\bauthor{\bsnm{Zhang},~\bfnm{Baqun}\binits{B.}},
  \bauthor{\bsnm{Tsiatis},~\bfnm{Anastasios~A}\binits{A.~A.}},
  \bauthor{\bsnm{Laber},~\bfnm{Eric~B}\binits{E.~B.}} \AND
  \bauthor{\bsnm{Davidian},~\bfnm{Marie}\binits{M.}}
(\byear{2013}).
\btitle{Robust estimation of optimal dynamic treatment regimes for sequential
  treatment decisions}.
\bjournal{Biometrika}
\bvolume{100}
\bpages{681--694}.
\end{barticle}
\endbibitem

\bibitem[\protect\citeauthoryear{Zhang et~al.}{2020}]{zhang2020gendice}
\begin{binproceedings}[author]
\bauthor{\bsnm{Zhang},~\bfnm{Ruiyi}\binits{R.}},
  \bauthor{\bsnm{Dai},~\bfnm{Bo}\binits{B.}},
  \bauthor{\bsnm{Li},~\bfnm{Lihong}\binits{L.}} \AND
  \bauthor{\bsnm{Schuurmans},~\bfnm{Dale}\binits{D.}}
(\byear{2020}).
\btitle{Gen{\{}DICE{\}}: Generalized Offline Estimation of Stationary Values}.
In \bbooktitle{International Conference on Learning Representations}.
\end{binproceedings}
\endbibitem

\bibitem[\protect\citeauthoryear{Zhao et~al.}{2015}]{zhao2015new}
\begin{barticle}[author]
\bauthor{\bsnm{Zhao},~\bfnm{Ying-Qi}\binits{Y.-Q.}},
  \bauthor{\bsnm{Zeng},~\bfnm{Donglin}\binits{D.}},
  \bauthor{\bsnm{Laber},~\bfnm{Eric~B}\binits{E.~B.}} \AND
  \bauthor{\bsnm{Kosorok},~\bfnm{Michael~R}\binits{M.~R.}}
(\byear{2015}).
\btitle{New statistical learning methods for estimating optimal dynamic
  treatment regimes}.
\bjournal{Journal of the American Statistical Association}
\bvolume{110}
\bpages{583--598}.
\end{barticle}
\endbibitem

\bibitem[\protect\citeauthoryear{Zhao et~al.}{2019}]{zhao2019efficient}
\begin{barticle}[author]
\bauthor{\bsnm{Zhao},~\bfnm{Ying-Qi}\binits{Y.-Q.}},
  \bauthor{\bsnm{Laber},~\bfnm{Eric~B}\binits{E.~B.}},
  \bauthor{\bsnm{Ning},~\bfnm{Yang}\binits{Y.}},
  \bauthor{\bsnm{Saha},~\bfnm{Sumona}\binits{S.}} \AND
  \bauthor{\bsnm{Sands},~\bfnm{Bruce~E}\binits{B.~E.}}
(\byear{2019}).
\btitle{Efficient augmentation and relaxation learning for individualized
  treatment rules using observational data.}
\bjournal{Journal of Machine Learning Research}
\bvolume{20}
\bpages{1--23}.
\end{barticle}
\endbibitem

\bibitem[\protect\citeauthoryear{Zhou et~al.}{2017}]{zhou2017residual}
\begin{barticle}[author]
\bauthor{\bsnm{Zhou},~\bfnm{Xin}\binits{X.}},
  \bauthor{\bsnm{Mayer-Hamblett},~\bfnm{Nicole}\binits{N.}},
  \bauthor{\bsnm{Khan},~\bfnm{Umer}\binits{U.}} \AND
  \bauthor{\bsnm{Kosorok},~\bfnm{Michael~R}\binits{M.~R.}}
(\byear{2017}).
\btitle{Residual weighted learning for estimating individualized treatment
  rules}.
\bjournal{Journal of the American Statistical Association}
\bvolume{112}
\bpages{169--187}.
\end{barticle}
\endbibitem

\end{thebibliography}
	
\end{document}